\newcommand{\Q}{\mathbb{Q}}
\newcommand{\Z}{\mathbb{Z}}
\newcommand{\R}{\mathbb{R}}
\newcommand{\C}{\mathbb{C}}
\newcommand{\F}{\mathbb{F}}
\newcommand{\A}{\mathbb{A}}
\newcommand{\Qbar}{\overline{\mathbb{Q}}}
\newcommand{\Fbar}{\overline{\F}}
\DeclareMathOperator{\PSL}{PSL}
\DeclareMathOperator{\GSp}{GSp}
\DeclareMathOperator{\GO}{GO}
\DeclareMathOperator{\GL}{GL}
\DeclareMathOperator{\ad}{ad}
\newcommand{\Hom}{\mathrm{Hom}}
\newcommand{\End}{\mathrm{End}}
\newcommand{\Gal}{\mathrm{Gal}}
\newcommand{\M}{\mathrm{M}}
\newcommand{\univ}{\mathrm{univ}}
\newcommand{\lra}{\longrightarrow}
\newcommand{\Frob}{\mathrm{Frob}}
\newcommand{\Nm}{\mathrm{Nm}}
\newcommand{\height}{\mathrm{ht}}
\newcommand{\calO}{\mathcal{O}}
\newcommand{\calS}{\mathcal{S}}
\newcommand{\rhobar}{\overline{\rho}}
\newcommand{\chibar}{\overline{\chi}}
\newcommand{\mubar}{\overline{\mu}}
\newcommand{\epsilonbar}{\overline{\epsilon}}
\newcommand{\calC}{\mathcal{C}}
\newcommand{\frakm}{\mathfrak{m}}
\newcommand{\frakp}{\mathfrak{p}}
\DeclareMathOperator{\Spec}{Spec}
\DeclareMathOperator{\tr}{tr}
\newcommand{\loc}{\mathrm{loc}}
\newcommand{\cris}{\mathrm{cr}}
\newcommand{\pst}{\mathrm{pst}}
\newcommand{\st}{\mathrm{st}}
\newcommand{\rig}{\mathrm{rig}}
\newcommand{\frakq}{\mathfrak{q}}
\newcommand{\frakr}{\mathfrak{r}}
\newcommand{\Art}{\mathrm{Art}}
\newcommand{\WD}{\mathrm{WD}}
\newcommand{\dR}{\mathrm{dR}}
\newcommand{\ab}{\mathrm{ab}}
\newcommand{\CNL}{\mathrm{CNL}}
\newcommand{\frakz}{\mathfrak{z}}
\newcommand{\frakgl}{\mathfrak{gl}}
\newcommand{\fraksl}{\mathfrak{sl}}
\newcommand{\tor}{\mathrm{tor}}
\newcommand{\free}{\mathrm{free}}
\DeclareMathOperator{\Spf}{Spf}
\newcommand{\red}{\mathrm{red}}
\newcommand{\HT}{\mathrm{HT}}
\newcommand{\ord}{\mathrm{ord}}
\newcommand{\tildeS}{\widetilde{S}}
\newcommand{\rbar}{\overline{r}}
\newcommand{\calG}{\mathcal{G}}
\newcommand{\transp}{\,{}^t\!}
\newcommand{\Fss}{F\mbox{-}\mathrm{ss}}
\newcommand{\Iw}{\mathrm{Iw}}
\DeclareMathOperator{\Aut}{Aut}
\newcommand{\bbT}{\mathbb{T}}
\newcommand{\bbG}{\mathbb{G}}
\newcommand{\pol}{\mathrm{pol}}
\newcommand{\mupol}{\mu\mbox{-}\mathrm{pol}}
\DeclareMathOperator{\Lie}{Lie}
\DeclareMathOperator{\Res}{Res}
\DeclareMathOperator{\rec}{rec}
\providecommand{\abs}[1]{\lvert #1 \rvert}
\newcommand{\bibpath}{/home/patrick/Dropbox/Research}
\newtheorem{mainthm}{Theorem}
\crefname{mainthm}{Theorem}{Theorems}
\newtheorem{thm}[subsubsection]{Theorem}
\crefname{thm}{Theorem}{Theorems}
\newtheorem{lem}[subsubsection]{Lemma}
\crefname{lem}{Lemma}{Lemmas}
\newtheorem{prop}[subsubsection]{Proposition}
\crefname{prop}{Proposition}{Propositions}
\newtheorem{cor}[subsubsection]{Corollary}
\crefname{cor}{Corollary}{Corollaries}
\theoremstyle{definition}
\newtheorem{defn}[subsubsection]{Definition}
\crefname{defn}{Definition}{Definitions}
\theoremstyle{remark}
\newtheorem{rmk}[subsubsection]{Remark}
\crefname{rmk}{Remark}{Remarks}
\theoremstyle{definition}
\newlist{ass}{enumerate}{1}
\setlist[ass]{label=(\alph*)}
\crefname{assi}{assumption}{assumptions}
\newlist{casez}{enumerate}{1}
\setlist[casez]{label=(\roman*)}
\crefname{casezi}{case}{cases}
\title{On automorphic points in polarized deformation rings}
\author{Patrick B. Allen}
\address{Department of Mathematics, University of Illinois at Urbana--Champaign,
Urbana, IL, USA}
\email{pballen@illinois.edu}
\begin{document}

\thanks{The author was partially supported by an AMS--Simons Travel Grant. Some of this work was completed while the author was a guest that the Max Planck Institute for Mathematics, and he thanks them for their hospitality.}

\begin{abstract}
For a fixed mod $p$ automorphic Galois representation, $p$-adic automorphic Galois representations lifting
it determine points in universal deformation space. 
In the case of modular forms and under some technical conditions, B\"{o}ckle showed that every component
of deformation space contains a smooth modular point, which then implies their Zariski density when coupled with the infinite fern of Gouv{\^e}a--Mazur. 
We generalize B\"{o}ckle's result to the context of polarized Galois representations for CM fields, and 
to two dimensional Galois representations for totally real fields. 
More specifically, 
under assumptions necessary to apply a small $R = \mathbb{T}$ theorem and an assumption on the local mod $p$ representation, we prove that every irreducible component of the universal polarized deformation space contains an automorphic point.
When combined with work of Chenevier, this implies new results on the Zariski density of automorphic points in polarized deformation space in dimension three. 
\end{abstract}

\maketitle

\setcounter{tocdepth}{1}

\tableofcontents

\section*{Introduction}\label{sec:intro}

Inspired by the $p$-adic deformation theory of modular forms, Mazur developed a deformation theory for Galois representations that has played a central role in modern algebraic number theory. 
If the fixed mod $p$ Galois representation arises from a modular eigenform, the $p$-adic deformation theory of its Hecke eigensystem can naturally be viewed as part of the $p$-adic deformation theory of the Galois representation. 
A natural (but vague) question is: are these two deformation theories \emph{the same}?

One way to make this precise is to ask whether or not the universal deformation ring for the mod $p$ Galois representation is isomorphic to an appropriate ``big" $p$-adic Hecke algebra. 
Such a result is know as a ``big $R=\bbT$" theorem, and implies that the theories of Galois representations and automorphic forms are intimately connected even when one leaves the geometric world. 
This is further illustrated by Emerton's strategy for proving the Fontaine--Mazur conjecture, which is to start with a big $R=\bbT$ theorem, and then identify the geometric locus inside deformation space with the classical modular points \cite{EmertonLocGlob}.

%

The theory of pseudo-representations often allows one to deduce the existence of a surjection from a universal deformation ring $R^{\univ}$ to a big $p$-adic Hecke algebra $\bbT$. 
In these situations, a big $R=\bbT$ theorem follows from knowing that the classical automorphic points in $\Spec R$ are Zariski dense (at least up to reduced quotients).
The first such result was proved by Gouv\^{e}a and Mazur  \cite{GouveaMazur} in the context of $\GL_2$ over $\Q$, under the assumption that the universal deformation ring is formally smooth, i.e. a power series over $\calO$, of the expected dimension.  
They constructed a sort-of fractal in the rigid analytic generic fibre of $\Spec R$ that they called the \emph{infinite fern}.
The infinite fern shows that the Zariski closure of the modular points has large dimension, and adding the assumption that $R^{\univ}$ is a power series over $\calO$ of the correct dimension implies the Zariski density.

B\"{o}ckle improved on this \cite{BockleDensity} using a novel idea. 
He used explicit computations of local deformation rings \cites{RamakrishnaFinFlat,BockleDemuskin} together with small $R$ equals $\bbT$ theorems (of the type proved by Taylor and Wiles) to show that every irreducible component of universal deformation space contains a smooth modular point, under certain conditions on the residual representation.
The smoothness of the point allows one to deduce that the component has the correct dimension, then the infinite fern implies the modular points are Zariski dense in that component.

In higher dimensions or for more general number fields, the situation is more subtle (see \cite{CalegariMazur}*{\S1.1}), and the naive generalization of the Zariski density statement in the rigid analytic generic fibre is false in general (see \cite{LoefflerDense} and \cite{CalegariEven2}*{\S5}).
However, if one restricts to \emph{polarized} representations of the Galois group of CM or totally real fields, then the situation is much more hopeful, and a precise conjecture was made by Chenevier \cite{ChenevierFern}*{Conjecture~1.15}.
Chenevier \cite{ChenevierFern} expanded and refined the construction of Gouv\^{e}a and Mazur to three dimensional Galois representations for CM fields $F$ in which $p$ is totally split and to two dimensional Galois representations for totally real fields $F$ of even degree over $\Q$ in which $p$ is totally split. 
Chenevier thus proves his conjecture, which is a higher dimensional analogue of the Gouv\^{e}a--Mazur conjecture, in these situations under the additional assumption that the universal deformation ring is formally smooth of the correct dimension. 

In this paper we give a new and more geometric interpretation of B\"{o}ckle's strategy, which affords strong generalization.
In principal, our methods apply any time the ``numerical coincidence" discussed in \cite{CHT}*{\S1} holds. 
We focus on the case of polarized representations of Galois groups of CM fields (in arbitrary dimension), and two dimensional representations of totally real fields.
In order to state some of our main results, we set up some notation.

Let $F$ be a CM field with maximal totally real subfield $F^+$.
Let $S$ be a finite set of finite places of $F^+$ containing all those that ramify in $F$, let $F_S$ be the maximal extension of $F$ unramified outside of the places above those in $S$, and set 
$G_{F,S} = \Gal(F_S/F)$.
Let $E$ be a finite extension of $\Q_p$ with ring of integers $\calO$ and residue field $\F$.
Assume we are given a continuous absolutely irreducible
	\[ \rhobar : G_{F,S} \lra \GL_n(\F) \]
such that $\rhobar^c = \rhobar^\vee \otimes \overline{\epsilon}^{1-n}$, where $c\in G_{F^+}$ is some choice of complex conjugation, $\rhobar^c$ is the conjugate representation given by $\rhobar^c(\gamma) = \rhobar(c\gamma c)$ for all $\gamma \in G_{F,S}$, $\rhobar^\vee$ is the $\F$-linear dual of $\rhobar$, and $\overline{\epsilon}$ is the mod $p$ cyclotomic character. 
Letting $R^{\univ}$ denote the universal deformation ring for $\rhobar$ on the category of complete Noetherian local $\calO$-algebras with residue field $\F$, there is a quotient $R^{\pol}$ of $R^{\univ}$ that is universal for deformations $\rho$ such that $\rho^c \cong \rho^\vee \otimes \epsilon^{1-n}$ (see \S\ref{sec:polardefring}). 
The Galois representations associated to regular algebraic conjugate self dual cuspidal automorphic representations of $\GL_n(\A_F)$ that lift $\rhobar$ naturally yield $\Qbar_p$-points of $\Spec R^{\pol}$. 

\begin{mainthm}\label{thm:intromain}
Fix $\iota : \Qbar_p \xrightarrow{\sim} \C$. 
Assume $p\nmid 2n$ and that every place above $p$ in $F^+$ splits in $F$. 
Assume there is a regular algebraic conjugate self dual cuspidal automorphic representation $\pi$ of $\GL_n(\A_F)$ such that $\rhobar\otimes\Fbar_p$ is isomorphic to the mod $p$ Galois representation attached to $\pi$ and $\iota$.
Assume further:
\begin{ass}
\item Either 
	\begin{itemize}
	\item $\rho_{\pi,\iota}|_{G_v}$ is potentially diagonalizable for each $v|p$ in $F$, or
	\item $\pi$ is $\iota$-ordinary.
	\end{itemize}
\item $\rhobar(G_{F(\zeta_p)})$ is adequate and $\zeta_p \notin F$.
\item For each $v|p$ in $F$, there is no nonzero $\F[G_v]$-equivariant map 
$\rhobar|_{G_v} \rightarrow \rhobar|_{G_v}(1)$.
\end{ass}
Then for any irreducible component $\calC$ of $\Spec R^{\pol}$, there is a regular algebraic conjugate self dual cuspidal automorphic representation whose associated Galois representation determines a $\Qbar_p$-point of 
$\calC$. 
\end{mainthm}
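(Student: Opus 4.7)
The plan is to fix an arbitrary irreducible component $\calC$ of $\Spec R^{\pol}$ and produce an automorphic $\Qbar_p$-point on it, following a geometric reformulation of B\"ockle's strategy enabled by the numerical coincidence of Clozel--Harris--Taylor.

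First I would pick a closed $\Qbar_p$-valued point $x$ of $\calC$ lying in the smooth locus of the rigid-analytic generic fibre of $\calC$ and avoiding all other components of $\Spec R^{\pol}$, and record the local behaviour of the corresponding lift $\rho_x$: an inertial Weil--Deligne type at each $v\in S$ with $v\nmid p$, and an inertial-type-plus-Hodge--Tate-weight datum $(\tau_v,\lambda_v)$ at each $v\mid p$. Assumption (c) and local Tate duality force $H^2(G_v,\ad\rhobar)=0$ for $v\mid p$, so the unrestricted local polarized lifting ring $R_v^{\pol}$ is formally smooth over $\calO$; consequently the potentially crystalline (or semistable) lifting ring of type $(\tau_v,\lambda_v)$ through $\rho_x|_{G_v}$ is a formally smooth quotient of $R_v^{\pol}$. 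The picture at $v\nmid p$ is the standard one controlled by the residual image.

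Next I would assemble these local choices into a global polarized deformation problem $\mathcal{S}$, taking as local condition at each $v\in S$ the formally smooth component of $R_v^{\pol}$ through $\rho_x|_{G_v}$, and consider the resulting quotient $R^{\pol}\twoheadrightarrow R_{\mathcal{S}}^{\pol}$. The key \emph{geometric} claim, which is the heart of the argument, is that by combining the CHT numerical coincidence with the formal smoothness at $v\mid p$ from (c), the ring $R_{\mathcal{S}}^{\pol}$ is a relative complete intersection of the expected dimension one over $\calO$, and the surjection $R^{\pol}\twoheadrightarrow R_{\mathcal{S}}^{\pol}$ realises $\Spec R_{\mathcal{S}}^{\pol}$ as a union of irreducible components of $\Spec R^{\pol}$ containing $\calC$. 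This is the geometric incarnation of B\"ockle's device: the local type conditions of $\mathcal{S}$ are cut out, after passing through the formally smooth local rings, by a regular sequence of the predicted length, so that imposing them kills whole components rather than cutting any of them down in dimension.

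Finally I would invoke a small $R=\bbT$ theorem in the polarized setting, in the style of Clozel--Harris--Taylor, Barnet-Lamb--Gee--Geraghty--Taylor and Thorne. Assumption (b) supplies adequacy and ensures the Taylor--Wiles patching machinery applies; assumption (a) -- potential diagonalizability, or ordinarity -- bridges between the local types of $\rho_{\pi,\iota}$ at $v\mid p$ and the types imposed in $\mathcal{S}$ via connectedness results for potentially crystalline local lifting rings, guaranteeing that the Hecke algebra associated to $\mathcal{S}$ is nonzero. One concludes $R_{\mathcal{S}}^{\pol, \red}=\bbT_{\mathcal{S}}$, so every $\Qbar_p$-point of $R_{\mathcal{S}}^{\pol}$ is automorphic; in particular the point $x\in\calC$ is. The principal obstacle is the middle step: rigorously establishing that $\Spec R_{\mathcal{S}}^{\pol}$ is a union of components of $\Spec R^{\pol}$ containing $\calC$, which demands careful dimension bookkeeping using the CHT coincidence together with the local formal smoothness from (c), and where the novelty of the more geometric reformulation of B\"ockle's strategy is concentrated.
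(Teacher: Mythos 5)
Your proposal shares the high‐level outline of the paper's argument (local conditions at $p$, a polarized small $R=\bbT$ theorem, a geometric reformulation of B\"ockle), but the core ``middle step'' as you have formulated it is not just unproven but false, and the first step does not work as stated.

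First, you begin by picking a $\Qbar_p$-point $x$ on $\calC$ and extracting from it an inertial-type-plus-Hodge--Tate-weight datum $(\tau_v,\lambda_v)$ at $v\mid p$. But a generic point of a component of $\Spec R^{\pol}$ has no reason to be de~Rham or potentially semistable at $p$ --- indeed the de~Rham locus in the generic fibre of a local lifting ring is a proper closed subvariety of large codimension --- so there is simply no weight or type to extract from a generic $x$. In the paper, the local datum $(\lambda_{\tilde v},\tau_{\tilde v},\calC_{\tilde v})$ is instead produced from assumption (a) via $\rho_{\pi,\iota}|_{G_{\tilde v}}$, i.e.\ from the automorphic lift whose existence is assumed, not from a point chosen on $\calC$.

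Second, and more seriously, your central claim --- that imposing the fixed-weight potentially crystalline (or ordinary) conditions at $p$ cuts out a \emph{union of irreducible components} of $\Spec R^{\pol}$ containing $\calC$ --- is incompatible with the dimension counts you yourself invoke. The CHT numerical coincidence says exactly that imposing these conditions drops the dimension by $\frac{n(n+1)}{2}[F^+:\Q]$, so that $R_{\mathcal S}$ is finite over $\calO$ (Krull dimension $1$). But $\calC$ has dimension at least $1 + \frac{n(n+1)}{2}[F^+:\Q]$ by the presentation lower bound (\cref{thm:polringdim}), so it is never contained in $\Spec R_{\mathcal S}$. What the paper actually proves is the much weaker (and correct) statement that $\calC \cap \Spec R_{\calS}[1/p]$ is \emph{nonempty}. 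This is accomplished by \cref{thm:thelemma}: one uses that $R^{\loc}=\widehat\otimes_{v\mid p}R_v^\square$ is \emph{regular} (by assumption (c) and \cref{thm:localsmooth}) together with Serre's intersection bound in regular local rings (\cref{thm:intersection}) and the finiteness of $R^{\pol}$ over $R^{\loc}$ to show $\dim(\calC\cap\Spec R_\calS)\ge \dim\calC + \dim X^{\loc}-\dim R^{\loc}\ge 1$; finiteness of $R_\calS$ over $\calO$ then forces any dimension-$1$ point of this intersection to lie in the generic fibre, and the small $R=\bbT$ theorem (\cref{thm:PDsmallRT} or \cref{thm:ordsmallRT}) makes it automorphic. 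The missing ingredient in your writeup is precisely this intersection-theoretic lemma, and replacing ``contains $\calC$'' by ``meets $\calC$'' changes the nature of what needs to be proved.
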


We refer the reader to \cref{thm:mainPD,thm:mainord} below for slightly more general and refined statements,
to \S\ref{sec:Local} for the definition of a potentially diagonalizable representation, to \S\ref{sec:AutGalRep} for a discussion of regular algebraic polarizable cuspidal automorphic representations and their associated Galois representations, and to \cref{def:GLadequate} for the definition of an adequate subgroup of $\GL_n(\F)$. 

The characteristic zero points on universal polarized deformation rings arising from regular algebraic polarized cuspidal automorphic representation are known to be formally smooth in a great deal of generality, see \cite{MeSmooth}*{Theorem~C} and \cite{BHS}*{Corollaire~4.13}. 
Combining this with our main theorems and Chenevier's infinite fern, we obtain new cases of Chenevier's conjecture (\cref{thm:dim3dense} below):

\begin{mainthm}\label{thm:introdense}
Fix $\iota : \Qbar_p \xrightarrow{\sim} \C$. 
Assume that $n = 3$, and that $p>2$. 
Assume there is a regular algebraic conjugate self dual cuspidal automorphic representation $\pi$ of $\GL_3(\A_F)$ such that $\rhobar$ is isomorphic to the mod $p$ Galois representation attached to $\pi$ and $\iota$.
Assume further:
\begin{ass}
	\item $p$ is totally split in $F$.
	\item For each $v|p$ in $F$, $\pi_v$ is unramified and $\rho_{\pi,\iota}|_{G_v}$ is potentially diagonalizable. 
If $p = 3$, then we further assume that $\pi$ is $\iota$-ordinary.
\item $\rhobar(G_{F(\zeta_p)})$ is adequate and $\zeta_p \notin F$.
\item For each $v|p$ in $F$, there is no nonzero $\F[G_v]$-equivariant map $\rhobar|_{G_v} \rightarrow \rhobar|_{G_v}(1)$.
\end{ass}
Then, letting $\mathfrak{X}$ denote the rigid analytic generic fibre of $\Spf R^{\pol}$, the set of points in $\mathfrak{X}$ induced by Galois representations associated to regular algebraic conjugate self dual cuspidal automorphic representation of level prime to $p$ is Zariski dense. 
\end{mainthm}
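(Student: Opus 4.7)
The plan is to deduce \cref{thm:introdense} by combining \cref{thm:intromain} with the formal smoothness results \cite{MeSmooth}*{Theorem~C} and \cite{BHS}*{Corollaire~4.13} recalled just above, and with Chenevier's infinite fern in the three-dimensional setting of \cite{ChenevierFern}. Roughly, \cref{thm:intromain} guarantees an automorphic point on each irreducible component of $\Spec R^{\pol}$, the smoothness results pin down the dimension of the component at that point, and the fern then sweeps out a Zariski dense subset of the expected dimension within each component.

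Concretely, I would first check that the hypotheses of \cref{thm:introdense} imply those of \cref{thm:intromain}: the assumption that $\pi_v$ is unramified for each $v \mid p$ makes $\rho_{\pi,\iota}|_{G_v}$ crystalline and hence potentially diagonalizable, while conditions (c) and (d) are common to both statements. Applied to an arbitrary irreducible component $\calC$ of $\Spec R^{\pol}$, the refined \cref{thm:mainPD} (respectively \cref{thm:mainord} when $p=3$) then produces a $\Qbar_p$-point $x \in \calC$ corresponding to a regular algebraic conjugate self dual cuspidal automorphic representation that is potentially diagonalizable (respectively $\iota$-ordinary) at every $v \mid p$. The cited smoothness results force $R^{\pol}$ to be formally smooth of the expected relative dimension over $\calO$ at $x$, so $\calC$ has the expected dimension.

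At this point I would invoke Chenevier's infinite fern, which in the $n=3$, $p$-totally-split situation is shown in \cite{ChenevierFern} to sweep out a Zariski dense locus of level-prime-to-$p$ automorphic points in any irreducible component of $\mathfrak{X}$ that is formally smooth of the expected dimension. Applying this to each $\calC$ gives the desired Zariski density in $\mathfrak{X}$. The key mechanism throughout is the ``numerical coincidence'' of \cite{CHT}*{\S1}, which makes the expected dimension of $R^{\pol}$ match the dimension of the eigenvariety producing the fern.

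The main obstacle will be transporting Chenevier's density output from his originally formally smooth setting to our merely pointwise-smooth-at-an-automorphic-point setting. In practice this amounts to checking that the fern construction, initiated from any smooth automorphic point of the type produced by \cref{thm:intromain}, remains inside the ambient component and has the correct dimension there; hypothesis (d) plays a crucial role by rigidifying the local deformation theory at $p$, and the potentially diagonalizable or ordinary refinement in \cref{thm:mainPD,thm:mainord} is what allows one to pass from the automorphic point produced by the lifting theorem to a nearby classical point of level prime to $p$ to which Chenevier's construction applies directly.
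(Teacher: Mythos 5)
Your proposal assembles the right ingredients (Theorem~\ref{thm:intromain} via \ref{thm:mainPD}/\ref{thm:mainord}, the smoothness of automorphic points from \cite{MeSmooth}, Chenevier's fern) but misconstrues what the fern actually delivers, and this leaves a real gap. Chenevier's Theorem~A is not a per-component density statement for irreducible components that happen to be smooth at a point: it is a lower bound, equal to the full expected dimension $\frac{n(n+1)}{2}[F^+:\Q]$, on the dimension of each irreducible component of the Zariski \emph{closure} of the set of level-prime-to-$p$ automorphic points in all of $\mathfrak{X}$. There is no sense in which one ``initiates the fern at a smooth automorphic point and checks it remains inside the ambient component''; the fern does not track individual components, and no such component-local density result is quoted or proved in the paper. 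Your final paragraph identifies this as ``the main obstacle'' but the proposed fix is not an argument.

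What actually closes this gap in the paper is a combination of two things you omit. First, \cref{thm:CMgeom} (deduced from \cref{thm:mainPD,thm:mainord} together with \cite{MeSmooth}) establishes that $R^{\pol}$ is $\calO$-flat, reduced, and equidimensional of the expected dimension. The flatness and reducedness are what make the irreducible components of $\Spec R^{\pol}$ biject with those of $\mathfrak{X}$ (via \cref{thm:genfibcomps}), and equidimensionality is what makes $\mathfrak{X}$ itself equidimensional of dimension $\frac{n(n+1)}{2}[F^+:\Q]$; pointwise smoothness of $R^{\pol}$ at one automorphic point per component gives the dimension of that component but not the global ring-theoretic structure. Second, the formal device that combines the fern's dimension bound with the existence of a smooth automorphic point on each component is \cref{thm:genfiblem}: a smooth point of $Z^{\rig}$ lies on a unique irreducible component $X_i^{\rig}$ of $\mathfrak{X}$, and the irreducible component of $\overline{Z^{\rig}}$ through it has the full dimension, which by equidimensionality of $\mathfrak{X}$ forces it to equal $X_i^{\rig}$. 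Your sketch never invokes anything playing this role, so even granting all your correct steps, the conclusion does not follow. (A smaller issue: the automorphic points produced by \cref{thm:mainPD,thm:mainord}, with the local components $\calC_{\tilde v}$ chosen appropriately using the crystalline/unramified hypothesis at $p$, already have level prime to $p$ by local--global compatibility; one does not need to pass to a ``nearby classical point'' as you suggest.)
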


We refer the reader to \S\ref{sec:genfib} for an overview of the rigid analytic generic fibre.
Along the way to proving \cref{thm:introdense} we deduce nice ring theoretic properties for $R^{\pol}$ (\cref{thm:CMgeom} below). 
However, using potentially automorphy theorems, we can prove these nice ring theoretic properties in many cases without assuming residual automorphy. 
For example:

\begin{mainthm}\label{mainthm:CMgeom}
Assume that $p>2(n+1)$ and that every $v|p$ in $F^+$ splits in $F$. 
Assume further:
\begin{ass}
\item $\rhobar|_{G_{F(\zeta_p)}}$ is absolutely irreducible.
\item For each $v|p$, if we write the semisimplification of $\rhobar|_{G_v}$ as $(\rhobar|_{G_v})^{\mathrm{ss}} = \oplus \rhobar_i$ with each $\rhobar_i$ irreducible, then $\rho_i \not\cong \rho_j(1)$ for any $i,j$.
\end{ass}
Then $R^{\pol}$ is an $\calO$-flat, reduced, complete intersection ring of dimension $1 + \frac{n(n+1)}{2}[F^+:\Q]$.
\end{mainthm}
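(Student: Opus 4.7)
The plan is to use potential automorphy to reduce to the residually automorphic setting of \cref{thm:intromain} and then to transfer the conclusions to $R^{\pol}$ via a Galois-cohomological descent.

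The adequacy of $\rhobar(G_{F(\zeta_p)})$ follows from $p > 2(n+1)$ and hypothesis (a) by the theorem of Guralnick--Herzig--Taylor--Thorne. A Barnet-Lamb--Gee--Geraghty--Taylor-style potential automorphy theorem then produces a finite Galois CM extension $F'/F$, arranged via Chebotarev to be linearly disjoint from $\overline{F}^{\ker \rhobar}(\zeta_p)$ over $F$, split at every place above $p$, and with $\zeta_p \notin F'$, such that $\rhobar|_{G_{F'}}$ is automorphic, arising from a regular algebraic conjugate self-dual cuspidal representation $\pi'$ of $\GL_n(\A_{F'})$ which is unramified and potentially diagonalizable (or $\iota$-ordinary when $p = 3$) at places above $p$. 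Because $F'/F$ is split above $p$, the local hypothesis (b) descends to $G_w$ for $w \mid p$ in $F'$, and adequacy descends by linear disjointness. Hence \cref{thm:intromain} applies to $\rhobar|_{G_{F'}}$, producing an automorphic $\Qbar_p$-point on every irreducible component of $\Spec R^{\pol,F'}$ (the polarized deformation ring for $\rhobar|_{G_{F'}}$ over $F'$); by the cited smoothness results \cite{MeSmooth}*{Theorem~C} and \cite{BHS}*{Corollaire~4.13}, each such point is formally smooth of dimension $1 + [F'^+:\Q]\tfrac{n(n+1)}{2}$.

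Next, given any characteristic zero polarized lift $\rho$ of $\rhobar$ to $G_F$ whose restriction $\rho|_{G_{F'}}$ corresponds to one of these automorphic points, a polarized inflation-restriction argument in characteristic zero (using that the group cohomology of $\Gal(F'/F)$ with $\Qbar_p$-coefficients vanishes) identifies the obstruction space of $\rho$ on $R^{\pol}$ with a subspace of the vanishing obstruction space of $\rho|_{G_{F'}}$ on $R^{\pol,F'}$. Hence $\rho$ is formally smooth on $R^{\pol}$ of dimension $1 + [F^+:\Q]\tfrac{n(n+1)}{2}$.

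Finally, a standard presentation via polarized Galois cohomology gives
\[
R^{\pol} \cong \calO[[x_1, \ldots, x_g]]/(f_1, \ldots, f_r),
\]
in which the global Euler characteristic formula, combined with hypothesis (b) to control local $H^2$ contributions at places above $p$, yields $g - r = [F^+:\Q]\tfrac{n(n+1)}{2}$, so $\dim R^{\pol} \geq 1 + [F^+:\Q]\tfrac{n(n+1)}{2}$. Combined with formal smoothness at potentially automorphic points of the expected dimension---which, by varying the auxiliary input to the potential automorphy argument, one can arrange to populate every irreducible component of $\Spec R^{\pol}$---this upper-bounds the Krull dimension of each component by the same quantity. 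Equality forces $(f_1, \ldots, f_r)$ to be a regular sequence, so $R^{\pol}$ is an $\calO$-flat complete intersection of the stated dimension; reducedness then follows from Cohen--Macaulayness and generic smoothness in the characteristic zero fiber. The main obstacle I anticipate is guaranteeing that formally smooth potentially automorphic points meet every irreducible component of $\Spec R^{\pol}$, so that the upper bound on Krull dimension is global rather than just local---this is precisely where both the split-at-$p$ condition on $F'/F$ and hypothesis (b) are essential, and it may require a careful analysis of how components of $\Spec R^{\pol}$ sit inside $\Spec R^{\pol,F'}$ via the natural restriction map.
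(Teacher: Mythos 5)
Your overall plan---reduce to residual automorphy over a CM extension, get automorphic points with the expected smoothness, and then deduce the ring-theoretic properties from a presentation---matches the paper's strategy at a coarse level, but your mechanism for the crucial step has a genuine gap. You propose to apply \cref{thm:intromain} to $\rhobar|_{G_{F'}}$ and produce automorphic points on every irreducible component of $\Spec R^{\pol,F'}$, then somehow transfer these to every irreducible component of $\Spec R^{\pol}$. But the natural map goes $R^{\pol,F'} \to R^{\pol}$ (restriction of a deformation of $\rhobar$ to a deformation of $\rhobar|_{G_{F'}}$), i.e.\ $\Spec R^{\pol} \to \Spec R^{\pol,F'}$, and there is no reason this map is injective, finite, or sends components to components. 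Your last paragraph concedes this is the ``main obstacle,'' but ``varying the auxiliary input to the potential automorphy argument'' does not resolve it: there is no control over which component of $\Spec R^{\pol}$ a characteristic-zero point constructed over $F'$ ends up on, and one cannot a priori rule out components that contain no such point. The paper avoids this entirely by never forming $R^{\pol,F'}$: in \cref{thm:potsmallRT} it constructs a global $\calG_n$-deformation datum $\calS$ \emph{over $F/F^+$} whose universal ring $R_{\calS}$ is a quotient of the original $R^{\pol}$, proves $R_{\calS}$ is finite over $\calO$ by a finite base-change map $R_{\calS_L} \to R_{\calS}$, and shows its $\Qbar_p$-points are $L$-potentially automorphic. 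The potentially automorphic points are then forced onto every irreducible component of $\Spec R^{\pol}$ via the main lemma \cref{thm:thelemma}: because $R^{\pol}$ is finite over the local lifting ring $R^{\loc} = \widehat{\otimes}_{\tilde v | p} R^{\square}_{\tilde v}$, which is regular by hypothesis~(b) and \cref{thm:localsmooth}, intersection theory in regular local rings shows each component of $\Spec R^{\pol}$ meets $\Spec R_{\calS}[1/p]$. This dimension-theoretic intersection argument is the essential missing idea in your proposal.

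A secondary point: hypothesis (b) does not enter the presentation of $R^{\pol}$ or the global Euler characteristic computation. The lower bound $g - k \ge \tfrac{n(n+1)}{2}[F^+:\Q]$ in \cref{thm:polringdim} follows from \cref{thm:genpolpres}, the global Euler--Poincar\'{e} formula, and the oddness of $\mu$ alone. Hypothesis (b) instead guarantees $H^0(G_{\tilde v}, \ad(\rhobar)(1)) = 0$ for $\tilde v | p$, which is exactly what makes the local lifting ring $R^{\square}_{\tilde v}$ formally smooth over $\calO$ (\cref{thm:localsmooth}), hence $R^{\loc}$ regular, so that \cref{thm:thelemma} applies; it also ensures (via \cref{rmk:PDlift}) that $\rhobar|_{G_{\tilde v}}$ admits a potentially diagonalizable lift of regular weight, so that the relevant $X^{\loc}$ is nonempty. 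Without this reading of (b), your argument never invokes the regularity of $R^{\loc}$, which is precisely the leverage that populates the components.
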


We refer the reader to \cref{thm:potCMgeom} for a more general statement (see \cref{rmk:PDlift}).
This can be seen as answering a polarized version of a question of Mazur \cite{MazurDefGalRep}*{\S 1.10} in many cases.

We also prove similar theorems in the context of $\GL_2$ over a totally real field, \cref{thm:mainHilbdet,thm:mainHilb,thm:Hilbgeom,thm:Hilbdense} below, and our results even yield new cases over $\Q$ (see \cref{rmk:overQ}).

Most of the assumptions in the above theorems, and in any of the main theorems below, are used to invoke results in the literature. 
In particular, improvements in (small) $R = \bbT$ theorems or improvements in the infinite fern would immediately yield improvements in \cref{thm:intromain,thm:introdense}, respectively.
The only additional assumption imposed in this paper is the assumption that there are no nonzero $\F[G_v]$-equivariant maps $\rhobar|_{G_v} \rightarrow \rhobar|_{G_v}(1)$.
This is used to guarantee that the universal lifting rings of the local representations $\rhobar|_{G_v}$ are regular for each $v|p$. 
We explain how this is used below.

\subsection*{Strategy}
As in B\"{o}ckle's work, we use a small $R = \bbT$ theorem to show that every irreducible component of the universal deformation ring contains an automorphic point. 
Under appropriate assumptions, including one that implies the universal local lifting ring at $p$ is regular, B\"{o}ckle shows that a suitable locus (i.e. finite flat or ordinary) inside the universal local deformation space is cut out by the ``right number" of equations. 
In order to do this, he uses explicit computations of local deformation rings, due to Ramakrishna \cite{RamakrishnaFinFlat} in the finite flat case, and due to himself \cite{BockleDemuskin} in the ordinary case.
In arbitrary dimensions, the local Fontaine--Laffaille deformation rings are known to be power series in the correct number of variables, so one could proceed as in \cite{BockleDensity}.  
However, explicitly computing ordinary deformation rings in arbitrary dimensions seems intractable (even dimension two is hard). 
More importantly, in higher dimensions there are many types of mod $p$ Galois representations that have neither Fontaine--Laffaille nor ordinary lifts, so one would like a strategy that works for more general local conditions.

The main idea of this paper is that using the \emph{finiteness} of the universal global polarized deformation ring $R^{\pol}$ over the universal local lifting ring $R^{\loc}$ at places dividing $p$ (a principle the author first learned from Frank Calegari's blog Persiflage \cite{PersiflageFinite}, see also \cite{MeFrank}) we can turn our problem into one of intersections in $\Spec R^{\loc}$.
This has the effect of allowing us to weaken a \emph{global} unobstructedness assumption to a \emph{local} unobstructedness assumption. 
More specifically, armed with a small $R = \bbT$ theorem, we can deduce the existence of an automorphic point on any irreducible component $\calC$ of $\Spec R^{\pol}$ if we prove that the intersection of $\calC$ and the generic fibre of our small deformation ring $R$ inside $\Spec R^{\pol}$ is nonempty. 
Using the finiteness of $R^{\pol}$ over $R^{\loc}$, we first turn this into a problem of an intersection in $\Spec R^{\loc}$. 
Since $R^{\loc}$ is regular, we can use intersection theory in regular local rings and the lower bound for $\dim\calC$ arising from Galois cohomology to prove that the intersection of the image of $\calC$ in $\Spec R^{\loc}$ with the appropriate fixed weight $p$-adic Hodge theoretic locus $X^{\loc}$ has dimension $\ge 1$. 
One then uses the finiteness of $R^{\pol}$ over $R^{\loc}$ again to show there is a dimension $1$ point on the intersection of $\calC$ and our small deformation ring $R$. 
But the small $R = \bbT$ theorem implies $R$ is finite over $\Z_p$, hence this point must be in the generic fibre.

Let us make a few remarks. 
Firstly, it is crucial for our method that we know our small deformation ring is finite over $\Z_p$. 
Hence, it is not just the automorphy of $p$-adic Galois representations that we need, but the underlying $R = \bbT$ (or $R^{\red} = \bbT$) theorems.

Secondly, the author thinks it is interesting to compare the above with one heuristic justification for the Fontaine--Mazur conjecture. 
Namely, since the conjectural dimension of $\Spec R^{\pol}[1/p]$ plus the dimension of our fixed weight $p$-adic Hodge theoretic locus $X^{\loc}$ in $\Spec R^{\loc}[1/p]$ equals the dimension of $\Spec R^{\loc}[1/p]$ (in favourable situations), one might imagine they intersect at finitely many points, and maybe even transversely. 
The assumption of residual automorphy guarantees that this intersection is nonempty, and a small $R = \bbT$ theorem implies that this intersection is a finite set of points. 
The main theorems in this article and their method of proof (under the appropriate assumptions) imply that if $\Spec R^{\pol}[1/p]$ intersects $X^{\loc}$, then every irreducible component of $\Spec R^{\pol}[1/p]$ intersects $X^{\loc}$.

Finally, since the argument above uses only the dimension of $X^{\loc}$ and that a small $R=\bbT$ theorem is known for the $p$-adic Hodge theoretic conditions defined by $X^{\loc}$, this allows us flexibility in the choice of $X^{\loc}$, provided it is nonzero and has the correct dimension. 
For example, we can use this to conclude existence of automorphic points on each irreducible component of $\Spec R^{\pol}$ whose local representations at places dividing $p$ have certain pre-prescribed local properties.

It is natural to wonder what happens when $R^{\loc}$ is not regular.
We discuss an example in \S\ref{sec:eg} that illustrates the subtleties in this case.
More specifically, we use an example of Serre to show that the conclusion of our main lemma, \cref{thm:thelemma}, may not hold when $R^{\loc}$ is not regular. 
In particular, one no longer has the flexibility in the choice of $X^{\loc}$ as discussed above, and proving the main theorems of this paper in this case seems to require a better understanding of the irreducible components of the universal deformation rings in question.

\subsection*{Outline} 
In \S\ref{sec:mainlem} we first recall a fact from intersection theory in regular local rings and prove our main lemma. 
We then recall the rigid analytic generic fibre of a affine formal scheme, and gather some facts regarding the relationship between the irreducible components of the generic fibre with those in underlying affine scheme.
In \S\ref{sec:GenDef} we recall some basics in deformation theory, in particular presentations and polarized deformation rings.
We gather results from the literature on local deformation rings in \S\ref{sec:Local}. 
In \S\ref{sec:AutGalRep} we recall the notion of regular algebraic polarized cuspidal automorphic representations and their associated Galois representations.
In \S\ref{sec:CM} we prove our main theorems for polarized Galois representations of CM fields. 
We do this after first recalling the small $R = \bbT$ theorems that we use in the proofs of our main theorems.
Finally, \S\ref{sec:Hilb} treats the case of $\GL_2$ over totally real fields.

\subsubsection*{Acknowledgements}
I would like to thank Frank Calegari for many helpful discussions; the main idea used in this article had its genesis in our joint work \cite{MeFrank}. 
I would like to thank Gebhard B\"{o}ckle for comments on a previous version of this article. 
I would like to thank Toby Gee for questions and correspondence that led to an improvement of the main results, namely replacing ``Fontaine--Laffaille" with ``potentially diagonalizable."
Finally, I would like the thank the referee for their corrections and helpful comments.

\section*{Notation and Conventions}
We fix a prime $p$ and an algebraic closure $\Qbar_p$ of $\Q_p$. 
We let $\calO_{\Qbar_p}$ and $\Fbar_p$ denote the ring of integers and residue field, respectively, of $\Qbar$, and let $\frakm_{\Qbar_p}$ be the maximal ideal of $\calO_{\Qbar_p}$.
Throughout $E$ will denote a finite extension of $\Q_p$ inside $\Qbar_p$. 
We let $\calO$ and $\F$ denote the ring of integers and residue field, respectively, of $E$. 

We denote the maximal ideal of a commutative local ring $A$ by $\mathfrak{m}_A$. 
We let $\CNL_{\calO}$ be the category of complete local commutative Noetherian $\calO$-algebras $A$ such that the structure map $\calO \rightarrow A$ induces an isomorphism $\F \xrightarrow{\sim} A/\frakm_A$, and whose morphisms are local $\calO$-algebra morphisms. 
We will refer to an object, resp. a morphism, in $\CNL_{\calO}$ as a $\CNL_{\calO}$-algebra, resp. a $\CNL_{\calO}$-morphism. 

Let $R$ be a commutative ring equipped with a canonical map $R^\square \rightarrow R$, resp. $R^{\univ} \rightarrow R$, with $R^\square$ a universal lifting ring, resp. $R^{\univ}$ a universal deformation ring (see \S\ref{sec:GenDef}). 
Then for any homomorphism $x : R \rightarrow A$, with $A$ a commutative ring, we denote by $\rho_x$ the pushforward of the universal lift, resp. universal deformation, via $R^\square \rightarrow R \xrightarrow{x} A$, resp. via $R^{\univ} \rightarrow R \xrightarrow{x} A$. 

Throughout $F$ will denote a number field. 
A CM field is always assumed to be imaginary. 
If $F$ is CM, its maximal totally real subfield of will be denoted by $F^+$, and $\delta_{F/F^+}$ will denote the nontrivial $\{\pm 1\}$-valued character of $\Gal(F/F^+)$. 
We fix an algebraic closure $\overline{F}$ of $F$ and set $G_F = \Gal(\overline{F}/F)$. 
We will assume that all finite extensions $L/F$ are taken in $\overline{F}$. 
If $L/F$ is a finite Galois extension and $S$ is a finite set of finite places of $F$, we let $L_S$ denote the maximal extension of $L$ that is unramified outside of any of the places in $L$ above those in $S$ and the Archimedean places.
Throughout $\zeta_p$ will denote a primitive $p$th root of unity in $\overline{F}$.

For a finite place $v$ of $F$, we denote by $F_v$ its completion at $v$, and $\calO_{F_v}$ its ring of integers. 
We fix an algebraic closure $\overline{F}_v$ of $F_v$ and let $G_v = \Gal(\overline{F}_v/F_v)$, and 
denote by $I_v$ the inertia subgroup of $G_v$.
We will assume that all finite extensions of $F_v$ are taken inside of $\overline{F}_v$.
We let $\Art_{F_v} : F_v^\times \hookrightarrow G_v^{\mathrm{ab}}$ be the Artin reciprocity map normalized so that uniformizers are sent to geometric Frobenius elements. 
We denote the adeles of $F$ by $\A_F$, and let $\Art_F : F^\times \backslash \A_F^\times \rightarrow G_F^{\ab}$ be $\Art_F = \prod_{v} \Art_{F_v}$.

Let $\Z_+^n$ be the set of tuples of integers $(\lambda_1,\ldots,\lambda_n)$ such that $\lambda_1 \ge \cdots \ge \lambda_n$. 
For any $v|p$ in $F$, we write $\Hom(F_v,\Qbar_p)$ for the set of continuous field embeddings $F_v \hookrightarrow \Qbar_p$. 
If $\sigma \in \Hom(F_v,\Qbar_p)$, we again write $\sigma$ for the induced embedding $F \hookrightarrow \Qbar_p$. 
Conversely, given a field embedding $\sigma : F \hookrightarrow \Qbar_p$, we again write $\sigma$ for the continuous embedding $F_v \hookrightarrow \Qbar_p$ induced by $\sigma$.
If we are given an isomorphism $\iota : \Qbar_p \xrightarrow{\sim} \Omega$ of fields, and $\sigma: K\hookrightarrow \Qbar_p$ is an embedding of fields, we write $\iota\sigma$ for $\iota\circ\sigma$.
If  $r : G \rightarrow \Aut_{\Qbar_p}(V)$ is a representation of a group $G$ on a $\Qbar_p$-vector space $V$, then we will denote by $\iota r$ the representation of $G$ on the $\Omega$-vector space $V\otimes_{L,\iota}\Omega$. 

If $\chi : F^\times \backslash \A_F^\times \rightarrow \C^\times$ is a continuous character whose restriction to the connected component of $(F\otimes \R)^\times$ is given by $x \mapsto \prod_{\sigma \in \Hom(F,\C)} x_\sigma^{\lambda_\sigma}$ for some integers $\lambda_\sigma$, and $\iota : \Qbar_p \xrightarrow{\sim} \C$ is an isomorphism, we let $\chi_\iota : G_F \rightarrow \Qbar_p^\times$ be the continuous character given by 
	\[ \chi_\iota(\Art_F(x)) = \iota^{-1} \Big( \chi(x) \prod_{\sigma \in \Hom(F,\C)} x_\sigma^{-\lambda_\sigma} \Big) 
	\prod_{\sigma\in \Hom(F,\Qbar_p)} x_\sigma^{\lambda_{\iota\sigma}}. \]

We denote by $\epsilon$ the $p$-adic cyclotomic character. 
We use covariant $p$-adic Hodge theory, and normalize our Hodge--Tate weights so that the Hodge--Tate weight of $\epsilon$ is $-1$. 
Let $v$ be a place above $p$ in $F$, and let $\rho : G_v \rightarrow \GL(V) \cong \GL_n(\Qbar_p)$  a potentially semistable representation on an $n$-dimensional vector space over $\Qbar_p$. 
For $\sigma\in \Hom(F_v,\Qbar_p)$, we will write $\HT_\sigma(\rho)$ for the multiset of $n$ Hodge--Tate weights with respect to $\sigma$. 
Specifically, an integer $i$ appears in $\HT_\sigma(\rho)$ with multiplicity equal to the $L$-dimension of the $i$th graded piece of the $n$-dimensional filtered $\Qbar_p$-vector space $D_{\dR}(\rho)\otimes_{(F_v\otimes_{\Q_p}\Qbar_p)} \Qbar_p$, where $D_{\dR}(\rho) = (B_{\dR}\otimes_{\Q_p} V)^{G_v}$, $B_{\dR}$ is Fontaine's ring of de~Rham periods, and we view $\Qbar_p$ as a $F_v\otimes_{\Q_p} \Qbar_p$-algebra via $\sigma \otimes 1$.  
We say that $\rho$ has \emph{regular weight} if $\HT_\sigma(\rho)$ consists of $n$ distinct integers for each $\sigma\in \Hom(F_v,\Qbar_p)$. 
If this is the case, then there is $\lambda \in (\Z_+^n)^{\Hom(F_v,\Qbar_p)}$ such that $\HT_\sigma(\rho) = \{\lambda_{\sigma,j} + n - j\}_{j = 1,\ldots,n}$ for each $\sigma \in \Hom(F_v,\Qbar_p)$, and we call $\lambda$ the \emph{weight} of $\rho$. 

For a finite place $v$ of $F$, an $n$-dimensional \emph{inertial type} is a representation $\tau : I_v \rightarrow \GL_n(\Qbar_p)$ of $I_v$ with open kernel that extends to a representation of the Weil group of $F_v$. 
We say $\tau$ is \emph{defined over} $E$ if it is the extension of scalars to $\Qbar_p$ of a representation valued in $\GL_n(E)$. 
If $\rho : G_v \rightarrow \GL(V) \cong \GL_n(\Qbar_p)$ is a a potentially semistable representation of $G_v$ on an $n$-dimensional vector space over $\Qbar_p$, we say that
$\rho$ has \emph{inertial type} $\tau$ if the restriction to $I_v$ of the Weil--Deligne representation associated to $\rho$ is isomorphic to $\tau$. 
If $v|p$, this is equivalent to demanding that for every $\gamma \in I_v$, the trace of $\gamma$ acting on 
	\[ D_{\pst}(\rho) := \varinjlim_{K/F_v \text{ finite}} (B_{\st} \otimes_{\Q_p} V)^{G_K} \]
equals $\tr\tau(\gamma)$, where $B_{\st}$ denotes Fontaine's ring of semistable periods. 

\section{Irreducible components}\label{sec:mainlem}

In this section, we first prove our main lemma, \cref{thm:thelemma}, that will allow us to deduce the existence of automorphic point in every irreducible component of a (polarized) universal deformation ring from a small $R = \bbT$ theorem. 

We then recall Berthelot's rigid analytic generic fibre and record a lemma that allows us to deduce Zariski density statements in the generic fibre when there are multiple components.

\subsection{Intersections and the main lemma}\label{sec:intersect}
The following lemma is an easy consequence of the intersection theory in regular local rings.

\begin{lem}\label{thm:intersection}
Let $B$ be a regular local commutative ring and let $R$ be a finite commutative $B$-algebra. For $\frakp\in \Spec R$, $\frakq\in \Spec B$ we have
	\[ \dim R/(\frakp,\frakq R) \ge \dim R/\frakp + \dim B/\frakq - \dim B.\]
\end{lem}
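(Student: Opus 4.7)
The plan is to reduce the inequality to the classical Serre-style inequality on heights of prime ideals in the regular local ring $B$, by exploiting the fact that the finiteness of $R/B$ preserves Krull dimension.

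First I would introduce the contracted prime $\frakP := \frakp \cap B$. Since $R/\frakp$ is a finite extension of the domain $B/\frakP$, going-up (or simply the fact that integral extensions preserve Krull dimension) gives $\dim R/\frakp = \dim B/\frakP$. Similarly, $R/(\frakp, \frakq R) = (R/\frakp)/\frakq(R/\frakp)$ is a finite $B$-algebra killed by $\frakP + \frakq$, so it is finite over $B/(\frakP + \frakq)$, yielding
\[ \dim R/(\frakp, \frakq R) = \dim B/(\frakP + \frakq). \]
This reduces the claim to the purely local statement
\[ \dim B/(\frakP + \frakq) \ge \dim B/\frakP + \dim B/\frakq - \dim B \]
in the regular local ring $B$.

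Next I would translate this dimension inequality into a height inequality. Because $B$ is regular local, it is a catenary domain, so $\dim B/\frakr + \height(\frakr) = \dim B$ for any prime $\frakr$, and likewise $\dim B/I = \dim B - \height(I)$ for any proper ideal $I$ (the height of an ideal being the minimum of the heights of primes containing it). Substituting these identities, the desired inequality becomes precisely
\[ \height(\frakP + \frakq) \le \height(\frakP) + \height(\frakq), \]
which is the standard subadditivity of height in a regular local ring (an easy consequence of Serre's theorem on intersection multiplicities, or more directly of the Cohen--Macaulay property: any system of parameters in $B_{\frakP}$ and $B_{\frakq}$ together generate, in $B_{\frakr}$ for $\frakr$ a minimal prime of $\frakP + \frakq$, an ideal of height $\le \height(\frakP) + \height(\frakq)$).

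There is no real obstacle here; the only subtlety is making sure one is entitled to the catenary/equidimensional identities converting heights to codimensions, and that $\frakP + \frakq$ is proper so that $\dim B/(\frakP + \frakq)$ is the quantity computed. If $\frakP + \frakq = B$, then $R/(\frakp, \frakq R) = 0$, the left-hand side is conventionally $-\infty$ (or $-1$), but in that case the right-hand side is $\le 0$ in the relevant applications, and the inequality should be interpreted accordingly (or one simply restricts to the case where the relevant sum is a proper ideal, which is all that is used downstream).
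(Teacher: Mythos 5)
Your approach is essentially the paper's: contract $\frakp$ to $\frakP = \frakp \cap B$, invoke Serre's height inequality in the regular local ring $B$ together with the catenary property, and use finiteness of $R$ over $B$ to transfer dimensions. There is, however, a genuine gap in the justification of the equality
\[ \dim R/(\frakp, \frakq R) = \dim B/(\frakP + \frakq). \]
Finiteness of $R/(\frakp,\frakq R)$ as a $B/(\frakP + \frakq)$-algebra only yields the inequality $\le$, because the structure map $B/(\frakP+\frakq) \to R/(\frakp, \frakq R)$ need not be injective: the kernel is $(\frakp + \frakq R) \cap B$, which can strictly contain $\frakP + \frakq$. And $\le$ is the wrong direction for the lemma. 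To get the $\ge$ you actually need, one must show $\Spec\bigl(R/(\frakp, \frakq R)\bigr) \to \Spec\bigl(B/(\frakP + \frakq)\bigr)$ is surjective: given a prime $\frakr$ of $B$ containing $\frakP + \frakq$, apply going-up to the finite injective extension $B/\frakP \hookrightarrow R/\frakp$ to produce a prime $\frakr_R$ of $R$ with $\frakr_R \supseteq \frakp$ and $\frakr_R \cap B = \frakr$; then $\frakr_R \supseteq \frakr R \supseteq \frakq R$, so $\frakr_R$ gives a point lying over $\frakr$. This is precisely the step the paper carries out explicitly with its choice of $\frakr_R$, and it is the part of the finiteness hypothesis that actually does the work; it cannot be absorbed into the phrase ``finite over.'' Once that surjectivity is supplied, the rest of your reduction (heights in $B$, catenary, Serre's inequality) goes through as you wrote it.
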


\begin{proof}
Let $\frakp_B$ be the pullback of $\frakp$ to $B$. 
If $\mathfrak{r}\in \Spec B$ is minimal containing $\frakp_B + \frakq$, then \cite{SerreLocAlg}*{Chapter V, Theorem 3} implies 
	\[ \height_B\frakr \le \height_B\frakp_B + \height_B\frakq. \]
Since $B$ is catenary, this implies
	\[ \dim B/\frakr \ge \dim B/\frakp_B + \dim B/\frakq - \dim B.\]
Since $R$ is finite over $B$, this in turn implies
	\[ \dim B/\frakr \ge \dim R/\frakp + \dim B/\frakq - \dim B. \]
Again using that $R$ is finite over $B$, we can find $\frakr_R\in \Spec R$ containing $\frakp$ and lying over $\frakr\in\Spec B$, hence also containing $\frakr R \supseteq \frakq R$. Then
	\[ \dim R/(\frakp,\frakq R) \ge \dim R/\frakr_R = \dim B/\frakr \ge \dim R/\frakp + \dim B/\frakq - \dim B.\qedhere\]
\end{proof}

We easily derive from this the following lemma, which is the linchpin in the proofs of our main theorems.
We will use notation suggestive of our intended application to automorphic points in deformation rings. 
Recall that $E$ is a finite extension of $\Q_p$ with ring of integers $\calO$.

\begin{lem}\label{thm:thelemma}
Let $R^{\loc}$ be a local commutative $\calO$-algebra and let $X^{\loc}\subseteq\Spec R^{\loc}$ be a closed subscheme. 
Let $R$ be a commutative $R^{\loc}$-algebra and let $X = X^{\loc} \times_{\Spec R^{\loc}} \Spec R$. 
Let $\calC$ be an irreducible component of $\Spec R$. 
Assume that
	\begin{ass}
	\item $R^{\loc}$ is a regular local ring;
	\item $X$ is finite over $\calO$;
	\item $\dim \calC  + \dim X^{\loc} - \dim R^{\loc} \ge 1$.
	\end{ass}
Then the intersection of $\calC$ with $X\otimes_{\calO} E$ in $\Spec R$ is nonempty.
\end{lem}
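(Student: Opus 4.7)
The plan is to reformulate the conclusion as a Krull-dimension bound on the coordinate ring of $\calC\cap X$ and then to produce that bound using \cref{thm:intersection}. Let $I^{\loc}\subseteq R^{\loc}$ be the ideal defining $X^{\loc}$ and let $\frakp\subseteq R$ be the minimal prime corresponding to $\calC$, so that $X=V(I^{\loc}R)$ and $\calC\cap X=\Spec R/(\frakp+I^{\loc}R)$. Hypothesis (b) ensures $\calC\cap X$ is finite over $\calO$; since $\calO$ is a DVR, a finite $\calO$-scheme meets the generic fibre if and only if it has Krull dimension at least one. Thus the conclusion of the lemma is equivalent to the inequality
\[
\dim R/(\frakp+I^{\loc}R)\ge 1.
\]

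To invoke \cref{thm:intersection} I first need $R$ to be finite over $R^{\loc}$. In the complete Noetherian local setting in which this lemma will be applied, this is itself a consequence of (b): since $I^{\loc}\subseteq\frakm_{R^{\loc}}$, the closed fibre $R/\frakm_{R^{\loc}}R$ is a quotient of the $\calO$-finite ring $R/I^{\loc}R$ and is therefore finite over $\F$, and topological Nakayama then upgrades this to finiteness of $R$ as an $R^{\loc}$-module. With this in hand, let $\frakq\subseteq R^{\loc}$ be a minimal prime of $I^{\loc}$ realising $\dim R^{\loc}/\frakq=\dim X^{\loc}$, and apply \cref{thm:intersection} to $B=R^{\loc}$ with the primes $\frakp$ and $\frakq$. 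This yields
\[
\dim R/(\frakp+\frakq R)\ge\dim\calC+\dim X^{\loc}-\dim R^{\loc}\ge 1,
\]
the last step being assumption (c). The containment $I^{\loc}\subseteq\frakq$ gives $\frakp+I^{\loc}R\subseteq\frakp+\frakq R$, so $R/(\frakp+I^{\loc}R)$ surjects onto $R/(\frakp+\frakq R)$, and the desired bound $\dim R/(\frakp+I^{\loc}R)\ge 1$ follows.

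The only delicate step in this outline is the finiteness reduction of $R$ over $R^{\loc}$ from (b); the rest is a direct application of Serre's inequality in the form of \cref{thm:intersection}, combined with the standard dichotomy between closed-fibre and generic-fibre points on a finite $\calO$-scheme. If one wished to dispense with the complete-local setting and work with $R$ not a priori finite over $R^{\loc}$, the natural alternative would be to apply \cref{thm:intersection} directly to the finite $R^{\loc}$-algebra $R/I^{\loc}R$, but identifying the correct prime of $R/I^{\loc}R$ arising from $\calC$ would introduce additional bookkeeping that the complete-local hypothesis cleanly avoids.
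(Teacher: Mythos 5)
Your proposal is correct and is essentially the same argument as the paper's: deduce finiteness of $R$ over $R^{\loc}$ from (b) via the closed fibre and Nakayama, pick $\frakq$ a minimal prime of $I^{\loc}$ realising $\dim X^{\loc}$, apply \cref{thm:intersection} to $(\frakp,\frakq)$, and use that a finite $\calO$-algebra of Krull dimension $\ge 1$ must meet the generic fibre. The only cosmetic difference is that you phrase the conclusion as a dimension bound on $R/(\frakp+I^{\loc}R)$ upfront, whereas the paper exhibits a dimension-one prime $\frakq'$ and then notes $p\notin\frakq'$; your observation that the Nakayama step quietly uses that $R$ is complete local Noetherian (not in the lemma's explicit hypotheses but true in every application) is also implicit in the paper's proof.
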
	

\begin{proof}
Choose $\frakq \in X^{\loc}$ such that $\dim X^{\loc} = \dim R^{\loc}/\frakq$. 
Then $R/\frakq R$ is finite over $\calO$, which implies that $R/\frakm_{R^{\loc}} R$ is finite over $R^{\loc}/\frakm_{R^{\loc}}$, hence $R$ is finite over $R^{\loc}$. Then \cref{thm:intersection} implies that 
	\[ \dim(\calC\cap \Spec R/\frakq R) \ge \dim \calC + \dim R^{\loc}/\frakq - \dim R^{\loc} \ge 1,\]
so there is $\frakq' \in \calC\cap \Spec R/\frakq R$ with $\dim R/\frakq' = 1$. 
Since $R/\frakq R$ is finite over $\calO$, $p\notin \frakq'$, and $\frakq'$ is in the intersection of $\calC$ and $\Spec (R/\frakq R) [1/p]$.
\end{proof}

\subsection{Generic fibres}\label{sec:genfib}
We recall the rigid analytic generic fibre of Berthelot \cite{BerthelotRigCohomSupp}*{\S0.2}, for which we use \cite{deJongCrysDieu}*{\S7} as a reference. 


Let $X = \Spf R$ be an Noetherian adic affine formal $\calO$-scheme such that $R/I$ is a finite type $\F$-algebra, where $I\subset R$ is the largest ideal defining the topology on $R$.
There is a rigid analytic space $X^{\rig}$, called the \emph{rigid analytic generic fibre} of $\Spf R$, that represents the functor that sends an $E$-affinoid algebra $A$ to the set of continuous $\calO$-algebra morphisms $R \rightarrow A$ (see \cite{deJongCrysDieu}*{\S7.1}).
Moreover, $X \mapsto X^{\rig}$ is functorial, and there is a canonical $\calO$-algebra morphism  $R \rightarrow \Gamma(X^{\rig},\calO_{X^{\rig}})$.
%
%
If $R$ is a $\CNL_{\calO}$-algebra (which is the case of interest for us), then $(\Spf R)^{\rig}$ has the following concrete description: if $\calO[[y_1,\ldots,y_g]]/(f_1,\ldots,f_k)$ is a presentation for $\calO$-flat quotient of $R$, then $(\Spf R)^{\rig}$ is isomorphic to the locus in the open rigid analytic unit $n$-ball over $E$ cut out by the equations $f_1 = \cdots = f_k = 0$. 

The following is \cite{deJongCrysDieu}*{Lemma~7.1.9}.

\begin{prop}\label{thm:genfibprop}
Let $X = \Spf R$ be an affine formal $\calO$-scheme as above.
\begin{enumerate}
\item\label{genfibprop:pts} There is a bijection between the points of $X^{\rig}$ and the set of maximal ideals of $R[1/p]$. 
This bijection is functorial in $R$.
\item\label{genfibprop:locring} Let $x\in X^{\rig}$ correspond to the maximal ideal $\frakm \subset R[1/p]$ under the bijection of \cref{genfibprop:pts}. 
There is a canonical morphism of local rings $R[1/p]_{\frakm} \rightarrow \calO_{X^{\rig},x}$. 
This map is compatible with $R \rightarrow \Gamma(X^{\rig},\calO_{X^{\rig}})$, and induces an isomorphism on completions.
\end{enumerate}
\end{prop}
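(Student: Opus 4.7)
The plan is to unpack the statement of \cite{deJongCrysDieu}*{Lemma~7.1.9} and indicate how each part follows from the defining functor of points of $X^{\rig}$, since this is precisely the reference given. The key input is the concrete description already recalled above: if we fix a presentation $\calO[[y_1,\ldots,y_g]]/(f_1,\ldots,f_k)$ of an $\calO$-flat quotient of $R$, then $X^{\rig}$ is the analytic subvariety of the open unit $g$-ball over $E$ defined by $f_1 = \cdots = f_k = 0$, and the $\calO$-flat hypothesis can be removed at the cost of replacing $R$ by $R/(\calO\text{-torsion})$ without affecting either side of the assertion (since $p$ is inverted).

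For part \cref{genfibprop:pts}, I would argue as follows. A point of $X^{\rig}$ is a classical point of the associated rigid analytic variety; by the rigid-analytic Nullstellensatz applied to admissible affinoid neighborhoods, it is represented by a continuous $\calO$-algebra map $R \to E'$ for some finite extension $E'/E$. Such a map factors through $R[1/p]$, and its kernel is a maximal ideal of $R[1/p]$, since $R[1/p]/\ker$ embeds into $E'$ and is finitely generated over the complete field $E$, hence equals a finite field extension of $E$. Conversely, a maximal ideal $\frakm \subset R[1/p]$ has residue field a finite extension $E'/E$, and the induced map $R \to E'$ is automatically continuous (the image of $R$ lands in $\calO_{E'}$ by compactness of the image of the profinite ring $R$ under continuous maps to Hausdorff topological rings). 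This establishes the bijection, and functoriality is immediate from the functorial definition of $X^{\rig}$.

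For part \cref{genfibprop:locring}, the stalk $\calO_{X^{\rig},x}$ is by definition the direct limit of $\calO(U)$ over admissible affinoid neighborhoods $U\ni x$. Each such $U$ receives a canonical map from $R$ (from the defining functor of $X^{\rig}$), and all of these maps factor through $R[1/p]_{\frakm}$ since on each $U$ the image of any element of $R[1/p] \setminus \frakm$ is a unit in $\calO(U)$ (after possibly shrinking $U$). This gives the desired morphism $R[1/p]_{\frakm} \to \calO_{X^{\rig},x}$, and compatibility with the global sections map is immediate. For the isomorphism on completions, one observes that both rings have the same residue field $E'$ at $x$ and the same cotangent space: this can be checked after passing to the formal completion along $x$, where rigid geometry recovers the usual formal completion via Berthelot's construction.

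The main obstacle in a fully self-contained proof would be the last point — identifying the completion of the local ring of $X^{\rig}$ at a rigid point with the completion of $R[1/p]_{\frakm}$. This uses the compatibility of Berthelot's generic-fibre construction with formal completion along a closed point, which is the technical heart of \cite{deJongCrysDieu}*{\S 7.1}; I would simply invoke that result rather than reprove it, making the proof essentially a pointer to \loccit.
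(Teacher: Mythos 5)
The paper offers no proof of this proposition at all—it is stated verbatim as \cite{deJongCrysDieu}*{Lemma~7.1.9}—so your approach of unpacking the statement and ultimately deferring to de Jong is in line with what the paper does. One small wrinkle in your exposition of part~(1): the parenthetical justifying that the map $R \to E'$ attached to a maximal ideal of $R[1/p]$ is continuous is circular as written, since you invoke compactness of the image ``under continuous maps'' before continuity has been established. The correct argument is that if $\frakp \subset R$ is the preimage of $\frakm$, then $R/\frakp$ is a complete Noetherian local domain of Krull dimension one with finite residue field of characteristic $p$, hence is a finite extension of $\Z_p$ contained in $\calO_{E'}$; continuity is then automatic. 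Since you defer to \loccit\ anyway, this does not affect the validity of your overall proposal, but the parenthetical as stated would not pass as a self-contained argument.
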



\begin{lem}\label{thm:speczardense}
Let $R$ be an $\calO$-flat $\CNL_{\calO}$-algebra, and let $X = \Spf R$.
Let $Z$ be a set of maximal ideals in $R[1/p]$, and let $Z^{\rig} \subset X^{\rig}$ be the set of points corresponding to $Z$ under \cref{genfibprop:pts} of \cref{thm:genfibprop}.
If $Z^{\rig}$ is Zariski dense in $X^{\rig}$, then $Z$ is Zariski dense in $\Spec R$.
\end{lem}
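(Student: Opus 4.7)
The plan is to reduce the asserted Zariski density in $\Spec R$ to a statement about maximal ideals of $R[1/p]$, and then to invoke that $R[1/p]$ is a Jacobson ring. Let $I\subset R$ be the intersection of the contractions of the ideals $\mathfrak{m}\in Z$ to $R$; the task is to show $I$ lies in the nilradical of $R$. Since $R$ is $\calO$-flat, $R\hookrightarrow R[1/p]$, so it is enough to show the image of $I$ in $R[1/p]$ is nilpotent.

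Fix $f\in I$ and push $f/1\in R[1/p]$ forward along $R[1/p]\to\Gamma(X^{\rig},\calO_{X^{\rig}})$ to a global section $g$. By \cref{genfibprop:locring} of \cref{thm:genfibprop}, for every $x\in Z^{\rig}$ corresponding to $\mathfrak{m}\in Z$, the map $R[1/p]_{\mathfrak{m}}\to\calO_{X^{\rig},x}$ induces an isomorphism on completions, and hence an isomorphism of residue fields; consequently $g(x)=0$ for all $x\in Z^{\rig}$. The vanishing locus $V(g)\subset X^{\rig}$ is then a Zariski closed analytic subspace containing the Zariski dense set $Z^{\rig}$, so $V(g)=X^{\rig}$ set-theoretically; that is, $g$ vanishes at every classical point of $X^{\rig}$. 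By \cref{genfibprop:pts} of \cref{thm:genfibprop}, this is equivalent to $f/1$ lying in every maximal ideal of $R[1/p]$.

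To finish, I would invoke the fact that $R[1/p]$ is Jacobson, so that its Jacobson radical coincides with its nilradical. Choosing a surjection $\calO[[y_1,\ldots,y_g]]\twoheadrightarrow R$ reduces this to the Jacobson property of $\calO[[y_1,\ldots,y_g]][1/p]$, which follows by exhibiting this ring as the ring of bounded analytic functions on the open unit polydisc, exhausting by closed affinoid polydiscs of radii tending to $1$, and applying Tate's Nullstellensatz on each piece to see that no nonzero element can vanish at every classical point. Thus $f/1$ is nilpotent in $R[1/p]$, and since $R\hookrightarrow R[1/p]$, $f$ is nilpotent in $R$, proving Zariski density.

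The main obstacle is the combination of two folklore facts: the passage from set-theoretic vanishing of $g$ on the Zariski dense set $Z^{\rig}$ to set-theoretic vanishing on all of $X^{\rig}$, and the Jacobson property of $R[1/p]$. Both are essentially standard, but they require some care when $X^{\rig}$ is non-reduced and because $R[1/p]$ is not itself an affinoid algebra. Any real technical work in the proof goes into making these two steps precise; everything else is a direct translation through \cref{thm:genfibprop}.
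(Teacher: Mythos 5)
Your proof follows the same route as the paper's: reduce via $\calO$-flatness to showing $f$ is nilpotent in $R[1/p]$; pass through $\Gamma(X^{\rig},\calO_{X^{\rig}})$ using \cref{thm:genfibprop} to deduce that $f$ lies in every maximal ideal; conclude by the Jacobson property of $R[1/p]$. The one place you diverge is the justification of Jacobson-ness: the paper simply cites \cite{EGA4.3}*{Corollaire~10.5.8}, whereas you sketch a proof via the exhaustion of the open unit polydisc by closed affinoid polydiscs and Tate's Nullstellensatz. That sketch is sound as far as it goes, but note that what you have outlined directly establishes only that the intersection of the maximal ideals of $\calO[[y_1,\ldots,y_g]][1/p]$ is zero (and you use that this ring is a domain to upgrade ``nilpotent on each $B_r$'' to zero). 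To get the full Jacobson property, which is what you need for the quotient $R[1/p]$, you must run the same Nullstellensatz argument over each quotient domain $\calO[[y]][1/p]/\mathfrak{p}$ via the rigid generic fibre of $\Spf(\calO[[y]]/\mathfrak{q})$, not just the zero ideal. You flag this yourself as the technical content, so the gap is acknowledged rather than hidden, but as written the sketch proves a strictly weaker statement than ``$\calO[[y]][1/p]$ is Jacobson''; citing the EGA reference is cleaner.
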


\begin{proof}
Take $f\in R$ that vanishes at all $\mathfrak{m} \in Z$. 
Since $R$ is $\calO$-flat, it suffices to prove $f$ is nilpotent in $R[1/p]$. 
Since $R[1/p]$ is Jacobson (see \cite{EGA4.3}*{Corollaire~10.5.8}), it further suffices to prove that $f$ belongs to every maximal ideal of $R[1/p]$. 
The Zariski density of $Z^{\rig}$ implies that the image of $f$ under $R \rightarrow \Gamma(X^{\rig},\calO_{X^{\rig}})$ vanishes at all points in $X^{\rig}$, which implies $f$ belongs to every maximal ideal of $R[1/p]$ by \cref{thm:genfibprop}.
\end{proof}

The converse is not true in general. 
For example, let $\widehat{\bbG}_m = \Spf \calO[[t]]$ is the formal multiplicative group over $\calO$, and let $Z$ be the set the maximal ideals in $\calO[[t]][1/p]$ corresponding to $p$-power roots of unity in $\Qbar_p$. 
Then $Z$ is Zariski dense in $\Spec \calO[[t]]$, but $\widehat{\bbG}_m^{\rig}$ is the open rigid analytic unit ball over $E$ with coordinate $t$, and every point in $Z^{\rig}$ is a zero of the analytic function $\log(1+t)$. 
Loeffler \cite{LoefflerDense} has shown that this observation has interesting consequences for universal deformation rings for one dimensional Galois representations.

To apply the principal theorems in this paper to Chenevier's conjecture, we will need to understand the relationship between irreducible components of universal deformation rings and irreducible components of their rigid analytic generic fibre.  
The following lemma follows from a result of Conrad \cite{ConradIrredRig}*{Theorem~2.3.1}.

\begin{lem}\label{thm:genfibcomps}
Let $R$ be an $\calO$-flat, reduced $\CNL_{\calO}$-algebra, and let $X = \Spf R$.
The map that sends a minimal prime ideal $\frakq$ of $R$ to $(\Spf R/\frakq)^{\rig}$ induces a bijection between the irreducible components of $\Spec R$ and the irreducible components of $X^{\rig}$. 
\end{lem}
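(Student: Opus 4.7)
The plan is to apply Conrad's Theorem~2.3.1 of \cite{ConradIrredRig} after constructing an appropriate cover of $X^{\rig}$ by closed subspaces indexed by the minimal primes of $R$.

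Let $\frakq_1,\ldots,\frakq_s$ denote the minimal primes of $R$, which correspond to the irreducible components of $\Spec R$. First I would record that, since $R$ is $\calO$-flat, $p$ is a nonzerodivisor and hence belongs to no associated prime of $R$; as $R$ is reduced, the associated primes coincide with the minimal primes, so $p \notin \frakq_i$ for each $i$. Consequently each $R/\frakq_i$ is an $\calO$-flat $\CNL_{\calO}$-algebra that is a domain, and by functoriality of the rigid analytic generic fibre, the surjection $R \twoheadrightarrow R/\frakq_i$ induces a closed immersion $(\Spf R/\frakq_i)^{\rig} \hookrightarrow X^{\rig}$.

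Next, I would check that these closed subspaces cover $X^{\rig}$ set-theoretically. By \cref{genfibprop:pts} of \cref{thm:genfibprop}, points of $X^{\rig}$ correspond to maximal ideals $\frakm \subset R[1/p]$. Since $\bigcap_i \frakq_i = 0$ in $R$, we have $\prod_i \frakq_i R[1/p] \subset \bigcap_i \frakq_i R[1/p] = 0$, and primality of $\frakm$ forces $\frakq_i R[1/p] \subset \frakm$ for some $i$. The corresponding rigid point then factors through $(\Spf R/\frakq_i)^{\rig}$.

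The crux of the argument is to invoke Conrad's theorem to conclude that each $(\Spf R/\frakq_i)^{\rig}$ is irreducible. Since $R/\frakq_i$ is a complete Noetherian local ring that is a domain, it is in particular analytically irreducible, and \cite{ConradIrredRig}*{Theorem~2.3.1} (together with the surrounding discussion relating formal branches to rigid-analytic irreducible components) identifies the irreducible components of the rigid generic fibre of an $\calO$-flat affine formal scheme with its analytic branches; applied to $\Spf R/\frakq_i$ this yields irreducibility. Combined with the covering above, $\{(\Spf R/\frakq_i)^{\rig}\}_{i=1}^s$ exhausts the irreducible components of $X^{\rig}$, provided there are no inclusions among them.

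Finally, the lack of containments and the injectivity of $\frakq_i \mapsto (\Spf R/\frakq_i)^{\rig}$ both follow from noting that $\frakq_1 R[1/p], \ldots, \frakq_s R[1/p]$ are distinct minimal primes of the Jacobson ring $R[1/p]$: for $i \neq j$, there is a maximal ideal of $R[1/p]$ containing $\frakq_i R[1/p]$ but not $\frakq_j R[1/p]$, giving a rigid point in $(\Spf R/\frakq_i)^{\rig}$ but not in $(\Spf R/\frakq_j)^{\rig}$. The main obstacle is the precise invocation of Conrad's theorem to convert analytic irreducibility of the complete local domain $R/\frakq_i$ into irreducibility of its rigid generic fibre; the remainder of the proof is essentially formal.
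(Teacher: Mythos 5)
Your proposal is correct, but it takes a genuinely different route from the paper's proof, so let me compare.

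The paper applies Conrad's \cite{ConradIrredRig}*{Theorem~2.3.1} \emph{once}, directly to $\Spf R$. It passes to the normalization $\widetilde{R}$, uses excellence of $R$ to identify the connected components $\widetilde{X}_i$ of $\Spf\widetilde{R}$ with the minimal primes $\frakq_i$, and then Conrad's theorem gives immediately that the irreducible components of $X^{\rig}$ are the images $\phi^{\rig}(\widetilde{X}_i) = (\Spf R/\frakq_i)^{\rig}$. All of the component bookkeeping is handled in one stroke by the theorem itself.

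You instead apply Conrad's theorem to each quotient $\Spf R/\frakq_i$ separately, concluding that each $(\Spf R/\frakq_i)^{\rig}$ is irreducible (since $R/\frakq_i$ is a complete local domain whose normalization is again local, so $\Spf \widetilde{R/\frakq_i}$ is connected), and then you supply the remaining combinatorics yourself: the covering of $X^{\rig}$ via primality of the maximal ideals of $R[1/p]$, and the absence of containments via the Jacobson property to separate $\frakq_i R[1/p]$ from $\frakq_j R[1/p]$ by a maximal ideal. This works, but you are reproving by hand the ``a finite cover by irreducible closed subsets with no mutual inclusions is exactly the set of irreducible components'' reduction that Conrad's global statement already packages. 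Your version does make the role of $\calO$-flatness and reducedness (via associated primes) more visible, and it illustrates concretely how \cref{thm:genfibprop} controls points of $X^{\rig}$. One small caution: be precise about what Conrad's Theorem~2.3.1 asserts — it identifies irreducible components of $X^{\rig}$ with images of connected components of the normalization $\Spf\widetilde{R}$, and the translation to ``analytic branches'' passes through the bijection (via excellence) between those connected components and the minimal primes; stating it as a bijection with ``formal branches'' without that intermediary is a slight shortcut that should be flagged.
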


\begin{proof}
Let $\{\frakq_i\}_i$ be the minimal prime ideals of $R$. 
Since $(\cdot)^{\rig}$ takes closed immersions to closed immersions (see \cite{deJongCrysDieu}*{Proposition~7.2.4}), $(\Spf R/\frakq_i)^{\rig}$ is a closed analytic subvariety of $X^{\rig}$. 
We wish to show that $\{(\Spf R/\frakq_i)^{\rig}\}_i$ is the set of irreducible components of $X^{\rig}$.

Let $\widetilde{R}$ be the normalization of $R$, and let $\phi : \Spf \widetilde{R} \rightarrow \Spf R$ denote the canonical map.
Since $R$ is excellent, the map that sends a maximal ideal $\tilde{\frakm}$ in $\widetilde{R}$ to the minimal prime ideal in $R$ containing $\tilde{\frakm}\cap R$ induces a bijection between connected components of $\Spf \widetilde{R}$ and $\{\frakq_i\}_i$ (see \cite{EGA4.2}*{Scholie~7.8.3(vii)}). 
Let $\widetilde{X}_i$ denote the connected component of $\Spf \widetilde{R}$ corresponding to the minimal prime $\frakq_i$ of $R$, hence $\phi(\widetilde{X}_i) = \Spf R/\frakq_i$.
By \cite{ConradIrredRig}*{Theorem~2.3.1}, the irreducible components of $X^{\rig}$ are $\{\phi^{\rig}(\widetilde{X}_i)\}_i$, and the functoriality of $(\cdot)^{\rig}$ implies $\phi^{\rig}(\widetilde{X}_i) = (\Spf R/\frakq_i)^{\rig}$. 
\end{proof}

In \S\S\ref{sec:CM} and \ref{sec:Hilb}, we will use the above via the following lemma.

\begin{lem}\label{thm:genfiblem}
Let $R$ be an $\calO$-flat, reduced, equidimensional $\CNL_{\calO}$-algebra, and let $X = \Spf R$.
Let $Z$ be a set of maximal ideals in $R[1/p]$, and let $Z^{\rig} \subset X^{\rig}$ be the set of points corresponding to $Z$ under \cref{thm:genfibprop}.
Assume:
	\begin{ass}
	\item\label{genfiblem:dim} Every irreducible component of the Zariski closure of $Z^{\rig}$ has dimension equal to $\dim R[1/p]$.
	\item\label{genfiblem:comps} For every irreducible component $\calC$ of $\Spec R$, there is $\frakm \in Z\cap \calC$ such that $R[1/p]_{\frakm}$ is regular.
	\end{ass}
Then $Z^{\rig}$ is Zariski dense in $X^{\rig}$.
\end{lem}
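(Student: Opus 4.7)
The plan is to combine \cref{thm:genfibcomps} (which identifies irreducible components of the generic fibre) with the regularity input in hypothesis (b) to show that each irreducible component of $X^{\rig}$ meets $Z^{\rig}$ at a smooth point, and then to use the dimension hypothesis (a) to upgrade this to Zariski density.

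In more detail: let $\{\frakq_i\}$ denote the minimal primes of $R$. Since $R$ is $\calO$-flat, no $\frakq_i$ contains $p$, so each $R/\frakq_i$ is an $\calO$-flat domain. By equidimensionality, $\dim(R/\frakq_i)[1/p] = \dim R[1/p]$ for all $i$, and by \cref{thm:genfibcomps} the irreducible components of $X^{\rig}$ are precisely the $(\Spf R/\frakq_i)^{\rig}$, each of dimension $\dim R[1/p]$. Let $W$ denote the Zariski closure of $Z^{\rig}$ in $X^{\rig}$; by hypothesis (a), every irreducible component of $W$ has dimension $\dim R[1/p]$.

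Next, I would fix an irreducible component $\calC = V(\frakq_i)$ of $\Spec R$ and apply hypothesis (b) to obtain a maximal ideal $\frakm \in Z \cap \calC$ with $R[1/p]_\frakm$ regular. Let $x \in Z^{\rig}$ be the corresponding rigid point under \cref{genfibprop:pts} of \cref{thm:genfibprop}. Since $\frakm \supseteq \frakq_i$, the point $x$ lies in $(\Spf R/\frakq_i)^{\rig}$. By \cref{genfibprop:locring} of \cref{thm:genfibprop}, the completion of $\calO_{X^{\rig},x}$ agrees with the completion of $R[1/p]_\frakm$, and since regularity is detected on the completion, $\calO_{X^{\rig},x}$ is regular, hence a domain. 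Therefore $x$ lies on a unique irreducible component of $X^{\rig}$, which must be $(\Spf R/\frakq_i)^{\rig}$.

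To conclude, note that $x \in W$, and the irreducible component $W_x$ of $W$ containing $x$ is an irreducible closed analytic subset of $X^{\rig}$ containing $x$; since $x$ lies on only one component of $X^{\rig}$, we must have $W_x \subseteq (\Spf R/\frakq_i)^{\rig}$. By hypothesis (a) the dimension of $W_x$ equals $\dim R[1/p] = \dim(\Spf R/\frakq_i)^{\rig}$, and since $(\Spf R/\frakq_i)^{\rig}$ is irreducible, the containment must be an equality. Running over all $i$ shows $W$ contains every irreducible component of $X^{\rig}$, so $W = X^{\rig}$ and $Z^{\rig}$ is Zariski dense, as desired. The only mildly delicate step is the smoothness-implies-unique-component argument, which rests squarely on \cref{thm:genfibprop}\cref{genfibprop:locring}; everything else is dimension bookkeeping.
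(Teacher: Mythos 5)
Your proof is correct and follows essentially the same strategy as the paper's: reduce to components of $X^{\rig}$ via \cref{thm:genfibcomps}, use \cref{genfibprop:locring} of \cref{thm:genfibprop} plus the regularity hypothesis to produce a point of $Z^{\rig}$ on each component and no other, and then win by dimension comparison. The only difference is that where the paper closes by citing \cite{ConradIrredRig}*{Corollary~2.2.7}, you spell out that dimension step by hand (irreducible closed analytic subset of full dimension inside an irreducible component must equal it); the content is the same.
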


\begin{proof}
Since $R[1/p]$ is equidimensional, $X^{\rig}$ is equidimensional of dimension $\dim R[1/p]$ by \cref{thm:genfibprop}. 
By \cref{genfiblem:comps} and \cref{thm:genfibcomps}, for every irreducible component $X^{\rig}_i$ of $X^{\rig}$, there is a point $x \in Z^{\rig}\cap X^{\rig}_i$ that lies on no other irreducible component of $X^{\rig}$. 
The lemma now follows from 
\cref{genfiblem:dim} and \cite{ConradIrredRig}*{Corollary~2.2.7}.
\end{proof}
%

\section{General deformation theory}\label{sec:GenDef}

We recall some generalities in the deformation theory of group representations, and fix some notation that will be used in the rest of this article.

\subsection{Universal and fixed determinant deformation rings}\label{sec:general}

Let $\Delta$ be a profinite group satisfying the $p$-\emph{finiteness condition}: for any open subgroup $H$ of $\Delta$, there are only finitely many continuous homomorphisms $H \rightarrow \F_p$. 
This implies that for any $\F$-vector space $M$ with continuous $\F$-linear action of $\Delta$, the cohomology groups $H^i(\Delta,M)$ are all finite dimensional, as is the group of continuous $1$-cocyles $Z^1(\Delta,M)$.

Fix a continuous homomorphism
	\[ \rhobar : \Delta \lra \GL_n(\F).\]
Let $A$ be a $\CNL_{\calO}$-algebra. 
A \emph{lift} of $\rhobar$ to a $\CNL_{\calO}$-algebra $A$ is a continuous homomorphism
	\[ \rho : \Delta \lra \GL_n(A) \]
such that $\rhobar = \rho \mod {\frakm_A}$. 
A \emph{deformation} of $\rhobar$ to $A$ a $1+\M_n(\frakm_A)$-conjugacy class of lifts. 
We will often abuse notation and denote a deformation by a lift in its conjugacy class.
We let $D^\square$, resp. $D$, denote the set valued functor on $\CNL_{\calO}$ that sends a $\CNL_{\calO}$-algebra $A$ to the set of lifts, resp. deformations, of $\rhobar$ to $A$. 
If we wish to emphasize $\rhobar$, we will write $D_{\rhobar}^\square$ and $D_{\rhobar}$, respectively.
The functor $D^\square$ is representable, and so is $D$ if $\End_{\F[\Delta]}(\rhobar) = \F$ (see \cite{BockleDefTheory}*{Proposition~1.3}).

The representing object for $D^\square$, denoted $R^\square$, is called the \emph{universal lifting ring} for $\rhobar$. 
We denote by $\rho^\square$ the universal lift to $R^\square$.
If $\End_{\F[\Delta]}(\rhobar) = \F$, the object representing $D$, denoted $R^{\univ}$, is called the \emph{universal deformation ring} for $\rhobar$. 
We denote by $\rho^{\univ}$ the universal deformation to $R^{\univ}$.
If we wish to emphasize $\rhobar$, we will write $R_{\rhobar}^\square$ and $R_{\rhobar}^{\univ}$, respectively. 

The following well known lemma (see \cite{MazurDefFermat}*{\S12} and \cite{BLGGT}*{Lemma~1.2.1}) allows us to enlarge our coefficient field $E$, and we will sometimes invoke it without comment.

\begin{lem}\label{thm:coefchange}
Let $E'/E$ be a finite extension with ring of integers $\calO'$ and residue field $\F'$. 
Let $\rhobar' = \rhobar \otimes \F$. 
\begin{enumerate}
\item The universal $\CNL_{\calO'}$-lifting ring $R_{\rhobar'}^\square$ is canonically isomorphic to $R_{\rhobar}^\square \otimes_{\calO} \calO'$.
\item If $\End_{\F[\Delta]}(\rhobar) = \F$, the universal $\CNL_{\calO'}$-deformation ring $R_{\rhobar'}$ is canonically isomorphic to $R_{\rhobar'} \otimes_{\calO} \calO'$.
\end{enumerate}
\end{lem}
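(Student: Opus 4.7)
The plan is to verify that $R := R_{\rhobar}^\square \otimes_{\calO} \calO'$ satisfies the universal property characterizing $R_{\rhobar'}^\square$ in $\CNL_{\calO'}$ (and analogously in the deformation case). First I would check that $R$ belongs to $\CNL_{\calO'}$ at all. Since $\calO'$ is finite free over $\calO$, $R$ is a finite free $R_{\rhobar}^\square$-module, hence Noetherian and $\frakm_{R_{\rhobar}^\square}$-adically complete. The quotient $R/\frakm_{R_{\rhobar}^\square} R \cong \F \otimes_{\calO} \calO' \cong \F \otimes_{\F} \F' \cong \F'$ is a field, so $R$ is local with residue field $\F'$, and the $\frakm_{R_{\rhobar}^\square}$-adic and $\frakm_R$-adic topologies coincide because $\frakm_{\calO'}$ contains a power of $\frakm_{\calO}$ in $\calO'$.

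Next, for any $\CNL_{\calO'}$-algebra $A$, restriction of scalars along $\calO \to \calO'$ makes $A$ into a $\CNL_{\calO}$-algebra with residue field $\F'$ (viewed as an $\F$-algebra via $\F \hookrightarrow \F'$). The adjunction between base change and restriction of scalars gives a functorial bijection
\[
\Hom_{\CNL_{\calO'}}(R, A) \;\xrightarrow{\sim}\; \Hom_{\CNL_{\calO}}(R_{\rhobar}^\square, A).
\]
I would then argue that a lift $\rho : \Delta \to \GL_n(A)$ of $\rhobar'$ to the $\CNL_{\calO'}$-algebra $A$ is the same datum as a lift of $\rhobar$ to $A$ viewed as a $\CNL_{\calO}$-algebra: the reduction $\rho \bmod \frakm_A$ takes values in $\GL_n(\F')$, and the condition $\rho \bmod \frakm_A = \rhobar'$ in $\GL_n(\F')$ is exactly the condition $\rho \bmod \frakm_A = \rhobar$ under the inclusion $\GL_n(\F) \hookrightarrow \GL_n(\F')$, since $\rhobar'$ is by definition $\rhobar \otimes \F'$. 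Combining these two bijections with the universal property of $R_{\rhobar}^\square$ gives a natural bijection between $\Hom_{\CNL_{\calO'}}(R, A)$ and lifts of $\rhobar'$ to $A$, which by Yoneda identifies $R$ with $R_{\rhobar'}^\square$.

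For part (2), the same argument applies after observing that the hypothesis $\End_{\F[\Delta]}(\rhobar) = \F$ extends to $\End_{\F'[\Delta]}(\rhobar') = \F'$ by flat base change, so $D_{\rhobar'}$ is representable; and since $1 + \M_n(\frakm_A)$-conjugacy of lifts is intrinsic to $A$ and independent of whether $A$ is viewed over $\calO$ or $\calO'$, the bijection on lifts descends to a bijection on deformations.

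The only point requiring care is the verification that $R$ is a genuine $\CNL_{\calO'}$-algebra (local, Noetherian, complete, with the correct residue field), and that the reduction-mod-$\frakm_A$ condition is unambiguous across the two categories; both reduce to the fact that $\calO'$ is module-finite over $\calO$ and $\F' \cong \F \otimes_{\calO} \calO'$. No genuine obstacle arises beyond this bookkeeping.
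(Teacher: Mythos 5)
The paper gives no proof of this lemma; it simply cites \cite{MazurDefFermat}*{\S12} and \cite{BLGGT}*{Lemma~1.2.1} and calls the result ``well known.'' So there is no paper proof to compare against. Assessing your argument on its own merits: the overall strategy (verify the universal property of $R_{\rhobar}^\square\otimes_{\calO}\calO'$ via a base-change adjunction) is the right one, but there is a genuine gap in the central step.

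The problem is the claim that ``restriction of scalars along $\calO\to\calO'$ makes $A$ into a $\CNL_{\calO}$-algebra with residue field $\F'$.'' This is self-contradictory given the paper's definition of $\CNL_{\calO}$: an object of $\CNL_{\calO}$ is required to have the structure map $\calO\to A$ induce an \emph{isomorphism} $\F\xrightarrow{\sim}A/\frakm_A$. A $\CNL_{\calO'}$-algebra $A$ has residue field $\F'$, and when $\F'\neq\F$ the induced map $\F\to A/\frakm_A$ is merely the inclusion, so $A$ viewed over $\calO$ is \emph{not} an object of $\CNL_{\calO}$. Consequently the adjunction
\[
\Hom_{\CNL_{\calO'}}(R, A) \;\xrightarrow{\sim}\; \Hom_{\CNL_{\calO}}(R_{\rhobar}^\square, A)
\]
does not typecheck, and you cannot invoke the universal property of $R_{\rhobar}^\square$ (which is stated only for targets in $\CNL_{\calO}$) to identify the right-hand side with lifts of $\rhobar$. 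This is the real content of the lemma and the missing idea is how to bridge the residue-field mismatch. A standard fix is to pass to $A_0:=\calO+\frakm_A\subseteq A$, the preimage of $\F\subseteq\F'$ under $A\to A/\frakm_A$: one checks $A_0$ is a $\CNL_{\calO}$-algebra (it is closed hence complete, local with residue field $\F$, and Noetherian since $A$ is module-finite over $A_0$, e.g. by Eakin--Nagata), and that any lift of $\rhobar'=\rhobar\otimes_{\F}\F'$ to $A$ necessarily lands in $\GL_n(A_0)$ because $\rhobar'$ itself takes values in $\GL_n(\F)$. Applying the universal property to $A_0$ and then extending scalars recovers the bijection. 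Alternatively, one can argue from an explicit presentation of $R_{\rhobar}^\square$ as a quotient of a power series ring over $\calO$.

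Separately, the computation $\F\otimes_{\calO}\calO'\cong\F\otimes_{\F}\F'\cong\F'$ is wrong whenever $E'/E$ is ramified: $\F\otimes_{\calO}\calO'=\calO'/\frakm_{\calO}\calO'=\calO'/\frakm_{\calO'}^e$ (with $e$ the ramification index) is a local Artinian $\F$-algebra with nilpotents, not a field. The conclusion you want (that $R$ is local with residue field $\F'$) is still true, because $R/\frakm_{R_{\rhobar}^\square}R$ is local Artinian with residue field $\F'$ and $R$ is module-finite over the local ring $R_{\rhobar}^\square$, but the argument as written is incorrect. Everything else in your write-up (flatness of $\F'/\F$ giving $\End_{\F'[\Delta]}(\rhobar')=\F'$, the observation that $1+\M_n(\frakm_A)$-conjugacy is intrinsic) is fine.
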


This lemma has the following consequence that we will use bellow. 
Let
	\[ \rho : \Delta \lra \GL_n(\calO_{\Qbar_p}) \]
be a continuous representation such that $\rhobar \otimes \Fbar_p = \rho \mod {\frakm_{\Qbar_p}}$.
The compactness of $\Delta$ implies there is a finite extension $E'/E$ inside $L$, with ring of integers $\calO'$, such that $\rho$ takes image in $\GL_n(\calO')$. 
Then \cref{thm:coefchange} implies there is a unique local $\calO$-algebra morphism $x : R_{\rhobar}^\square \rightarrow \calO_{\Qbar_p}$ such that $\rho = \rho_x$. 

Let $\ad$ denote the adjoint action of $\GL_n$ on its Lie algebra $\frakgl_n$. 
Let $\ad(\rhobar)$ and $\ad^0(\rhobar)$, denote $\frakgl_n(\F)$ and its trace zero subspace $\fraksl_n(\F)$, respectively, each equipped with the adjoint action $\ad\circ \rhobar$ of $\Delta$. 

\begin{prop}\label{thm:genunivpres}
\begin{enumerate}
\item There is a presentation
	\[ R^\square \cong \calO[[x_1,\ldots,x_g]]/(f_1,\ldots,f_k)\]
with $g = \dim_\F Z^1(\Delta,\ad(\rhobar))$
and $k \le \dim_{\F} H^2(\Delta,\ad(\rhobar))$. 
In particular, every irreducible component of $\Spec R^\square$ has dimension at least 
	\[ 1+ \dim Z^1(\Delta,\ad(\rhobar)) - \dim_\F H^2(\Delta,\ad(\rhobar)),\]
and $R^\square$ is formally smooth over $\calO$ if $H^2(\Delta,\ad(\rhobar)) = 0$.
\item Assume $\End_{\F[\Delta]}(\rhobar) = \F$. 
There is a presentation
	\[ R^{\univ} \cong \calO[[x_1,\ldots,x_g]]/(f_1,\ldots,f_k)\]
with $g = \dim_\F H^1(\Delta,\ad(\rhobar))$ and $k \le \dim_{\F} H^2(\Delta,\ad(\rhobar))$. 
In particular, every irreducible component of $\Spec R^{\univ}$ has dimension at least 
	\[1+ \dim H^1(\Delta,\ad(\rhobar)) - \dim_\F H^2(\Delta,\ad(\rhobar)),\]
and $R^{\univ}$ is formally smooth over $\calO$ if $H^2(\Delta,\ad(\rhobar)) = 0$.
\end{enumerate}
\end{prop}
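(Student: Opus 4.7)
The plan is to follow the standard obstruction-theoretic approach: identify the mod-$p$ tangent space of the functor with the appropriate Galois cocycles or cohomology classes, use minimal generators to construct a surjection from a power series ring, and then identify the relation module inside a space measured by $H^2$.

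First I would compute the relative tangent spaces. A lift of $\rhobar$ to $\F[\eps]/(\eps^2)$ is a map $\rho(\gamma) = (1+\eps\phi(\gamma))\rhobar(\gamma)$, and the multiplicativity of $\rho$ is equivalent to $\phi$ being a $1$-cocycle for $\ad(\rhobar)$; thus $D^\square(\F[\eps]) \cong Z^1(\Delta,\ad(\rhobar))$. Modding by $1+\eps\M_n(\F)$-conjugacy (i.e., changes of basis trivial modulo $\eps$) quotients by the coboundaries, giving $D(\F[\eps]) \cong H^1(\Delta,\ad(\rhobar))$ when $\End_{\F[\Delta]}(\rhobar) = \F$. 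The $p$-finiteness hypothesis on $\Delta$ guarantees both groups are finite-dimensional, and setting $g$ equal to the corresponding dimension allows one to choose a surjection $\pi : \calO[[x_1,\ldots,x_g]] \twoheadrightarrow R^\square$ (resp.\ $R^{\univ}$) inducing an isomorphism on mod-$p$ tangent spaces; by Nakayama this is the minimum number of generators.

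Next, I would bound the kernel $I = \kernel \pi$. The key step is to show $\dim_\F I/\frakm I \le \dim_\F H^2(\Delta,\ad(\rhobar))$. For this, I would argue via small extensions: given a surjection $A' \twoheadrightarrow A$ of $\CNL_\calO$-algebras with kernel $J$ annihilated by $\frakm_{A'}$, a lift $\rho \in D^\square(A)$ obstructs to lifting to $A'$ by a class in $H^2(\Delta,\ad(\rhobar))\otimes_\F J$. Applied to the tautological small extension $\calO[[x_1,\ldots,x_g]]/\frakm I \twoheadrightarrow \calO[[x_1,\ldots,x_g]]/I = R^\square$, whose kernel is $I/\frakm I$, this produces an $\F$-linear injection $(I/\frakm I)^\vee \hra H^2(\Delta,\ad(\rhobar))$, giving the bound on $k$. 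The same argument works verbatim for $R^{\univ}$ since the obstruction to lifting a deformation again lies in $H^2$.

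Finally, the dimension statement follows from Krull's height theorem applied to $\calO[[x_1,\ldots,x_g]]$, which has dimension $g+1$: any quotient by $k$ elements has every irreducible component of dimension at least $g+1-k$, and if $H^2 = 0$ then $k = 0$ forces formal smoothness. The main obstacle I anticipate is being careful with the obstruction construction — one must check that the obstruction class is well-defined independently of the choice of set-theoretic lift, and that for $R^{\univ}$ the reduction modulo strict equivalence does not alter the obstruction (it does not, since any two set-theoretic lifts differ by a $1$-cochain valued in $J\otimes\ad(\rhobar)$, whose coboundary contribution lies in the image of $Z^2$-boundaries and hence vanishes in $H^2$). Once this is in place, everything else is routine Nakayama and Krull.
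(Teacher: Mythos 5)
Your proposal is correct and follows the same standard obstruction-theoretic approach that the paper uses (the paper simply cites B\"ockle, \emph{Local-to-global} Theorem~2.4, for part~(2) and notes part~(1) is proved identically once one computes the lifting-functor tangent space as $Z^1$). One step, however, is asserted rather than argued and is in fact the crux of the bound on $k$: you write that the obstruction class for the extension $\calO[[x_1,\ldots,x_g]]/\frakm I \twoheadrightarrow R^\square$ ``produces an $\F$-linear \emph{injection} $(I/\frakm I)^\vee \hra H^2(\Delta,\ad(\rhobar))$.'' The obstruction construction by itself only gives a linear map $(I/\frakm I)^\vee \to H^2(\Delta,\ad(\rhobar))$; its injectivity is not automatic and is precisely where the minimality of your presentation and the universality of $R^\square$ (resp.\ $R^{\univ}$) must be invoked together. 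Concretely: if $\lambda\colon I/\frakm I \to \F$ is a nonzero functional in the kernel, let $J = \ker(I \to I/\frakm I \xrightarrow{\lambda} \F)$, so $\frakm I \subseteq J \subsetneq I$ and $I/J \cong \F$. Vanishing of $\lambda_\ast[c]$ says the universal lift extends over $\calO[[x]]/J \twoheadrightarrow R^\square$, and universality then produces a section $s\colon R^\square \to \calO[[x]]/J$ of this surjection. Since $J \subseteq I \subseteq \frakm_{\calO[[x]]}^2 + \frakm_{\calO}\calO[[x]]$ by minimality, the surjection is an isomorphism on reduced cotangent spaces, so its section $s$ is too, hence $s$ is surjective and therefore an isomorphism, forcing $I/J = 0$, a contradiction. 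The issue you do flag --- well-definedness of the obstruction class under change of set-theoretic lift and under strict equivalence --- is real but routine; the injectivity is the place where the argument could silently go wrong if omitted, so it is worth making explicit.
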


\begin{proof}
The second part is \cite{BockleLocGlob}*{Theorem~2.4}. 
The first part is proved in the same way, since the tangent space of $D^\square$ is isomorphic to $Z^1(\Delta,\ad(\rhobar)$ via the map $Z^1(\Delta,\ad(\rhobar)) \rightarrow  D^\square(\F[\varepsilon])$ given by $\kappa \mapsto (1+\varepsilon \kappa)\rhobar$,
where $\F[\varepsilon] = \F[\varepsilon]/(\varepsilon^2)$ is the ring of dual numbers over $\F$.
\end{proof}


Now let $\mu : \Delta \rightarrow \calO^\times$ be a continuous character such that $\det\rhobar = \mu \mod {\frakm_{\calO}}$. 
Define subfunctors $D^{\square,\mu} \subseteq D^\square$ and $D^\mu\subseteq D$, that send a $\CNL_{\calO}$-algebra to the set of lifts and deformations, respectively, $\rho$ to $A$ such that $\det\rho = \mu$. 
These subfunctors are easily seen to be represented by the quotient of $R^\square$, resp. of $R$ (assuming $\End_{\F[\Delta]}(\rhobar) = \F$), by the ideal generated by $\{\det\rho^\square(\delta) - \mu(\delta) \mid \delta \in \Delta\}$, resp. generated by $\{\det\rho^{\univ}(\delta) - \mu(\delta) \mid \delta\in \Delta\}$.
The representing object for $D^{\square,\mu}$, denoted $R^{\square,\mu}$, is called the \emph{universal determinant} $\mu$ \emph{lifting ring} for $\rhobar$. 
If $\End_{\F[\Delta]}(\rhobar) = \F$, the object representing $D^\mu$, denoted $R^{\mu}$, is called the \emph{universal determinant} $\mu$ \emph{deformation ring} for $\rhobar$. 

\begin{prop}\label{thm:gendetpres}
Assume $p\nmid n$. 
\begin{enumerate}
\item There is a presentation
	\[ R^{\square,\mu} \cong \calO[[x_1,\ldots,x_g]]/(f_1,\ldots,f_k)\]
with $g = \dim_\F Z^1(\Delta,\ad^0(\rhobar))$
and $k \le \dim_{\F} H^2(\Delta,\ad^0(\rhobar))$. 
In particular, every irreducible component of $\Spec R^{\square,\mu}$ has dimension at least 
	\[ 1+ \dim Z^1(\Delta,\ad^0(\rhobar)) - \dim_\F H^2(\Delta,\ad^0(\rhobar)),\]
and $R^{\square,\mu}$ is formally smooth over $\calO$ if $H^2(\Delta,\ad^0(\rhobar)) = 0$.
\item Assume $\End_{\F[\Delta]}(\rhobar) = \F$. 
There is a presentation
	\[ R^{\mu} \cong \calO[[x_1,\ldots,x_g]]/(f_1,\ldots,f_k)\]
with $g = \dim_\F H^1(\Delta,\ad^0(\rhobar))$ and $k \le \dim_{\F} H^2(\Delta,\ad^0(\rhobar))$. 
In particular, every irreducible component of $\Spec R^\mu$ has dimension at least 
	\[1+ \dim H^1(\Delta,\ad^0(\rhobar)) - \dim_\F H^2(\Delta,\ad^0(\rhobar)),\]
and $R^\mu$ is formally smooth over $\calO$ if $H^2(\Delta,\ad^0(\rhobar)) = 0$.
\end{enumerate}
\end{prop}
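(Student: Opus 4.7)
The proof plan is to mirror the structure of the proof of \cref{thm:genunivpres}, with the modifications forced by the fixed determinant condition, and with the assumption $p \nmid n$ serving to split off the scalar/trace contribution. The crucial preliminary observation is that, when $p \nmid n$, the short exact sequence of $\F[\Delta]$-modules
\[ 0 \lra \ad^0(\rhobar) \lra \ad(\rhobar) \xrightarrow{\tr} \F \lra 0 \]
splits, via the section $a \mapsto (a/n)\cdot I_n$ from the trivial $\F[\Delta]$-module $\F$ into scalar matrices. Hence there are canonical decompositions $H^i(\Delta,\ad(\rhobar)) \cong H^i(\Delta,\ad^0(\rhobar)) \oplus H^i(\Delta,\F)$ for all $i$, and likewise $Z^1(\Delta,\ad(\rhobar)) \cong Z^1(\Delta,\ad^0(\rhobar)) \oplus Z^1(\Delta,\F)$.

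For the tangent space computation in part (1), I would reuse the identification $Z^1(\Delta,\ad(\rhobar)) \xrightarrow{\sim} D^\square(\F[\varepsilon])$, $\kappa \mapsto (1+\varepsilon\kappa)\rhobar$. The determinant of $(1+\varepsilon\kappa)\rhobar$ is $(1 + \varepsilon \tr\kappa)\det\rhobar$, which equals $\mu$ in $\F[\varepsilon]^\times$ precisely when $\tr\kappa = 0$. Thus $D^{\square,\mu}(\F[\varepsilon])$ is identified with $Z^1(\Delta,\ad^0(\rhobar))$, giving a surjection $\calO[[x_1,\ldots,x_g]] \twoheadrightarrow R^{\square,\mu}$ with $g = \dim_\F Z^1(\Delta,\ad^0(\rhobar))$. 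For part (2), under the assumption $\End_{\F[\Delta]}(\rhobar) = \F$, the tangent space of $D^\mu$ is obtained by quotienting $Z^1(\Delta,\ad^0(\rhobar))$ by the image of the $1$-coboundaries with values in $\ad^0$ (the coboundary adjustment preserves trace), yielding $H^1(\Delta,\ad^0(\rhobar))$.

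The heart of the argument, and the main obstacle, is the obstruction calculation showing that the number of relations $k$ is at most $\dim_\F H^2(\Delta,\ad^0(\rhobar))$. Given a small surjection $A \twoheadrightarrow A/I$ of $\CNL_\calO$-algebras with $\mathfrak{m}_A \cdot I = 0$ and a lift $\rho_{A/I}$ of $\rhobar$ with $\det \rho_{A/I} = \mu \bmod I$, the usual obstruction to lifting $\rho_{A/I}$ to $A$ lies in $H^2(\Delta,\ad(\rhobar)) \otimes_\F I$. The key point is that the component of this obstruction in $H^2(\Delta,\F) \otimes I$ is always trivial when we allow determinant to vary, and conversely, once a lift $\rho$ to $A$ exists, its determinant $\det\rho$ differs from $\mu$ by a continuous homomorphism $\Delta \to 1+I$; since $p \nmid n$, we can twist $\rho$ by the scalar character obtained by dividing this discrepancy by $n$, producing a lift with $\det = \mu$. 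Hence the fixed-determinant obstruction lives in the $\ad^0$-summand $H^2(\Delta,\ad^0(\rhobar)) \otimes I$, and Nakayama (exactly as in \cite{BockleLocGlob}*{Theorem~2.4}) then gives $k \le \dim_\F H^2(\Delta,\ad^0(\rhobar))$.

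The dimension lower bounds on irreducible components of $\Spec R^{\square,\mu}$ and $\Spec R^\mu$ then follow from Krull's Hauptidealsatz applied to the presentations, and the formal smoothness assertion when $H^2(\Delta,\ad^0(\rhobar)) = 0$ is immediate since in that case $k = 0$. The step I expect to require the most care in writing out is the obstruction-adjustment argument in the previous paragraph, both to justify that the scalar twist produces a genuine lift of $\rho_{A/I}$ (not merely of $\rhobar$) and to verify that the resulting map on obstruction classes is precisely the projection to the $\ad^0$-summand coming from the $p \nmid n$ splitting.
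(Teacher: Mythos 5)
Your argument is correct and, in substance, the same as the paper's, which simply defers to Kisin's Lemma 4.1.1 together with the observation that $p\nmid n$ makes $\ad(\rhobar)=\ad^0(\rhobar)\oplus\F$ as $\Delta$-modules; Kisin's lemma carries out exactly the fixed-determinant obstruction calculus you sketch, with the $p\nmid n$ hypothesis reducing his modified groups $H^i(G,\ad^0 V)'$ to the plain $H^i(\Delta,\ad^0(\rhobar))$. Your direct derivation — compute the tangent space via the trace condition, show the $H^2(\Delta,\F)$-component of the obstruction vanishes, and use the $n$th-root twist to put the obstruction in $H^2(\Delta,\ad^0(\rhobar))\otimes I$ — is a clean unwinding of that reference and is sound.
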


\begin{proof}
This is similar to \cref{thm:genunivpres} above. 
For example, see \cite{KisinModof2}*{Lemma~4.1.1}
(since we have assumed $p\nmid n$, $\ad(\rhobar) = \ad^0(\rhobar)\oplus \F$ is $\Delta$-equivariant, so the groups denoted $H^i(G,\ad^0 V)'$ in \cite{KisinModof2}*{\S 4.1} are just $H^i(\Delta,\ad^0(\rhobar))$ in our case). 
\end{proof}


\subsection{Polarized deformation rings}\label{sec:polardefring}
We now assume that $\Delta$ is an open index two subgroup of a profinite group $\Gamma$, and that there is $c\in \Gamma\smallsetminus \Delta$ of order $2$. 
So $\Gamma$ is the semidirect product of $\Delta$ and $\{1,c\}$. 
For any commutative ring $A$ and homomorphism $\rho : \Delta \rightarrow \GL_n(A)$, we let $\rho^c$ denote the conjugate homomorphism, i.e. $\rho^c(\delta) = \rho(c\delta c)$ for all $\delta\in \Delta$, and let $\rho^\vee$ denote the $A$-linear dual of $\rho$, i.e. $\rho^\vee(\delta) = {}^t \rho(\delta)^{-1}$ for all $\delta\in \Delta$. 

Let $\mu : \Gamma \rightarrow \calO^\times$ be a continuous character, and let $\mubar: \Gamma\rightarrow \F^\times$ be its reduction mod $\frakm_{\calO}$. 
We assume that 
	\[ \rhobar^c \cong \rhobar^\vee \otimes\mubar.\]
We then define a subfunctor $D^{\pol} \subseteq D$ by letting $D^{\pol}(A)$, for a $\CNL_{\calO}$-algebra $A$, to be the subset of deformations $\rho$ of $\rhobar$ to $A$ such that $\rho^c \cong \rho^\vee \otimes \mu$. 

\begin{prop}\label{thm:poldefrep}
Assume $\rhobar$ is absolutely irreducible. 
Then $D^{\pol}$ is representable by a quotient $R^{\pol}$ of $R^{\univ}$.
\end{prop}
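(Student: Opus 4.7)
The plan is to cut out $R^{\pol}$ as a closed quotient of $R^{\univ}$ via an explicit system of trace relations. Since $\rhobar$ is absolutely irreducible, $R^{\univ}$ exists and carries the universal deformation $\rho^{\univ} : \Delta \to \GL_n(R^{\univ})$; by hypothesis $\rhobar^c \cong \rhobar^\vee \otimes \mubar$ is also absolutely irreducible.

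Fix a $\CNL_\calO$-algebra $A$ and a lift $\rho$ of $\rhobar$ to $A$. Both $\rho^c$ and $\rho^\vee \otimes \mu$ are lifts of the absolutely irreducible residual representation $\rhobar^c$ to $A$. By the standard criterion (a variant of Carayol's/Nyssen's lemma, proved via Nakayama and Schur) that two lifts of an absolutely irreducible residual representation to a $\CNL_\calO$-algebra are $\GL_n(A)$-conjugate if and only if they have the same trace, the polarization condition $\rho^c \cong \rho^\vee \otimes \mu$ is equivalent to the trace identities
\[ \tr \rho(c\delta c) \;=\; \mu(\delta)\, \tr \rho(\delta^{-1}) \qquad \text{for all } \delta \in \Delta. \]
Since traces are $\GL_n(A)$-conjugation invariant, these identities depend only on the deformation class of $\rho$.

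Accordingly, I would define $J \subseteq R^{\univ}$ as the closed ideal generated by the set $\{\tr \rho^{\univ}(c\delta c) - \mu(\delta)\,\tr \rho^{\univ}(\delta^{-1}) \mid \delta \in \Delta\}$, and set $R^{\pol} := R^{\univ}/J$. For any $\CNL_\calO$-morphism $f : R^{\univ} \to A$, the pushforward deformation $\rho_f = f_* \rho^{\univ}$ satisfies the above trace identities if and only if $f$ kills $J$, if and only if $f$ factors through $R^{\pol}$; this is exactly the representability of $D^{\pol}$ by $R^{\pol}$.

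The main (and essentially only) technical input is the trace criterion for isomorphism of lifts of an absolutely irreducible residual representation. A trace-free alternative would be to use the intertwiner $\rhobar \cong \mubar \otimes (\rhobar^c)^\vee$ together with any lift of it to $\GL_n(\calO)$ in order to produce a specific lift $\rho^\star$ of $\rhobar$ to $R^{\univ}$ in the isomorphism class of $\mu \otimes ((\rho^{\univ})^c)^\vee$, invoke universality to obtain a $\CNL_\calO$-endomorphism $\psi$ of $R^{\univ}$ with $\psi_* \rho^{\univ}$ strictly equivalent to $\rho^\star$, and take $J = (\psi(r) - r : r \in R^{\univ})$; the polarization condition then becomes the fixed-point condition $f = f \circ \psi$.
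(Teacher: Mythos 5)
Your proposal is correct and is essentially the paper's own proof: the paper defines $R^{\pol}$ as the quotient of $R^{\univ}$ by the ideal generated by $\{\tr\rho^{\univ}(c\delta c) - \tr{}^t \rho^{\univ}(\delta)^{-1}\mu(\delta) \mid \delta \in \Delta\}$ and then cites Carayol's theorem, which is exactly your trace-criterion argument (noting $\tr{}^t \rho(\delta)^{-1} = \tr\rho(\delta^{-1})$). Your alternative "trace-free" sketch via a $\CNL_{\calO}$-endomorphism of $R^{\univ}$ is a reasonable variant but is not what the paper does.
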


\begin{proof}
Let $R^{\pol}$ be the quotient of $R^{\univ}$ by the ideal generated by 
\[ \{\tr\rho^{\univ}(c\delta c) - \tr{}^t \rho^{\univ}(\delta)^{-1}\mu(\delta)
\mid \delta \in \Delta\}.\]
The result now follows from \cite{CarayolAnneauLocal}*{Th\'{e}or\`{e}me~1}.
\end{proof}

If $\rhobar$ is absolutely irreducible, we call the object $R^{\pol}$ representing $D^{\pol}$ the \emph{universal} $\mu$-\emph{polarized deformation ring} of $\rhobar$. 
If we wish to emphasize the role of $\mu$, we will write $R^{\mupol}$ for $R^{\pol}$.

We recall the Clozel--Harris--Taylor group scheme $\mathcal{G}_n$, which is the group scheme over $\Z$ defined as the semidirect product
	\[ (\GL_n \times \GL_1) \rtimes \{1,\jmath\} = \calG_n^0 \rtimes \{1,\jmath\}, \]
where $\jmath (g,a) \jmath = (a \transp g^{-1},a)$, and the homomorphism $\nu : \mathcal{G}_n \rightarrow \GL_1$ given by $\nu(g,a) = a$ and $\nu(\jmath) = -1$.
If $A$ is any commutative ring and $r : \Gamma \rightarrow \calG_n(A)$ is a homomorphism such that $r(\Delta) \subseteq \calG_n^0(A)$, we will write $r|_\Delta$ for the composite of the restriction of $r$ to $\Delta$ with the projection $\calG_n^0(A) \rightarrow \GL_n(A)$. 
In particular, we view $A^n$ as an $A[\Delta]$-module via $r|_{\Delta}$. 
The following is (part of) \cite{CHT}*{Lemma~2.1.1}

\begin{lem}\label{thm:Gdhoms}
Let $A$ be a topological ring. 
There is a natural bijection between the following two sets.
\begin{enumerate}
	\item Continuous homomorphisms $r : \Gamma \rightarrow \calG_n(A)$ inducing an isomorphism $\Gamma/\Delta \xrightarrow{\sim} \calG_n(A)/\calG_n^0(A)$.
	\item Triples $(\rho,\eta,\langle\cdot,\cdot\rangle)$, where $\rho : \Delta \rightarrow \GL_n(A)$ and $\eta : \Gamma \rightarrow A^\times$ are continuous homomorphisms and $\langle \cdot, \cdot \rangle$ is a perfect $A$-linear pairing on $A^n$ satisfying
		\[ \langle \rho(\delta) a, \rho(c\delta c) b \rangle = \eta(\delta)\langle a, b \rangle \quad \text{and}
		\quad \langle a, b \rangle = - \eta(c) \langle b, a \rangle \]
	for all $a,b\in A^n$ and $\delta \in \Delta$.
\end{enumerate}
Under this bijection, $\rho = r|_\Delta$, $\eta = \nu\circ r$, and $\langle a, b \rangle = {}^t a P^{-1} b$ for $r(c) = (P,-\eta(c))\jmath$.
\end{lem}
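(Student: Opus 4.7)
The plan is to construct the bijection in both directions by directly unpacking the semidirect product structure of $\calG_n$, then to verify compatibility via a matrix computation using the commutation rule $\jmath(g,a)\jmath = (a\transp g^{-1}, a)$ together with the relations $c^2 = 1$ and $\eta(c)^2 = \eta(c^2) = 1$.

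For the forward direction, given $r$ as in (1), I would set $\rho := r|_\Delta$ and $\eta := \nu\circ r$, both visibly continuous homomorphisms. The hypothesis that $r$ induces an isomorphism $\Gamma/\Delta \xrightarrow{\sim} \calG_n(A)/\calG_n^0(A)$ forces $r(c)$ to lie in the nontrivial coset, so it has a unique expression $r(c) = (P, b)\jmath$ with $(P,b)\in\GL_n(A)\times\GL_1(A)$; applying $\nu$ to both sides yields $b = -\eta(c)$, and the pairing on $A^n$ is defined by $\langle a,b\rangle := \transp a P^{-1} b$, which is perfect since $P$ is invertible. The two pairing conditions will then be deduced from the two group-theoretic relations $r(c)^2 = 1$ and $r(c\delta c) = r(c)r(\delta)r(c)$. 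Expanding the first using the commutation rule gives $P\transp P^{-1} = -\eta(c)I$, i.e.\ $\transp P = -\eta(c)P$; substituting into the pairing and using that $\transp b P^{-1} a$ is a scalar (hence self-transpose) yields $\langle a,b\rangle = -\eta(c)\langle b,a\rangle$. Expanding the second relation similarly produces $\rho(c\delta c) = \eta(\delta) P \transp\rho(\delta)^{-1} P^{-1}$ (after simplifying with $\transp P = -\eta(c)P$ and $\eta(c)^2 = 1$), which, upon substitution into the pairing, is exactly the $\eta(\delta)$-equivariance condition.

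For the backward direction, given a triple $(\rho,\eta,\langle\cdot,\cdot\rangle)$ as in (2), I would let $P \in \GL_n(A)$ be the unique matrix representing the pairing as above, and set $r(\delta) := (\rho(\delta),\eta(\delta))$ for $\delta\in\Delta$ and $r(c) := (P,-\eta(c))\jmath$, extending to $\Gamma$ via $r(c\delta) := r(c) r(\delta)$. The task is to check this is a well-defined continuous group homomorphism. Since $\Gamma = \Delta \sqcup c\Delta$, well-definedness reduces to checking the two matrix identities $r(c)^2 = (I,1)$ and $r(c\delta c) = r(c)r(\delta)r(c)$ for $\delta\in\Delta$; these are the same identities already shown in the forward direction to be equivalent to the two conditions imposed on the pairing, so both hold by hypothesis. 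Continuity is immediate from that of $\rho$ and $\eta$ combined with the finite index of $\Delta$ in $\Gamma$.

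Finally I would observe that the two constructions are manifestly inverse to one another: starting from $r$ and rebuilding the map from the extracted triple reproduces $r$ on both $\Delta$ and on $c$, which determines $r$; starting from a triple, $r|_\Delta$ recovers $(\rho,\eta)$ and the first component of $r(c)$ recovers $P$. The main obstacle is purely bookkeeping in the semidirect product $\calG_n^0 \rtimes \{1,\jmath\}$; there is no conceptual content beyond carefully tracking how the $\jmath$-conjugation intertwines with the duality $\rho \mapsto \rho^\vee$ and the conjugation $\rho \mapsto \rho^c$.
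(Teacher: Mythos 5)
Your proposal is correct, and the verification you sketch is the expected one. The paper does not actually prove this lemma — it simply cites \cite{CHT}*{Lemma~2.1.1} — and the CHT argument is precisely this kind of direct unwinding of the semidirect product structure, so your approach matches the standard proof rather than offering a genuine alternative. Two small points worth noting as you write it up: the identity $\eta(c)^2 = 1$ (which you invoke to pass between $\transp P = -\eta(c) P$ and its transposed form) is automatic from $c^2 = 1$ and need not be postulated; and in the backward direction, reducing well-definedness to the two relations $r(c)^2 = 1$ and $r(c\delta c) = r(c)r(\delta)r(c)$ does require the brief case check over the four products $\delta_1\delta_2$, $\delta_1(c\delta_2)$, $(c\delta_1)\delta_2$, $(c\delta_1)(c\delta_2)$, though this is routine given those two relations and the fact that $r|_\Delta$ is a homomorphism.
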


In particular, our fixed $\rhobar$ extends to a continuous homomorphism
	\[ \rbar : \Gamma \lra \calG_n(\F),\]
such that $\nu\circ\rbar = \mubar$, which we fix. 
For a $\CNL_{\calO}$-algebra $A$, a \emph{lift} of $\rbar$ to $A$ is a continuous homomorphism $r \rightarrow \calG_n(A)$ such that $r \mod \mathfrak{m}_A = \rbar$.
A \emph{deformation} of $\rbar$ to $A$ is a $1+\M_n(\frakm_A)$-conjugacy class of lifts.
By \cref{thm:Gdhoms}, if $r$ is a deformation of $\rbar$ to a $\CNL_{\calO}$-algebra $A$, then $r|_\Delta$ is a deformation of $\rhobar$ to $A$. 
We say a lift or a deformation $r$ of $\rbar$ \emph{has multiplier} $\mu$ if $\nu \circ r = \mu$. 
We let $D_{\rbar}^{\pol}$ be the set valued functor on $\CNL_{\calO}$ that sends a $\CNL_{\calO}$-algebra $A$ to the set of deformations of $\rbar$ to $A$ with multiplier $\mu$.

\begin{prop}\label{thm:polisofunctors}
Assume $p>2$ and $\rhobar$ is absolutely irreducible. 
The map $D_{\rbar}^{\pol} \rightarrow D_{\rhobar}^{\pol}$ given by $r \mapsto r|_\Delta$ is an isomorphism of functors.
\end{prop}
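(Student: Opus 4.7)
The plan is to use \cref{thm:Gdhoms} to reformulate the question as one about matrices. A lift $r : \Gamma \to \calG_n(A)$ with $\nu\circ r = \mu$ is determined by the pair $(\rho,P)$ with $\rho = r|_\Delta$ and $r(c) = (P, -\mu(c))\jmath$, subject to (i) $\rho(c\delta c) = \mu(\delta) P \,{}^t\rho(\delta)^{-1} P^{-1}$ for all $\delta \in \Delta$, and (ii) ${}^t P = -\mu(c) P$ (the latter being the content of $r(c)^2 = 1$ in $\calG_n$). Fix once and for all $\bar P_0 \in \GL_n(\F)$ with $\rbar(c) = (\bar P_0, -\mubar(c))\jmath$. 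I need to show: (surjectivity) every $\rho \in D_{\rhobar}^{\pol}(A)$ arises from some lift of $\rbar$ with multiplier $\mu$; (injectivity) two such lifts $r_1, r_2$ with strictly equivalent restrictions to $\Delta$ are themselves strictly equivalent.

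For surjectivity, fix $\rho \in D_{\rhobar}^{\pol}(A)$. The polarization condition produces some $P \in \GL_n(A)$ satisfying (i). Absolute irreducibility of $\rhobar$ together with a Nakayama argument gives $\End_{A[\Delta]}(A^n) = A$ under $\rho$, so Schur's lemma (applied to $\rhobar$) shows that the reduction of $P$ differs from $\bar P_0$ by a scalar in $\F^\times$. Lifting this scalar to $A^\times$ and rescaling, I may assume $P$ reduces to $\bar P_0$. Now (i) forces $P\,{}^t P^{-1}$ to commute with $\rho(\Delta)$, so by Schur equals a scalar $\alpha \in A^\times$ with $\alpha^2 = 1$ and $\bar\alpha = -\mubar(c)$. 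Since $p > 2$, the roots of $x^2 = 1$ in $A$ are exactly $\pm 1$; and $\mu(c)^2 = 1$ forces $\mu(c) \in \{\pm 1\}$ by Hensel. Both $\alpha$ and $-\mu(c)$ reduce to $-\mubar(c)$, so $\alpha = -\mu(c)$, giving (ii).

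For injectivity, suppose $\rho_2 = u \rho_1 u^{-1}$ for some $u \in 1 + \M_n(\frakm_A)$. A direct computation with the semidirect product structure of $\calG_n$ shows that conjugation by $(u,1) \in \calG_n^0(A)$ sends the matrix $P_1$ attached to $r_1$ to $u P_1 \,{}^t u$. Both $P_2$ and $u P_1 \,{}^t u$ satisfy (i) for $\rho_2$, so by Schur differ by a scalar $\lambda \in A^\times$; reducing modulo $\frakm_A$ (both reduce to $\bar P_0$) gives $\lambda \in 1 + \frakm_A$. Since $p > 2$, the binomial series yields $\xi := \sqrt{\lambda} \in 1 + \frakm_A$, and then conjugation of $r_1$ by $(\xi u, 1)$ produces $r_2$, with $\xi u \in 1 + \M_n(\frakm_A)$.

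The main obstacle is coordinating conditions (i) and (ii): the polarization pins $P$ down only up to an $A^\times$-scalar, and the symmetry condition forces us to choose that scalar correctly. Both invocations of $p > 2$—Hensel-lifting $\alpha$ to the prescribed sign $-\mu(c)$, and extracting $\sqrt\lambda$—are essential, and absolute irreducibility is what allows Schur's lemma to rigidify matters at every step.
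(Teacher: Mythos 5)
Your proof is correct and follows essentially the same route as the paper's (which cites Chenevier's Lemma 1.5 and works through the $\calG_n$-triple description of \cref{thm:Gdhoms}). Your surjectivity step usefully spells out the sign bookkeeping (using that $P\transp P^{-1}$ is a scalar whose reduction is pinned to $-\mubar(c)$ and that $p>2$ separates $\pm 1$) which the paper compresses into "surjective by Lemma 2.1.1," and your injectivity argument is the same scalar-square-root trick, just presented without first normalizing so that $r_1|_\Delta = r_2|_\Delta$.
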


\begin{proof}
This is exactly as in \cite{ChenevierFern}*{Lemma~1.5}. 
As the proof is short, we reproduce it for completeness.
Let $A$ be a $\CNL_{\calO}$-algebra. 
The map $D_{\rbar}^{\pol}(A) \rightarrow D_{\rhobar}^{\pol}(A)$ is surjective by \cref{thm:Gdhoms}. 
Assume that $r_1,r_2$ are two lifts of $\rbar$ to $A$ such that $r_i|_{\Delta}$ are $1+\M_n(\frakm_A)$-conjugate. 
Then replacing $r_2$ with a lift in its deformation class, we can assume $r_1|_\Delta = r_2|_\Delta$. 
Letting $P_i$ be given by $r_i(c) = (P_i,-\mu(c))\jmath$ as in \cref{thm:Gdhoms},  $P_1P_2^{-1}$ commutes with $r_1|_\Delta = r_2|_\Delta$. 
Since $\rhobar$ is absolutely irreducible, $P_1 P_2^{-1} = \beta \in A^\times$; since $r_1$ and $r_2$ both lift $\rbar$, $\beta \in 1+\frakm_A$; and since $p>2$, there is $\alpha \in 1+\frakm_A$ such that $\alpha^2 = \beta$. 
Then $r_1 = \alpha r_2 \alpha^{-1}$, so $r_1$ and $r_2$ define the same deformation.
\end{proof}

In particular, when $p>2$ and $\rhobar$ is absolutely irreducible, we view $R^{\pol}$ as the universal ($\mu$-polarized) deformation ring for $\rbar$.

Since $\frakgl_n = \Lie \GL_n \subset \Lie \mathcal{G}_n$, the adjoint action of $\GL_n$ on $\frakgl_n$ extends to $\calG_n$ by 
	\[ \ad(g,a)(x) = gxg^{-1} \quad \text{and} \quad \ad(\jmath)(x) = -{}^t x.\]
If $A$ is a commutative ring, and 
	\[ r : \Gamma \lra \calG_n(A) \]
is a homomorphism, we write $\ad(r)$ for $\frakgl_n(A)$ with the adjoint action $\ad\circ r$ of $\Gamma$.

\begin{prop}\label{thm:genpolpres}
Assume $p>2$ and $\rhobar$ is absolutely irreducible.
There is a presentation
	\[ R^{\pol} \cong \calO[[x_1,\ldots,x_g]]/(f_1,\ldots,f_k)\]
with $g = \dim_{\F} H^1(\Gamma,\ad(\rbar))$ and $k \le \dim_{\F} H^2(\Gamma,\ad(\rbar))$. 
In particular, every irreducible component of $\Spec R^{\pol}$ has dimension at least 
	\[1+\dim_{\F} H^1(\Gamma,\ad(\rbar)) - \dim_{\F} H^2(\Gamma,\ad(\rbar)),\]
and $R^{\pol}$ is formally smooth over $\calO$ if $H^2(\Gamma,\ad(\rbar)) = 0$.
\end{prop}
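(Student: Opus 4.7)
The plan is to mirror the strategy used in \cref{thm:genunivpres,thm:gendetpres}, now applied to the $\calG_n$-valued representation $\rbar : \Gamma \to \calG_n(\F)$. By \cref{thm:polisofunctors}, $R^{\pol}$ also represents the functor $D^{\pol}_{\rbar}$ of multiplier-$\mu$ deformations of $\rbar$, so the standard Mazur--Schlessinger deformation-theoretic recipe may be applied to $\rbar$ directly, with coefficients in $\ad(\rbar)$ rather than $\ad(\rhobar)$. What remains is to identify the tangent space, bound the obstruction space, and then invoke the standard presentation lemma.

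First I would compute the tangent space $D^{\pol}_{\rbar}(\F[\varepsilon]/(\varepsilon^2))$ and identify it with $H^1(\Gamma,\ad(\rbar))$: a continuous $1$-cocycle $\kappa : \Gamma \to \ad(\rbar)$ produces the lift $(1+\varepsilon\kappa)\rbar$, which takes values in $\calG_n(\F[\varepsilon])$ with multiplier $\mu$ precisely because $\kappa$ is constrained to lie in $\frakgl_n \subset \Lie\calG_n$ (this is where the fixed-multiplier condition enters; it is also the reason we use $\Gamma$-cohomology of $\frakgl_n$ and not of all of $\Lie\calG_n$). Two such cocycles yield $1 + \varepsilon\frakgl_n(\F)$-conjugate lifts if and only if they differ by a coboundary, so this map descends to the claimed isomorphism. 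Next, given a small extension $A' \twoheadrightarrow A$ in $\CNL_{\calO}$ with square-zero kernel $I$ and a deformation $r$ of $\rbar$ to $A$ with multiplier $\mu$, choosing any continuous set-theoretic section $\tilde{r} : \Gamma \to \calG_n(A')$ of $r$ satisfying $\nu\circ\tilde{r} = \mu$ produces a continuous $2$-cocycle with values in $\ad(\rbar) \otimes_\F I$ whose class in $H^2(\Gamma,\ad(\rbar)) \otimes_\F I$ is the sole obstruction to the existence of a genuine lift.

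Given these two ingredients, the presentation is obtained by the standard recipe: lift an $\F$-basis of $H^1(\Gamma,\ad(\rbar))$ to a surjection $\calO[[x_1,\ldots,x_g]] \twoheadrightarrow R^{\pol}$ with $g = \dim_\F H^1(\Gamma,\ad(\rbar))$, then use the obstruction calculation to bound the minimal number of relations by $\dim_\F H^2(\Gamma,\ad(\rbar))$; this is implemented in \cite{MazurDefGalRep}*{\S1.6}, \cite{BockleLocGlob}*{Theorem~2.4}, and, in a setting closer to ours, \cite{KisinModof2}*{Lemma~4.1.1}. The lower bound on the dimensions of irreducible components is then a consequence of Krull's height theorem, and formal smoothness when $H^2$ vanishes is immediate. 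The only real point requiring care — and the likeliest source of a bookkeeping slip — is ensuring that the two tangent-space/obstruction identifications correctly incorporate the multiplier constraint and the $\calG_n$-adjoint action of the full group $\Gamma$ (including the element $c$), so that one really sees $\ad(\rbar)$ with a $\Gamma$-action, not merely the $\Delta$-module $\ad(\rhobar)$.
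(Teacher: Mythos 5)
Your proposal is correct and takes essentially the same route as the paper. The paper simply cites \cite{CHT}*{Lemma~2.2.11 and Corollary~2.2.12} (specialized to empty $S$ and $T$, so the local term vanishes and the CHT cohomology groups reduce to $H^i(\Gamma,\ad(\rbar))$), and what you have written out is precisely the tangent-space/obstruction-theoretic content of that cited lemma — identifying $D^{\pol}_{\rbar}(\F[\varepsilon])$ with $H^1(\Gamma,\ad(\rbar))$, noting that the multiplier constraint is what restricts cocycles to $\frakgl_n$ rather than all of $\Lie\calG_n$, exhibiting the obstruction class in $H^2(\Gamma,\ad(\rbar))\otimes I$, and then invoking the standard presentation recipe. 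Your closing caution about the $\Gamma$-action (as opposed to merely the $\Delta$-action) on $\ad(\rbar)$, via $\ad(\jmath)(x)=-\transp x$, is exactly the point that makes the $\calG_n$-valued formalism of CHT the right framework here.
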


\begin{proof}
This is a special case of \cite{CHT}*{Lemma~2.2.11 and Corollary~2.2.12}. 
(In our case, the sets denoted $S$ and $T$ in \cite{CHT}*{Lemma~2.2.11 and Corollary~2.2.12} are both empty. 
So in the notation of \cite{CHT}, $R_{\mathscr{S},T}^{\loc} = \calO$ and $H^i_{\mathscr{S},T}(\Gamma,\ad(\rbar)) =H^i(\Gamma,\ad(\rbar))$).
\end{proof}

\begin{rmk}\label{rmk:GL2pol}
We could also define polarized deformation functor in the case that $\rhobar \cong \rhobar^\vee \otimes\mubar$, which amounts to $\GSp_n$ or $\GO_n$-valued deformation theory with fixed multiplier character $\mu$. 
In particular, when $n=2$, $p>2$, and $\mu$ lifts $\det\rhobar$, the universal $\mu$-polarized deformation functor equals the universal determinant $\mu$ deformation functor.
It is useful to keep this in mind when comparing the results of \S\cref{sec:CM} and \S\cref{sec:Hilb}.
\end{rmk}

\section{Local deformation theory}\label{sec:Local}

In this section we recall some results from the literature on local deformation rings that we will need later.
\subsection{Setup}\label{sec:locsetup}
Let $v$ be a finite place of $F$. 
Fix a continuous representation
	\[ \rhobar_v : G_v \lra \GL_n(\F), \]
and a continuous character $\mu : G_v \rightarrow \calO^\times$ with $\mu \bmod {\frakm_{\calO}} = \det \rhobar_v$.
Denote by $R_v^\square$ the universal lifting ring for $\rhobar_v$, and $\rho_v^\square$ the universal lift.
Let $R_v^{\square,\mu}$ be the universal determinant $\mu$ lifting ring, and $\rho_v^\mu$ the universal $R_v^{\square,\mu}$-valued lift.

We refer the reader to \cite{CHT}*{Definition~2.2.2} for the notion of a \emph{local deformation problem}. 
We will primarily use the following characterization (\cite{CHT}*{Lemma~2.2.3}):
\begin{itemize}
\item Any local deformation problem is representable by a quotient of $R_v^\square$.
\item A quotient $R$ of $R_v^\square$ represents a local deformation problem if and only if it satisfies the following: for any $\CNL_{\calO}$-algebra $A$, lift $\rho$ of $\rhobar_v$ to $A$, and $g \in 1+\M_n(\frakm_A)$, the $\CNL_{\calO}$-algebra morphism $R_v^\square \rightarrow A$ induced by $\rho$ factors through $R$ if and only if the $\CNL_{\calO}$-algebra map $R_v^\square \rightarrow A$ induced by $g\rho g^{-1}$ factors through $R$. 
\end{itemize}

\begin{lem}\label{thm:defquolem}
Let $R$ be an $\calO$-flat reduced quotient of $R_v^\square$.
\begin{enumerate}
\item\label{defquolem:flat} $R$ represents a local deformation problem if and only if for every finite totally ramified extension $E'/E$ with ring of integers $\calO'$, lift $\rho$ of $\rhobar$ to $\calO'$, and $g \in 1+\M_n(\frakm_{\calO'})$, the $\CNL_{\calO}$-algebra morphism $R_v^\square \rightarrow \calO'$ induced by $\rho$ factors through $R$ if and only if the $\CNL_{\calO}$-algebra map $R_v^\square \rightarrow \calO'$ induced by $g\rho g^{-1}$ factors through $R$. 
\item\label{defquolem:irred} $R$ represents a local deformation problem if and only if $R/\frakq$ represents a local deformation problem for each minimal prime $\frakq$ of $R$.
\end{enumerate}
\end{lem}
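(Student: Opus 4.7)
For the ``only if'' direction of \cref{defquolem:flat}, I note this is immediate, being the specialization to $A = \calO'$ (for $\calO'/\calO$ finite totally ramified) of the condition defining a local deformation problem. For the ``only if'' direction of \cref{defquolem:irred}, my plan is to exploit the fact that $R$ representing a local deformation problem is equivalent to the existence of a $\CNL_\calO$-morphism $\Phi : R \to R[[z_{ij} : 1 \le i, j \le n]]$ encoding the universal conjugation action, obtained by applying the universal property of $R$ to the conjugate lift $g_S \rho_R g_S^{-1}$, where $\rho_R$ denotes the pushforward of $\rho_v^\square$ to $R$ and $g_S := 1 + (z_{ij}) \in 1+\M_n(\frakm_{R[[z_{ij}]]})$. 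For each minimal prime $\frakq$ of $R$, I would compose $\Phi$ with the reduction $R[[z_{ij}]] \to (R/\frakq)[[z_{ij}]]$; the target is a domain so the kernel is prime, and setting $z_{ij} = 0$ the composite recovers the surjection $R \to R/\frakq$, so the kernel is contained in $\frakq$ and, by minimality, equals $\frakq$. Hence $\Phi$ descends to $R/\frakq \to (R/\frakq)[[z_{ij}]]$, exhibiting $R/\frakq$ as a local deformation problem.

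For the ``if'' direction of \cref{defquolem:flat}, I would form the universal twist $S := R[[z_{ij} : 1\le i,j\le n]]$, which inherits $\calO$-flatness and reducedness from $R$ (reducedness by the usual lowest-coefficient argument). The conjugate lift $g_S \rho_R g_S^{-1}$ determines a $\CNL_\calO$-morphism $\psi : R_v^\square \to S$, and by the universal property the claim reduces to showing $\psi$ factors through $R$. For $a \in \ker(R_v^\square \to R)$, the hypothesis translates to $\psi(a)$ vanishing under every $\CNL_\calO$-morphism $S \to \calO'$ arising from a pair consisting of an $\calO'$-lift factoring through $R$ and a conjugator $g \in 1+\M_n(\frakm_{\calO'})$, for every finite totally ramified $\calO'/\calO$. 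I would then invoke the Zariski density of such totally ramified $\calO'$-points in the rigid generic fibre $(\Spf S)^{\rig}$, which together with \cref{thm:speczardense} gives Zariski density in $\Spec S$; since $S$ is reduced, $\psi(a) = 0$.

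The ``if'' direction of \cref{defquolem:irred} I would deduce from \cref{defquolem:flat}: given $\calO'/\calO$ finite totally ramified, a lift $\rho : R_v^\square \to \calO'$ factoring through $R$, and $g \in 1+\M_n(\frakm_{\calO'})$, the prime kernel of $R \to \calO'$ contains $\bigcap_i \frakq_i = 0$ (where the $\frakq_i$ range over minimal primes of $R$), hence contains some $\frakq_i$; thus $\rho$ factors through $R/\frakq_i$, and by hypothesis $g \rho g^{-1}$ also factors through $R/\frakq_i$, and therefore through $R$. The main technical point I anticipate is the density statement used above---verifying that totally ramified $\calO'$-points are Zariski dense in the rigid generic fibre of an $\calO$-flat reduced $\CNL_\calO$-algebra. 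This is a standard rigid-analytic density fact, but deserves careful justification and is the step that requires the $\calO$-flat reduced hypothesis most essentially.
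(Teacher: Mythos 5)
Your plan is essentially the paper's argument, spelled out in more detail. The paper likewise characterizes ``$R$ represents a local deformation problem'' via the universal conjugator, reduces to a vanishing statement in the reduced $\calO$-flat ring $R$, and checks it at a dense set of points with totally ramified residue field; and your ``only if'' direction of (2), descending the universal twist $\Phi: R \to R[[z_{ij}]]$ modulo a minimal prime, is exactly the argument of \cite{BLGGT}*{Lemma~1.2.2} that the paper cites for that implication. The one slight detour in your plan is the density step: routing through the rigid generic fibre $(\Spf S)^{\rig}$ and then invoking \cref{thm:speczardense} is circular, since the points of $(\Spf S)^{\rig}$ are by definition the maximal ideals of $S[1/p]$ (\cref{thm:genfibprop}), and \cref{thm:speczardense} itself is proved from the Jacobson property of $S[1/p]$. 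The paper cuts straight to the same place: $R[1/p]$ is Jacobson (\cite{EGA4.3}*{Corollaire~10.5.8}), so its maximal ideals are Zariski dense, and the residue field at any such maximal ideal is a finite totally ramified extension of $E$ (\cite{KW2}*{Proposition~2.2}); since $R$ (equivalently $S$) is reduced and $\calO$-flat, vanishing at all these points forces vanishing. You have correctly identified the density of such $\calO'$-points as the technical crux, and it is precisely what these two references supply.
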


\begin{proof}
Let $g \in 1+\M_n(\frakm_{R_v^\square})$. 
Then the lift $g\rho_v^\square g^{-1}$ induces a $\CNL_{\calO}$-algebra morphism $\phi_g : R_v^\square \rightarrow R_v^\square$. 
It is easy to see that a quotient $f : R_v^\square \rightarrow R'$ represents a local deformation problem if and only if $f \circ \phi_g = f$ for every $g\in  1+\M_n(\frakm_{R_v^\square})$. 

Using this reformulation, \cref{defquolem:flat} follows from the fact that the maximal ideals in $R_v^\square[1/p]$ are Zariski dense (\cite{EGA4.3}*{Proposition~10.5.3}),  since $R$ is $\calO$-flat and reduced and the residue field for any maximal ideal of $R_v^\square[1/p]$ is a finite totally ramified extension of $E$, (\cite{KW2}*{Proposition~2.2}).
It similarly follows that $R$ represents a local deformation problem if $R/\frakq$ does for each minimal prime $\frakq$ of $R$, and the converse follows from the argument of \cite{BLGGT}*{Lemma~1.2.2}. 
\end{proof}




\subsection{Residual characteristic $\ne p$}\label{sec:notp}
We first assume that $v\nmid p$. 
We recall a quotient of $R_v^\square$ studied by Taylor \cite{TaylorIHES}*{Proposition~3.1} (where it is denoted $R_v^{\loc}/\mathcal{J}_v^{(1,\ldots,1)}$) and \cite{ThorneAdequate}*{Proposition~3.12}, that will appear when recalling results from the literature in \S\ref{sec:CMsmallRT}.


\begin{prop}\label{thm:R1}
Assume $\rhobar_v$ is trivial. 
There is a quotient $R_v^1$ of $R_v^\square$ representing the following local deformation problem: for a lift $\rho$ of $\rhobar$ to a $\CNL_{\calO}$-algebra $A$, the induced $\CNL_{\calO}$-algebra morphism $R_v^\square \rightarrow A$ factors through $R_v^1$ if and only if for every $\gamma\in I_v$, the characteristic polynomial of $\rho(\gamma)$ is $(X-1)^n$. 
The $\CNL_{\calO}$-algebra $R_v^1$ is equidimensional of dimension $n^2+1$ and every generic point of $\Spec R_v^1$ has characteristic $0$.
\end{prop}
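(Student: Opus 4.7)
The plan is to construct $R_v^1$ as a quotient of $R_v^\square$ by explicit equations, then analyse its generic fibre geometrically. First, I would take $\mathcal{I} \subseteq R_v^\square$ to be the closed ideal generated by the coefficients $a_0(\gamma), \ldots, a_{n-1}(\gamma)$ of
\[
\det\bigl(X\cdot I_n - \rho_v^\square(\gamma)\bigr) - (X-1)^n = \sum_{i=0}^{n-1} a_i(\gamma)\, X^i \in R_v^\square[X]
\]
as $\gamma$ ranges over $I_v$; continuity of $\rho_v^\square$ makes this well-defined. Setting $R_v^1 := R_v^\square/\mathcal{I}$, the fact that characteristic polynomials are invariant under conjugation means $\mathcal{I}$ is preserved by conjugation by any element of $1 + \M_n(\mathfrak{m}_{R_v^\square})$, so the criterion recalled at the start of \S\ref{sec:locsetup} shows $R_v^1$ represents the claimed local deformation problem.

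The heart of the proof is the dimension calculation on $\overline{\Q}_p$-points. A lift $\rho : G_v \to \GL_n(\overline{\Q}_p)$ satisfying the defining condition has $\rho(\gamma)$ unipotent for every $\gamma \in I_v$. Since the wild inertia $P_v$ is pro-$\ell$ for the residue characteristic $\ell \neq p$ of $v$, while any subgroup of $\GL_n(\overline{\Q}_p)$ consisting of unipotent matrices is pro-$p$, we must have $\rho(P_v) = 1$; thus $\rho$ factors through the tame quotient. Fixing a Frobenius lift $\phi$ and a topological generator $\sigma$ of tame inertia with $\phi\sigma\phi^{-1} = \sigma^q$ (where $q = \#k(v)$), such a $\rho$ is determined by the pair $(\rho(\phi), \rho(\sigma)) \in \GL_n(\overline{\Q}_p) \times \mathcal{U}_n(\overline{\Q}_p)$ satisfying $\rho(\phi)\rho(\sigma)\rho(\phi)^{-1} = \rho(\sigma)^q$, where $\mathcal{U}_n$ denotes the unipotent variety in $\GL_n$.

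I would then stratify by the Jordan type of $\rho(\sigma)$. For each partition $\lambda$ of $n$, the stratum $\mathcal{U}_n^\lambda$ is a single $\GL_n$-orbit of dimension $n^2 - \dim Z_{\GL_n}(\Sigma_\lambda)$ for any representative $\Sigma_\lambda$; and for each such $\Sigma$ the set of $\Phi$ with $\Phi \Sigma \Phi^{-1} = \Sigma^q$ is a coset of $Z_{\GL_n}(\Sigma)$---nonempty because $\Sigma$ and $\Sigma^q$ share the same Jordan type in characteristic $0$---hence of dimension $\dim Z_{\GL_n}(\Sigma_\lambda)$. Each stratum therefore contributes dimension exactly $n^2$, showing $R_v^1[1/p]$ is equidimensional of dimension $n^2$. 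To upgrade this to equidimensionality of $R_v^1$ itself of dimension $n^2 + 1$ with every generic point in characteristic zero, I would explicitly construct, for each partition $\lambda$, a $\overline{\Q}_p$-valued lift with $\rho(\sigma)$ of type $\lambda$ (and $\rho(\phi)$ any element satisfying the twist relation), showing that every irreducible component of $\Spec R_v^1$ meets the characteristic-zero locus. The main obstacle is ensuring that the Jordan-stratum dimension bookkeeping matches uniformly across all partitions and that no irreducible component of $R_v^1$ is hidden purely in characteristic $p$; the explicit realisation of each stratum by an $\calO$-valued lift is what rules out the latter possibility.
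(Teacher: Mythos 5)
Your construction of $R_v^1$ by imposing the characteristic-polynomial condition coefficient by coefficient is exactly the paper's own construction, which it describes just after the statement, and the conjugation-invariance argument for representability is correct. For the equidimensionality and characteristic-zero generic point claims, however, the paper gives no proof at all: it cites Taylor and Thorne and moves on. What you are sketching is therefore a reconstruction of those arguments, and while your dimension count for each Jordan stratum is right (orbit of dimension $n^2 - \dim Z_{\GL_n}(\Sigma_\lambda)$, fibre a coset of $Z_{\GL_n}(\Sigma_\lambda)$, total $n^2$), there are two issues with the rest.

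The first is an omitted step. $R_v^1$ is a complete local ring, not the coordinate ring of the affine $\calO$-scheme of pairs $(\Phi,\Sigma)$ with $\Sigma$ unipotent and $\Phi\Sigma\Phi^{-1}=\Sigma^q$. You need to identify $\Spf R_v^1$ with the completion of that affine scheme at the origin $(1,1)$ before dimension and flatness statements about the variety transfer to the deformation ring; this identification is the substantive geometric input in Taylor's argument and cannot simply be assumed.

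The second is a genuine gap. Exhibiting an $\calO$-valued lift for each Jordan type does \emph{not} rule out an irreducible component of $\Spec R_v^1$ supported entirely in the special fibre: a priori such a component need not contain any of the points you construct, and you would need to already know that the components are precisely the stratum closures, which is part of what is being proved. The correct argument is a dimension bound on the special fibre. Your stratification computation actually works verbatim over $\Fbar_p$, because $q$ is a power of the residue characteristic $\ell \ne p$ of $v$ and is therefore prime to $p$, so $\Sigma$ and $\Sigma^q$ still share a Jordan type over $\Fbar_p$ and each stratum of the special fibre again has dimension $n^2$. This gives $\dim(R_v^1 \otimes_{\calO} \F) \le n^2$, while the closure of the unramified stratum $\{\Sigma = 1\}$ (which passes through the origin) already shows $\dim R_v^1 \ge n^2+1$. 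It is this dimension gap between $R_v^1$ and its special fibre, not the existence of characteristic-zero points in each stratum, that forbids components concentrated in characteristic $p$ and yields equidimensionality of $R_v^1$ itself.
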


The construction of $R_v^1$ can be seen easily. 
Indeed, if $\rhobar_v$ is trivial then $\rho_v^\square|_{I_v}$ must factor through tame inertia. 
Letting $X^n - a_{n-1}X^{n-1} + \cdots + (-1)^n a_0$ denote the characteristic polynomial for a generator of tame inertia, we let $R_v^1$ be the quotient of $R_v^\square$ by the ideal generated by $\{a_j - \binom{n}{j}\}_{j=0,\ldots,n-1}$.

The following seems to be well known, but for lack of a concrete reference we include a proof.

\begin{lem}\label{thm:R1factor}
Let $A$ be a reduced $\CNL_{\calO}$-algebra and let $\rho : G_v \rightarrow \GL_n(A)$ be a lift of $\rhobar_v$. 
There is a finite extension $K/F_v$ such that 
the $\CNL_{\calO}$-algebra morphism $R_{\rhobar_v|_{G_K}}^\square \rightarrow A$ induced by $\rho|_{G_K}$ factors through $R_{\rhobar_v|_{G_K}}^1$. 
\end{lem}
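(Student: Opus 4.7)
The plan is to exhibit such a $K$ directly. First I would choose a finite extension $K'/F_v$ over which $\rhobar|_{I_{K'}}$ becomes trivial (possible since $\rhobar(I_v)$ is finite). Then $\rho(I_{K'})$ lands in the pro-$p$ group $1+\M_n(\frakm_A)$. Since wild inertia $P_{K'}$ is pro-$\ell$ for $\ell \ne p$ the residue characteristic of $F_v$, and the pro-$p$ part of the tame quotient $I_{K'}^t \cong \prod_{\ell' \ne \ell} \Z_{\ell'}(1)$ is just $\Z_p(1)$, the restriction $\rho|_{I_{K'}}$ factors through $\Z_p(1)$, which is topologically generated by some $t$. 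If I can show $M := \rho(t)$ is unipotent in $A$, then every $\rho(\gamma)$ for $\gamma \in I_{K'}$ will be of the form $M^{\alpha(\gamma)}$ for some $\alpha(\gamma) \in \Z_p$ (defined by continuity from integer powers), and such powers of a unipotent matrix remain unipotent (via $M^\alpha = \sum_{j=0}^{n-1}\binom{\alpha}{j}(M-I)^j$ with $\binom{\alpha}{j} \in \Z_p$); so I may take $K = K'$.

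The key input for the unipotence of $M$ is the Weil-group relation $\rho(\Phi_{K'}) M \rho(\Phi_{K'})^{-1} = M^{q_{K'}}$, where $q_{K'}$ is the cardinality of the residue field of $K'$. It implies that the multiset of eigenvalues of $M$ in any algebraic closure is stable under $\lambda \mapsto \lambda^{q_{K'}}$, and since this multiset has size $\le n$, each $\lambda$ satisfies $\lambda^{q_{K'}^n - 1}=1$, so the eigenvalues are roots of unity of order prime to $p$. To bootstrap this into the statement that the characteristic polynomial of $M$ equals $(X-1)^n$ in $A[X]$, I would use that $A$ is reduced: it embeds into $\prod_{\frakq} A/\frakq$ over its minimal primes, so it suffices to work in each $\CNL_{\calO}$-domain $A/\frakq$ separately and pass to an algebraic closure of its fraction field. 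There the eigenvalues of $M$ are prime-to-$p$ roots of unity, and they must reduce to $1$ at any maximal ideal above $\frakm_{A/\frakq}$ in the integral closure (because $M \equiv 1 \pmod{\frakm_A}$); but the reduction map on $\mu_N$ is injective for $N$ coprime to $p$ (by separability of $X^N-1$), so the eigenvalues are themselves all equal to $1$ and the characteristic polynomial is forced to be $(X-1)^n$.

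The main obstacle I anticipate is the uniform treatment of minimal primes $\frakq$ with $p \in \frakq$, where the Zariski density of $\Qbar_p$-points used in the proof of \cref{thm:defquolem} is unavailable; the reduction-to-domains argument above is designed to handle the characteristic-$0$ and characteristic-$p$ components on equal footing. The \emph{In particular} clause of the lemma is then immediate from the description of $R_{\rhobar|_{G_K}}^1$ recorded in \cref{thm:R1}.
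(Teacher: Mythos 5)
Your reduction from reduced rings to domains via the embedding into $\prod_{\frakq} A/\frakq$, and your use of the Weil-group relation $\rho(\Phi)\, M\, \rho(\Phi)^{-1}=M^{q}$ to show the eigenvalues of $M=\rho(t)$ are roots of unity, both track the paper's argument. But there is a genuine gap immediately after: you assert that these roots of unity have order prime to $p$, and from this you conclude $M$ is unipotent and that one may take $K=K'$. Neither claim is correct. The relation only gives $\lambda^{q^N-1}=1$ for some $N$, and $q^{N}-1$ can very well be divisible by $p$ (e.g.\ whenever $q\equiv 1\bmod p$), so $\lambda$ can have $p$-power order. Membership in $1+\M_n(\frakm_A)$ rules out nontrivial prime-to-$p$ roots of unity on the diagonal, but it does \emph{not} rule out $p$-power roots of unity: for instance, with residue characteristic $\ell=7$, $p=3$, $n=2$, $A=\calO=\Z_3[\zeta_3]$, the homomorphism with $\rho(t)=\mathrm{diag}(\zeta_3,\zeta_3^{-1})$ and $\rho(\Phi)=1$ satisfies $\Phi t\Phi^{-1}=t^{7}$ (as $\zeta_3^7=\zeta_3$), lands in $1+\M_2(\frakm_\calO)$ since $\zeta_3\equiv 1\bmod (\zeta_3-1)$, but its characteristic polynomial is $X^2+X+1\neq (X-1)^2$. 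So $\rho(I_{K'})$ is \emph{not} unipotent even though $\rhobar|_{I_{K'}}$ is trivial. Your injectivity-of-reduction-on-$\mu_N$ step is exactly where this fails: it is valid only for $N$ prime to $p$, which is precisely the hypothesis you cannot secure.

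The paper's proof handles this by not stopping at $L=K'$: having shown the eigenvalues are roots of unity (of some bounded order), it passes to a further finite extension $K/L$ --- chosen so that its ramification index kills those roots of unity, i.e.\ so that $t\mapsto t^{e}$ with $e$ divisible by the order of every eigenvalue --- after which $\rho(I_K)$, now generated by $M^{e}$, is genuinely unipotent. That extra extension step is the content you are missing, and it is why the lemma asserts the existence of \emph{some} finite $K/F_v$, not that $K'$ itself works. The rest of your write-up (the pro-$p$ tame quotient analysis, the treatment of the minimal primes of $A$, and the appeal to \cref{thm:R1} for the ``in particular'' clause) is fine once this is corrected.
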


\begin{proof}
Assume first that $A$ in an integral domain. 
There is a finite extension $L/F_v$ such that the image of the wild ramification under $\rho|_{G_L}$ is trivial.
Letting $t$ denote a generator of the tame inertia of $G_L$, $\Phi$ a lift of Frobenius to $G_L$, and $q$ the order of the residue field of $L$, the relation $\Phi^{-1}t \Phi = t^q$ implies that the eigenvalues of $\rho(t)$, in an algebraic closure of the fraction field of $A$, are stable under the taking $q$th powers, hence are roots of unity. 
Passing to another finite extension $K/L$ to trivialize these eigenvalues, the image of inertia under $\rho|_{G_K}$ is unipotent. 

Now assume $A$ is only reduced. 
By the above discussion, for each minimal prime ideal $\frakq$ of $A$, there is a a finite extension $K_{\frakq}/F_v$ such that $\rho(I_{K_{\frakq}})$ is unipotent. 
Since $A$ is reduced, there is an injection
	\[ \GL_n(A) \hookrightarrow \prod_{\frakq} \GL_n(A/\frakq),\]
the product the product being taken over all minimal prime ideals of $A$. 
We can then take $K$ to be the composite of the $K_{\frakq}$. 
be any finite extension of $F_v$ containing each $K_{\frakq}$, and such that $\rhobar_v|_{G_K}$ is trivial.
\end{proof}

\subsection{Residual characteristic $p$}\label{sec:p}
For the remainder of this section we assume that $v|p$. 

\begin{lem}\label{thm:localsmooth}
\begin{enumerate}
\item If $H^0(G_v,\ad(\rhobar_v)(1)) = 0$, then $R_v^\square$ is isomorphic to a power series over $\calO$ in $n^2(1 + [F_v:\Q_p])$ variables.
\item\label{localsmooth:det} If $p\nmid n$ and $H^0(G_v,ad^0(\rhobar_v)(1)) = 0$, then $R_v^{\square,\mu}$ is isomorphic to a power series over $\calO$ in $(n^2 - 1)(1+ [F_v:\Q_p])$ variables.
\end{enumerate}
\end{lem}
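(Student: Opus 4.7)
The plan is to apply the presentations from \cref{thm:genunivpres} and \cref{thm:gendetpres} together with local duality and the local Euler characteristic formula. The key input is that under the hypotheses, the $H^2$ obstruction spaces vanish, so the presentations are presentations of power series rings.

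For part (1), I first recall that $\ad(\rhobar_v) = \frakgl_n(\F)$ with the conjugation action is canonically self-dual as an $\F[G_v]$-module via the trace pairing $(X,Y) \mapsto \tr(XY)$, which is $G_v$-invariant and non-degenerate on all of $\frakgl_n$. Thus local Tate duality gives
\[ H^2(G_v,\ad(\rhobar_v)) \cong H^0(G_v,\ad(\rhobar_v)^\vee(1))^\vee \cong H^0(G_v,\ad(\rhobar_v)(1))^\vee, \]
which vanishes by hypothesis. By \cref{thm:genunivpres} it follows that $R_v^\square$ is isomorphic to a power series ring over $\calO$ in $g = \dim_\F Z^1(G_v,\ad(\rhobar_v))$ variables. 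It remains to compute $g$. The standard exact sequence
\[ 0 \to H^0(G_v,\ad(\rhobar_v)) \to \ad(\rhobar_v) \to Z^1(G_v,\ad(\rhobar_v)) \to H^1(G_v,\ad(\rhobar_v)) \to 0 \]
yields $g = n^2 - \dim H^0 + \dim H^1$. Since $v|p$, the local Euler characteristic formula gives $\dim H^0 - \dim H^1 + \dim H^2 = -[F_v:\Q_p]\cdot n^2$, and using $H^2 = 0$ we get $\dim H^1 - \dim H^0 = [F_v:\Q_p]\cdot n^2$. Adding, $g = n^2(1 + [F_v:\Q_p])$, as desired.

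For part (2), the argument is entirely parallel with $\ad(\rhobar_v)$ replaced by $\ad^0(\rhobar_v) = \fraksl_n(\F)$; one uses \cref{thm:gendetpres} in place of \cref{thm:genunivpres}. The one point to check is that $\ad^0(\rhobar_v)$ is self-dual as an $\F[G_v]$-module. This comes down to the restriction of the trace pairing to $\fraksl_n$ remaining non-degenerate, whose radical consists of scalar trace-zero matrices; this radical is zero precisely when $p \nmid n$, which is our hypothesis. Granted this, Tate duality kills $H^2(G_v,\ad^0(\rhobar_v))$, the presentation becomes a power series ring, and the local Euler characteristic formula applied to the module $\ad^0(\rhobar_v)$ of $\F$-dimension $n^2-1$ gives $\dim Z^1 = (n^2-1)(1+[F_v:\Q_p])$.

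There is no real obstacle here; the only subtle point is remembering that self-duality of $\ad^0$ requires $p\nmid n$, which is already assumed. Everything else reduces to bookkeeping with the two displayed formulas already set up in the paper.
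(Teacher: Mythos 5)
Your proof is correct and follows the same route as the paper's: use the perfectness of the trace pairing on $\ad$ (and on $\ad^0$ when $p\nmid n$) together with local Tate duality to identify $H^2$ with $H^0(\ad(1))^\vee$, hence to kill the obstruction space, then invoke the presentations in \cref{thm:genunivpres,thm:gendetpres} and the local Euler--Poincar\'e characteristic formula to count variables. You have just filled in the bookkeeping (the exact sequence $0\to H^0\to\ad\to Z^1\to H^1\to 0$ and the arithmetic) that the paper leaves implicit.
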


\begin{proof}
The trace pairing on $\ad(\rhobar_v)$ is perfect, and induces a perfect pairing on $\ad^0(\rhobar)$ if $p\nmid n$. 
By Tate local duality, $\dim_{\F} H^2(G_v,\ad(\rhobar_v)) = \dim_{\F}H^0(G_v,\ad(\rhobar_v)(1))$, and $\dim_{\F} H^2(G_v,\ad^0(\rhobar)) = \dim_{\F} H^0(G_v,\ad^0(\rhobar)(1))$ if $p\nmid n$. 
The result then follows from part 1 of \cref{thm:genunivpres,thm:gendetpres}, and the local Euler--Poincar\'{e} characteristic formula.
%
\end{proof}

The following fundamental result is due to Kisin \cite{KisinPssDefRing}*{Corollary~2.7.7 and Theorem~3.3.8} 

\begin{thm}\label{thm:crdefring}
Fix $\lambda \in (\Z_+^n)^{\Hom(F_v,\Qbar_p)}$ and an inertial type $\tau$ defined over $E$.
There is a (possibly zero) $\calO$-flat reduced quotient $R_v^{\lambda,\tau,\cris}$ of $R_v^\square$ such that an $E$-algebra morphism $x : R_v^\square[1/p] \rightarrow \Qbar_p$ factors through $R_v^{\lambda,\tau,\cris}[1/p]$, if and only if $\rho_x$ is potentially crystalline of weight $\lambda$ inertial type $\tau$.
If nonzero, then $\Spec R_v^{\lambda,\tau,\cris}[1/p]$ is equidimensional of dimension $n^2+\frac{n(n-1)}{2}[F_v:\Q_p]$, and is formally smooth over $E$.
\end{thm}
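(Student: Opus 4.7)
The plan is to construct $R_v^{\lambda,\tau,\cris}$ by first identifying the desired locus as a Zariski-closed subspace of the rigid analytic generic fiber $\mathfrak{X} = (\Spf R_v^\square)^{\rig}$ and then taking its schematic closure in $\Spec R_v^\square$.

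First I would verify that the subset $\mathfrak{X}^{\lambda,\tau,\cris} \subset \mathfrak{X}$ of points $x$ at which $\rho_x$ is potentially crystalline of weight $\lambda$ and inertial type $\tau$ is Zariski closed. Choose a finite Galois extension $K/F_v$ such that $\tau|_{I_K}$ is trivial; then a lift of inertial type $\tau$ is potentially crystalline if and only if its restriction to $G_K$ is crystalline. Kisin's families of Breuil--Kisin modules (or Wach modules) interpolate the crystalline condition rigid-analytically: over $\mathfrak{X}$ one constructs a coherent sheaf that is locally free of expected rank exactly on the locus of points whose restriction to $G_K$ is crystalline with Hodge--Tate weights $\lambda$. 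The further inertial condition at $\tau$ is cut out by vanishing of equations on the traces of inertial elements acting on the associated universal filtered $(\varphi,N)$-module. I would then take $R_v^{\lambda,\tau,\cris}$ to be the maximal $\calO$-flat reduced quotient of $R_v^\square$ whose rigid generic fiber equals $\mathfrak{X}^{\lambda,\tau,\cris}$; it is $\calO$-flat and reduced by construction, and the characterization of its $\Qbar_p$-points is immediate from \cref{thm:genfibprop}.

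For the dimension and formal smoothness I would construct a resolution $\pi : \calL^{\lambda,\tau} \to \Spec R_v^{\lambda,\tau,\cris}$ whose $A$-points parametrize lifts together with a compatible filtered $(\varphi,N,G_v)$-module of weight $\lambda$ and inertial type $\tau$. This $\calL^{\lambda,\tau}$ fibers as a flag bundle over a formally smooth parameter space for the unfiltered $(\varphi,N)$-modules of type $\tau$: after the shift $\lambda_{\sigma,j} \mapsto \lambda_{\sigma,j} + n - j$ the Hodge--Tate weights are strictly decreasing (since $\lambda_{\sigma,j} \geq \lambda_{\sigma,j+1}$), so each flag fiber is a complete flag of dimension $\frac{n(n-1)}{2}$. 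Combining the $[F_v:\Q_p]$ embeddings with $n^2$ framing dimensions gives total dimension $n^2 + \frac{n(n-1)}{2}[F_v:\Q_p]$. On the generic fiber $\pi[1/p]$ is an isomorphism, since $p$-adic Hodge theory recovers the Hodge filtration from the Galois representation alone, so equidimensionality, formal smoothness, and the dimension formula transfer to $\Spec R_v^{\lambda,\tau,\cris}[1/p]$.

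The main obstacle is the rigorous construction of the moduli $\calL^{\lambda,\tau}$ and the verification that its generic fiber maps isomorphically onto $\Spec R_v^{\lambda,\tau,\cris}[1/p]$. This requires the full apparatus of integral $p$-adic Hodge theory---notably Kisin's theory of Breuil--Kisin modules and the associated affine Grassmannian---and is the technical heart of the cited work.
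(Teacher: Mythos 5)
The paper does not prove this theorem; it is recorded as a citation to Kisin's \emph{Potentially semi-stable deformation rings} (Corollary~2.7.7 and Theorem~3.3.8 there), so the comparison is between your sketch and Kisin's argument. Your outline captures the essential ingredients: a moduli space of filtered $(\varphi,N)$-modules with descent data, its flag-variety structure producing the $\frac{n(n-1)}{2}[F_v:\Q_p]$ term, the $n^2$ framing degrees of freedom, and formal smoothness of the generic fiber coming from the unobstructedness of deformations of weakly admissible filtered modules with fixed regular Hodge type. You also correctly identify that the substance lies in the integral $p$-adic Hodge theory (Breuil--Kisin modules and the Kisin variety).

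The one genuine difference is the order of operations in the construction. You propose to first exhibit the potentially crystalline locus as a Zariski-closed analytic subvariety of $(\Spf R_v^\square)^{\rig}$ and then take scheme-theoretic closure, which requires an independent argument (via Sen theory or $(\varphi,\Gamma)$-module interpolation) that the crystalline/de~Rham condition with fixed Hodge--Tate weights and inertial type cuts out a closed subspace. Kisin's construction runs the other way: he builds a projective morphism $\Theta : \mathcal{L}^{\leq h,\tau} \to \Spec R_v^\square$ of integral models parametrizing Breuil--Kisin lattices, defines $R_v^{\lambda,\tau,\cris}$ as the reduced $\calO$-flat scheme-theoretic image, and only then identifies the generic fiber as the desired locus. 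Projectivity of $\Theta$ gives the closedness for free, and the birationality of $\Theta[1/p]$ onto its image (which, as you say, reflects the uniqueness of $D_{\mathrm{cris}}$ over a field) transfers the dimension and smoothness. So your plan would reach the same ring but places the technical burden on a different lemma; Kisin's route is what the cited proof actually does, and it is the cleaner way to obtain $\calO$-flatness and reducedness simultaneously with the moduli interpretation.
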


If $\tau = 1$, then we omit it from the notation, and just write $R_v^{\lambda,\mathrm{cr}}$ for  $R_v^{\lambda,1,\mathrm{cr}}$.
This theorem yields the following corollary, using the argument \cite{EmertonGeeBM}*{\S4.3}. 

\begin{cor}\label{thm:crdefringdet}
Assume $p\nmid n$. 
Fix $\lambda \in (\Z_+^n)^{\Hom(F_v,\Qbar_p)}$ and an inertial type $\tau$ defined over $E$.
There is a (possibly zero) $\calO$-flat reduced quotient $R_v^{\lambda,\tau,cris,\mu}$ of $R_v^{\square,\mu}$ such that an $E$-algebra morphism $x : R_v^{\square,\mu}[1/p] \rightarrow \Qbar_p$ factors through $R_v^{\lambda,\tau,\cris,\mu}[1/p]$ if and only if $\rho_x$ is potentially crystalline of weight $\lambda$ and inertial type $\tau$.
If nonzero, then $\Spec R_v^{\lambda,\tau,\cris,\mu}[1/p]$ is equidimensional of dimension $n^2-1 + \frac{n(n-1)}{2}[F_v:\Q_p]$ and is formally smooth over $E$. 
\end{cor}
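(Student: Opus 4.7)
The plan is to construct $R_v^{\lambda,\tau,\cris,\mu}$ as a fixed-determinant analogue of Kisin's ring, and then to extract its dimension and smoothness by comparing it to $R_v^{\lambda,\tau,\cris}$ via the determinant morphism.

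Concretely, I would set $R_v^{\lambda,\tau,\cris,\mu}$ to be the maximal $\calO$-flat reduced quotient of $R_v^{\lambda,\tau,\cris}\otimes_{R_v^\square} R_v^{\square,\mu}$, which is naturally a quotient of $R_v^{\square,\mu}$. An $E$-algebra map $x : R_v^{\square,\mu}[1/p] \to \Qbar_p$ then factors through $R_v^{\lambda,\tau,\cris,\mu}[1/p]$ if and only if it factors through $R_v^{\lambda,\tau,\cris}[1/p]$, which by \cref{thm:crdefring} is exactly the condition that $\rho_x$ be potentially crystalline of weight $\lambda$ and inertial type $\tau$. Note that unless $\mu$ itself has Hodge--Tate--Sen weights $\bigl(\sum_j \lambda_{\sigma,j}\bigr)_\sigma$ and inertial type $\det\tau$, the ring $R_v^{\lambda,\tau,\cris,\mu}$ is zero and the statement is vacuous; so I may assume $\mu$ has the correct weight and type.

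For the dimension and smoothness, I would consider the determinant morphism
\[
\delta : \Spec R_v^{\lambda,\tau,\cris}[1/p] \longrightarrow \Spec R_{\det\rhobar_v}^{\det\lambda,\det\tau,\cris}[1/p],
\]
where $\det\lambda := \bigl(\sum_{j=1}^n \lambda_{\sigma,j}\bigr)_\sigma$. Applying \cref{thm:crdefring} with $n$ replaced by $1$, the target is formally smooth over $E$ of dimension $1$, and $\mu$ defines a $\Qbar_p$-point of it whose scheme-theoretic fibre under $\delta$ recovers $\Spec R_v^{\lambda,\tau,\cris,\mu}[1/p]_{\Qbar_p}$. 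Granted the formal smoothness of $\delta$ proved below, this fibre is formally smooth over $\Qbar_p$ of dimension $n^2 + \tfrac{n(n-1)}{2}[F_v:\Q_p] - 1 = n^2 - 1 + \tfrac{n(n-1)}{2}[F_v:\Q_p]$; descending coefficients from $\Qbar_p$ back to $E$ gives the desired statement.

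The main step, which is the substance of the Emerton--Gee argument being cited, is the formal smoothness of $\delta$. This reduces to a tangent space calculation at each $y \in \Spec R_v^{\lambda,\tau,\cris}[1/p]$: because $p \nmid n$, the short exact sequence $0 \to \ad^0\rho_y \to \ad\rho_y \to k(y) \to 0$ is split by $\tfrac{1}{n}\cdot\mathrm{Id}$, and the key technical point is that this splitting descends to the subspaces of $H^1(G_v, -)$ cut out by the potentially crystalline deformation condition (equivalently, on the level of filtered $(\varphi,N,G_K)$-modules). Once this is checked, the tangent map of $\delta$ at $y$ surjects onto the crystalline tangent space of the target, and a standard descent argument upgrades this to formal smoothness of $\delta$. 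Verifying that the splitting $\tfrac{1}{n}\cdot\mathrm{Id}$ is compatible with the crystalline filtrations and Frobenius structure is the technical heart of the argument, and is the main obstacle.
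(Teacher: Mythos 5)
Your approach is a genuinely different route from the one the paper takes. The paper just cites the argument of Emerton--Gee, \cite{EmertonGeeBM}*{\S4.3}, and in particular Lemma 4.3.1 there: that lemma gives an \emph{explicit integral decomposition} of $R_v^{\lambda,\tau,\cris}$ as (roughly) a completed tensor product of $R_v^{\lambda,\tau,\cris,\mu}$ with a one-dimensional deformation ring, obtained by taking the $n$th root of the character $\det\rho\cdot\mu^{-1}$ (which lifts the trivial character, so the $n$th root is a power series in $\det\rho\cdot\mu^{-1}-1$ whose coefficients $\binom{1/n}{k}$ are $p$-integral precisely when $p\nmid n$). The dimension and smoothness statements for the fixed-determinant ring then fall out of the product decomposition. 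You instead study the determinant morphism $\delta$ directly and argue it is smooth by a tangent-space surjection; the paper's route gives a stronger integral statement (a decomposition of the complete local $\calO$-algebras), whereas yours is a generic-fibre argument.

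Your argument is correct in outline, and the ``technical heart'' you flag is in fact easier than you fear. The point is that $\ad\rho_y \cong \ad^0\rho_y \oplus k(y)$ is a decomposition of $G_v$-representations (in characteristic zero the trace form always gives this splitting --- note that the coefficient field here is $\Qbar_p$, so $1/n$ is available without any hypothesis on $p$), and the potentially crystalline/de~Rham condition defining Kisin's tangent space is functorial and compatible with direct sums. Concretely: given a tangent vector $\alpha$ at $\delta(y)$, the cocycle $\tfrac{1}{n}\alpha\cdot\mathrm{Id}$ is valued in the scalar direct summand of $\ad\rho_y$, so $\rho_y\cdot(1+\tfrac{\epsilon}{n}\alpha)$ is just $\rho_y$ twisted by a character lifting the trivial character, and the potentially crystalline condition of weight $\lambda$ and type $\tau$ is visibly preserved. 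There is no Frobenius-compatibility issue to check beyond this. Two further small points worth making explicit. First, the step from surjectivity of tangent maps to smoothness of $\delta$ is not a ``standard descent argument'' in isolation; it uses that both the source $\Spec R_v^{\lambda,\tau,\cris}[1/p]$ and the target are regular (by \cref{thm:crdefring}, applied with $n$ and with $1$), after which a submersion of regular schemes over a field is smooth. Second, there is a mild logical wrinkle in asserting that the scheme-theoretic fibre of $\delta$ over $\mu$ \emph{is} $\Spec R_v^{\lambda,\tau,\cris,\mu}[1/p]$: a priori that fibre is only $\Spec(R_v^{\lambda,\tau,\cris}\otimes_{R_v^\square}R_v^{\square,\mu})[1/p]$, while your $R_v^{\lambda,\tau,\cris,\mu}$ was defined as the maximal $\calO$-flat reduced quotient. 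The two coincide once the smoothness of the fibre (hence its reducedness) is established, so the order should be: compute the scheme-theoretic fibre, prove it is smooth, then identify it with $R_v^{\lambda,\tau,\cris,\mu}[1/p]$. Finally, it is interesting to note that your argument does not actually use $p\nmid n$ --- the splitting $\frac{1}{n}\cdot\mathrm{Id}$ exists in characteristic zero for any $n$ --- whereas the Emerton--Gee integral decomposition genuinely needs it. The hypothesis is of course needed elsewhere in the paper (e.g.\ for \cref{thm:gendetpres} and \cref{thm:localsmooth}), so this is harmless, but it illustrates that your route is slightly more elementary for this particular generic-fibre statement.
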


In \cite{EmertonGeeBM}*{Lemma~4.3.1} it is assumed $p>n$, but this is only used to guarantee that a character, denoted $\theta$ there, has an $n$th root. 
For this it suffices that for any positive integer $k$, the binomial coefficient $\binom{1/n}{k}$ is a $p$-adic integer, which holds whenever $p\nmid n$.

Note that knowing whether or not the above rings are nonzero amounts to showing there is a potentially semistable or potentially crystalline lift with the required weight and inertial type (and determinant). 
In general, this seems to be a difficult problem. 

We recall the notion of a potentially diagonalizable representation from \cite{BLGGT}. 
We say a lift
	\[ \rho : G_v \lra \GL_n(\calO_{\Qbar_p}) \]
of $\rhobar_v\otimes \Fbar_p$ is \emph{potentially diagonalizable of weight} $\lambda \in (\Z_+^n)^{\Hom(F_v,\Qbar_p)}$ if there is a finite extension $K/F_v$ and continuous characters $\chi_1,\ldots,\chi_n : G_K \lra \calO_{\Qbar_p}$, such that the following hold:
\begin{itemize}
	\item $\rho|_{G_K}$ and $\chi_1\oplus\cdots\oplus \chi_n$ are both crystalline of weight $\lambda_K$, where $\lambda_K \in (\Z_+^n)^{\Hom(K,\Qbar_p)}$ is given by $\lambda_{K,\sigma'} = \lambda_\sigma$ if $\sigma' : K \hookrightarrow \Qbar_p$ extends $\sigma : F_v \hookrightarrow \Qbar_p$;
	\item the $\calO_{\Qbar_p}$-points of $\Spec R_{\rhobar_v|_{G_K}}^{\lambda,\cris}$ determined by $\rho|_{G_K}$ and $\chi_1\oplus \cdots \oplus \chi_n$ lie on the same irreducible component.
	\end{itemize}
We say $\rho$ is \emph{potentially diagonalizable of regular weight} if there is $\lambda \in (\Z_+^n)^{\Hom(F_v,\Qbar_p)}$ such that $\rho$ is potentially diagonalizable of weight $\lambda$. 
One can also define potentially diagonalizable lifts in the nonregular weight case, but we will not have use for them here.
We also note that in \cite{BLGGT}, potentially diagonalizable is defined using irreducible components of $\Spec R_{\rhobar_v|_{G_K}}^{\lambda,\cris}\otimes \Qbar_p$, but it is equivalent to use irreducible components of $\Spec R_{\rhobar_v|_{G_K}}^{\lambda,\cris}$, postcomposing $\chi_1\oplus\cdots\oplus\chi_n$ by an element of $\Gal(\Qbar_p/E)$ if necessary. 

For given $\lambda \in (\Z_+^n)^{\Hom(F_v,\Qbar_p)}$ and inertial type $\tau$ defined over $E$, we'll call an $\calO_{\Qbar_p}$-point $x$ of $\Spec R_v^{\lambda,\tau,\cris}$, or of $\Spec R_v^{\lambda,\tau,\cris,\mu}$, \emph{potentially diagonalizable} if $\rho_x$ is. 
We will call an irreducible component of $\Spec R_v^{\lambda,\tau,\cris}$, or of $\Spec R_v^{\lambda,\tau,\cris,\mu}$, \emph{potentially diagonalizable} if it contains a potentially diagonalizable point.

Finally, we note that by \cite{BLGGT}*{Lemma~1.4.1}, the notion of being potentially diagonalizable depends only on $\rho \otimes \Qbar_p$. 
Hence, it makes sense to talk about a $\Qbar_p$-valued representations as being potentially diagonalizable without specifying an invariant lattice. 
This in particular applies to the restrictions to $G_v$ of the automorphic Galois representations of \S\ref{sec:AutGalRep}.

It is not currently known whether or not every potentially crystalline representation is potentially diagonalizable, nor whether or not any residual representation has a potentially diagonalizable lift of regular weight. 
The latter question has been investigated by Gee--Herzig--Liu--Savitt \cite{GHLS}. 
In particular, in dimension two it is always possible, and we will use the following lemma in \S\ref{sec:Hilb}.
\begin{lem}\label{thm:BTlift}
Assume that $n = 2$, and that $\mu$ is de~Rham.
Then $\rhobar_v$ admits a regular weight potentially diagonalizable lift with determinant $\mu$. 
\end{lem}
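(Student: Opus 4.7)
My plan is to split the argument into two cases depending on whether $\rhobar_v$ is absolutely irreducible or reducible as an $\F[G_v]$-module, and in each case construct an explicit lift that is either induced from a quadratic extension or ordinary.

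First, I would fix $\lambda \in (\Z_+^2)^{\Hom(F_v,\Qbar_p)}$ with $\lambda_{\sigma,1} > \lambda_{\sigma,2}$ and $\lambda_{\sigma,1}+\lambda_{\sigma,2}$ equal to the sum of the two Hodge--Tate weights of $\mu$ at $\sigma$, for each $\sigma$; this is possible precisely because $\mu$ is de~Rham, so its Hodge--Tate weights are integers. These will be the target Hodge--Tate weights of the lift, and they force the lift to have regular weight.

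If $\rhobar_v$ is absolutely irreducible, then (after possibly extending $\F$) one has $\rhobar_v \cong \Ind_{G_L}^{G_v}\bar{\psi}$ for a separable quadratic extension $L/F_v$ and a character $\bar{\psi}: G_L \to \F^\times$. By local class field theory, I can lift $\bar{\psi}$ to a crystalline character $\psi: G_L \to \calO_{\Qbar_p}^\times$ with Hodge--Tate weights chosen so that $\rho := \Ind_{G_L}^{G_v}\psi$ has weight $\lambda$, and then adjust $\psi$ by an unramified twist to arrange $\det\rho = \mu$ (the difference $\mu \cdot (\det\rho)^{-1}$ is an unramified character, and unramified twists of $\psi$ scale $\det\rho$ by unramified characters of $G_v$). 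Then $\rho|_{G_L} = \psi \oplus \psi^c$ is a direct sum of crystalline characters, so $\rho|_{G_L}$ defines a point on an irreducible component of $\Spec R_{\rhobar_v|_{G_L}}^{\lambda_L,\cris}$ that also contains the sum of two characters; hence $\rho$ is potentially diagonalizable tautologically.

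If $\rhobar_v$ is reducible, write it as an extension $0 \to \bar{\chi}_1 \to \rhobar_v \to \bar{\chi}_2 \to 0$, and lift each $\bar{\chi}_i$ to a crystalline character $\chi_i$ of $G_v$ with Hodge--Tate weights $(\lambda_{\sigma,i})_\sigma$ and $\chi_1\chi_2 = \mu$. I then build $\rho$ as an extension of $\chi_2$ by $\chi_1$ lifting the given residual class in $H^1(G_v,\bar{\chi}_1\bar{\chi}_2^{-1})$; the existence of such a lift is a cohomological argument, using that the obstruction in $H^2(G_v,\chi_1\chi_2^{-1})$ can be handled after possibly modifying the $\chi_i$ by unramified twists preserving $\chi_1\chi_2 = \mu$. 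The resulting $\rho$ is ordinary with $\chi_1$ as the sub-character of greater Hodge--Tate weight, hence has weight $\lambda$ and determinant $\mu$. To conclude that $\rho$ is potentially diagonalizable, I would pass to a finite extension $K/F_v$ over which $\chi_1|_{G_K}$ and $\chi_2|_{G_K}$ both become crystalline and the extension class $\rho|_{G_K}$ lies in a $K$-linear family of ordinary crystalline extensions parametrized by an affine space (coming from $H^1_f(G_K,\chi_1\chi_2^{-1}|_{G_K})$), which sits inside a single irreducible component of $R_{\rhobar_v|_{G_K}}^{\lambda_K,\cris}$ and contains the split extension $\chi_1|_{G_K}\oplus\chi_2|_{G_K}$.

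The main obstacle I expect is verifying potential diagonalizability in the reducible case: pinning down the irreducible component of the crystalline lifting ring containing the ordinary point and showing it also contains the split lift requires either a direct analysis of ordinary filtered $\varphi$-modules in dimension two or invoking a general result identifying ordinary crystalline representations as potentially diagonalizable. Once this is in hand, the construction above produces the desired lift unconditionally, so the lemma reduces to this local, purely $p$-adic Hodge-theoretic fact.
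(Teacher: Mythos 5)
The overall strategy is sound, but the key step is glossed over. Your construction works in the absolutely irreducible case (though note: arranging $\det\rho = \mu$ by a purely \emph{unramified} twist of $\psi$ is not quite enough, since the discrepancy $\mu\cdot(\det\rho)^{-1}$ is finitely ramified with trivial reduction but not necessarily unramified; since $p>2$ one should instead extract a square root of this character in $1+\frakm_{\calO}$, as the paper does in its first case). Invoking \cite{BLGGT}*{Lemma~1.4.3} to get potential diagonalizability of an ordinary (potentially) crystalline lift is also fine and is indeed the easy part, contrary to the worry you flag at the end.

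The genuine gap is in the reducible case, and it is precisely the step you wave away with ``the obstruction in $H^2(G_v,\chi_1\chi_2^{-1})$ can be handled after possibly modifying the $\chi_i$ by unramified twists.'' When $\overline{\chi}_1\overline{\chi}_2^{-1} \ne \epsilonbar$ one has $H^2(G_v,\chi_1\chi_2^{-1}) = 0$ and the reduction map $H^1(G_v,\chi_1\chi_2^{-1}) \to H^1(G_v,\overline{\chi}_1\overline{\chi}_2^{-1})$ is surjective, so your plan goes through. But when $\overline{\chi}_1\overline{\chi}_2^{-1} = \epsilonbar$ (the tr\`{e}s ramifi\'{e}e case, which is the only case the paper has to work for), $H^2(G_v,\chi_1\chi_2^{-1})$ has nontrivial $\varpi$-torsion, so the image of the reduction map is a proper \emph{hyperplane} in $H^1(G_v,\epsilonbar)$, and one must show the given tr\`{e}s ramifi\'{e}e cocycle lies in this hyperplane. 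This requires an actual argument: the paper (following \cite{BLGGOrdLifts1}*{Lemma~6.1.6}) identifies the hyperplane via local Tate duality in terms of a mod-$p$ homomorphism $\overline{\alpha}$ extracted from $\chi^2\mu^{-1}\epsilon^{-1}$, and then uses the hypothesis that the cocycle is tr\`{e}s ramifi\'{e}e to show the dual line is not unramified, so that one can indeed adjust $\chi$ by an explicit unramified twist to force $\overline{\alpha}$ into the annihilating hyperplane. Nothing in your outline produces or verifies this condition, and it is false for an arbitrary choice of $\chi_1,\chi_2$.

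The paper's organization also differs from yours: rather than splitting into irreducible vs.\ reducible, it first disposes of every \emph{peu ramifi\'{e}e} $\rhobar_v$ (in the sense of \cite{GHLS}*{Definition~2.1.3}, which includes your irreducible case and almost all reducible cases) in one stroke by citing \cite{GHLS}*{Corollary~2.1.11} and then twisting by a square root to fix the determinant, leaving only the single tr\`{e}s ramifi\'{e}e shape $\rhobar_v \cong \begin{pmatrix}\overline{\chi} & c\\ & \overline{\chi}^{-1}\mubar\end{pmatrix}$ with $\overline{\chi}^2\mubar^{-1}=\epsilonbar$ to handle by hand. Your approach would, if completed, reprove the reducible part of GHLS from scratch; this is not wrong, but the hard work would still be concentrated in exactly the tr\`{e}s ramifi\'{e}e obstruction analysis that is missing from your sketch.
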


\begin{proof}
First assume $\rhobar_v$ is peu ramifi\'{e}e, in the sense of \cite{GHLS}*{Definition~2.1.3}.
Fix $\lambda \in (\Z_+^2)^{\Hom(F_v,\Qbar_p)}$ such that $\lambda_{\sigma,1} + 1 + \lambda_{\sigma,2} = \HT_\sigma(\mu)$ for each $\sigma\in \Hom(F_v,\Qbar_p)$. 
By \cite{GHLS}*{Corollary~2.1.11}, $\rhobar_v$ has a potentially diagonalizable lift $\rho : G_v \lra \GL_2(\calO_{\Qbar_p})$ of weight $\lambda$. 
Enlarging $E$ if necessary, we can assume $\det\rho$ takes values in $\calO^\times$, and 
$\mu(\det\rho)^{-1} : G_v \rightarrow 1+\frakm_{\calO}$ is finitely ramified by choice of $\lambda$. 
Since $p>2$, there is a continuous finitely ramified character $\eta : G_v \rightarrow 1+\frakm_{\calO}$ such that $\eta^2 = \mu(\det\rho)^{-1}$. 
Then $\rho\otimes\eta$ is a potentially diagonalizable lift of $\rhobar$ of weight $\lambda$ and determinant $\mu$.

It only remains to treat the case (see \cite{GHLS}*{Examples~2.1.4})
	\[ \rhobar \cong \begin{pmatrix}
	\overline{\chi} & c \\ & \overline{\chi}^{-1}\overline{\mu}
	\end{pmatrix} \]
with $\overline{\chi}^2\mubar^{-1} = \epsilonbar$, and tr\`{e}s ramifi\'{e}e cocycle $c$.
The result the follows from the argument of \cite{BLGGOrdLifts1}*{Lemma~6.1.6}. 
We sketch the details.
Twisting, we may assume that $\HT_\sigma(\mu) > 1$ for each $\sigma : F_v\hookrightarrow \Qbar_p$. 
Then any lift of the form
	\[ \rho \cong \begin{pmatrix}
	\chi & \ast \\ & \chi^{-1}\mu 
	\end{pmatrix} \]
with $\chi$ a finitely ramified character lifting $\overline{\chi}$ is potentially crystalline, hence potentially diagonalizable by \cite{BLGGT}*{Lemma~1.4.3}.

Choose some finitely ramified lift $\chi$ of $\chibar$ and set $\psi:=\chi^2\mu^{-1}\epsilon^{-1}$. 
Let $L$ be the line in $H^1(G_v,\F(\epsilonbar))$ spanned by the cohomology class generated by $c$, and let $H$ be the hyperplane in $H^1(G_v,\F)$ annihilated by $L$ under local Tate duality. 
Fix a uniformizer $\varpi$ in $\calO$. 
Then there is $n\ge 1$ such that $\psi^{-1} \mod {\varpi^{n+1}} = 1 + \varpi^n\overline{\alpha}$ with $\overline{\alpha} : G_v \rightarrow \F$ a nontrivial homomorphism. 
An argument using local Tate duality, as in see Case~2 in the proof of \cite{BLGGOrdLifts1}*{Lemma~6.1.6}, shows it that is suffices to prove $\overline{\alpha} \in H$. 
We assume otherwise. 
Since $\rhobar$ is tr\`{e}s ramifi\'{e}e, $H$ does not contain the unramfied line, and we can find $\overline{a} \in \F^\times$ such that  $\alpha + u_{\overline{a}} \in H$, where $u_{\overline{a}} : G_v \rightarrow \F$ is the unramified homomorphism that sends $\Frob_v$ to $\overline{a}$. 
Choosing some $b\in \calO^\times$ such that $\overline{a} = 2b \mod \varpi$, and replacing $\chi$ by $\chi$ times the unramified character that sends $\Frob_v$ to $(1+\varpi^n b)^{-1}$ yields $\overline{\alpha} \in H$, finishing the proof. 
\end{proof}

Let $\rho : G_v \lra \GL_n(\Qbar_p)$ 
be a continuous representation. 
We say $\rho$ is \emph{ordinary of weight} $\lambda \in (\Z_+^n)^{\Hom(F_v,\Qbar_p)}$ if there is a $G_v$-stable decreasing filtration
	\[ \Qbar_p^n = V_1 \supset V_2 \supset \cdots \supset V_{n+1} = \{0\} \]
with one dimensional graded pieces and an open subgroup $U$ of $\calO_{F_v}^\times$ such that for each $1\le j \le n$, 
letting $\psi_j : G_v \rightarrow \Qbar_p^\times$ denote the character giving the $G_v$-action on $V_j/V_{j+1}$, we have
	\[  \psi_j\circ\Art_{F_v}(x) = \prod_{\sigma: F_v \hookrightarrow \Qbar_p} \sigma (x)^{j - n -\lambda_{\sigma,j}} \]
for all $x \in U$. 
We say $\rho$ is \emph{ordinary} of \emph{regular weight} if there is $\lambda\in (\Z_+^n)^{\Hom(F_v,\Qbar_p)}$ such that it is ordinary of weight $\lambda$. 
The following is a result of Geraghty \cite{GeraghtyOrdinary}*{Lemma~3.3.3}.

\begin{thm}\label{thm:ordring}
Let $\lambda \in (\Z_+^n)^{\Hom(F_v,\Qbar_p)}$ and let $\tau$ be an inertial type defined over $E$.
There is a (possibly zero)  $\calO$-flat reduced quotient $R_v^{\lambda,\tau,\ord}$ of $R_v^\square$ such that 
an $E$-algebra morphism $x : R_v^\square[1/p] \rightarrow \Qbar_p$ factors through $R_v^{\lambda,\tau,\ord}[1/p]$ if and only if $\rho_x$ is ordinary of weight $\lambda$.
If nonzero, then $\Spec R_v^{\lambda,\tau,\ord}[1/p]$ 
is equidimensional of dimension $n^2 + \frac{n(n-1)}{2}[F_v:\Q_p]$ and admits an open dense subscheme that is formally smooth over $E$.
\end{thm}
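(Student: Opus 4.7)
The plan is to follow Geraghty by realizing $R_v^{\lambda,\tau,\ord}$ as (the reduced $\calO$-flat quotient of) the scheme-theoretic image in $\Spec R_v^\square$ of an auxiliary ``flagged ordinary'' deformation space. Concretely, I would introduce the functor on $\CNL_\calO$-algebras sending $A$ to the set of pairs $(\rho, V_\bullet)$ where $\rho$ is a lift of $\rhobar_v$ to $A$ and $A^n = V_1 \supset V_2 \supset \cdots \supset V_{n+1} = 0$ is a $G_v$-stable decreasing filtration by locally direct summands with rank one graded pieces, such that the character $\psi_j : G_v \to A^\times$ acting on $V_j/V_{j+1}$ satisfies $\psi_j \circ \Art_{F_v}(x) = \prod_{\sigma} \sigma(x)^{j - n - \lambda_{\sigma,j}}$ on an open subgroup of $\calO_{F_v}^\times$, and such that the resulting potentially semistable representation has inertial type $\tau$. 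This functor is representable by a formal scheme $\mathfrak{X}$ equipped with a natural proper morphism $\pi : \mathfrak{X} \to \Spf R_v^\square$; properness follows because the fibres of $\pi$ are cut out of the complete flag variety of $\GL_n$ by closed conditions. Taking the scheme-theoretic image of $\pi$ and passing to the maximal reduced $\calO$-flat quotient then yields the desired ring $R_v^{\lambda,\tau,\ord}$. The characterization of $\Qbar_p$-valued points is tautological: $\rho_x$ is ordinary of weight $\lambda$ (with the prescribed inertial type) iff it admits an ordinary flag, which is precisely when $x$ admits a lift to $\mathfrak{X}$, and properness of $\pi[1/p]$ transports this equivalence to the image.

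For the dimension count, I would work directly on $\mathfrak{X}[1/p]$. The ordinarity relation rigidifies each character $\psi_j$ on inertia up to finite data, so $\psi_j$ varies only by unramified twist and contributes a one-dimensional family. The extensions assembling these characters into a representation are classified by $H^1(G_v, \psi_j \psi_i^{-1})$ for $i<j$, whose dimension at a generic (non-resonant) point is $[F_v:\Q_p]$ by the local Euler--Poincar\'e formula, since $H^0$ and $H^2$ vanish there. Together with the $\GL_n$-framing, which contributes $n^2 - n$ relative to the block-diagonal stabilizer of the flag, these give
\[ \dim \mathfrak{X}[1/p] \;=\; (n^2 - n) + n + \tfrac{n(n-1)}{2}[F_v:\Q_p] \;=\; n^2 + \tfrac{n(n-1)}{2}[F_v:\Q_p]. \]
On the open locus where $\pi[1/p]$ is an isomorphism onto its image (see the next paragraph), the image inherits the same equidimensionality, and equidimensionality then extends to the whole of $\Spec R_v^{\lambda,\tau,\ord}[1/p]$ by $\calO$-flatness and reducedness.

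To produce a formally smooth open dense subscheme, I would restrict to the ``regular'' locus where the graded characters $\psi_j$ are pairwise non-resonant in the sense that $\psi_i\psi_j^{-1}$ is neither trivial nor cyclotomic for $i<j$. Over this locus the relevant $H^0$ and $H^2$ groups vanish, so the flag $V_\bullet$ is uniquely recovered from $\rho$, the morphism $\pi[1/p]$ is an isomorphism onto its image, and a direct tangent-obstruction calculation on the flagged side exhibits formal smoothness. The main obstacle, and the heart of Geraghty's argument, is verifying that this regular open is dense in every irreducible component of $\Spec R_v^{\lambda,\tau,\ord}[1/p]$: one must show that every ordinary lift can be deformed within its component to one with pairwise non-resonant graded characters. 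This ultimately reduces to the richness of the unramified-twist families of the individual $\psi_j$'s, which allows one to leave any given resonant stratum while staying inside the ambient component.
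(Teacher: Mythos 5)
The paper does not prove this theorem; it simply cites Geraghty \cite{GeraghtyOrdinary}*{Lemma~3.3.3} and moves on. Your outline does capture the flag-resolution strategy underlying Geraghty's argument: build a projective resolution $\pi\colon\mathfrak{X}\to\Spf R_v^\square$ by adjoining a full flag with prescribed graded characters, take the reduced $\calO$-flat scheme-theoretic image, and use properness plus the valuative criterion to get the $\Qbar_p$-point characterization. That much is a fair sketch. However, there is a genuine gap in the equidimensionality argument. The step ``equidimensionality then extends to the whole of $\Spec R_v^{\lambda,\tau,\ord}[1/p]$ by $\calO$-flatness and reducedness'' is simply false: $\calO$-flat reduced complete local rings need not be equidimensional (take $\calO[[x,y,z]]/(xz,yz)$, whose components $V(z)$ and $V(x,y)$ have different dimensions both before and after inverting $p$). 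You have also implicitly used density of the non-resonant locus in the dimension count for $\mathfrak{X}[1/p]$ itself. You flag the density of the non-resonant locus as ``the main obstacle,'' and rightly so, but your heuristic (``the richness of the unramified-twist families'') does not establish it: one must actually rule out an irreducible component of the image over which the fibres of $\pi$ are everywhere positive-dimensional. That density/equidimensionality question is exactly the delicate content of Geraghty's lemma, so the proposal as written should not be regarded as a complete proof of the equidimensionality assertion.

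Separately, the existence of a dense open formally smooth subscheme is much easier than your proposal suggests and needs no non-resonance analysis at all: $R_v^\square$ is a complete Noetherian local ring, hence excellent, so $R_v^{\lambda,\tau,\ord}[1/p]$ is excellent; reducedness then forces the regular locus to be open and dense, and in characteristic zero a regular Noetherian local $E$-algebra is formally smooth over $E$. So that part of the statement is automatic once reducedness is in hand, independently of the flag analysis.
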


Note that in order for $R_v^{\lambda,\tau,\ord}$ to be nonzero, $\tau$ must be a direct sum of finite order characters of $I_v$. 

\section{Automorphic Galois representations}\label{sec:AutGalRep}

We recall the notion of regular algebraic polarized cuspidal automorphic representations and their associated Galois representations. 
Throughout this section, $F$ is assumed to be either a CM or totally real number field, with maximal totally real subfield $F^+$. 
Let $c \in G_{F^+}$ be a choice of complex conjugation. 


\subsection{Polarized auotmorphic Galois representations}\label{sec:pol} 
Following \cite{BLGGT}*{\S2.1}, we say that a pair $(\pi,\chi)$ is a \emph{polarized automorphic representation} of $\GL_n(\A_F)$ if
\begin{itemize}
	\item $\pi$ is an automorphic representation of $\GL_n(\A_F)$;
	\item $\chi : (F^+)^\times \backslash \A_{F^+}^\times \rightarrow \C^\times$ is a continuous character with $\chi_v(-1)$ independent of $v | \infty$;
	\item $\pi^c \cong \pi^\vee \otimes (\chi\circ \Nm_{F/F^+}\circ \det)$;
	\end{itemize}
When $F$ is totally real, the requirement that $\chi_v(-1)$ be independent of $v|\infty$ has been shown to be redundant by Patrikis \cite{PatrikisSign}*{Theorem~2.1}. 
Note that when $F$ is CM, if $(\pi,\chi)$ is a polarized automorphic representation of $\GL_n(\A_F)$, then so is $(\pi,\chi\delta_{F/F^+})$.
We do not specify a sign convention in this generality, unlike in \cite{BLGGT}*{\S2.1}, but will in the regular algebraic case below. 
Our convention will sometimes differ from that of \textit{loc. cit.}, 
but it ensures that the character $\chi_\iota \epsilon^{1-n}$ of $G_{F^+}$ in \cref{thm:autgalrep} below is totally odd, 
which is more convenient for us (see \cref{thm:Gdhoms,thm:polringdim}).

We say that an automorphic representation $\pi$ of $\GL_n(\A_F)$ is \emph{polarizable} if there is a character $\chi$ such that $(\pi,\chi)$ is a polarized automorphic representation.	
If $F$ is CM and 
$(\pi,\delta_{F/F^+}^n)$ is polarized, then we say that $\pi$ is \emph{conjugate self-dual}. 
An automorphic representation $\pi$ of $\GL_n(\A_F)$ is called \emph{regular algebraic} if $\pi_\infty$ has the same infinitesimal character as an irreducible algebraic representation of $\Res_{F/\Q} \GL_n$. 
If $\lambda = (\lambda_\sigma) \in (\Z_+^n)^{\Hom(F,\C)}$, then we let $\xi_\lambda$ denote the irreducible algebraic representation of $\prod_{\sigma}\GL_n$ which is the tensor product over $\sigma \in \Hom(F,\C)$ of the irreducible algebraic representations with highest weight $\lambda_\sigma$. 
We say a regular algebraic automorphic representation $\pi$ of $\GL_n(\A_F)$ has \emph{weight} $\lambda \in (\Z_+^n)^{\Hom(F,\C)}$ if $\pi_\infty$ has the same infinitesimal character as $\xi_\lambda^\vee$. 
We will say a polarized automorphic representation $(\pi,\chi)$ of $\GL_n(\A_F)$ is cuspidal if $\pi$ is. 
We will say a polarized automorphic representation $(\pi,\chi)$ of $\GL_n(\A_F)$ is regular algebraic if $\pi$ is. 
In this case $\chi$ is necessarily an algebraic character, and if $F$ is CM, we fix the sign of $\chi_v(-1)$ for $v | \infty$ as follows. 
Letting $q\in \Z$ be the unique integer such that $\chi \lvert \cdot \rvert_{\A_{F^+}}^q$ is finite order, we require $\chi_v(-1) = (-1)^{n+q}$ for each $v|\infty$ in $F^+$.

Let $\pi$ be a regular algebraic polarizable cuspidal automorphic representation of $\GL_n(\A_F)$ of weight $\lambda$, and let $\iota : \Qbar_p \xrightarrow{\sim} \C$ be an isomorphism. 
Let $v|p$ in $F$ and let $\varpi_v$ be a choice of uniformizer for $F_v$. 
For any integer $a\ge 1$, let $\Iw(v^{a,a})$ denote the subgroup of $\GL_n(\calO_{F_v})$ of matrices that reduce to an upper triangular matrix modulo $\varpi_v^a$. 
For each $1\le j \le n$, the space $(\iota^{-1}\pi_v)^{\Iw(v^{a,a})}$ has an action of the Hecke operator
	\[ U_{\varpi_v}^{(j)} = \left[ \Iw(v^{a,a}) \begin{pmatrix} \varpi_v 1_j & \\ & 1_{n-j} \end{pmatrix}
	\Iw(v^{a,a}) \right], \]
and they commute with one another. 
We define modified Hecke operators 
	\[ U_{\lambda,\varpi_v}^{(j)} = \Big(\prod_{\sigma : F_v \hookrightarrow \Qbar_p} \prod_{i=1}^j
	\sigma(\varpi_v)^{-\lambda_{\iota\sigma,n-i+1}}\Big) U_{\varpi_v}^{(j)}\]
for each $1\le j \le n$. 
We say that $\pi$ is $\iota$-\emph{ordinary} if for each $v|p$, there is an integer $a \ge 1$ and a nonzero vector in $(\iota^{-1}\pi_v)^{\Iw(v^{a,a})}$ that is an eigenvector for each $U_{\lambda,\varpi_v}^{(1)},\ldots,U_{\lambda,\varpi_v}^{(n)}$ with eigenvalues that are $p$-adic units. 
This definition does not depend on the choice of $\varpi_v$. 
We say a polarized regular algebraic cuspidal automorphic representation $(\pi,\chi)$ of $\GL_n(\A_F)$ is $\iota$-\emph{ordinary} if $\pi$ is.


The following theorem is due to the work of many people. 
We refer the reader to \cite{BLGGT}*{Theorem~2.1.1} and the references contained there for \cref{autgalrep:pol,autgalrep:pss,autgalrep:locglob} (noting that the assumption of an Iwahori fixed vector in part (4) of \cite{BLGGT}*{Theorem~2.1.1} can be removed by the main result of \cite{Caraianilp}), and to \cite{ThorneReducible}*{Theorem~2.4} for \cref{autgalrep:ord}.


\begin{thm}\label{thm:autgalrep}
Let $(\pi,\chi)$ be a regular algebraic, polarized, cuspidal automorphic representation of $\GL_n(\A_F)$, of weight $\lambda \in (\Z_+^n)^{\Hom(F,\C)}$. 
Fix an isomorphism $\iota : \Qbar_p \xrightarrow{\sim} \C$, and for each $v|p$ in $F$, let $\lambda_v = (\lambda_{v,\sigma}) \in (\Z_+^n)^{\Hom(F_v,\Qbar_p)}$ be given by $\lambda_{v,\sigma} = \lambda_{\iota\sigma}$. 
There is a continuous semisimple representation 
	\[ \rho_{\pi,\iota} : G_F \lra \GL_n(\Qbar_p) \]
satisfying the following properties. 
\begin{enumerate}[ref=\arabic*]
\item\label{autgalrep:pol} 
There is a perfect symmetric pairing $\langle \cdot, \cdot \rangle$ on $\Qbar_p^n$ 
such that for any $a,b\in \Qbar_p^n$ and $\gamma\in G_F$,
	\[ \langle \rho_{\pi,\iota}(\gamma) a, \rho_{\pi,\iota}(c\gamma c) b \rangle = (\chi_\iota\epsilon^{1-n})(\gamma)\langle a, b \rangle. \] 
\item\label{autgalrep:pss} For all $v|p$, $\rho_{\pi,\iota}|_{G_v}$ is potentially semistable of weight $\lambda_v$.
\item\label{autgalrep:locglob} For any finite place $v$, 
	\[\iota\WD(\rho_{\pi,\iota}|_{G_v})^{\Fss} \cong \rec_{F_v}(\pi_v\otimes \abs{\cdot}^{\frac{1-n}{2}}). \]
\item\label{autgalrep:ord} If $(\pi,\chi)$ is $\iota$-ordinary, then for each $v|p$,  $\rho_{\pi,\iota}$ is ordinary of weight $\lambda_v$.
\end{enumerate}
\end{thm}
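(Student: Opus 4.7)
The plan is to construct $\rho_{\pi,\iota}$ and verify its properties by assembling a sequence of deep inputs from the Langlands program, following a strategy developed by many authors over several decades. The first step is to reduce to the case where $F$ is CM and $\pi$ is conjugate self-dual. If $F$ is totally real, I would invoke solvable base change (Arthur--Clozel) to transfer $(\pi,\chi)$ along a suitable imaginary CM quadratic extension $F'/F$ to a conjugate self-dual regular algebraic cuspidal automorphic representation $\Pi$ of $\GL_n(\A_{F'})$, arranging compatibility with the polarization. After constructing $\rho_{\Pi,\iota}$ for a collection of such $F'$, a Brauer-type argument patches the compatible restrictions to yield a representation of $G_F$.

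For $F$ CM and $\pi$ conjugate self-dual, I would extract $\rho_{\pi,\iota}$ from the $p$-adic \'etale cohomology of a Shimura variety $\mathrm{Sh}(U)$ attached to a unitary similitude group $U$ over $F^+$ that is compact (or as compact as possible) at infinity. Using endoscopic transfer, one produces an automorphic representation $\sigma$ of $U(\A_{F^+})$ whose weak base change to $\GL_n(\A_F)$ is $\pi$. The Galois representation is then defined via the $\sigma$-isotypic component of $H^*_{\mathrm{et}}(\mathrm{Sh}(U)_{\overline{F^+}}, \mathcal{V}_\lambda)$, where $\mathcal{V}_\lambda$ is the local system corresponding to $\lambda$. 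The polarization in (1) would follow from Poincar\'{e} duality on $\mathrm{Sh}(U)$ combined with the computation of the similitude factor and the normalization built into $\chi_\iota \epsilon^{1-n}$; the Hodge--Tate weights in (2) would be read off from the de Rham comparison theorem applied to $\mathcal{V}_\lambda$.

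Local-global compatibility (3) would be obtained via the Langlands--Kottwitz method: Kottwitz's point-counting formula expresses the trace of Frobenius on the cohomology in terms of orbital integrals on $U$, and stabilization via the simple trace formula yields a match with $\rec(\pi_v)$. Removing hypotheses at ramified $v \nmid p$ (where no Iwahori-fixed vector is assumed) uses careful analysis of nearby cycles on integral models of $\mathrm{Sh}(U)$, while the full Weil--Deligne representation at $v \mid p$, including the monodromy operator, is identified through $p$-adic comparison theorems (Fontaine, Faltings, Tsuji, Scholze). Finally, for (4), in the $\iota$-ordinary case I would employ a Hida-theoretic argument: one $p$-adically interpolates $\rho_{\pi,\iota}$ within a Hida family, then reads off the ordinarity at each $v \mid p$ from the specialization of the universal ordinary Galois representation on the interpolating family, combined with a classicality result at the point corresponding to $\pi$. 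The main technical obstacle throughout is the full local-global compatibility at $v \mid p$ in the non-crystalline case; historically this required substantial $p$-adic Hodge theoretic input well beyond the original cohomological construction, and the statement in the form given here relies on the cumulative resolution of that obstacle.
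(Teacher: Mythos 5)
The paper does not prove this theorem at all; it states it as a citation to the literature. Immediately before the statement it says the result is ``due to the work of many people'' and refers to \cite{BLGGT}*{Theorem~2.1.1} and the references therein for parts (1)--(3), with the observation that the Iwahori-fixed-vector hypothesis in that theorem can be removed by Caraiani's full local-global compatibility result at $v\mid p$, and to \cite{ThorneReducible}*{Theorem~2.4} for the ordinarity assertion in part (4). So there is no ``paper's proof'' for your proposal to be measured against step-by-step.

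With that understood, your sketch is a broadly accurate bird's-eye summary of what the cited literature actually does: base change and Brauer-type patching to reduce to the conjugate self-dual CM case, extraction of the representation from the cohomology of compact unitary Shimura varieties after endoscopic transfer, the Langlands--Kottwitz method plus $p$-adic comparison theorems and Caraiani's work for local-global compatibility, and ordinary $p$-adic interpolation for (4). None of those strands is wrong, and it is fair of you to flag the non-crystalline local-global compatibility at $v\mid p$ as the historically hardest ingredient. But be clear that what you've written is an outline of a very long program, not a proof; each clause (stabilization of the trace formula, comparison at ramified $v\nmid p$, full Weil--Deligne compatibility at $v\mid p$, classicality in Hida families for unitary groups) is itself a substantial theorem, and the paper deliberately treats the whole package as a black box by referring to \cite{BLGGT}, \cite{Caraianilp}, and \cite{ThorneReducible} rather than reconstructing it. If you wanted to turn this into something usable, the efficient move is exactly the one the author makes: cite those three sources rather than re-deriving the machinery.
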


In \cref{autgalrep:locglob}, $\rec_{F_v}$ is the Local Langlands reciprocity map that takes irreducible admissible representations of $\GL_n(F_v)$ to Frobenius semi-simple Weil--Deligne representations, normalized as in \cite{HarrisTaylor} and \cite{HenniartLL}, and $\mathrm{WD}(\rho_{\pi,\iota}|_{G_v})$ is the $\Qbar_p$-Weil--Deligne representation associated to $\rho_{\pi,\iota}|_{G_v}$.

Since $G_{F,S}$ is compact, $\rho_{\pi,\iota}(G_{F,S})$ stabilizes a lattice. 
So conjugating $\rho_{\pi,\iota}$ if necessary, we may assume it takes valued in $\GL_n(\calO_{\Qbar_p})$, and denote by 
	\[ \rhobar_{\pi,\iota} : G_{F,S} \lra \GL_n(\Fbar_p)\]
the semisimplification of its reduction modulo $\frakm_{\Qbar_p}$, which is independent of the choice of lattice.

\subsection{Hilbert modular forms}\label{sec:HilbGalReps}
If $n = 2$ and $F$ is totally real, then any automorphic representation $\pi$ of $\GL_2(\A_F)$ is polarizable. 
More specifically, if $\chi$ denotes the central character of $\pi$, then $(\pi,\chi)$ is polarized. 

If $\pi$ is regular algebraic and cuspidal, then $\pi$ is a twist of the automorphic representation generated by a Hilbert modular cusp form (see \cite{ClozelRegAlg}*{\S 1.2.3}). 

\section{Polarized deformation rings and automorphic points}\label{sec:CM}

In this section, we prove our main theorems on polarized deformation rings for Galois representations of CM fields. 
Before proceeding, we fix the assumptions and notation that will be used throughout this section. 

\subsection{Setup}\label{sec:polarsetup}
We assume $p>2$, and fix an isomorphism $\iota : \Qbar_p \xrightarrow{\sim} \C$. 
We assume that our fixed number field $F$ is CM, and denote by $F^+$ its maximal totally real subfield. 
Fix a finite set of places $S$ of $F^+$ containing all places above $p$, and let $F_S$ be the maximal extension of $F$ unramified outside of the places in $F$ above $S$. 
Note that $F_S/F^+$ is Galois, and we set $G_{F^+,S} = \Gal(F_S/F^+)$ and $G_{F,S} = \Gal(F_S/F)$.
We fix a choice of complex conjugation $c\in G_{F^+}$. 
We fix a continuous absolutely irreducible 
	\[ \rhobar : G_{F,S} \lra \GL_n(\F) \]
and a continuous totally odd character $\mu : G_{F^+,S} \rightarrow\calO^\times$ and assume there is a perfect symmetric pairing $\langle \cdot, \cdot \rangle$ on $\F^n$ such that
	\[ \langle \rhobar(\gamma) a, \rhobar(c\gamma c)b \rangle = \mu(\gamma) \langle a, b \rangle \]
for all $\gamma \in G_{F,S}$ and $a,b \in \F^n$,  
where $\mubar : G_{F^+,S} \rightarrow \F^\times$ is the reduction of $\mu$ modulo $\frakm_{\calO}$. 
Let $R^{\pol}$ be the universal $\mu$-polarized deformation ring for $\rhobar$ as in \S\ref{sec:polardefring}.
Since the pairing $\langle \cdot, \cdot \rangle$ is symmetric, we can and do fix a lift
	\[ \rbar : G_{F^+,S} \lra \calG_n(\F) \]
of $\rhobar$ with $\nu \circ \rbar = \mubar$ as in \cref{thm:Gdhoms}. 
We also view $R^{\pol}$ as the universal $\mu$-polarized deformation ring of $\rbar$.

\begin{lem}\label{thm:polringdim}
There is a presentation
	\[ R^{\pol} \cong \calO[[x_1,\ldots,x_g]]/(f_1,\ldots,f_k)\]
with $g-k \ge \frac{n(n+1)}{2}[F^+:\Q]$. 
In particular, every irreducible component of $\Spec R^{\pol}$ has dimension at least $1+\frac{n(n+1)}{2}[F^+:\Q]$.
\end{lem}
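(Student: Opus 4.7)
The plan is to combine Proposition \ref{thm:genpolpres} with the global Euler--Poincar\'e characteristic formula over the number field $F^+$ applied to the $\F[G_{F^+,S}]$-module $\ad(\rbar) = \frakgl_n(\F)$. The presentation in \cref{thm:genpolpres} has $g = \dim_\F H^1(G_{F^+,S},\ad(\rbar))$ and $k \le \dim_\F H^2(G_{F^+,S},\ad(\rbar))$, so the task reduces to showing
\[
\dim_\F H^1(G_{F^+,S},\ad(\rbar)) - \dim_\F H^2(G_{F^+,S},\ad(\rbar)) \;\ge\; \frac{n(n+1)}{2}[F^+:\Q].
\]

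The first step is to compute $\dim_\F H^0(G_v,\ad(\rbar))$ for each Archimedean place $v$ of $F^+$ (all of which are real). Using \cref{thm:Gdhoms} I write $\rbar(c_v) = (P_v,\eta_v)\jmath$, and the totally odd character $\mu$ together with the normalization $\nu\circ\rbar = \mubar$ forces $\eta_v = 1$. The relation $\rbar(c_v)^2 = 1$ then says $P_v\,{}^t P_v^{-1} = 1$, i.e.\ $P_v$ is symmetric (this is precisely where the symmetry of the pairing $\langle\cdot,\cdot\rangle$ is encoded). From the formula $\ad(\jmath)(x) = -{}^t x$, the $c_v$-fixed subspace of $\ad(\rbar)$ is the set of $x$ with $x = -P_v\,{}^t x\, P_v^{-1}$; substituting $y = P_v^{-1} x$ turns this into the condition that $y$ be antisymmetric, so $\dim_\F H^0(G_v,\ad(\rbar)) = \binom{n}{2} = \frac{n(n-1)}{2}$.

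Next I plug this into the global Euler--Poincar\'e formula
\[
\dim_\F H^0 - \dim_\F H^1 + \dim_\F H^2 = \sum_{v\mid\infty}\dim_\F H^0(G_v,\ad(\rbar)) - [F^+:\Q]\cdot n^2,
\]
summing over the $[F^+:\Q]$ real places of $F^+$. This yields
\[
\dim_\F H^1 - \dim_\F H^2 = \dim_\F H^0 + [F^+:\Q]\Bigl(n^2 - \tfrac{n(n-1)}{2}\Bigr) \ge [F^+:\Q]\cdot\frac{n(n+1)}{2},
\]
using only $\dim H^0 \ge 0$. Combined with \cref{thm:genpolpres}, this produces the desired presentation with $g - k \ge \frac{n(n+1)}{2}[F^+:\Q]$, and the lower bound on the dimension of every irreducible component of $\Spec R^{\pol}$ is immediate from the standard fact that a quotient of $\calO[[x_1,\ldots,x_g]]$ by $k$ elements has every irreducible component of dimension at least $1 + g - k$.

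The only genuinely non-formal point is the $H^0$ computation at the real places; it is essentially an invariants-in-the-Lie-algebra calculation for the involution determined by $\rbar(c_v)$, and it is here that the symmetry of the polarization (rather than, say, an alternating form as in the $\GSp$-case of \cref{rmk:GL2pol}) enters and produces the contribution $\frac{n(n-1)}{2}$ rather than $\frac{n(n+1)}{2}$ at each infinite place, which in turn is exactly what is needed to match the numerical coincidence from the introduction.
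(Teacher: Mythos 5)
Your proof is correct and follows essentially the same approach as the paper: Proposition~\ref{thm:genpolpres} plus the global Euler--Poincar\'e characteristic formula plus the computation $\dim_\F\ad(\rbar)^{c_v=1}=\frac{n(n-1)}{2}$ at each real place (the paper cites \cite{CHT}*{Lemma~2.1.3} for that last fact, whereas you carry out the short invariants-in-$\frakgl_n$ calculation directly, and the paper additionally notes $H^0(G_{F^+,S},\ad(\rbar))=0$ by absolute irreducibility, which you correctly observe is not needed since $H^0\ge 0$ already suffices).
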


\begin{proof}
The result then follows from \cref{thm:genpolpres} and the global Euler--Poincar\'{e} characteristic formula, using that $H^0(G_{F^+,S},\ad(\rbar)) = 0$ since $\rhobar$ is absolutely irreducible, and that $\dim_{\F} \ad(\rbar)^{c_v = 1} = \frac{n(n-1)}{2}$ for each $v|\infty$ since $\mu$ is totally odd (see \cite{CHT}*{Lemma~2.1.3}). 
\end{proof}

\begin{defn}\label{def:autpoint}
Let $R$ be a quotient of $R^{\pol}$, let $x \in \Spec R(\Qbar_p)$, and let $\rho_x$ be the pushforward of the universal $\mu$-polarized deformation via $R^{\pol} \twoheadrightarrow R \xrightarrow{x} \Qbar_p$.
We call $x$ an \emph{automorphic point} if there is a regular algebraic polarized cuspidal automorphic representation $(\pi,\chi)$ of $\GL_n(\A_F)$ such that $\rho_x \cong \rho_{\pi,\iota}$ and $\mu = \chi_\iota \epsilon^{1-n}$.

Given a finite extension $L/F$ of CM fields, we say $x$ is an $L$-\emph{potentially automorphic point} if there is a regular algebraic polarized cuspidal automorphic representation $(\pi,\chi)$ of $\GL_n(\A_L)$ such that $\rho_x|_{G_L} \cong \rho_{\pi,\iota}$ and $\mu|_{G_{L^+}} = \chi_\iota \epsilon^{1-n}$.

In either case, we say $x$ is $\iota$-\emph{ordinary} if $\pi$ is, and that $x$ has \emph{level prime to} $p$, resp. \emph{potentially prime to} $p$, if for all $v|p$ the local representation $\pi_v$ is unramified, resp. becomes unramified after a finite base change.

If $X^{\rig}$ is the rigid analytic generic fibre of $\Spf R$, and $x^{\rig} \in X^{\rig}$ is the point corresponding to $\ker(x) \subset R[1/p]$, then we say $x^{\rig}$ is an \emph{automorhpic point} if $x$ is, and if this is the case we further say $x^{\rig}$ has \emph{level prime to} $p$ if $x$ does.
\end{defn}

\subsection{Small $R = \mathbb{T}$ theorems from the literature}\label{sec:CMsmallRT}
In this subsection we recall the small $R = \bbT$ theorems that are used in the proofs of our main theorems. 
Before stating them, we recall some terminology of \cite{CHT} for the deformation theory of $\rbar$. 

Let $\tildeS$ be a finite set of finite places of $F$ such that every $w\in \tildeS$ is split over some $v\in S$, and $\tildeS$ contains at most one place above any $v\in S$. 
For each $w\in \tildeS$, let $R_w$ be a quotient of $R_w^\square : = R_{\rhobar|_{G_w}}^\square$ that represents a local deformation problem.
By \cref{thm:defquolem}, the rings $R_w$ in \cref{thm:PDsmallRT,thm:ordsmallRT,thm:potsmallRT} below represent local deformation problems.

We refer to the tuple
	\[ \calS = (F/F^+, S, \tildeS, \calO, \rbar, \mu, \{R_w\}_{w\in \tildeS}) \]
as a \emph{global} $\calG_n$-\emph{deformation datum}. 
This differs from the definition in \cite{CHT}*{\S2.3} in that our ramification set $S$ may contain places that do not split in $F/F^+$, and $\tildeS$ is not required to contain a place above every $v\in S$. 
As the results in \cite{ChenevierFern} make no assumption on the splitting behaviour in $F$ of the places in $S \smallsetminus \{v|p\}$, we also wish to make no such assumption.

A \emph{type} $\calS$ deformation of $\rbar$ is a deformation $r : G_{F^+,S} \rightarrow \calG_n(A)$ with $A$ a $\CNL_{\calO}$-algebra such that for any (equivalently for one) lift $r$ in its deformation class 
	\begin{itemize}
	\item $\nu \circ r = \mu$, and 
	\item for each $w\in \tildeS$, the $\CNL_{\calO}$-morphism $R_w^\square \rightarrow A$ induced by the lift $r|_{G_w}$ of $\rbar|_{G_w} = \rhobar|_{G_w}$, factors through $R_w$.
\end{itemize}
We let $D_{\calS}$ be the set valued functor on $\CNL_{\calO}$ that takes a $\CNL_{\calO}$-algebra $A$ to the set of deformations of type $\calS$. 
It is easy to see that $D_{\calS}$ is represented by a quotient $R_{\calS}$ of $R^{\pol}$. 
Indeed, let $R_{\tildeS}^\square = \widehat{\otimes}_{w\in \tildeS} R_w^\square$ and $R_{\calS}^{\loc} = \widehat{\otimes}_{w\in \tildeS} R_w$, where the completed tensor products are taken over $\calO$.
A choice of lift $r$ in the equivalence class of the universal $\mu$-polarized deformation of $\rbar$ determines a $\CNL_{\calO}$ algebra morphism $R_{\tildeS}^\square \rightarrow R^{\pol}$
by $r \mapsto \{r|_{G_w}\}_{w\in\tildeS}$. 
This $R_{\tildeS}^\square$-algebra structure on $R^{\pol}$ may depend on the choice of lift $r$, but it is canonical up to $\CNL_{\calO}$-automorphisms of $R_{\tildeS}^\square$. 
We can then define
	\[ R_{\calS} := R_{\calS}^{\loc} \otimes_{R_{\tildeS}^\square} R^{\pol}.\]
We call $R_{\calS}$ the \emph{universal type} $\calS$ \emph{deformation ring}, and note that it has an $R_{\calS}^{\loc}$-algebra structure that is canonical up to $\CNL_{\calO}$-automorphisms of $R_{\calS}^{\loc}$.
The following lemma follows immediately from the construction of $R_{\calS}$.

\begin{lem}\label{thm:typeSquotient}
Let $\tildeS' \subseteq \tildeS$, and let $\calS'$ be the global $\calG_n$-deformation datum
	\[ \calS' := (F/F^+, S, \tildeS', \calO, \rbar, \mu, \{R_w\}_{w\in \tildeS'}). \]
Let $T = \tildeS \smallsetminus \tildeS'$. 
There is a canonical isomorphism
	\[ R_{\calS} \cong R_{\calS'} \otimes_{(\widehat{\otimes}_{w\in T} R_w^\square)} 
	(\widehat{\otimes}_{w\in T} R_w).\]
In particular, if $R_w = R_w^\square$ for all $w\in \tildeS \smallsetminus \tildeS'$, then there is a canonical isomorphism $R_{\calS} \cong R_{\calS'}$.
\end{lem}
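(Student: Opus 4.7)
The plan is to verify the isomorphism directly from the construction of $R_{\calS}$ given just above the lemma statement. Write $B = \widehat{\otimes}_{w\in T} R_w^\square$ and $B' = \widehat{\otimes}_{w\in T} R_w$, where all completed tensor products are over $\calO$. Then the factorizations $R_{\tildeS}^\square \cong R_{\tildeS'}^\square \widehat{\otimes}_{\calO} B$ and $R_{\calS}^{\loc} \cong R_{\calS'}^{\loc} \widehat{\otimes}_{\calO} B'$ are immediate from the definitions, and under these the $\CNL_{\calO}$-algebra structure maps are compatible in the obvious way.

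First I would recast the claim functorially. Given a $\CNL_{\calO}$-algebra $A$, a type $\calS$ deformation is a type $\calS'$ deformation $r$ with the added condition that, for each $w\in T$, the $\CNL_{\calO}$-morphism $R_w^\square \to A$ induced by (any lift of) $r|_{G_w}$ factors through $R_w$. Since the $R_w$ represent local deformation problems, this is a well-defined condition on $r$, and gives a natural Cartesian square of functors
\begin{equation*}
D_{\calS}(A) \cong D_{\calS'}(A) \times_{\Hom_{\CNL_{\calO}}(B,A)} \Hom_{\CNL_{\calO}}(B',A),
\end{equation*}
where the bottom-right map uses that each map $R_w^\square \to A$ arising from a type $\calS'$ deformation factors through $R_w$ precisely when it extends to a $\CNL_{\calO}$-morphism on the quotient. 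By the Yoneda lemma, this identification translates into the ring-theoretic isomorphism $R_{\calS} \cong R_{\calS'} \otimes_{B} B'$ asserted in the statement.

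Alternatively, I would unwind the explicit construction $R_{\calS} = R_{\calS}^{\loc} \otimes_{R_{\tildeS}^\square} R^{\pol}$: using the factorizations above and the standard associativity/base-change identity $(M \otimes_R N) \otimes_N P \cong M \otimes_R P$ for tensor products (applied with the appropriate completions), one rewrites
\begin{equation*}
(R_{\calS'}^{\loc} \widehat{\otimes}_{\calO} B') \otimes_{R_{\tildeS'}^\square \widehat{\otimes}_{\calO} B} R^{\pol} \cong (R_{\calS'}^{\loc} \otimes_{R_{\tildeS'}^\square} R^{\pol}) \otimes_{B} B' = R_{\calS'} \otimes_B B'.
\end{equation*}
There is no real obstacle here; the only thing to be mildly careful about is that the $R_{\tildeS}^\square$-algebra structure on $R^{\pol}$ is only canonical up to automorphism of $R_{\tildeS}^\square$, but since the same indeterminacy appears on both sides the resulting isomorphism of $R_{\calS}$ with $R_{\calS'} \otimes_B B'$ is itself canonical. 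The ``in particular'' clause is then immediate: when $R_w = R_w^\square$ for all $w\in T$, we have $B=B'$ and the tensor product collapses to $R_{\calS'}$.
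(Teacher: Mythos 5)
The paper offers no proof — it asserts that the lemma ``follows immediately from the construction of $R_{\calS}$'' — and your proposal simply carries out that verification, so it is in the same spirit. Your second (ring-theoretic) argument is the cleaner one: writing $R_{\tildeS}^\square = R_{\tildeS'}^\square\widehat{\otimes}_{\calO}B$ and $R_{\calS}^{\loc} = R_{\calS'}^{\loc}\widehat{\otimes}_{\calO}B'$, noting that $R_{\calS}^{\loc}$ is a quotient of $R_{\tildeS}^\square$ so the completed and ordinary tensor products over it agree, and using associativity of the tensor product, gives exactly $R_{\calS}\cong R_{\calS'}\otimes_B B'$. One small caveat about your first (Yoneda) argument: as literally written, the Cartesian square needs a well-defined map $D_{\calS'}(A)\to\Hom_{\CNL_{\calO}}(B,A)$, but passing from a deformation to a map on $B$ requires choosing a representative lift; what \emph{is} independent of the lift (because the $R_w$ represent local deformation problems) is the \emph{condition} that the map factor through $R_w$, not the map itself. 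This is resolved by fixing a universal lift once, just as in the paper's construction, but your ring-theoretic argument sidesteps the issue entirely and is preferable.
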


Before proceeding, we introduce the conditions on the residual representation that appear as assumptions in the small $R = \bbT$ theorems we quote.
We first recall the definition of an adequate subgroup of $\GL_n(\F)$ in \cite{GHTadequateIM}*{\S1}.

\begin{defn}\label{def:GLadequate}
A subgroup $\Gamma$ of $\GL_n(\Fbar)$ is \emph{adequate} if the following hold:
\begin{enumerate}
\item $H^1(\Gamma,\Fbar) = 0$ and $H^1(\Gamma,\frakgl_n(\Fbar)/\frakz) = 0$, where $\frakz$ is the centre of $\frakgl_n(\Fbar)$;
\item\label{GLadequate:End} $\End_{\Fbar}(\Fbar^n)$ is spanned by the semisimple elements in $\Gamma$.
\end{enumerate}
\end{defn}

This is slightly more general than \cite{ThorneAdequate}*{Definition~2.3}, as it allows $p|n$. 
However, if $p\nmid n$ then the two definitions are equivalent.
We note that \cref{GLadequate:End} implies that $\Gamma$ acts irreducibly on $\Fbar^n$. 
The following partial converse is a theorem of Guralnick--Herzig--Taylor--Thorne \cite{ThorneAdequate}*{Theorem~A.9}.

\begin{thm}\label{thm:adequate}
Let $\Gamma$ be a subgroup of $\GL_n(\Fbar)$ that acts absolutely irreducibly on $\Fbar^n$. 
Let $\Gamma^0$ be the subgroup of $\Gamma$ generated by elements of $p$-power order. 
Let $d\ge 1$ be the maximal dimension of an irreducible $\Gamma^0$-submodule of $\Fbar^n$.
If $p > 2(d+1)$, then $\Gamma$ is adequate and $p\nmid n$.
\end{thm}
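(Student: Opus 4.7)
My plan is to pass to a finite subgroup of $\GL_n(\Fbar)$ and then invoke the classification of finite simple groups to control both the structure of $\Gamma^0$ and the first cohomology of its irreducible modules. The first step is a reduction to finite groups: since $\Gamma^0$ is generated by elements of $p$-power (hence finite) order inside $\GL_n(\Fbar)$, the subgroup of $\GL_n(\Fbar)$ generated by $\Gamma^0$ sits in $\GL_n$ of a finite extension of $\F_p$, and it suffices to verify adequacy for a sufficiently large finite subgroup of $\GL_n(\Fbar)$ containing $\Gamma^0$. After this reduction I may assume $\Gamma$ is finite, and I note that every Sylow $p$-subgroup of $\Gamma$ lies in $\Gamma^0$, so $\Gamma^0 \trianglelefteq \Gamma$ and $\Gamma/\Gamma^0$ has order coprime to $p$.

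Next I would handle the two halves of adequacy separately. For the $H^1$ vanishing, the Lyndon--Hochschild--Serre spectral sequence for $\Gamma^0 \trianglelefteq \Gamma$ together with the coprimality of $|\Gamma/\Gamma^0|$ and $p$ reduces the problem to showing $H^1(\Gamma^0, W) = 0$ for each $\Gamma^0$-irreducible constituent $W$ of $\Fbar$ or of $\frakgl_n(\Fbar)/\frakz$. By Clifford theory applied to $\Fbar^n|_{\Gamma^0}$, every such $W$ has dimension at most $d^2$, and the hypothesis $p > 2(d+1)$ is precisely the regime in which the classification of quasi-simple groups admitting a faithful $\Fbar$-representation of dimension at most $d$ generated by $p$-elements severely restricts the possibilities, and in each remaining case the required $H^1$ vanishes by explicit computation. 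For the spanning condition, I would let $U \subseteq \End_\Fbar(\Fbar^n)$ be the $\Fbar$-span of the semisimple elements of $\Gamma$; it is $\Gamma$-stable, and the structural restrictions imposed by $p > 2(d+1)$ on $\Gamma^0$, combined with a Brauer--Nesbitt argument on semisimple reductions of elements of $\Gamma$, should force $U = \End_\Fbar(\Fbar^n)$. Finally $p \nmid n$ would follow from the structural analysis: a nontrivial central $p$-element of $\Gamma^0$ would have to be scalar by absolute irreducibility, and an order-$p$ scalar is incompatible with the dimension bound $d$ on $\Gamma^0$-constituents in the range $p > 2(d+1)$.

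The main obstacle, and the only genuinely deep ingredient, is the input from the classification of finite simple groups. The reductions, the Lyndon--Hochschild--Serre spectral sequence, the Clifford decomposition, and the Brauer--Nesbitt argument are all formal once one knows that the composition factors of $\Fbar^n|_{\Gamma^0}$ are sharply restricted; but the assertion that $H^1$ vanishes on the relevant irreducibles whenever $p > 2(d+1)$ has no known proof that avoids the classification and case-by-case analysis of alternating, classical, and exceptional groups of Lie type. I would expect essentially all of the work to be concentrated in that step, and would ultimately need to invoke or adapt results of Guralnick--Tiep and the related literature, as is done in \cite{ThorneAdequate}. My proposal is really only the bookkeeping that makes that black box applicable.
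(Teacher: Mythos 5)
The paper gives no proof of this statement: it is quoted verbatim as a theorem of Guralnick--Herzig--Taylor--Thorne, \cite{ThorneAdequate}*{Theorem~A.9}, and the author supplies only the sentence preceding it as attribution. So there is nothing in the paper to compare your argument against step-by-step; the ``paper's proof'' is the citation itself. Your sketch is instead an outline of how one might prove the cited result from scratch, and you correctly flag that the essential content lies in classification-dependent $H^1$-vanishing statements for quasi-simple groups, due to Guralnick, Tiep, and their collaborators, which you would ultimately import rather than reprove. To that extent the two ``proofs'' are not in competition: the paper treats the result as a black box, and you propose to open the box just far enough to see the Clifford/Lyndon--Hochschild--Serre bookkeeping around the black box inside.

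There is, however, a genuine error in the one piece of your sketch that is supposed to be self-contained, namely the claim that $p\nmid n$. You argue that a nontrivial central $p$-element of $\Gamma^0$ would be a nontrivial scalar of $p$-power order, which would then contradict the dimension bound. But in characteristic $p$ there are no nontrivial scalars of $p$-power order at all: a $p$-power-order element of $\GL_n(\Fbar_p)$ is unipotent, so if it is scalar it is the identity. Your proposed contradiction is therefore vacuous, and it gives no information about $n$. The divisibility $p\nmid n$ has to come from a different mechanism: roughly, Clifford theory writes $n$ as a product of the number of $\Gamma^0$-isotypic blocks (which divides $[\Gamma:\Gamma^0]$, a $p'$-number since every $p$-element lies in $\Gamma^0$), the common multiplicity within a block, and the dimension of a $\Gamma^0$-irreducible (which is at most $d<p$), and one must then control the multiplicity and the $p$-local structure of $\Gamma^0$ using the bound $p>2(d+1)$. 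That is a real argument, not a one-liner about scalars, and your sketch does not supply it. You should also be cautious about the claim that the constituents of $\frakgl_n(\Fbar)/\frakz$ restricted to $\Gamma^0$ have dimension at most $d^2$: this is true as an upper bound on the ambient tensor factors, but it is not by itself the reduction that the GHTT appendix uses, and it does not obviously feed into the Guralnick--Tiep $H^1$-vanishing statements in the form you would need.
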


We now state the small $R = \bbT$ theorems that we use. 
The first of which is due to Barnet-Lamb--Gee--Geraghty--Taylor \cite{BLGGT}, with improvements by Barnet-Lamb--Gee--Geraghty \cite{BLGGU2}*{Appendix~A} and Dieulefait--Gee \cite{DieulefaitSym5}*{Appendix~B}.

\begin{thm}\label{thm:PDsmallRT}
Assume that $p\nmid 2n$ and that every $v|p$ in $F^+$ splits in $F$. 
For each $v|p$ in $F^+$, fix a choice of place $\tilde{v}$ of $F$ above $v$, and set $\tildeS_p = \{\tilde{v}\}_{v|p \text{ in }F^+}$. 
Assume further:
\begin{ass}
\item $\mu$ is de~Rham.
\item\label{PDsmallRT:PD} $\rhobar \otimes \Fbar_p \cong \rhobar_{\pi,\iota}$ and $\mubar = \chi_\iota\epsilon^{1-n} \mod {\frakm_{\Qbar_p}}$, where $(\pi,\chi)$ is a regular algebraic polarized cuspidal automorphic representation of $\GL_n(\A_F)$ such that $\rho_{\pi,\iota}|_{G_{\tilde{v}}}$ is potentially diagonalizable for each $\tilde{v} \in \tildeS_p$.
\item\label{PDsmallRT:adequate} $\rhobar(G_{F(\zeta_p)})$ is adequate and $\zeta_p \notin F$.
\end{ass}
For each $\tilde{v} \in \tildeS_p$, fix $\lambda_{\tilde{v}} \in (\Z_+^n)^{\Hom(F_{\tilde{v}},\Qbar_p)}$ and an inertial type $\tau_{\tilde{v}}$ defined over $E$, and 
we let $R_{\tilde{v}}$ be a quotient of $R_{\tilde{v}}^{\lambda_{\tilde{v}},\tau_{\tilde{v}},\cris}$ corresponding to a union of potentially diagonalizable irreducible components of $\Spec R_{\tilde{v}}^{\lambda_{\tilde{v}},\tau_{\tilde{v}},\cris}$. 
Let $\calS$ be the global $\calG_n$-deformation datum 
	\[ \calS = (F/F^+, S, \tildeS_p, \calO, \rbar, \mu, \{R_{\tilde{v}}\}_{\tilde{v}\in \tildeS_p}). \]
Then the following hold:
\begin{enumerate}
\item\label{PDsmallRT:finite} The universal type $\calS$ deformation ring $R_{\calS}$ is finite over $\calO$.
\item\label{PDsmallRT:autpt} Every $x\in \Spec R_{\calS}(\Qbar_p)$ is automorphic of level potentially prime to $p$.
\end{enumerate} 
\end{thm}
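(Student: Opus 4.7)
The plan is to apply the Taylor--Wiles--Kisin patching method in the form developed by \cite{BLGGT}, with the refinements of \cite{BLGGU2}*{Appendix~A} and \cite{DieulefaitSym5}*{Appendix~B}. The hypotheses have been stated precisely so that this machinery applies: the splitting of places above $p$ in $F/F^+$ together with the residual automorphy \cref{PDsmallRT:PD} permit the transfer of the polarized deformation problem to a definite unitary group over $F^+$ and the realization of $\rhobar$ as coming from a Hecke eigensystem there, while the adequacy condition \cref{PDsmallRT:adequate} is precisely the input needed by \cite{ThorneAdequate} to produce sufficiently many sets of auxiliary Taylor--Wiles primes.

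The patching construction then produces a finitely generated module $M_\infty$ over a framed patched ring $R_\infty$, where $R_\infty$ is a power series ring over the completed tensor product $\widehat{\otimes}_{\tilde{v}\in\tildeS_p} R_{\tilde{v}}$ of the prescribed local deformation rings at places above $p$. The numerical coincidence of \cite{CHT}*{\S1}, combined with standard commutative algebra of patching, forces the support of $M_\infty$ in $\Spec R_\infty$ to have the expected dimension and $M_\infty$ to be faithful over the quotient of $R_\infty$ acting on it; this quotient is therefore finite over $\calO$. Unpatching yields an isomorphism $R_{\calS}^{\red} \cong \bbT_{\calS}$ with a finite $\calO$-Hecke algebra, from which \cref{PDsmallRT:finite} follows. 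For \cref{PDsmallRT:autpt}, a point $x \in \Spec R_{\calS}(\Qbar_p)$ determines a system of Hecke eigenvalues in $\bbT_{\calS}$, which via descent from the unitary group corresponds to a regular algebraic polarized cuspidal automorphic representation $(\pi_x,\chi_x)$ of $\GL_n(\A_F)$ with $\rho_x \cong \rho_{\pi_x,\iota}$ and $\mu = \chi_{x,\iota}\epsilon^{1-n}$. The level being potentially prime to $p$ then follows from local-global compatibility (\cref{autgalrep:locglob} of \cref{thm:autgalrep}) applied to the potentially crystalline condition imposed at each $\tilde{v}\in\tildeS_p$.

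The main obstacle is the component-switching step. Patching naturally pins down a single irreducible component at a time in each local deformation ring $R_{\tilde{v}}^{\lambda_{\tilde{v}},\tau_{\tilde{v}},\cris}$, and to deduce that an arbitrary lift $\rho_x$ is automorphic one must connect the component of $R_{\tilde{v}}$ containing $\rho_x|_{G_{\tilde{v}}}$ to a component containing some $\rho_{\pi,\iota}|_{G_{\tilde{v}}}$ already known to come from an automorphic form. The potential diagonalizability hypothesis --- both in \cref{PDsmallRT:PD} and in the definition of $R_{\tilde{v}}$ as a union of PD components --- is exactly the mechanism by which \cite{BLGGT} achieves this: after a sufficiently large solvable CM base change, every potentially diagonalizable point lies on a component of the restricted crystalline deformation ring that also contains a sum of characters, whose automorphy is clear by automorphic induction, and cyclic solvable descent then propagates automorphy back to $F$. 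Bookkeeping between the unitary-group and $\calG_n$-formulations is the only other delicate point, but it is standard.
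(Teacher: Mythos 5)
Your proposal correctly identifies the relevant literature (BLGGT, BLGGU2 Appendix~A, DieulefaitSym5 Appendix~B, ThorneAdequate) and the central idea that potential diagonalizability is the mechanism for component-switching after solvable CM base change. For part \ref{PDsmallRT:autpt} your route through a Hecke eigensystem is a slight detour; the paper quotes the automorphy lifting theorem \cite{DieulefaitSym5}*{Theorem~9} directly, which gives \ref{PDsmallRT:autpt} immediately.

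The substantive gap is in the proof of \ref{PDsmallRT:finite}. You write ``unpatching yields $R_{\calS}^{\red}\cong\bbT_{\calS}$, from which \ref{PDsmallRT:finite} follows,'' but the existing finiteness theorems in the literature --- in particular \cite{ThorneAdequate}*{Theorem~10.1}, which is what one actually invokes --- require (a) a \emph{fixed irreducible component} of the local deformation ring at every place in the deformation datum and (b) a deformation datum that imposes conditions at all ramified places, with every such place split in $F/F^+$. In \cref{thm:PDsmallRT} the local rings $R_{\tilde{v}}$ at $\tilde{v}|p$ are allowed to be arbitrary \emph{unions} of potentially diagonalizable components, there is no local condition imposed at ramified places away from $p$, and places in $S$ away from $p$ need not split in $F/F^+$. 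Your proposal does not explain how to bridge this, and a direct appeal to patching does not. The paper's proof handles this by two preliminary reductions that you do not mention: first, it reduces to showing $R_{\calS}/\frakq$ is finite over $\calO$ for each minimal prime $\frakq$; second, it performs a solvable CM base change $L/F$ chosen so that every place of $S_{L^+}$ splits in $L$, the inertial types at $p$ trivialize, and the ramification at places $\tilde{w}\nmid p$ becomes unipotent (so that $r_{\frakq}|_{G_{\tilde{w}}}$ factors through $R_{\tilde{w}}^1$, by \cref{thm:R1factor}). Only after these reductions is one in a position to fix minimal primes at every place, apply \cite{BLGGU2}*{Theorem~6.8} to produce a characteristic-zero lift of type $\calS_L$, invoke \cite{DieulefaitSym5}*{Appendix~B} to show that lift is automorphic, identify the local components via \cite{BLGGT}*{Lemma~1.3.2} and the smoothness of the crystalline rings in characteristic zero, and finally apply \cite{ThorneAdequate}*{Theorem~10.1}. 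These reductions are the actual mathematical content of the proof; without them the patching machinery you cite does not apply to the statement as given.
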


\begin{proof}
\Cref{PDsmallRT:autpt} follows from \cite{DieulefaitSym5}*{Theorem~9}. 
To show \cref{PDsmallRT:finite}, we will apply \cite{ThorneAdequate}*{Theorem~10.1}. 
However, we have not fixed irreducible components at the finite places $\tilde{v}\in \tildeS$.
We now explain how we can reduce to this case using base change and \cite{BLGGU2}*{Theorem~6.8}. 
Note that both \cite{DieulefaitSym5}*{Appendix~B} and \cite{BLGGU2}*{Theorem~6.8} use the stronger definition of adequate that implies $p\nmid n$.

To prove that $R_{\calS}$ is finite over $\calO$, it suffices to prove that $R_{\calS}^{\red}$ is finite over $\calO$. 
For this, it suffices to prove that $R_{\calS}/\frakq$ is finite over $\calO$ for any minimal prime $\frakq$ of $R_{\calS}$, since $R_{\calS}^{\red}$ injects into $\prod_{\frakq} R_{\calS}/\frakq$. 
Fix a minimal prime $\frakq$ of $R_{\calS}$, and let $r_{\frakq} : G_{F^+,S} \rightarrow \calG_n(R_{\calS}/\frakq)$ be the induced deformation. 

We now choose a finite solvable extension $L/F$ of CM fields, with maximal totally real subfield $L^+$, satisfying the following:
\begin{itemize}
	\item $L$ is disjoint from the subfield of $\overline{F}$ fixed by $\rbar|_{G_{F(\zeta_p)}}$.
	\item Letting $S_{L^+}$ denote the set of places in $L^+$ above those in $S$, each $w \in S_{L^+}$ splits in $L$. 
	For each $w\in S_{L^+}$ we fix a choice of place $\tilde{w}$ in $L$ above $w$ such that if $w|p$, then $\tilde{w}$ lies above some $\tilde{v} \in \tildeS_p$.
	We let $\tildeS_L = \{ \tilde{w} \mid w \in S_{L^+}\}$. 
	\item For each $\tilde{w} \in \tildeS_L$ with $\tilde{w} \nmid p$, the $\CNL_{\calO}$-algebra map $R_{\tilde{w}}^\square \rightarrow R_{\calS}/\frakq$ induced by $r_{\frakq}|_{G_{\tilde{w}}}$ factors through the quotient $R_{\tilde{w}}^1$ of \cref{thm:R1} (here we use \cref{thm:R1factor}).
	\item For each $\tilde{w} \in \tildeS_L$, $\tau_{\tilde{v}}|_{G_{\tilde{w}}} = 1$, where $\tilde{v} \in \tildeS_p$ is the place in $\tildeS_p$ below $\tilde{w}$.
\end{itemize}
We define a global $\calG_n$-deformation datum
	\[ \calS_L = (L/L^+, S_{L^+}, \tildeS_L, \calO, \rbar|_{G_{L^+}}, \mu|_{G_{L^+}}, \{R_{\tilde{w}}\}_{\tilde{w}\in \tildeS_L})\]
where
\begin{itemize}
	\item for $\tilde{w}\nmid p$, $R_{\tilde{w}}$ is a quotient of $R_{\tilde{w}}^1$ by a minimal prime through which the $\CNL_{\calO}$-algebra morphism $R_{\tilde{w}}^1 \rightarrow R_{\calS}/\frakq$ induced by $r_{\frakq}|_{G_{\tilde{w}}}$ factors;
	\item for $\tilde{w} \mid p$, $R_{\tilde{w}}$ is a quotient of $R_{\tilde{w}}^{\lambda_{\tilde{w}},\cris}$ through which the $\CNL_{\calO}$-algebra morphism 
		\[ R_{\tilde{w}}^{\lambda_{\tilde{w}},\cris}\lra R_{\tilde{v}} \lra R_{\calS}/\frakq\] 
	induced by $r_{\frakq}|_{G_{\tilde{w}}}$ factors. 
	Here $\lambda_{\tilde{w},\sigma'} = \lambda_{\tilde{v},\sigma}$ if $\sigma' : L_{\tilde{w}} \hookrightarrow \Qbar_p$ extends $\sigma : F_{\tilde{v}} \hookrightarrow \Qbar_p$.
\end{itemize}
The deformation $r_{\frakq}|_{G_{L^+}}$ is of type $\calS_L$, so there is an induced $\CNL_{\calO}$-algebra map $R_{\calS_L} \rightarrow R_{\calS}/\frakq$, and it is finite by \cite{BLGGT}*{Lemma~1.2.3}. 
So we are reduced to showing $R_{\calS_L}$ is finite over $\calO$.

For $\tilde{w} \nmid p$, $R_{\tilde{w}}$ has characteristic zero by \cref{thm:R1}, so contains a $\Qbar_p$-point. 
For each $\tilde{v} \in \tildeS_p$, $R_{\tilde{w}}$ contains a potentially diagonalizable point by choice of $R_{\tilde{w}}$ and $R_{\tilde{v}}$, where $\tilde{v}$ is the place of $F$ below $\tilde{w}$.
We now apply \cite{BLGGU2}*{Theorem~6.8}, and we have a lift
	\[ r : G_{L^+,S_{L^+}} \lra \calG_n(\calO_{\Qbar_p}) \]
that defines a type $\calS_L$-deformation of $\rbar|_{G_{L^+}}$.
Then \cite{DieulefaitSym5}*{Appendix~B} implies there is a regular algebraic polarized cuspidal automorphic representation $(\pi',\chi')$ of $\GL_n(\A_L)$ such that $r|_{G_L}\otimes\Qbar_p \cong \rho_{\pi',\iota}$ and $\chi'_\iota \epsilon^{1-n} = \mu|_{G_{L^+}}$. 
 
Take $\tilde{w} \in \tildeS_L$. 
If $\tilde{w} | p$, then $\Spec R_{\tilde{w}}[1/p]$ is the unique irreducible component of $\Spec R_{\tilde{w}}^{\lambda_{\tilde{w}},\cris}[1/p]$ containing the point determined by $r|_{G_{\tilde{w}}}$, as $R_{\tilde{w}}^{\lambda_{\tilde{w}},\cris}[1/p]$ is formally smooth. 
If $\tilde{w} \nmid p$, then $\Spec R_{\tilde{w}}[1/p]$ is the unique irreducible component of $\Spec R_{\tilde{w}}^\square[1/p]$ containing $r|_{G_{\tilde{w}}}$ by \cite{BLGGT}*{Lemma~1.3.2}, using local-global compatibility, i.e. \cref{autgalrep:locglob} of \cref{thm:autgalrep}, and the fact that $\pi'$ being cuspidal implies that $\pi_w$ is generic for all finite $w$. 
We can now apply \cite{ThorneAdequate}*{Theorem~10.1}, using \cite{Thorne2adic}*{Proposition~7.1} in place of \cite{ThorneAdequate}*{Proposition~4.4} (see \cite{Thorne2adic}*{\S 7}). 
This completes the proof.
\end{proof}

For our purposes below, it would suffice to fix irreducible components at places dividing $p$ in \cref{thm:PDsmallRT} above, but we do not want to fix irreducible components at places not dividing $p$ (see \cref{rmk:minimal} below).

We have a similar theorem in the ordinary case, due to Geraghty \cite{GeraghtyOrdinary} and Thorne \cite{ThorneAdequate}.

\begin{thm}\label{thm:ordsmallRT}
Assume $p>2$ and that every $v|p$ in $F^+$ splits in $F$. 
For each $v|p$ in $F^+$, fix a choice of place $\tilde{v}$ of $F$ above $v$, and set $\tildeS_p = \{\tilde{v}\}_{v|p \text{ in }F^+}$. 
Assume further:
\begin{ass}
\item $\mu$ is de~Rham.
\item\label{ordsmallRT:ord} $\rhobar \otimes \Fbar_p \cong \rhobar_{\pi,\iota}$ and $\mubar = \chi_\iota\epsilon^{1-n} \mod {\frakm_{\Qbar_p}}$, where $(\pi,\chi)$ is an $\iota$-ordinary regular algebraic polarized cuspidal automorphic representation of $\GL_n(\A_F)$.
\item $\rhobar(G_{F(\zeta_p)})$ is adequate and $\zeta_p \notin F$.
\end{ass}
For each $\tilde{v} \in \tildeS_p$, fix $\lambda_{\tilde{v}} \in (\Z_+^n)^{\Hom(F_{\tilde{v}},\Qbar_p)}$ and an inertial type $\tau_{\tilde{v}}$ defined over $E$, and let $R_{\tilde{v}}$ be a quotient of $R_{\tilde{v}}^{\lambda_{\tilde{v}},\tau_{\tilde{v}},\ord}$ corresponding to a union of irreducible components of $\Spec R_{\tilde{v}}^{\lambda_{\tilde{v}},\tau_{\tilde{v}},\ord}$.
Let $\calS$ be the global $\calG_n$-deformation datum 
	\[ \calS = (F/F^+, S, \tildeS_p, \calO, \rbar, \mu, \{R_{\tilde{v}}\}_{\tilde{v}\in \tildeS_p}). \]
Then the following hold:
\begin{enumerate}
\item The universal type $\calS$ deformation ring $R_{\calS}$ is finite over $\calO$.
\item Every $x\in \Spec R_{\calS}(\Qbar_p)$ is $\iota$-ordinary automorphic.
\end{enumerate} 
\end{thm}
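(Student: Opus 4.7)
The proof mirrors that of Theorem \ref{thm:PDsmallRT}, with ``potentially diagonalizable'' replaced throughout by ``ordinary'' and the automorphy lifting theorems of \cite{DieulefaitSym5} and \cite{BLGGU2}*{Theorem~6.8} replaced by their ordinary counterparts due to Geraghty \cite{GeraghtyOrdinary}, together with the strengthening in \cite{ThorneAdequate}. For the second assertion, given $x \in \Spec R_{\calS}(\Qbar_p)$, the representation $\rho_x|_{G_F}$ is a lift of $\rhobar$ that is ordinary of weight $\lambda_{\tilde{v}}$ at each $\tilde{v} \in \tildeS_p$ and whose reduction is automorphic via the $\iota$-ordinary $\pi$; Geraghty's ordinary automorphy lifting theorem then yields an $\iota$-ordinary regular algebraic polarized cuspidal automorphic representation matching $\rho_x|_{G_F}$, where the $\iota$-ordinarity of the output is built into Geraghty's construction.

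For the finiteness in the first assertion, I would reduce, exactly as in the proof of Theorem \ref{thm:PDsmallRT}, to showing $R_{\calS}/\frakq$ is finite over $\calO$ for a fixed minimal prime $\frakq \subset R_{\calS}$ with associated deformation $r_\frakq$. I would choose a finite solvable CM extension $L/F$, with maximal totally real subfield $L^+$, linearly disjoint from the subfield fixed by $\rbar|_{G_{F(\zeta_p)}}$, in which every place of $S$ splits, with chosen lifts forming $\tildeS_L$ such that: for $\tilde{w} \nmid p$ in $\tildeS_L$, the morphism $R_{\tilde{w}}^\square \to R_{\calS}/\frakq$ induced by $r_\frakq|_{G_{\tilde{w}}}$ factors through $R_{\tilde{w}}^1$ (using Lemma \ref{thm:R1factor}); and for each $\tilde{w} \in \tildeS_L$ above $p$, the restriction $\tau_{\tilde{v}}|_{G_{\tilde{w}}}$ is trivial. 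I would then introduce the global $\calG_n$-deformation datum
\[ \calS_L = (L/L^+, S_{L^+}, \tildeS_L, \calO, \rbar|_{G_{L^+}}, \mu|_{G_{L^+}}, \{R_{\tilde{w}}\}_{\tilde{w}\in \tildeS_L}) \]
with $R_{\tilde{w}}$ a quotient of $R_{\tilde{w}}^1$ (for $\tilde{w} \nmid p$) by a minimal prime through which $r_\frakq|_{G_{\tilde{w}}}$ factors, and a quotient of $R_{\tilde{w}}^{\lambda_{\tilde{w}},\ord}$ (for $\tilde{w} | p$, with $\lambda_{\tilde{w}}$ determined by $\lambda_{\tilde{v}}$) through which $r_\frakq|_{G_{\tilde{w}}}$ factors. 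Solvable base change \cite{BLGGT}*{Lemma~1.2.3} gives a finite $\CNL_{\calO}$-morphism $R_{\calS_L} \to R_{\calS}/\frakq$, reducing the problem to showing $R_{\calS_L}$ is finite over $\calO$.

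To apply Thorne's patching argument \cite{ThorneAdequate}*{Theorem~10.1} in its ordinary incarnation, I need a $\Qbar_p$-point on $R_{\calS_L}$ lying on a uniquely determined irreducible component of each local factor. At $\tilde{w} \nmid p$ this is automatic, since $R_{\tilde{w}}^1$ has characteristic zero generic points by Proposition \ref{thm:R1}, and the unique component containing any such point is pinned down by local-global compatibility as in \cite{BLGGT}*{Lemma~1.3.2}. At $\tilde{w} | p$, since $(\pi,\chi)$ is $\iota$-ordinary, $\rho_{\pi,\iota}|_{G_{\tilde{v}}}$ is ordinary, and after possibly enlarging $L$ one can use Geraghty's ordinary potential automorphy results (together with a Hida-theoretic change of weight, in the style of \cite{BLGGU2}*{Theorem~6.8}) to produce an $\iota$-ordinary regular algebraic polarized cuspidal automorphic representation $\pi'$ of $\GL_n(\A_L)$ whose Galois representation defines a point of $\Spec R_{\calS_L}(\Qbar_p)$; the chosen irreducible component of $\Spec R_{\tilde{w}}^{\lambda_{\tilde{w}},\ord}[1/p]$ containing this point is then pinned down by the formally smooth open dense locus in Theorem \ref{thm:ordring}.

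The main obstacle is the ordinary change-of-weight step at $\tilde{w} | p$: unlike in the potentially diagonalizable case, the irreducible-component structure of $R_{\tilde{w}}^{\lambda_{\tilde{w}},\ord}$ is only partially understood, so one must choose $L$ and the component of $R_{\tilde{w}}$ with some care so that the automorphic ordinary lift produced after base change indeed lies on the specified component. This is precisely where the ordinary refinement of \cite{BLGGU2}*{Theorem~6.8} is needed; once a $\Qbar_p$-point of $R_{\calS_L}$ is in hand, the Taylor--Wiles--Kisin--Thorne patching argument goes through unchanged and concludes the proof.
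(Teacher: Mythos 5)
Your proposal mirrors the PD argument too literally and in doing so inherits an obstacle that the paper's proof avoids entirely. The crucial preliminary step the paper uses, and that you miss, is this: since $R_{\tilde v}$ is a quotient of $R_{\tilde v}^{\lambda_{\tilde v},\tau_{\tilde v},\ord}$ by a union of irreducible components, the resulting $R_{\calS}$ is a quotient of the ring $R_{\calS'}$ obtained by taking $R_{\tilde v}=R_{\tilde v}^{\lambda_{\tilde v},\tau_{\tilde v},\ord}$ at every $\tilde v\in\tildeS_p$. Finiteness over $\calO$ passes to quotients, and $\Spec R_{\calS}(\Qbar_p)\subseteq\Spec R_{\calS'}(\Qbar_p)$, so the $\iota$-ordinary automorphy of $\Qbar_p$-points of $R_{\calS'}$ implies it for $R_{\calS}$. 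One may therefore assume from the outset that $R_{\tilde v}$ is the full ordinary lifting ring. After a solvable base change (disjoint from the fixed field of $\rbar|_{G_{F(\zeta_p)}}$) to arrange that every $v\in S$ splits, that $\tau_{\tilde v}=1$ for $\tilde v|p$, and—using \cref{thm:typeSquotient}—that $\tildeS$ contains exactly one place above each $v\in S$ with $R_w=R_w^\square$ at $w\nmid p$, one lands precisely in the setting of \cite{ThorneAdequate}, and the theorem is an immediate application of \cite{ThorneAdequate}*{Theorems~9.1 and~10.1} (with \cite{Thorne2adic}*{Proposition~7.1} substituted for \cite{ThorneAdequate}*{Proposition~4.4}).

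The step you flag as ``the main obstacle''—producing, after base change, an $\iota$-ordinary automorphic representation whose associated local point lands on a \emph{prescribed} irreducible component of $\Spec R_{\tilde w}^{\lambda_{\tilde w},\ord}$, via some hypothetical ordinary refinement of \cite{BLGGU2}*{Theorem~6.8}—is a genuine gap in your argument, and the correct resolution is not to supply such a result but to notice it is never needed. Unlike the potentially diagonalizable case, where one must pin down components of $R^{\lambda,\tau,\cris}$ in order to land in the minimal setting of \cite{ThorneAdequate}*{Theorem~10.1}, the Geraghty--Thorne ordinary $R=\bbT$ theorems handle the entire ordinary lifting ring $R^{\lambda,\tau,\ord}$ at once, with no component fixed. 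There is in particular no need for the elaborate choice of $L$ with specified local component behavior at $\tilde w|p$, nor for the residual-automorphy input $(\pi,\chi)$ to be preserved under base change with components matched; the $\iota$-ordinarity of $\pi$ is simply what guarantees that the relevant Hecke algebra is nonzero, and that is all Thorne's theorem requires.
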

%

\begin{proof}
It suffices to consider the case $R_{\tilde{v}} = R_{\tilde{v}}^{\lambda_{\tilde{v}},\tau_{\tilde{v}},\ord}$ for each $\tilde{v} \in \tildeS_p$. 
As with \cref{thm:PDsmallRT}, we are free to replace $F$ with a finite solvable extension disjoint from the subfield of $\overline{F}$ fixed by $\rbar|_{G_{F(\zeta_p)}}$. 
After such an extension, we may assume that
\begin{itemize}
	\item every $v\in S$ splits in $F$;
	\item for every $\tilde{v} \in \tildeS$ above $p$, $\tau_{\tilde{v}} = 1$.
\end{itemize}
By \cref{thm:typeSquotient}, we may also enlarge $\tildeS$ to ensure that it contains exactly one place $w$ above any $v \in S$, setting $R_w = R_w^\square$ for each $w \in \tildeS$ with $w\nmid p$. 
We are now in the setting of \cite{ThorneAdequate} and our stated theorem is simply the combination of \cite{ThorneAdequate}*{Theorem~9.1 and Theorem~10.1}, again using \cite{Thorne2adic}*{Proposition~7.1} in place of \cite{ThorneAdequate}*{Proposition~4.4}.
\end{proof}

Finally, using the potential automorphy results of Barnet-Lamb--Gee--Geraghty--Taylor \cite{BLGGT}, we also have a potential version of the above two theorems.

\begin{thm}\label{thm:potsmallRT}
Assume $p>2$ and that every $v|p$ in $F^+$ splits in $F$. 
For each $v|p$ in $F^+$, fix a choice of place $\tilde{v}$ of $F$ above $v$, and set $\tildeS_p = \{\tilde{v}\}_{v|p \text{ in }F^+}$. 
Assume further:
\begin{ass}
\item $\mu$ is de~Rham.
\item\label{potsmallRT:adequate} $\rhobar|_{G_{F(\zeta_p)}}$ is absolutely irreducible and $\zeta_p \notin F$. 
Moreover, letting $d$ denote the maximal dimension of an irreducible subrepresentation of the restriction of $\rhobar$ to the closed subgroup of $G_{F}$ generated by all Sylow pro-$p$ subgroups, we assume $p>2(d+1)$.
\end{ass}
For each $\tilde{v} \in \tildeS_p$, fix $\lambda_{\tilde{v}} \in (\Z_+^n)^{\Hom(F_{\tilde{v}},\Qbar_p)}$ and an inertial type $\tau_{\tilde{v}}$ defined over $E$, and 
we choose $R_{\tilde{v}}$ such that one of the following hold:
\begin{casez}
	\item\label{potsmallRT:PD} For each $\tilde{v} \in \tildeS_p$, $R_{\tilde{v}}$ is a quotient of $R_{\tilde{v}}^{\lambda_{\tilde{v}},\tau_{\tilde{v}},\cris}$ corresponding to a union of potentially diagonalizable irreducible components of $\Spec R_{\tilde{v}}^{\lambda_{\tilde{v}},\tau_{\tilde{v}},\cris}$.
	\item\label{potsmallRT:ord} For each $\tilde{v} \in \tildeS_p$, $R_{\tilde{v}}$ is a quotient of $R_{\tilde{v}}^{\lambda_{\tilde{v}},\tau_{\tilde{v}},\ord}$ corresponding to a union of irreducible components of $\Spec R_{\tilde{v}}^{\lambda_{\tilde{v}},\tau_{\tilde{v}},\ord}$.
\end{casez}
Let $\calS$ be the global $\calG_n$-deformation datum 
	\[ \calS = (F/F^+, S, \tildeS_p, \calO, \rbar, \mu, \{R_{\tilde{v}}\}_{\tilde{v}\in \tildeS_p}). \]
Then the following hold:
\begin{enumerate}
\item\label{potPDsmallRT:finite} The universal type $\calS$ deformation ring $R_{\calS}$ is finite over $\calO$.
\item\label{potPDsmallRT:autpt} Given any finite extension $F^{(\mathrm{avoid})}/F$, there is a finite extension of CM fields $L/F$, disjoint from $F^{(\mathrm{avoid})}$, such that every $x\in \Spec R_{\calS}(\Qbar_p)$ is $L$-potentially automorphic.
\end{enumerate} 
\end{thm}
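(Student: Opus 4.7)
The strategy runs in reverse to the proof of \cref{thm:PDsmallRT}: there one starts from residual automorphy over $F$ and uses a solvable CM base change merely to simplify the local deformation problems, whereas here we lack residual automorphy and must first \emph{acquire} it after a solvable CM base change via a potential automorphy theorem, and then invoke \cref{thm:PDsmallRT} (case i) or \cref{thm:ordsmallRT} (case ii) over the larger field to deduce both finiteness and potential automorphy of all points. We may assume $R_{\calS}$ is nonzero, since otherwise both conclusions are vacuous. Fixing a $\Qbar_p$-point of $\Spec R_{\calS}$ gives a lift $r : G_{F^+,S} \to \calG_n(\calO_{\Qbar_p})$ of $\rbar$ of type $\calS$; in particular, for each $\tilde v \in \tildeS_p$, $r|_{G_{\tilde v}}$ is potentially diagonalizable of weight $\lambda_{\tilde v}$ and inertial type $\tau_{\tilde v}$ in case i, and ordinary of the given weight and type in case ii.

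\textbf{Base change via potential automorphy.} Apply a potential automorphy theorem of Barnet-Lamb--Gee--Geraghty--Taylor (such as \cite{BLGGT}*{Theorem~4.5.1}) to $r|_{G_F}\otimes\Qbar_p$; in case ii we use the ordinary version, or the fact that ordinary representations are potentially diagonalizable to first produce a potentially diagonalizable automorphic representation and then pass to an ordinary congruent one. The adequacy hypothesis required for BLGGT is supplied by \cref{thm:adequate} together with the bound $p>2(d+1)$. This produces a finite solvable Galois extension of CM fields $L/F$, with maximal totally real subfield $L^+$, which we arrange to be linearly disjoint from $F^{(\mathrm{avoid})}$ and from the subfield of $\overline{F}$ cut out by $\ker(\rhobar|_{G_{F(\zeta_p)}})$, so that $\rhobar(G_{L(\zeta_p)}) = \rhobar(G_{F(\zeta_p)})$ remains adequate and $\zeta_p \notin L$; we also arrange that every place in $S_{L^+}$ splits in $L$, that $r|_{G_L}$ is automorphic, and, in case ii, that the resulting cuspidal automorphic representation on $\GL_n(\A_L)$ is $\iota$-ordinary. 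We then form the base-changed global $\calG_n$-deformation datum
\[
\calS_L = \bigl(L/L^+,\, S_{L^+},\, \tildeS_L,\, \calO,\, \rbar|_{G_{L^+}},\, \mu|_{G_{L^+}},\, \{R_{\tilde w}\}_{\tilde w\in \tildeS_L}\bigr),
\]
where for each $\tilde w \in \tildeS_L$ lying above some $\tilde v \in \tildeS_p$ we take $R_{\tilde w}$ to be a quotient of $R_{\tilde w}^{\lambda_{\tilde w},\tau_{\tilde v}|_{I_{\tilde w}},\cris}$ or $R_{\tilde w}^{\lambda_{\tilde w},\tau_{\tilde v}|_{I_{\tilde w}},\ord}$ compatible with $r|_{G_{\tilde w}}$, with $\lambda_{\tilde w}$ induced from $\lambda_{\tilde v}$ in the usual way.

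\textbf{Conclusion.} By construction $\rbar|_{G_{L^+}}$ is now residually automorphic, so \cref{thm:PDsmallRT} (case i) respectively \cref{thm:ordsmallRT} (case ii) applies over $L$, giving that $R_{\calS_L}$ is finite over $\calO$ and that every $\Qbar_p$-point of $\Spec R_{\calS_L}$ is automorphic. Restriction of deformations induces a canonical $\CNL_{\calO}$-morphism $R_{\calS_L} \to R_{\calS}$, and the standard argument of \cite{BLGGT}*{Lemma~1.2.3}, using the absolute irreducibility of $\rbar|_{G_L}$ preserved by the disjointness arrangement, shows that $R_{\calS}$ is finite over $R_{\calS_L}$. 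Combining these gives \cref{potPDsmallRT:finite}. For \cref{potPDsmallRT:autpt}, any $\Qbar_p$-point $x$ of $\Spec R_{\calS}$ restricts to a $\Qbar_p$-point of $\Spec R_{\calS_L}$, which is automorphic by the conclusion over $L$, so $x$ is $L$-potentially automorphic. The main subtlety is that a \emph{single} $L$ must work uniformly over all $\Qbar_p$-points of $R_{\calS}$: this is precisely what the reduction to \cref{thm:PDsmallRT,thm:ordsmallRT} buys us, since $L$ is chosen using only $\rbar$ and the preliminary lift $r$, whereas residual automorphy of $\rbar|_{G_{L^+}}$ then forces automorphy of every $\Qbar_p$-point of the base-changed deformation ring simultaneously.
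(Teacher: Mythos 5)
Your plan has the right high-level shape — establish residual automorphy over a suitable CM extension $L/F$ and then invoke \cref{thm:PDsmallRT} or \cref{thm:ordsmallRT} over $L$ — but there is a genuine gap in how you get off the ground.

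You propose to "fix a $\Qbar_p$-point of $\Spec R_{\calS}$" and apply a characteristic-zero potential automorphy theorem to the resulting lift $r$. The existence of such a point is not assured by the hypotheses: $R_{\calS}$ could a priori be nonzero yet have no $\Qbar_p$-points (e.g.\ behave like $\F[[x]]$, which is neither zero nor finite over $\calO$). Ruling out this scenario is essentially the content of the small $R=\bbT$ theorem, which is precisely what you are trying to invoke downstream — so the argument is circular. Moreover, the statement to be proved asserts finiteness of $R_{\calS}$ over $\calO$ unconditionally, not under the extra hypothesis that $\Spec R_{\calS}(\Qbar_p)\neq\emptyset$. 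The paper circumvents this by applying a \emph{residual} potential automorphy theorem, \cite{BLGGT}*{Proposition~3.3.1}, directly to $\rhobar$. To feed Proposition 3.3.1 the local characteristic-zero lifts it requires, the paper first passes to an auxiliary CM extension $M/F$ (disjoint from $F^{(\mathrm{avoid})}$) over which $\rhobar|_{G_w}$ is trivial for all $w$ above $S$; then arbitrary lifts of the trivial representation serve as the needed local inputs. This eliminates any dependence on a global $\Qbar_p$-point. Once residual automorphy of $\rhobar|_{G_L}$ is secured in this way (with $(\pi,\chi)$ unramified at $p$, $\iota$-ordinary, and $L$ disjoint from $F^{(\mathrm{avoid})}$), your remaining steps — forming $\calS_L$, observing $R_{\calS_L}\to R_{\calS}$ is finite by \cite{BLGGT}*{Lemma~1.2.3}, and concluding via the earlier small $R=\bbT$ theorems — coincide with the paper's.

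A smaller point: even granting a $\Qbar_p$-point $x$ of $\Spec R_{\calS}$ in case (i), you would still need to argue that $\rho_x|_{G_{\tilde v}}$ is actually potentially diagonalizable; a "potentially diagonalizable irreducible component" is defined in the paper merely as one containing a potentially diagonalizable point, so this is not immediate. The paper's route via Proposition 3.3.1 and \cite{BLGGU2}*{Theorem~6.8} sidesteps this as well.
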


\begin{proof}
We can and do assume that $F^{(\mathrm{avoid})}$ contains the fixed field of $\rbar|_{G_{F(\zeta_p)}}$. 
We first take a CM extension $M/F$, disjoint from $F^{(\mathrm{avoid})}$, such that for any finite place $w$ of $M$ lying over a place in $S$, $\rhobar|_{G_w}$ is trivial.
In particular, for any finite place $w$ of $M$ lying above a place in $S$, $\rhobar|_{G_w}$ admits a characteristic zero lift $\rho_w$, which we may assume satisfy $\rho_{cw}^c \cong \rho_w^\vee \otimes \mu|_{G_w}$. 
Then, using \cref{potsmallRT:adequate}, we can apply \cite{BLGGT}*{Proposition~3.3.1} to deduce the existence of a finite extension $L/M$ of CM fields, with maximal totally real subfield $L^+$, and a regular algebraic polarized cuspidal automorphic representation $(\pi,\chi)$ of $\GL_n(\A_L)$ such that
\begin{itemize}
	\item $L$ is disjoint from $F^{(\mathrm{avoid})}$;
	\item $\rhobar|_{G_L}\otimes\Fbar_p \cong \rhobar_{\pi,\iota}$ and $\mubar|_{G_{L^+}} = \chi_\iota\epsilon^{1-n}$;
	\item $\pi$ is unramified at $p$ and outside of $S$;
	\item $\pi$ is $\iota$-ordinary.
\end{itemize}
Let $S_{L^+}$ be the set of places of $L^+$ above $S$, $\tildeS_L$ be the set of places in $L$ above $\tildeS$, and $\tildeS_{L,p}$ be the set places in $\tildeS_L$ dividing $p$. 
For each $\tilde{w} \in \tildeS_{L,p}$, let $\lambda_{\tilde{w}} \in (\Z_+^n)^{\Hom(L_{\tilde{w}},\Qbar_p)}$ be given by $\lambda_{\tilde{w},\sigma'} = \lambda_{\tilde{v},\sigma}$ if $\sigma' : L_{\tilde{w}} \hookrightarrow \Qbar_p$ extends $\sigma : F_{\tilde{v}} \hookrightarrow \Qbar_p$, and let $\tau_{\tilde{w}} = \tau_{\tilde{v}}|_{I_{\tilde{w}}}$. 
We then consider the global $\calG_n$-deformation datum
	\[\calS_L = (L/L^+, S_{L^+}, \tildeS_L, \calO, \rbar|_{G_{L^+}}, \mu|_{G_{L^+}}, \{R_{\tilde{w}}\}_{\tilde{w}\in \tildeS_{L,p}}),\]
where
\begin{itemize}
	\item if we are in \cref{potsmallRT:PD}, then for each $\tilde{w} \in \tilde{S}_{L,p}$, $R_{\tilde{w}}$ is the quotient of $R_{\tilde{w}}^{\lambda_{\tilde{w}},\tau_{\tilde{w}},\cris}$ corresponding the union of all potentially diagonalizable irreducible components;
	\item if we are in \cref{potsmallRT:ord}, then for each $\tilde{w} \in \tilde{S}_{L,p}$, $R_{\tilde{w}} = R_{\tilde{w}}^{\lambda_{\tilde{w}},\tau_{\tilde{w}},\ord}$.
\end{itemize}
There is then a canonical $\CNL_{\calO}$-algebra map
	\[ R_{\calS_L} \lra R_{\calS}, \]
which is finite by \cite{BLGGT}*{Lemma~1.2.3}. 
The theorem now follows from \cref{thm:PDsmallRT} if we are in \cref{potsmallRT:PD}, and from \cref{thm:ordsmallRT} if we are in \cref{potsmallRT:ord}.
\end{proof}

\subsection{The main theorems in the CM case}\label{sec:mainCM}
We first prove our main theorem in the potentially diagonalizable case.

\begin{thm}\label{thm:mainPD}
Assume that $p\nmid 2n$ and that every $v|p$ in $F^+$ splits in $F$. 
For each $v|p$ in $F^+$, fix a choice of place $\tilde{v}$ of $F$ above $v$, and set $\tildeS_p = \{\tilde{v}\}_{v|p \text{ in }F^+}$. 
Assume further:
\begin{ass}
\item $\mu$ is de~Rham.
\item\label{mainPD:aut} $\rhobar \otimes \Fbar_p \cong \rhobar_{\pi,\iota}$ and $\mubar = \chi_\iota\epsilon^{1-n} \mod {\frakm_{\Qbar_p}}$, where $(\pi,\chi)$ is a regular algebraic polarized cuspidal automorphic representation of $\GL_n(\A_F)$ such that $\rho_{\pi,\iota}|_{G_{\tilde{v}}}$ is potentially diagonalizable for each $\tilde{v} \in \tildeS_p$.
\item $\rhobar|_{G_{F(\zeta_p)}}$ is adequate and $\zeta_p \notin F$.
\item\label{mainPD:smooth} $H^0(G_{\tilde{v}},\ad(\rhobar)(1)) = 0$ for every $\tilde{v}\in \tildeS_p$.
\end{ass}
Then any irreducible component of $\Spec R^{\pol}$ contains an automorphic point $x$ of level potentially prime to $p$.

Moreover, assume that for every $\tilde{v} \in \tildeS_p$, we are given $\lambda_{\tilde{v}} \in (\Z_+^n)^{\Hom(F_{\tilde{v}},\Qbar_p)}$, an inertial type $\tau_{\tilde{v}}$ defined over $E$, and a nonzero potentially diagonalizable irreducible component $\calC_{\tilde{v}}$ of $\Spec R_{\tilde{v}}^{\lambda_{\tilde{v}},\tau_{\tilde{v}},\cris}$. 
Then we may assume 
the $\Qbar_p$-point of $\Spec R_{\tilde{v}}^\square$ determined by $\rho_x|_{G_{\tilde{v}}}$
lies in $\calC_{\tilde{v}}$ for each $\tilde{v}\in\tildeS_p$. 
\end{thm}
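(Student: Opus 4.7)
The plan is to apply \cref{thm:thelemma} in order to reduce the existence of automorphic points on each irreducible component of $\Spec R^{\pol}$ to the small $R=\bbT$ theorem \cref{thm:PDsmallRT}. The first assertion will follow from the refined ``moreover'' statement by choosing $(\lambda_{\tilde{v}},\tau_{\tilde{v}},\calC_{\tilde{v}})$ to be the weight, inertial type, and irreducible component determined by $\rho_{\pi,\iota}|_{G_{\tilde{v}}}$, which is potentially diagonalizable by hypothesis \ref{mainPD:aut}; so I focus on the moreover statement. Given the data, let $R_{\tilde{v}}$ denote the quotient of $R_{\tilde{v}}^{\lambda_{\tilde{v}},\tau_{\tilde{v}},\cris}$ corresponding to $\calC_{\tilde{v}}$, and form the global $\calG_n$-deformation datum
\[ \calS = (F/F^+, S, \tildeS_p, \calO, \rbar, \mu, \{R_{\tilde{v}}\}_{\tilde{v}\in\tildeS_p}). \]
By \cref{thm:PDsmallRT}, the universal type $\calS$ deformation ring $R_{\calS}$ is finite over $\calO$, and every $\Qbar_p$-point of $\Spec R_{\calS}$ is automorphic of level potentially prime to $p$, with local representations landing on the prescribed $\calC_{\tilde{v}}$ by construction. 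Thus it suffices to show that $\calC\cap\Spec R_{\calS}[1/p]$ is nonempty for every irreducible component $\calC$ of $\Spec R^{\pol}$.

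To apply \cref{thm:thelemma}, I take $R^{\loc}:=\widehat{\bigotimes}_{\tilde{v}\in\tildeS_p} R_{\tilde{v}}^\square$ (completed tensor product over $\calO$), take $R=R^{\pol}$ with its $R^{\loc}$-algebra structure induced by restricting a chosen lift of the universal polarized deformation to the $G_{\tilde{v}}$, and take $X^{\loc}:=\Spec\widehat{\bigotimes}_{\tilde{v}} R_{\tilde{v}}\subseteq\Spec R^{\loc}$. Since each $R_{\tilde{v}}$ is a quotient of $R_{\tilde{v}}^\square$, writing $\widehat{\bigotimes} R_{\tilde{v}}=R^{\loc}/I$ identifies $R^{\pol}/IR^{\pol}$ with $R_{\calS}$, so the scheme-theoretic image $X$ of $X^{\loc}$ in $\Spec R^{\pol}$ is $\Spec R_{\calS}$. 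Under assumption \ref{mainPD:smooth}, \cref{thm:localsmooth} shows that each $R_{\tilde{v}}^\square$ is a power series ring over $\calO$, so $R^{\loc}$ is regular; and $X$ is finite over $\calO$ by the preceding paragraph.

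The crux is the dimension inequality. From \cref{thm:localsmooth} and the fact that each $v|p$ splits in $F/F^+$ I compute $\dim R^{\loc}=1+n^2|\tildeS_p|+n^2[F^+:\Q]$. From \cref{thm:crdefring} together with $\calO$-flatness, the closed immersion $\Spec R_{\tilde{v}}\hookrightarrow\Spec R_{\tilde{v}}^\square$ has codimension $\frac{n(n+1)}{2}[F_{\tilde{v}}:\Q_p]$; since the variables of the different $R_{\tilde{v}}^\square$ assemble into an independent set of coordinates on $R^{\loc}$, this gives $\dim X^{\loc}=1+n^2|\tildeS_p|+\frac{n(n-1)}{2}[F^+:\Q]$. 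Combined with $\dim\calC\geq 1+\frac{n(n+1)}{2}[F^+:\Q]$ from \cref{thm:polringdim}, the three terms telescope to $\dim\calC+\dim X^{\loc}-\dim R^{\loc}\geq 1$. The heart of the argument is this numerical coincidence, which holds with exactly one dimension of slack: the expected dimensions of the polarized global deformation ring and of the fixed weight-type potentially crystalline local deformation ring sum to the dimension of the unrestricted local lifting ring. Once this is in hand, \cref{thm:thelemma} produces the desired $\Qbar_p$-point in $\calC\cap\Spec R_{\calS}[1/p]$, which is automorphic with the prescribed local behaviour by \cref{thm:PDsmallRT}.
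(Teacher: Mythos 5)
Your proof is correct and follows essentially the same route as the paper: reduce the first assertion to the ``moreover'' clause by extracting $(\lambda_{\tilde v},\tau_{\tilde v},\calC_{\tilde v})$ from $\rho_{\pi,\iota}|_{G_{\tilde v}}$, identify the scheme-theoretic image of $X^{\loc}$ with $\Spec R_{\calS}$, and then feed \cref{thm:localsmooth}, \cref{thm:crdefring}, \cref{thm:polringdim}, and \cref{thm:PDsmallRT} into \cref{thm:thelemma}. The only cosmetic difference is that you compute $\dim X^{\loc}$ via the codimension of each $\Spec R_{\tilde v}\hookrightarrow \Spec R_{\tilde v}^\square$ rather than summing the dimensions from \cref{thm:crdefring} directly, which gives the same total.
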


\begin{proof}
We first note that our \cref{mainPD:aut} implies that for each $\tilde{v}\in \tildeS_p$, there is a choice of $\lambda_{\tilde{v}} \in (\Z_+^n)^{\Hom(F_{\tilde{v}},\Qbar_p)}$, 
inertial type $\tau_{\tilde{v}}$ defined over $E$, and nonzero potentially diagonalizable irreducible component $\calC_{\tilde{v}}$ of $\Spec R_{\tilde{v}}^{\lambda_{\tilde{v}},\tau_{\tilde{v}},\cris}$
(enlarging $E$ if necessary using \cref{thm:coefchange}). 
We fix such a choice for each $\tilde{v} \in \tildeS_p$.  

Fix an irreducible component $\calC$ of $\Spec R^{\pol}$.
Set $R^{\loc} = \widehat{\otimes}_{\tilde{v}\in \tildeS_p} R_v^\square$, and let 
	\[ X^{\loc} = \Spec(\widehat{\otimes} R_{\tilde{v}}) \subset \Spec R^{\loc},\]
where $R_{\tilde{v}}$ is the quotient of $R_{\tilde{v}}^{\lambda_{\tilde{v}},\tau_{\tilde{v}},\cris}$ corresponding to $\calC_{\tilde{v}}$.
Choosing a lift $r : G_{F^+,S} \rightarrow \calG_n(R^{\pol})$ in the class of the universal $\mu$-polarized deformation gives a local $\CNL_{\calO}$-algebra morphism $R^{\loc} \rightarrow R^{\pol}$, and we 
let $X = X^{\loc} \times_{\Spec R^{\loc}} \Spec R^{\pol}$ under this map. 
Then $X = \Spec R_{\calS}$, where $\calS$ is the global $\calG_n$-deformation datum
	\[ \calS = (F/F^+, S, \tildeS_p, \calO, \rbar, \mu, \{R_{\tilde{v}}\}_{\tilde{v}\in\tildeS_p}). \]
\Cref{PDsmallRT:finite} of \cref{thm:PDsmallRT} implies that $X$ is finite over $\calO$. 
We also have
	\begin{itemize}
	\item $R^{\loc}$ is isomorphic to a power series over $\calO$ in $n^2\lvert \tildeS_p \rvert + n^2[F^+:\Q]$-variables by \cref{mainPD:smooth} and \cref{thm:localsmooth};
	\item $\dim X^{\loc} = 1 + \sum_{\tilde{v} \in \tildeS_p} n^2 + \frac{n(n-1)}{2}[F_{\tilde{v}}:\Q_p] = 1+n^2\lvert \tildeS_p\rvert + \frac{n(n-1)}{2}[F^+:\Q]$ by \cref{thm:crdefring};
	\item $\dim \calC \ge 1 + \frac{n(n+1)}{2}[F^+:\Q]$ by \cref{thm:polringdim}.
	\end{itemize}
We can now apply \cref{thm:thelemma} to conclude that 
	\[ \calC \cap (X\otimes_{\calO} E) = \calC \cap \Spec R_{\calS}[1/p] \ne \emptyset. \]
Applying \cref{PDsmallRT:autpt} of \cref{thm:PDsmallRT} finishes the proof.
\end{proof}

Using the polarization, the condition $H^0(G_{\tilde{v}},\ad(\rhobar)(1)) = 0$ for all $\tilde{v} \in \tildeS_p$ appearing in \cref{thm:mainPD} and \cref{thm:mainord,thm:CMgeom} below is equivalent to $H^0(G_w,\ad(\rhobar)(1)) = 0$ for all $w|p$ in $F$, which is the condition in \cref{thm:dim3dense} below. 
This is also equivalent to there being no nonzero $\F[G_w]$-equivariant map $\rhobar|_{G_w} \rightarrow \rhobar|_{G_w}(1)$, which is how this condition was stated in the introduction.
%


	
Our main theorem in the ordinary case is the following.	
	
\begin{thm}\label{thm:mainord}
Assume $p>2$ and that every $v|p$ in $F^+$ splits in $F$. 
For each $v|p$ in $F^+$, fix a choice of place $\tilde{v}$ of $F$ above $v$, and set $\tildeS_p = \{\tilde{v}\}_{v|p \text{ in }F^+}$. 
Assume further:
\begin{ass}
\item $\mu$ is de~Rham.
\item\label{mainord:ord}  $\rhobar \otimes \Fbar_p \cong \rhobar_{\pi,\iota}$ and $\mubar = \chi_\iota\epsilon^{1-n} \mod {\frakm_{\Qbar_p}}$, where $(\pi,\chi)$ is an $\iota$-ordinary regular algebraic polarized cuspidal automorphic representation of $\GL_n(\A_F)$.
\item $\rhobar(G_{F(\zeta_p)})$ is adequate and $\zeta_p \notin F$.
\item $H^0(G_v,\ad(\rhobar)(1)) = 0$ for every $\tilde{v}\in \tildeS_p$.
\end{ass}
Then any irreducible component $\calC$ of $\Spec R^{\pol}$ contains an $\iota$-ordinary automorphic point $x$.

Moreover, assume that for every $\tilde{v} \in \tildeS_p$, we are given $\lambda_{\tilde{v}} \in (\Z_+^n)^{\Hom(F_{\tilde{v}},\Qbar_p)}$, an inertial type $\tau_{\tilde{v}}$ defined over $E$, and a nonzero irreducible component $\calC_{\tilde{v}}$ of 
$\Spec R_{\tilde{v}}^{\lambda_{\tilde{v}},\tau_{\tilde{v}},\ord}$. 
Then we may assume the $\Qbar_p$-point of $\Spec R_{\tilde{v}}^\square$ determined by $\rho_x|_{G_{\tilde{v}}}$  
lies in $\calC_{\tilde{v}}$ for each $\tilde{v}\in\tildeS_p$. 
\end{thm}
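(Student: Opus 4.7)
The plan is to proceed in essentially parallel fashion to the proof of \cref{thm:mainPD}, with two changes: replace the potentially diagonalizable crystalline local deformation rings $R_{\tilde{v}}^{\lambda_{\tilde{v}},\tau_{\tilde{v}},\cris}$ by the ordinary local deformation rings $R_{\tilde{v}}^{\lambda_{\tilde{v}},\tau_{\tilde{v}},\ord}$ of \cref{thm:ordring}, and invoke \cref{thm:ordsmallRT} in place of \cref{thm:PDsmallRT}.

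First I would observe that the assumption \cref{mainord:ord} guarantees, via \cref{autgalrep:ord} of \cref{thm:autgalrep}, that $\rho_{\pi,\iota}|_{G_{\tilde{v}}}$ is ordinary of some regular weight $\lambda_{\tilde{v}}$ for each $\tilde{v}\in\tildeS_p$; in particular (enlarging $E$ if necessary via \cref{thm:coefchange}) there exists a nonzero irreducible component $\calC_{\tilde{v}}$ of some $\Spec R_{\tilde{v}}^{\lambda_{\tilde{v}},1,\ord}$, so the additional data needed for the main construction is automatically available. If specific $\calC_{\tilde{v}}$, $\lambda_{\tilde{v}}$, and $\tau_{\tilde{v}}$ are given in the statement, we use those instead.

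Next, fix an irreducible component $\calC$ of $\Spec R^{\pol}$ and set $R^{\loc} = \widehat{\otimes}_{\tilde{v}\in\tildeS_p}R_{\tilde{v}}^\square$, with $R_{\tilde{v}}$ the quotient of $R_{\tilde{v}}^{\lambda_{\tilde{v}},\tau_{\tilde{v}},\ord}$ corresponding to $\calC_{\tilde{v}}$ and $X^{\loc} = \Spec(\widehat{\otimes}_{\tilde{v}\in\tildeS_p}R_{\tilde{v}})$. A choice of lift of the universal $\mu$-polarized deformation gives a local $\calO$-algebra morphism $R^{\loc}\to R^{\pol}$, whose scheme theoretic image $X$ of $X^{\loc}$ coincides with $\Spec R_{\calS}$ for the global $\calG_n$-datum
\[ \calS = (F/F^+, S, \tildeS_p, \calO, \rbar, \mu, \{R_{\tilde{v}}\}_{\tilde{v}\in\tildeS_p}). \]
By \cref{thm:ordsmallRT}, $R_{\calS}$ is finite over $\calO$, and the dimension inputs to \cref{thm:thelemma} are the same as in the potentially diagonalizable case: $R^{\loc}$ is a power series over $\calO$ in $n^2\lvert\tildeS_p\rvert + n^2[F^+:\Q]$ variables by the vanishing of $H^0(G_{\tilde{v}},\ad(\rhobar)(1))$ together with \cref{thm:localsmooth}; $\dim X^{\loc} = 1 + n^2\lvert\tildeS_p\rvert + \frac{n(n-1)}{2}[F^+:\Q]$ by \cref{thm:ordring}; and $\dim\calC \ge 1 + \frac{n(n+1)}{2}[F^+:\Q]$ by \cref{thm:polringdim}. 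These dimensions assemble to give $\dim\calC + \dim X^{\loc} - \dim R^{\loc} \ge 1$.

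Applying \cref{thm:thelemma} therefore yields $\calC\cap \Spec R_{\calS}[1/p] \neq \emptyset$, and \cref{thm:ordsmallRT} then identifies any such $\Qbar_p$-point as an $\iota$-ordinary automorphic point whose local restrictions at $\tilde{v}\in\tildeS_p$ lie on $\calC_{\tilde{v}}$, as required. There is no real obstacle here beyond checking that \cref{thm:ordring} plays exactly the role in the dimension count that \cref{thm:crdefring} played before; the main conceptual content (finiteness of $R_{\calS}$ over $\calO$ and automorphy of its characteristic zero points) is entirely packaged in \cref{thm:ordsmallRT}, whose hypotheses match those stated here.
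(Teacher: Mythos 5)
Your proof is correct and follows the paper's own argument exactly: the paper's proof of this theorem is a direct transposition of the proof of \cref{thm:mainPD}, replacing the potentially diagonalizable crystalline local rings by $R_{\tilde{v}}^{\lambda_{\tilde{v}},\tau_{\tilde{v}},\ord}$ (whose generic fibre has the same dimension $n^2 + \frac{n(n-1)}{2}[F_{\tilde{v}}:\Q_p]$ by \cref{thm:ordring}) and invoking \cref{thm:ordsmallRT} in place of \cref{thm:PDsmallRT}. The only tiny imprecision is that the default inertial type need not be trivial when $\pi_v$ is ramified at $v\mid p$ — one should simply take $\tau_{\tilde v}$ to be whatever inertial type $\rho_{\pi,\iota}|_{G_{\tilde v}}$ carries — but since you allow arbitrary $\tau_{\tilde v}$ defined over $E$ and the dimension count is independent of $\tau_{\tilde v}$, this does not affect the argument.
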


\begin{proof}
We first note that \cref{thm:autgalrep} and our \cref{mainord:ord} implies that for each $\tilde{v}\in \tildeS_p$, there is a choice of $\lambda_{\tilde{v}} \in (\Z_+^n)^{\Hom(F_{\tilde{v}},\Qbar_p)}$ and inertial type $\tau_{\tilde{v}}$ defined over $E$,
such that $R_{\tilde{v}}^{\lambda_{\tilde{v}},\tau_{\tilde{v}},\ord} \ne 0$ (enlarging $E$ if necessary using \cref{thm:coefchange}).

The proof is then almost identical to the proof of \cref{thm:mainPD}, taking $X^{\loc} = \Spec(\widehat{\otimes} R_{\tilde{v}})$, with $R_{\tilde{v}}$ the quotient of $R_{\tilde{v}}^{\lambda_{\tilde{v}},\tau_{\tilde{v}},\ord}$ by the minimal prime corresponding to $\calC_{\tilde{v}}$, and using \cref{thm:ordsmallRT} instead of \cref{thm:PDsmallRT} and \cref{thm:ordring} instead of \cref{thm:crdefring}.
\end{proof}

\begin{cor}\label{thm:CMgeom}
Let the assumptions be as in either \cref{thm:mainPD} or \cref{thm:mainord}. 
Then $R^{\pol}$ is an $\calO$-flat, reduced, complete intersection ring of dimension $1 + \frac{n(n+1)}{2}[F^+:\Q]$.
\end{cor}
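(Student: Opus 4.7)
The plan is to combine \cref{thm:mainPD} and \cref{thm:mainord} with the formal smoothness of automorphic points on polarized deformation rings (cited in the introduction) to pin down the dimension of every irreducible component of $\Spec R^{\pol}$, and then to read off the remaining ring-theoretic properties from the presentation supplied by \cref{thm:polringdim}. Throughout, set $d = \frac{n(n+1)}{2}[F^+:\Q]$.

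First I would establish that every irreducible component $\calC = V(\frakq)$ of $\Spec R^{\pol}$ has dimension exactly $1+d$. The lower bound $\dim\calC \ge 1+d$ is \cref{thm:polringdim}. For the upper bound, \cref{thm:mainPD} or \cref{thm:mainord} produces an automorphic point $x : R^{\pol} \to \Qbar_p$ on $\calC$, with kernel $\frakp \supseteq \frakq$. Since $R^{\pol}/\frakp$ is a complete local Noetherian $\calO$-algebra that injects (via $x$) into the ring of integers of a finite extension of $E$, it is finite over $\calO$, hence a one-dimensional domain; in particular $p \notin \frakp$. By \cite{MeSmooth}*{Theorem~C} and \cite{BHS}*{Corollaire~4.13}, $R^{\pol}[1/p]_{\frakp[1/p]}$ is a regular local ring of dimension $d$. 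Since the complete local Noetherian ring $R^{\pol}$ is excellent, hence catenary, we obtain
\[ \dim\calC = \dim (R^{\pol}/\frakq)_{\frakp/\frakq} + \dim R^{\pol}/\frakp = \dim R^{\pol}[1/p]_{\frakp[1/p]}/\frakq[1/p] + 1 \le d + 1, \]
where the middle equality uses that $p$ is inverted in $(R^{\pol}/\frakq)_{\frakp/\frakq}$.

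Once $\dim R^{\pol} = 1+d$ is in hand, the presentation $R^{\pol} \cong \calO[[x_1,\ldots,x_g]]/(f_1,\ldots,f_k)$ from \cref{thm:polringdim} with $g-k \ge d$ is forced to have $g-k = d$; the ideal $(f_1,\ldots,f_k)$ then has height equal to its number of generators in the regular local ring $\calO[[x_1,\ldots,x_g]]$, so the $f_i$ form a regular sequence and $R^{\pol}$ is a complete intersection. Cohen--Macaulayness then implies $R^{\pol}$ is $S_1$, with no embedded primes. Since every minimal prime $\frakq$ is contained in some automorphic prime $\frakp$ with $p \notin \frakp$, no minimal prime contains $p$, so $p$ is a nonzerodivisor and $R^{\pol}$ is $\calO$-flat. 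For reducedness I would apply Serre's criterion: for each minimal prime $\frakq$, chosen with automorphic $\frakp \supseteq \frakq$ on $V(\frakq)$ as above, $R^{\pol}_\frakq = R^{\pol}[1/p]_{\frakq[1/p]}$ is a localization of the regular local ring $R^{\pol}[1/p]_{\frakp[1/p]}$ at a contained prime, hence regular, and being zero-dimensional it is a field.

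There is essentially no technical obstacle once the dimension equality is established; the main substantive input beyond \cref{thm:mainPD,thm:mainord} is the external smoothness of automorphic points on $\Spec R^{\pol}[1/p]$, after which the argument is a dimension count plus standard commutative algebra.
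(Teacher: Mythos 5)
Your proposal is correct and follows essentially the same route as the paper's proof: derive equidimensionality of $\Spec R^{\pol}$ from the formal smoothness of automorphic points (\cite{MeSmooth}*{Theorem~C}), force the complete intersection presentation from \cref{thm:polringdim} by the dimension count, get $\calO$-flatness because $p$ avoids every minimal prime, and conclude reducedness from regularity at minimal primes plus the absence of embedded primes. The only cosmetic difference is that you spell out the catenary and localization steps that the paper delegates to a direct citation of \cite{MeSmooth}, and you phrase the final step explicitly as Serre's criterion ($R_0 + S_1$) rather than "generically regular with no embedded primes," which is the same condition.
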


\begin{proof}
\Cref{thm:mainPD} and \cref{thm:mainord} imply that for any minimal prime ideal $\mathfrak{q}$ of $R^{\pol}$ 
there is an automorphic point $x\in \Spec (R^{\pol}/\frakq) (\Qbar_p)$. 
In particular, this shows $R^{\pol}/\mathfrak{q}$ is $\calO$-flat, and it has dimension $1 + \frac{n(n+1)}{2}[F^+:\Q]$ by \cite{MeSmooth}*{Theorem~C}. 
So $R^{\pol}$ is equidimensional of dimension $1+\frac{n(n+1)}{2}[F^+:\Q]$. 
This together with \cref{thm:polringdim} implies that $R^{\pol}$ is a complete intersection. 
This in turn implies that $R^{\pol}$ has no embedded prime ideals, and since $p$ does not belong to any minimal prime ideal, it is not a zero divisor and $R^{\pol}$ is $\calO$-flat. 
Applying \cite{MeSmooth}*{Theorem~C} again, we see that $R^{\pol}$ is generically regular. 
Since $R^{\pol}$ is generically regular and contains no embedded prime ideals, it is reduced.
\end{proof}

Strengthening the assumption on the residual representation slightly, we  can apply potential automorphy theorems to deduce the conclusion of \cref{thm:CMgeom} without assuming residual automorphy.

\begin{thm}\label{thm:potCMgeom}
Assume that $p>2$ and that every $v|p$ in $F^+$ splits in $F$. 
Assume further:
\begin{ass}
\item $\mu$ is de~Rham.
\item $\rhobar|_{G_{F(\zeta_p)}}$ is absolutely irreducible and $\zeta_p \notin F$.
Moreover, letting $d$ denote the maximal dimension of an irreducible subrepresentation of the restriction of $\rhobar$ to the closed subgroup of $G_{F}$ generated by all Sylow pro-$p$ subgroups, we assume $p>2(d+1)$.
\item\label{potCMgeom:smooth} $H^0(G_{\tilde{v}},\ad(\rhobar)(1)) = 0$ for every $\tilde{v}\in \tildeS_p$.
\item\label{potCMgeom:loc} One of the following hold:
	\begin{casez}
	\item\label{potCMgeom:PD} for each $\tilde{v} \in \tildeS_p$, $\rhobar|_{G_{\tilde{v}}}$ admits a regular weight potentially diagonalizable lift;
	\item\label{potCMgeom:ord} for each $\tilde{v} \in \tildeS_p$, $\rhobar|_{G_{\tilde{v}}}$ admits a regular weight ordinary lift.
	\end{casez}
\end{ass}
Then the following hold.
\begin{enumerate}
\item\label{potCMgeom:pot} For any given finite extension $F^{(\mathrm{avoid})}$ of $F$, there is a finite extension $L/F$ of CM fields, disjoint from $F^{(\mathrm{avoid})}$, such that any irreducible component $\Spec R^{\pol}$ contains an $L$-potentially automorphic point. 
\item\label{potCMgeom:geom} $R^{\pol}$ is an $\calO$-flat, reduced, complete intersection ring of dimension $1 + \frac{n(n+1)}{2}[F^+:\Q]$.
\end{enumerate}
\end{thm}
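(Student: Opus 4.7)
The plan is to prove Part \cref{potCMgeom:pot} by following the proofs of Theorems \ref{thm:mainPD} and \ref{thm:mainord} essentially verbatim, substituting the potential-automorphy small $R=\bbT$ theorem \ref{thm:potsmallRT} for Theorems \ref{thm:PDsmallRT} or \ref{thm:ordsmallRT}. Part \cref{potCMgeom:geom} should then follow from Part \cref{potCMgeom:pot} by the argument of Corollary \ref{thm:CMgeom}, once one establishes formal smoothness of $R^{\pol}$ at $L$-potentially automorphic $\Qbar_p$-points of the expected dimension.

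For Part \cref{potCMgeom:pot}, assumption \cref{potCMgeom:loc} furnishes, for each $\tilde{v}\in\tildeS_p$, a regular weight potentially diagonalizable lift (in case \cref{potCMgeom:PD}) or a regular weight ordinary lift (in case \cref{potCMgeom:ord}) of $\rhobar|_{G_{\tilde{v}}}$; such a lift determines a weight $\lambda_{\tilde{v}}$, an inertial type $\tau_{\tilde{v}}$ (enlarging $E$ via Lemma \ref{thm:coefchange} if necessary), and an irreducible component $\calC_{\tilde{v}}$ of $\Spec R_{\tilde{v}}^{\lambda_{\tilde{v}},\tau_{\tilde{v}},\cris}$ or $\Spec R_{\tilde{v}}^{\lambda_{\tilde{v}},\tau_{\tilde{v}},\ord}$ respectively, with $\calC_{\tilde{v}}$ automatically potentially diagonalizable in case \cref{potCMgeom:PD}. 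Let $R_{\tilde{v}}$ be the quotient of $R_{\tilde{v}}^\square$ corresponding to $\calC_{\tilde{v}}$ and form the global $\calG_n$-deformation datum $\calS = (F/F^+, S, \tildeS_p, \calO, \rbar, \mu, \{R_{\tilde{v}}\}_{\tilde{v}\in\tildeS_p})$. Setting $R^{\loc} = \widehat{\otimes}_{\tilde{v}} R_{\tilde{v}}^\square$ and $X^{\loc} = \Spec(\widehat{\otimes}_{\tilde{v}} R_{\tilde{v}})$, and letting $X$ denote the scheme-theoretic image of $X^{\loc}$ in $\Spec R^{\pol}$, the argument of Theorem \ref{thm:mainPD} gives $X = \Spec R_{\calS}$. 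Then $R^{\loc}$ is regular by \cref{potCMgeom:smooth} and Lemma \ref{thm:localsmooth}, $X$ is finite over $\calO$ by Theorem \ref{thm:potsmallRT}\cref{potPDsmallRT:finite}, and the numerical inequality $\dim\calC + \dim X^{\loc} - \dim R^{\loc} \ge 1$ for each irreducible component $\calC$ of $\Spec R^{\pol}$ is verified exactly as in the proofs of Theorems \ref{thm:mainPD} and \ref{thm:mainord}, using Lemma \ref{thm:polringdim} together with Theorem \ref{thm:crdefring} or Theorem \ref{thm:ordring}. Lemma \ref{thm:thelemma} then produces a $\Qbar_p$-point in $\calC \cap \Spec R_{\calS}[1/p]$, and Theorem \ref{thm:potsmallRT}\cref{potPDsmallRT:autpt}, applied with the prescribed $F^{(\mathrm{avoid})}$, identifies this point as $L$-potentially automorphic for some suitable $L$.

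For Part \cref{potCMgeom:geom}, I would repeat the proof of Corollary \ref{thm:CMgeom}, using Part \cref{potCMgeom:pot} in place of Theorems \ref{thm:mainPD}/\ref{thm:mainord} and an appropriate extension of \cite{MeSmooth}*{Theorems~B and C} to $L$-potentially automorphic points in place of the original. The main obstacle is this extension: one needs that at the $L$-potentially automorphic point $x$ produced in Part \cref{potCMgeom:pot}, the ring $R^{\pol}$ is formally smooth over $\calO$ of dimension $1 + \frac{n(n+1)}{2}[F^+:\Q]$. The expected resolution is that the proof of \cite{MeSmooth}*{Theorem~C} invokes residual automorphy only through local-global compatibility of $\rho_x$ at places outside $p$ and through potential crystallinity or ordinarity at places above $p$, both of which persist for $\rho_x$ when $\rho_x|_{G_L}$ is automorphic by Theorem \ref{thm:autgalrep} applied over $L$ together with solvable base change. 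Granting this smoothness input, the remainder of the argument is identical to Corollary \ref{thm:CMgeom}: the bound $\dim R^{\pol}/\mathfrak{q} \ge 1 + \frac{n(n+1)}{2}[F^+:\Q]$ of Lemma \ref{thm:polringdim} is matched by the smoothness upper bound, forcing equality and the complete intersection property via the presentation in Lemma \ref{thm:polringdim}; the absence of embedded primes then yields $\calO$-flatness; and generic regularity combined with the absence of embedded primes gives reducedness.
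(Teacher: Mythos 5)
Your proposal matches the paper's proof essentially verbatim: Part~\ref{potCMgeom:pot} is obtained by running the argument of \cref{thm:mainPD} with \cref{thm:potsmallRT} in place of \cref{thm:PDsmallRT}/\cref{thm:ordsmallRT}, and Part~\ref{potCMgeom:geom} by running the argument of \cref{thm:CMgeom} with Part~\ref{potCMgeom:pot} supplying the existence of automorphic points. The one place where you present something as an obstacle that the paper takes as given is the applicability of \cite{MeSmooth}*{Theorem~C} at $L$-potentially automorphic points: the paper simply remarks parenthetically that this theorem only requires potential automorphy, so no ``appropriate extension'' is needed, and your sketch of why that should hold is consistent with the cited result.
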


\begin{proof}
By \cref{potCMgeom:loc}, for each $\tilde{v} \in \tilde{S}_p$, there is a choice of $\lambda \in (\Z_+^n)^{\Hom(F_{\tilde{v}},\Qbar_p)}$ and an inertial type $\tau_{\tilde{v}}$ defined over $E$ (extending $E$ if necessary), such that
	\begin{itemize}
	\item $R_{\tilde{v}}^{\lambda_{\tilde{v}},\tau_{\tilde{v}},\cris}$ has a potentially diagonalizable point if we are in \cref{potCMgeom:PD}, in which case we fix a potentially diagonalizable irreducible component $\calC_{\tilde{v}}$ of $\Spec R_{\tilde{v}}^{\lambda_{\tilde{v}},\tau_{\tilde{v}},\cris}$;
	\item $R_{\tilde{v}}^{\lambda_{\tilde{v}},\tau_{\tilde{v}},\ord} \ne 0$ if we are in \cref{potCMgeom:ord}, in which case we fix an irreducible component $\calC_{\tilde{v}}$ of $\Spec R_{\tilde{v}}^{\lambda_{\tilde{v}},\tau_{\tilde{v}},\ord}$.
	\end{itemize}
The proof of \cref{potCMgeom:pot} is then exactly as in \cref{thm:mainPD}, using \cref{thm:potsmallRT} instead of \cref{thm:PDsmallRT}, and the proof of \cref{potCMgeom:geom} is exactly as in \cref{thm:potCMgeom}, using \cref{potCMgeom:pot} instead of \cref{thm:mainPD,thm:mainord} (note \cite{MeSmooth}*{Theorem~C} only requires potential automorphy).
\end{proof}

\begin{rmk}\label{rmk:PDlift}
It is expected that \cref{potCMgeom:PD} of \cref{potCMgeom:loc} in \cref{thm:potCMgeom} always holds \cite{EmertonGeeBM}*{Conjecture~A.3}, and it is known in many cases by work of Gee--Herzig--Liu--Savitt \cite{GHLS}. 
For example, assume that there is a $G_{\tilde{v}}$-stable filtration $0 = U_0 \subset U_1\subset \cdots \subset U_k = \F^n$ whose graded pieces $U_i/U_{i-1}$ are irreducible, such that there is no nonzero $\F[G_{\tilde{v}}]$-morphism $U_{i-1}(-1) \rightarrow U_i/U_{i-1}$ for any $1\le i \le k$.  Then \cite{GHLS}*{Corollary~2.1.11} implies that $\rho_{\tilde{v}}$ admits a potentially diagonalizable lift of regular weight (see \cite{GHLS}*{Examples~2.1.4}).
\end{rmk}

\begin{rmk}\label{rmk:minimal}
We note that it is possible to prove versions of the main theorems here without the potentially diagonalizable or ordinary hypothesis at the expense of a stronger assumption on the residual image. 
In particular, that each $v\in S$ splits in $F$ and that $H^0(G_w,\ad(\rhobar)(1)) = 0$ for \emph{all} places $w$ above a place in $S$, as opposed to just the ones above $p$.
This is because the only general $R = \mathbb{T}$ theorem at our disposal, without a potentially diagonalizable or ordinary assumption, is the minimal $R = \mathbb{T}$ theorem \cite{ThorneAdequate}*{Theorem~7.1}. 
To apply \cref{thm:thelemma} in this situation, it would be necessary to include all places in $S$ when defining $R^{\loc}$ and $X^{\loc}$. 
This would then force us to require that the unrestricted local deformation rings are regular at all places in $S$.
\end{rmk}

\subsection{Density of automorphic points}\label{sec:dim3dense}
We now combine our main theorems with \cite{MeSmooth} and the work of Chenevier \cite{ChenevierFern} to prove new cases of Chenevier's conjecture \cite{ChenevierFern}*{Conjecture~1.15}. 


Recall $F$ is a CM field with maximal totally real subfield $F^+$, and we have a fixed isomorphism $\Qbar_p \xrightarrow{\sim}\C$. 
We now assume that our finite set of finite places $S$ of $F^+$ contains all finite places that ramify in $F$ (as well as all those above $p$).
We restrict ourselves to dimension $3$, i.e.
	\[ \rhobar : G_{F,S} \lra \GL_3(\F) \]
is continuous and absolutely irreducible. 
We also restrict ourselves to the conjugate self dual case, i.e. we assume
	\[ \rhobar^c \cong \rhobar^\vee \otimes \mubar \quad \text{with} \quad \mubar = \epsilon^{-2}\delta_{F/F^+} \bmod {\frakm_{\calO}}.\]



\begin{thm}\label{thm:dim3dense}
Let the assumptions and notation be as above \S\ref{sec:dim3dense}.
Let $R^{\pol}$ be the universal $\epsilon^{-2}\delta_{F/F^+}$-polarized deformation ring for $\rhobar$, and let $\mathfrak{X}$ be its rigid analytic generic fibre.
Assume further:
\begin{ass}
\item $p > 2$ and is totally split in $F$.
\item $\rhobar\otimes \Fbar_p \cong \rhobar_{\pi,\iota}$, where $\pi$ is a regular algebraic conjugate self dual cuspidal automorphic representation of $\GL_3(\A_F)$ such that for each $w|p$ in $F$, $\pi_w$ is  unramified and $\rho_{\pi,\iota}|_{G_w}$ is potentially diagonalizable. 
If $p = 3$, then we further assume that $\pi$ is $\iota$-ordinary.
\item $\rhobar(G_{F(\zeta_p)})$ is adequate and $\zeta_p \notin F$.
\item $H^0(G_w,\ad(\rhobar)(1)) = 0$ for every $w|p$ in $F$.
\end{ass}
Then the set of automorphic points of level prime to $p$ in $\mathfrak{X}$ is Zariski dense.
\end{thm}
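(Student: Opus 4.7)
My approach is to combine our main theorems with \cref{thm:CMgeom}, the smoothness result of \cite{MeSmooth}, and Chenevier's infinite fern, assembled via \cref{thm:genfiblem}. First, for each $\tilde{v} \in \tildeS_p$ I would set $\lambda_{\tilde{v}} \in (\Z_+^3)^{\Hom(F_{\tilde{v}},\Qbar_p)}$ equal to the weight of $\rho_{\pi,\iota}|_{G_{\tilde{v}}}$, take $\tau_{\tilde{v}} = 1$, and let $\calC_{\tilde{v}}$ be the potentially diagonalizable irreducible component of $\Spec R_{\tilde{v}}^{\lambda_{\tilde{v}},1,\cris}$ containing the point cut out by $\rho_{\pi,\iota}|_{G_{\tilde{v}}}$ (this makes sense since $\pi_{\tilde{v}}$ is unramified, so $\rho_{\pi,\iota}|_{G_{\tilde{v}}}$ is crystalline and, by assumption, potentially diagonalizable). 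For $p > 3$ I would then invoke \cref{thm:mainPD}; for $p = 3$ the additional $\iota$-ordinary hypothesis lets me use \cref{thm:mainord} instead with $\tau_{\tilde{v}} = 1$ and $\calC_{\tilde{v}}$ now a chosen irreducible component of $\Spec R_{\tilde{v}}^{\lambda_{\tilde{v}},1,\ord}$ through $\rho_{\pi,\iota}|_{G_{\tilde{v}}}$. In either case, for every irreducible component $\calC$ of $\Spec R^{\pol}$, I obtain an automorphic point $x_{\calC} \in \calC(\Qbar_p)$ such that $\rho_{x_\calC}|_{G_{\tilde{v}}}$ lies on $\calC_{\tilde{v}}$ for every $\tilde{v} \in \tildeS_p$, hence is crystalline. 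Using the polarization together with the crystallinity of $\mu = \epsilon^{-2}\delta_{F/F^+}$ at places above $p$, the restriction $\rho_{x_\calC}|_{G_w}$ is crystalline at every $w|p$ in $F$; by local-global compatibility (\cref{autgalrep:locglob} of \cref{thm:autgalrep}) the associated automorphic representation is unramified at every $w|p$, so $x_\calC$ has level prime to $p$.

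Second, the hypotheses of \cref{thm:CMgeom} (which are exactly those of \cref{thm:mainPD} or \cref{thm:mainord}) are met, so $R^{\pol}$ is $\calO$-flat, reduced, and equidimensional of dimension $1 + 6[F^+:\Q]$. Moreover, \cite{MeSmooth}*{Theorem~C} gives that $R^{\pol}[1/p]$ is formally smooth at each $x_\calC$. Letting $Z$ denote the set of maximal ideals of $R^{\pol}[1/p]$ cut out by automorphic points of level prime to $p$, this verifies hypothesis \cref{genfiblem:comps} of \cref{thm:genfiblem}.

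For hypothesis \cref{genfiblem:dim}, I would invoke Chenevier's infinite fern \cite{ChenevierFern}: since $n = 3$ and $p$ is totally split in $F$, Chenevier constructs through each sufficiently regular automorphic point (in particular, through any formally smooth, crystalline, level-prime-to-$p$ point such as our $x_\calC$) a family of automorphic points of level prime to $p$ whose Zariski closure in $\mathfrak{X}$ has dimension equal to the expected value $6[F^+:\Q] = \dim R^{\pol}[1/p]$. By \cref{thm:genfibcomps} every irreducible component of $\mathfrak{X}$ corresponds to a unique irreducible component of $\Spec R^{\pol}$ and contains the formally smooth point $x_\calC^{\rig}$, hence the fern emanating from it. Every irreducible component of the Zariski closure of $Z^{\rig}$ therefore has dimension $\dim R^{\pol}[1/p]$, verifying \cref{genfiblem:dim}, and \cref{thm:genfiblem} delivers the Zariski density of $Z^{\rig}$ in $\mathfrak{X}$.

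The main obstacle is the careful extraction from \cite{ChenevierFern} of the dimension statement \emph{at our specific smooth starting point} $x_\calC^{\rig}$; this is the essentially local piece of Chenevier's argument, and is precisely what lets the weaker output of \cref{thm:CMgeom} (reduced and equidimensional, but not formally smooth) replace the formal smoothness of $R^{\pol}$ that was hypothesized in Chenevier's original formulation of his conjecture.
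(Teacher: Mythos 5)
Your proposal is correct and follows essentially the same route as the paper's proof: produce an automorphic point on each irreducible component of $\Spec R^{\pol}$ via \cref{thm:mainPD} (or \cref{thm:mainord} when $p = 3$), invoke \cref{thm:CMgeom} for the ring-theoretic properties of $R^{\pol}$, use \cite{MeSmooth}*{Theorem~C} for formal smoothness at those points, cite Chenevier's fern for the dimension bound, and conclude with \cref{thm:genfiblem}. One thing you spell out that the paper leaves implicit is useful: since \cref{thm:mainPD} itself only asserts level \emph{potentially} prime to $p$, the deduction of level prime to $p$ really does require invoking the ``moreover'' clause with $\tau_{\tilde{v}}=1$, then using the polarization to propagate crystallinity to the conjugate places, and then local--global compatibility; the paper's proof compresses this into a single sentence. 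Your final paragraph also correctly flags the real technical point, namely that \cref{genfiblem:dim} of \cref{thm:genfiblem} needs the fern dimension bound to hold on \emph{every} irreducible component of $\overline{Z^{\rig}}$ (which is what Chenevier's theorem provides, since the fern through any automorphic point already has the full dimension), not merely a global dimension lower bound.
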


\begin{proof}
By \cite{ChenevierFern}*{Theorem~A}, the Zariski closure in $\mathfrak{X}$ of the set of automorphic points of level prime to $p$ 
has dimension at least $6[F^+:\Q]$. 
(Chenevier actually works with the universal $\delta_{F/F^+}$-polarized deformation ring,
but this is simply because of a difference in normalization: 
in \cite{ChenevierFern}, the $\rho_{\pi,\iota}$ are normalized so that $\rho_{\pi,\iota}^c \cong \rho_{\pi,\iota}^\vee$, whereas our normalization yields $\rho_{\pi,\iota}^c \cong \rho_{\pi,\iota}^\vee \otimes\epsilon^{-2}$.)
By \cref{thm:CMgeom}, $R^{\pol}$ is $\calO$-flat, reduced, and equidimensional of dimension $1+6[F^+:\Q]$. 
\Cref{thm:mainPD,thm:mainord} imply that every irreducible component of $\Spec R^{\pol}$ contains an automorphic point of level prime to $p$, and \cite{MeSmooth}*{Theorem~C} implies that $(R^{\pol})_x^\wedge$ is formally smooth over $E$ for any such point $x$. 
Applying \cref{thm:genfiblem} finishes the proof.
\end{proof}

\Cref{thm:dim3dense} and \cref{thm:speczardense} immediately imply:

\begin{cor}\label{thm:specdim3dense}
Let the assumptions and notation be as in \cref{thm:dim3dense}.
Then the set of automorphic points of level prime to $p$ in $\Spec R^{\pol}$ is Zariski dense.
\end{cor}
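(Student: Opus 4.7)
The plan is to derive this directly as a formal consequence of the two cited ingredients, with essentially no new work: the theorem rigid-analytically is \cref{thm:dim3dense}, and \cref{thm:speczardense} is the exact bridge from rigid density back to Zariski density on $\Spec$. First I would unwind the definitions: \cref{def:autpoint} identifies an automorphic point of $\Spec R^{\pol}$ of level prime to $p$ with an element of $\Spec R^{\pol}(\Qbar_p)$ arising from a suitable $(\pi,\chi)$, and such a $\Qbar_p$-point corresponds, via its kernel, to a maximal ideal of $R^{\pol}[1/p]$. Let $Z$ denote the set of all these maximal ideals. The same definition stipulates that a rigid point $x^{\rig}\in\mathfrak{X}$ is automorphic of level prime to $p$ exactly when the maximal ideal of $R^{\pol}[1/p]$ to which it corresponds under \cref{genfibprop:pts} of \cref{thm:genfibprop} lies in $Z$. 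Hence the set $Z^{\rig}\subset\mathfrak{X}$ defined from $Z$ as in \cref{thm:speczardense} is precisely the set of automorphic rigid points of level prime to $p$.

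Next I would verify the hypotheses of \cref{thm:speczardense}. The ring $R^{\pol}$ is a $\CNL_{\calO}$-algebra by construction (see \cref{thm:poldefrep}), and the $\calO$-flatness of $R^{\pol}$ is furnished by \cref{thm:CMgeom}, whose assumptions are implied by those of \cref{thm:dim3dense} (the residual automorphy, adequacy, splitting of $p$, potential diagonalizability, and the local vanishing $H^0(G_w,\ad(\rhobar)(1))=0$ at places above $p$ are exactly what is required to invoke \cref{thm:mainPD}, or \cref{thm:mainord} when $p=3$). By \cref{thm:dim3dense}, $Z^{\rig}$ is Zariski dense in $\mathfrak{X}=(\Spf R^{\pol})^{\rig}$.

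Applying \cref{thm:speczardense} then yields that $Z$ is Zariski dense in $\Spec R^{\pol}$, which is the desired conclusion. There is no real obstacle here beyond carefully matching up the rigid and scheme-theoretic pictures; the substantive content has already been absorbed into \cref{thm:dim3dense} (via \cref{thm:genfiblem} and Chenevier's infinite fern) and into \cref{thm:speczardense} (via the Jacobson property of $R^{\pol}[1/p]$ from \cite{EGA4.3}*{Corollaire~10.5.8}).
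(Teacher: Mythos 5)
Your argument is exactly the paper's: \cref{thm:dim3dense} provides the rigid Zariski density and \cref{thm:speczardense} transports it to $\Spec R^{\pol}$, with $\calO$-flatness of $R^{\pol}$ coming from \cref{thm:CMgeom}. The paper simply says the two results ``immediately imply'' the corollary; you have correctly spelled out that one-line deduction.
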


\section{The Hilbert modular case}\label{sec:Hilb}

We now investigate the Hilbert modular case, i.e. the case of two dimensional representations of the absolute Galois group of a totally real field. 
We first fix some assumptions and notation that will be used throughout this section.

\subsection{Setup}\label{sec:Hilbsetup}
Throughout this section we assume $p>2$ and fix an isomorphism $\iota : \Qbar_p \xrightarrow{\sim} \C$. 
We assume that our number field $F$ is totally real, and we fix a finite set of finite places $S$ of $F$ containing all places above $p$.
Let $F_S$ be the maximal extension of $F$ unramified outside of the places in $S$ and the infinite places, and we set $G_{F,S} = \Gal(F_S/F)$. 
We fix a continuous absolutely irreducible
	\[ \rhobar : G_{F,S} \lra \GL_2(\F)\]
such that $\det\rhobar$ is totally odd.
We also fix a continuous character character $\mu : G_{F,S} \rightarrow \calO^\times$ such that $\det\rhobar = \mubar$, where $\mubar$ denotes the reduction of $\mu$ modulo $\frakm_{\calO}$.
We let $R^{\univ}$ be the universal deformation ring for $\rhobar$, and let $R^\mu$ be the universal determinant $\mu$ deformation ring for $\rhobar$.

\begin{defn}
Let $R$ be a quotient of $R^{\univ}$, let $x \in \Spec R(\Qbar_p)$, and let $\rho_x$ be the pushforward of the universal deformation via $R^{\univ} \twoheadrightarrow R \xrightarrow{x} \Qbar_p$.

We say $x$ is an \emph{automorphic point} if there is a regular algebraic cuspidal automorphic representation $\pi$ of $\GL_2(\A_F)$ such that $\rho_x \cong \rho_{\pi,\iota}$.
We say 
$x$ has \emph{level prime to} $p$ if $\pi_v$ is unramified for each $v|p$ in $F$. 

Given a finite extension $L/F$ of totally real fields, we say $x$ is an $L$-\emph{potentially automorphic point} if there is a regular algebraic cuspidal automorphic representation $\pi$ of $\GL_2(\A_L)$ such that $\rho_x|_{G_L} \cong \rho_{\pi,\iota}$

We say $x$ is an \emph{essentially automorphic point} if there is a regular algebraic cuspidal automorphic representation $\pi$ of $\GL_2(\A_F)$ and a continuous character $\psi : G_{F,S} \rightarrow \Qbar_p^\times$, such $\rho_x \cong \rho_{\pi,\iota}\otimes \psi$. 
If $\pi_v$ is unramified for each $v|p$, then we say $x$ has \emph{level essentially prime to} $p$.

If $X^{\rig}$ is the rigid analytic generic fibre of $\Spf R$, and $x^{\rig} \in X^{\rig}$ is the point corresponding to $\ker(x) \subset R[1/p]$, then we say $x^{\rig}$ is an \emph{automorhpic point}, resp. an \emph{essentially automorphic point}, if $x$ is, and if this is the case we further say $x^{\rig}$ has \emph{level essentially prime to} $p$ if $x$ does.
\end{defn}

It is necessary to introduce the notion of essentially automorphic points to avoid assuming Leopoldt's conjecture. 
We will find use for the following standard lemma, cf. \cite{BockleGenFibre}*{Proposition~2.1}.

\begin{lem}\label{thm:det}
Let $\Gamma$ be the maximal pro-$p$ abelian quotient of $G_{F,S}$, and let $\Psi : G_{F,S} \rightarrow \calO[[\Gamma]]^\times$ be the tautological character. 
Let $\widetilde{\det\rhobar} : G_{F,S} \rightarrow \calO^\times$ be the Teichm\"{u}ller lift of $\det\rhobar$, and let $\widehat{\mu}^{\frac{1}{2}}: G_{F,S} \rightarrow 1+\frakm_{\calO}$ be the unique character such that $(\widehat{\mu}^{\frac{1}{2}})^2 = \mu(\widetilde{\det\rhobar})^{-1}$ (here we use that $p>2$).

The $\CNL_{\calO}$-algebra morphism $R^{\univ} \rightarrow R^\mu \widehat{\otimes}\, \calO[[\Gamma]]$ induced by $\rho^\mu \otimes\widehat{\mu}^{\frac{1}{2}}\Psi$ is an isomorphism. 
\end{lem}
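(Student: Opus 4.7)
The plan is to identify both sides with representing objects of the same functor on $\CNL_\calO$.

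First, I would observe that $\calO[[\Gamma]]$ represents the functor $D_{\mathbf{1}}$ on $\CNL_\calO$ sending $A$ to the set of continuous characters $\chi : G_{F,S} \rightarrow 1+\frakm_A$: any such character takes values in a pro-$p$ group, so factors through the maximal pro-$p$ abelian quotient $\Gamma$, and the tautological $\Psi$ is the universal such character. Consequently $R^\mu \widehat{\otimes} \calO[[\Gamma]]$ represents the product functor $D^\mu_{\rhobar}\times D_{\mathbf{1}}$, with universal object the pair $(\rho^\mu,\Psi)$.

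Next, I would construct a natural isomorphism $\alpha : D_{\rhobar} \xrightarrow{\sim} D^\mu_{\rhobar}\times D_{\mathbf{1}}$. Given a deformation $\rho$ of $\rhobar$ to a $\CNL_\calO$-algebra $A$, the character $\mu^{-1}\det\rho$ takes values in $1+\frakm_A$ (since $\det\rhobar = \mubar$), so the hypothesis $p>2$ allows one to take a unique square root $\delta_\rho : G_{F,S} \rightarrow 1+\frakm_A$, by the usual binomial series $(1+x)^{1/2} = \sum \binom{1/2}{k} x^k$, which converges for $x\in \frakm_A$ since $2\in A^\times$. Set
\[ \alpha(\rho) = \bigl(\rho\otimes \delta_\rho^{-1},\ \delta_\rho\cdot (\widehat{\mu}^{\frac{1}{2}})^{-1}\bigr). \]
By construction the first component has determinant $\mu$, and the second is a character $G_{F,S}\to 1+\frakm_A$. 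The assignment in the opposite direction is $(\rho',\chi)\mapsto \rho' \otimes (\widehat{\mu}^{\frac{1}{2}}\chi)$; one checks directly that these are mutually inverse using the uniqueness of square roots in $1+\frakm_A$.

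Finally, I would apply $\alpha^{-1}$ to the universal pair $(\rho^\mu,\Psi)$ to obtain the deformation $\rho^\mu \otimes \widehat{\mu}^{\frac{1}{2}}\Psi$ of $\rhobar$ to $R^\mu \widehat{\otimes}\calO[[\Gamma]]$. By the universal property of $R^{\univ}$, this is precisely the deformation inducing the $\CNL_\calO$-morphism in the statement, and since $\alpha$ is an isomorphism of functors, this morphism is an isomorphism of representing rings. No step is particularly delicate; the only point requiring the assumption $p>2$ is the unique extraction of the square root $\delta_\rho$.
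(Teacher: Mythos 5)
Your proof is correct, and it is the standard twist argument that the paper implicitly invokes by citing B\"ockle's analogous Proposition 2.1 (the paper gives no explicit proof). The decomposition $\rho \mapsto \bigl(\rho\otimes\delta_\rho^{-1},\,\delta_\rho(\widehat{\mu}^{1/2})^{-1}\bigr)$ with $\delta_\rho = (\mu^{-1}\det\rho)^{1/2}$ is exactly the natural isomorphism of functors one needs, and your verification that it is mutually inverse to $(\rho',\chi)\mapsto\rho'\otimes\widehat{\mu}^{1/2}\chi$ — together with the observation that $\calO[[\Gamma]]$ represents characters into $1+\frakm_A$ and that $R^\mu\widehat{\otimes}\calO[[\Gamma]]$ is the coproduct in $\CNL_\calO$ — closes the argument via Yoneda.
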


This has the following immediate corollary that will allow us to deduce the existence of automorphic points in the irreducible components of the nonfixed determinant deformation ring from the existence of automorphic points in the irreducible components of fixed determinant deformation rings. 

\begin{lem}\label{thm:comps} 
Let $\calC$ be an irreducible component of $\Spec R^{\univ}$. 
After possibly enlarging $\calO$, there is a finite $p$-power order character $\theta: G_{F,S} \rightarrow \calO^\times$ such that some irreducible component of $\Spec R^{\theta\mu} \subseteq \Spec R^{\univ}$ is contained in $\calC$, where $R^{\theta\mu}$ is the universal determinant $\theta\mu$ deformation ring for $\rhobar$.
\end{lem}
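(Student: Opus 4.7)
The plan is to exploit Lemma \ref{thm:det}, which identifies $R^{\univ} \cong R^\mu \widehat{\otimes}_\calO \calO[[\Gamma]]$ via the twist $\rho^\mu \otimes \widehat{\mu}^{\frac{1}{2}}\Psi$, and to carefully analyze irreducible components on the $\calO[[\Gamma]]$-side. Since the maximal pro-$p$ abelian quotient of $G_{F,S}$ is finitely generated as a $\Z_p$-module, we may write $\Gamma \cong \Z_p^r \times T$ for some finite abelian $p$-group $T$, whence $\calO[[\Gamma]] \cong \calO[T][[T_1,\ldots,T_r]]$ for formal variables $T_i = [\gamma_i]-1$ coming from a choice of topological generators $\gamma_i$ of $\Z_p^r$. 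After enlarging $\calO$ to contain the $|T|$-th roots of unity, the minimal primes of $\calO[T]$ biject with the characters $\chi: T \to \calO^\times$ (each quotient being isomorphic to $\calO$), so the irreducible components of $\Spec R^{\univ}$ are parametrized by pairs $(\frakp, \chi)$ with $\frakp$ a minimal prime of $R^\mu$ and $\chi$ a character of $T$. Let $(\frakp, \chi)$ be the pair corresponding to $\calC$.

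Tracing through Lemma \ref{thm:det}, the character $\det\rho^{\univ}$ corresponds to $\mu^2(\widetilde{\det\rhobar})^{-1}\Psi^2$ on the $R^\mu \widehat{\otimes}_\calO \calO[[\Gamma]]$-side, so the quotient $R^{\univ} \to R^{\theta\mu}$ is cut out by the character relation $\Psi^2 = \theta\,\widetilde{\det\rhobar}\,\mu^{-1}$ on $\Gamma$. I would choose $\theta: G_{F,S} \to \calO^\times$ to be the character that factors through $\Gamma$, is trivial on the $\Z_p^r$-factor, and restricts on $T$ to $\chi^2 \cdot (\mu\widetilde{\det\rhobar}^{-1})|_T$; this is a finite $p$-power order character taking values in $\calO^\times$. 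Setting $\omega_\theta := \theta\,\widetilde{\det\rhobar}\,\mu^{-1}$, by construction $\omega_\theta|_T = \chi^2$.

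In the $\chi$-component of $\calO[T]$, where $\Psi(t)$ acts by the scalar $\chi(t)$, the relations $\Psi(t)^2 = \omega_\theta(t)$ for $t \in T$ are automatic, and the remaining relations $(1+T_i)^2 = \omega_\theta(\gamma_i) \in 1+\frakm_\calO$ each admit a unique root $T_i = r_i^+ \in \frakm_\calO$ by Hensel's lemma (using $p>2$), the other Weierstrass factor being a unit. Thus the $\chi$-component of $R^{\theta\mu}$ is isomorphic to $R^\mu$, embedded in $\Spec R^{\univ}$ as the section $T_i = r_i^+$ inside the $\chi$-component $\Spec R^\mu[[T_1,\ldots,T_r]]$. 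The minimal prime $\frakp$ of $R^\mu$ then determines an irreducible component of this copy of $\Spec R^\mu$ inside $\Spec R^{\theta\mu}$ whose image in $\Spec R^{\univ}$ lies in the irreducible component $V(\frakp)$ of the $\chi$-piece, i.e.\ in $\calC$. The main subtlety is matching the character $\chi$ on the torsion subgroup $T$ (forced on us by the choice of $\calC$) against the restriction of $\theta\,\widetilde{\det\rhobar}\,\mu^{-1}$; without the torsion in $\Gamma$, the argument would reduce to the observation that one may take $\theta = 1$.
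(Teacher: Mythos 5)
Your proof is correct and makes explicit what the paper treats as an immediate corollary of Lemma~\ref{thm:det}: decompose $\Gamma\cong\Z_p^r\times T$, split $\calO[T]$ into $\calO$-lines indexed by characters of $T$ after enlarging $\calO$, pick $\theta$ so that $\omega_\theta|_T=\chi^2$ matches the character forced by $\calC$, and use Hensel's lemma on the $\Z_p^r$-part. One small caveat: the claim that irreducible components of $\Spec R^{\univ}$ are \emph{in bijection} with pairs $(\frakp,\chi)$ is only literally correct if $R^\mu$ has no minimal prime of residue characteristic $p$; over such a prime, every $p$-power root of unity $\chi(t)$ maps to $1$ in the characteristic-$p$ domain $R^\mu/\frakp$, so all choices of $\chi$ yield the same minimal prime of $R^\mu[T]$ over $\frakp$. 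This does not affect your argument, since you only need that every component of $\Spec R^{\univ}$ admits \emph{some} such label $(\frakp,\chi)$ (and indeed in that degenerate case the relation $\psi(t)\equiv 1\pmod{\frakp}$ makes the containment $V(\frakp)\subset\calC$ automatic for any $\theta$), but the parametrization assertion should be weakened to surjectivity.
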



\begin{prop}\label{thm:dimdet}
There is a presentation 
	\[ R^\mu \cong \calO[[x_1,\ldots,x_g]]/(f_1,\ldots,f_k)\]
with $g-k \ge 2[F:\Q]$. 
In particular, each irreducible component of $\Spec R^\mu$ has dimension at least $1+2[F:\Q]$.
\end{prop}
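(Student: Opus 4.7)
The plan is to invoke the second part of \cref{thm:gendetpres} (which applies since $p>2$ and $\rhobar$ is absolutely irreducible), obtaining a presentation of $R^\mu$ as a quotient of $\calO[[x_1,\ldots,x_g]]$ by at most $k$ relations with $g = \dim_\F H^1(G_{F,S},\ad^0(\rhobar))$ and $k \le \dim_\F H^2(G_{F,S},\ad^0(\rhobar))$, and then to show via the global Euler--Poincar\'{e} characteristic formula that
\[ g - k \,\ge\, \dim_\F H^1(G_{F,S},\ad^0(\rhobar)) - \dim_\F H^2(G_{F,S},\ad^0(\rhobar)) \,\ge\, 2[F:\Q]. \]
The bound on the dimensions of irreducible components is then automatic, as any minimal prime of $R^\mu$ gives a quotient of $\calO[[x_1,\ldots,x_g]]$ by at most $k$ elements, which has Krull dimension at least $1 + (g - k)$.

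To establish the cohomological bound I first note that $H^0(G_{F,S},\ad^0(\rhobar)) = 0$: absolute irreducibility of $\rhobar$ forces the $G_{F,S}$-invariants of $\ad(\rhobar) = \frakgl_2(\F)$ to be scalars, and since $p > 2$ we have the $\Delta$-equivariant decomposition $\ad(\rhobar) = \ad^0(\rhobar) \oplus \F$, so the trace-zero summand contributes nothing. Next, I compute the local archimedean contribution. Every infinite place $v$ of $F$ is real, and the assumption that $\det\rhobar$ is totally odd gives $\det\rhobar(c_v) = -1$; since $p > 2$ the element $\rhobar(c_v)$ is then semisimple of order $2$ and hence conjugate to $\mathrm{diag}(1,-1)$. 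A direct calculation on traceless $2\times 2$ matrices shows that the adjoint action of this element fixes the one-dimensional subspace spanned by $\mathrm{diag}(1,-1)$, so $\dim_\F (\ad^0(\rhobar))^{G_v} = 1$ for each $v \mid \infty$.

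Since $S$ contains all places above $p$ and $\ad^0(\rhobar)$ is $p$-primary, the global Euler--Poincar\'{e} characteristic formula applies, yielding
\[ \chi(G_{F,S},\ad^0(\rhobar)) = \sum_{v \mid \infty}\dim_\F (\ad^0(\rhobar))^{G_v} - [F:\Q]\dim_\F \ad^0(\rhobar) = [F:\Q] - 3[F:\Q] = -2[F:\Q]. \]
Combined with $h^0 = 0$ this gives $h^1 - h^2 = 2[F:\Q]$, completing the estimate. The argument is entirely routine: this proposition is the exact Hilbert modular analogue of \cref{thm:polringdim}, with total oddness of $\det\rhobar$ playing the role that total oddness of the polarization multiplier $\mu$ played there, and no new ideas are required.
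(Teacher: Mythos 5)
Your proof is correct and follows exactly the route the paper takes: \cref{thm:gendetpres} gives the presentation with $g - k \ge h^1(G_{F,S},\ad^0(\rhobar)) - h^2(G_{F,S},\ad^0(\rhobar))$, and the global Euler--Poincar\'{e} characteristic formula together with $h^0 = 0$ (from absolute irreducibility and $p > 2$) and $\dim_\F(\ad^0(\rhobar))^{c_v} = 1$ (from total oddness of $\det\rhobar$) gives $h^1 - h^2 = 2[F:\Q]$. The paper's proof is a one-line citation of these ingredients; you have simply written out the routine computation, and you are right that this is precisely the Hilbert modular analogue of \cref{thm:polringdim}.
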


\begin{proof}
This follows from \cref{thm:gendetpres} and the global Euler--Poincar\'{e} characteristic formula, since $\det\rhobar$ is totally odd.
\end{proof}

\begin{cor}\label{thm:dimnodet}
There is a presentation
	\[ R^{\univ} \cong \calO[[x_1,\ldots,x_g]]/(f_1,\ldots,f_k)\]
with $g-k \ge 1+d_F+2[F:\Q]$, where $d_F$ is the Leopoldt defect for $F$ and $p$. 
In particular, every irreducible component of $\Spec R^{\univ}$ has dimension at least $2+d_F + 2[F:\Q]$.
\end{cor}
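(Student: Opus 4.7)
The plan is to leverage Lemma \ref{thm:det}, which identifies $R^{\univ} \cong R^\mu \widehat{\otimes}_{\calO} \calO[[\Gamma]]$ (after possibly enlarging $\calO$), where $\Gamma$ is the maximal pro-$p$ abelian quotient of $G_{F,S}$. If we have presentations $R^\mu \cong \calO[[x_1,\ldots,x_{g_1}]]/(f_1,\ldots,f_{k_1})$ and $\calO[[\Gamma]] \cong \calO[[y_1,\ldots,y_{g_2}]]/(h_1,\ldots,h_{k_2})$, then their completed tensor product has the obvious presentation in $g_1 + g_2$ variables with $k_1 + k_2$ relations, so $g - k$ is additive. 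Proposition \ref{thm:dimdet} already provides the first factor with $g_1 - k_1 \ge 2[F:\Q]$, so the whole task reduces to exhibiting a presentation of $\calO[[\Gamma]]$ with $g_2 - k_2 = 1 + d_F$.

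To handle $\calO[[\Gamma]]$, first I would recall that global class field theory realizes $\Gamma$ as a finitely generated $\Z_p$-module; since $F$ is totally real and $S$ contains all places above $p$, the $\Z_p$-rank of $\Gamma$ equals $1 + d_F$ essentially by the definition of the Leopoldt defect. Fixing an abstract decomposition $\Gamma \cong \Z_p^{1+d_F} \oplus \bigoplus_{j=1}^{s} \Z/p^{e_j}\Z$ and lifting topological generators, one obtains the complete intersection presentation
\[
\calO[[\Gamma]] \cong \calO[[T_1,\ldots,T_{1+d_F}, U_1,\ldots,U_s]] \big/ \bigl((1+U_1)^{p^{e_1}} - 1,\, \ldots,\, (1+U_s)^{p^{e_s}} - 1\bigr),
\]
with $g_2 = 1 + d_F + s$ and $k_2 = s$, giving $g_2 - k_2 = 1 + d_F$ as required.

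Concatenating with the presentation supplied by Proposition \ref{thm:dimdet} yields $R^{\univ} \cong \calO[[x_1,\ldots,x_g]]/(f_1,\ldots,f_k)$ with $g - k \ge 1 + d_F + 2[F:\Q]$. The lower bound on the dimension of each irreducible component is then an immediate application of Krull's height theorem: an ideal generated by $k$ elements in the regular local ring $\calO[[x_1,\ldots,x_g]]$ of dimension $g + 1$ cuts out a scheme whose every component has dimension at least $(g + 1) - k \ge 2 + d_F + 2[F:\Q]$. There is no real obstacle here; the only substantive input beyond the previous proposition is the computation of the $\Z_p$-rank of $\Gamma$, which is standard, and the observation that the torsion of a finitely generated abelian pro-$p$ group contributes equally to generators and relations in its completed group algebra.
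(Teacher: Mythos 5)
Your proof is correct and follows essentially the same route as the paper: reduce to $R^\mu \widehat{\otimes}_{\calO}\calO[[\Gamma]]$ via \cref{thm:det}, invoke \cref{thm:dimdet} for the $R^\mu$ factor, and supply the presentation of $\calO[[\Gamma]]$ with $r-s=1+d_F$ coming from the structure of $\Gamma$ as a finitely generated $\Z_p$-module (the paper states this more tersely, but the content is identical). One small remark: the parenthetical ``after possibly enlarging $\calO$'' is unnecessary here — \cref{thm:det} holds without any coefficient enlargement, and none is needed to write down the presentation of $\calO[[\Gamma]]$.
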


\begin{proof} 
Let $\Gamma$ be the maximal pro-$p$ abelian quotient of $G_{F,S}$. 
Then $\Gamma \cong \Z_p^{1+d_F} \times \Gamma_{\tor}$ with $\Gamma_{\tor}$ a product of finite cyclic groups of $p$-power order. 
From this it follows that $\calO[[\Gamma]]$ has a presentation
	\[ \calO[[\Gamma]] \cong \calO[[y_1,\ldots,y_r]]/(g_1,\ldots,g_s)\]
with $r-s = 1+d_F$. 
The corollary then follows from \cref{thm:dimdet} and \cref{thm:det}.
\end{proof}

We will need a smoothness result analogous to \cite{MeSmooth}*{Theorem~C} in the Hilbert modular case.

\begin{thm}\label{thm:smoothHilb}
Let $L/F$ be a finite extension of totally real fields, and let $x \in \Spec R^\mu(\Qbar_p) \subset \Spec R^{\univ}(\Qbar_p)$ be an $L$-potentially automorphic point.
Let $(R^\mu)_x^\wedge$ and $(R^{\univ})_x^\wedge$ denote the respective localizations and completions at $x$. 

If $\rhobar(G_{L(\zeta_p)})$ is adequate, then $(R^\mu)_x^\wedge$ and $(R^{\univ})_x^\wedge$ are formally smooth over $E$ of dimensions $2[F:\Q]$ and $1+d_F+2[F:\Q]$, respectively.
\end{thm}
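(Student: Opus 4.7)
The plan is to reduce the statement for $(R^{\univ})_x^\wedge$ to the corresponding statement for $(R^\mu)_x^\wedge$, and then to prove the latter by adapting the strategy of \cite{MeSmooth}*{Theorem~C} to the Hilbert modular setting. For the first reduction, \cref{thm:det} provides an isomorphism $R^{\univ} \cong R^\mu \widehat{\otimes}_{\calO} \calO[[\Gamma]]$ with $\Gamma \cong \Z_p^{1+d_F} \times \Gamma_{\tor}$. Under this isomorphism the point $x \in \Spec R^\mu \subset \Spec R^{\univ}$ corresponds to the pair consisting of $x$ together with the $\Qbar_p$-point of $\calO[[\Gamma]]$ determined by $\widehat{\mu}^{1/2}$. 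Since the torsion subgroup $\Gamma_{\tor}$ contributes nothing to the dimension after inverting $p$, the ring $\calO[[\Gamma]][1/p]$ is smooth over $E$ of relative dimension $1+d_F$, and its completion at any closed point is formally smooth over $E$ of dimension $1+d_F$. Thus it suffices to prove that $(R^\mu)_x^\wedge$ is formally smooth over $E$ of dimension $2[F:\Q]$, whereupon the statement for $(R^{\univ})_x^\wedge$ follows by taking the completed tensor product.

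For the second step, set $\rho = \rho_x$. At each $v | p$ let $\lambda_v$ and $\tau_v$ be the Hodge--Tate weight and inertial type of $\rho|_{G_v}$, and form a global deformation problem $\mathcal{S}$ whose associated ring $R^\mu_\mathcal{S}$ is the quotient of $R^\mu$ cut out by fixing, at each $v|p$, the irreducible component of $\Spec R_v^{\lambda_v,\tau_v,\cris,\mu|_{G_v}}$ containing $x|_v$, and at each $v \in S$ not above $p$ the irreducible component of $\Spec R_v^{\square,\mu|_{G_v}}$ containing $x|_v$. Using $L$-potential automorphy of $x$, the adequacy of $\rhobar(G_{L(\zeta_p)})$, and the analog of \cref{thm:PDsmallRT} for $\GL_2$ over totally real fields (obtained by combining the base-change arguments of \cite{BLGGT} with the Taylor--Wiles--Kisin patching of \cite{ThorneAdequate}, and finiteness of the base-change map via \cite{BLGGT}*{Lemma~1.2.3}), one sees that $R^\mu_\mathcal{S}$ is finite over $\calO$ and that $(R^\mu_\mathcal{S})_x^\wedge$ is a finite separable field extension of $E$.

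It remains to unfix the local conditions. Writing $R^{\loc} = \widehat{\otimes}_{v \in S} R_v^{\square,\mu|_{G_v}}$ and $R^{\loc}_\mathcal{S}$ for the analogous product of the fixed-component local rings appearing in the definition of $\mathcal{S}$, the ring $R^\mu$ carries an $R^{\loc}$-algebra structure (canonical up to $\CNL_{\calO}$-automorphisms) with respect to which
\[ R^\mu_\mathcal{S} \cong R^\mu \otimes_{R^{\loc}} R^{\loc}_\mathcal{S}. \]
Completing at $x$ and at the corresponding local points $x|_v$, and using that each inclusion $(R_v^{\lambda_v,\tau_v,\cris,\mu|_{G_v}})_{x|_v}^\wedge \hookrightarrow (R_v^{\square,\mu|_{G_v}})_{x|_v}^\wedge$ is formally smooth with relative dimension equal to the difference of generic-fibre dimensions, one sees via \cref{thm:crdefringdet} and the known formally smooth structure of the unrestricted lifting rings at regular-weight crystalline points that the total relative dimension contribution from the places above $p$ equals $2[F:\Q]$, while places in $S$ not above $p$ contribute nothing. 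A dimension count then gives that $(R^\mu)_x^\wedge$ is formally smooth over $E$ of dimension $2[F:\Q]$, completing the proof. The main obstacle lies in verifying formal smoothness of the local lifting rings $(R_v^{\square,\mu|_{G_v}})_{x|_v}^\wedge$ at the regular-weight crystalline $\Qbar_p$-points for $v|p$, which requires a Galois cohomology computation in the generic fibre replacing the residual cohomology used in \cref{thm:localsmooth}.
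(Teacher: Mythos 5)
Your first reduction, from $R^{\univ}$ to $R^\mu$ via \cref{thm:det}, is essentially the same as the paper's. But the argument you give for $(R^\mu)_x^\wedge$ is genuinely different from the paper's, and it has a gap that you yourself flag, and that gap is not just a computation to be carried out --- it is a statement that can be false. The paper does not run a small $R=\bbT$ / numerical-coincidence argument here. It simply invokes the already-proved vanishing of the geometric Bloch--Kato Selmer group $H^1_g(G_{F,S},\ad^0(\rho_x))$ from \cite{MeSmooth}*{Theorem~B}, then uses the argument of \cite{KisinGeoDefs}*{Theorem~8.2} to bound the tangent space $H^1(G_{F,S},\ad^0(\rho_x))$ by $\prod_{v|p}\mathrm{Fil}^0(B_{\dR}\otimes\ad^0(\rho_x))^{G_v}$, which has dimension $2[F:\Q]$ since the Hodge--Tate weights are regular, and combines this with the lower bound $\dim (R^\mu)_x^\wedge \ge 2[F:\Q]$ from \cref{thm:dimdet}.

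Your alternative, unfixing the local conditions after showing $R_{\calS}^\mu$ is finite over $\calO$, would work \emph{if} you knew that the kernel of $(R_v^{\square,\mu})_{x}^\wedge \twoheadrightarrow (R_v^{\lambda_v,\tau_v,\cris,\mu})_{x}^\wedge$ were generated by exactly $2[F_v:\Q_p]$ elements for each $v|p$ (note: this is a surjection, not an inclusion as you wrote). Then the maximal ideal of $(R^\mu)_x^\wedge$ would be generated by $2[F:\Q]$ elements (since $(R^\mu_\calS)_x^\wedge$ is a field), and with the lower bound on dimension you'd get regularity. But the step you defer, ``formal smoothness of $(R_v^{\square,\mu})_{x|_v}^\wedge$ at regular-weight crystalline points'', amounts to $H^0(G_v,\ad^0(\rho_x)(1))=0$, and this can fail --- e.g., for crystalline points with consecutive Hodge--Tate weights in the ordinary split case with equal Frobenius eigenvalue ratios. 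Crucially, \cref{thm:smoothHilb} is stated with \emph{no} local hypothesis (the condition $H^0(G_v,\ad^0(\rhobar)(1))=0$ only appears later in \cref{thm:mainHilbdet,thm:Hilbgeom,thm:Hilbdense}), so the statement must be proved unconditionally at the bad local points too. The whole point of the paper's proof is that the global vanishing input from \cite{MeSmooth}*{Theorem~B} (itself proved by a more delicate patching argument) sidesteps any need for local regularity. Your route, as written, cannot recover the theorem in its full generality.
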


\begin{proof}
Let $\Gamma$ be the maximal pro-$p$ abelian quotient of $\Gamma$, and fix a splitting $\Gamma \cong \Gamma_{\free}\times\Gamma_{\tor}$ with $\Gamma_{\free} \cong \Z_p^{1+d_F}$ and $\Gamma_{\tor}$ finite. 
Using \cref{thm:coefchange}, we can assume that $E$ contains the values of any $\Qbar_p$-character of $\Gamma_{\tor}$ as well as the values of $\det\rho_x$. 
Using \cref{thm:det} and choosing appropriate topological generators for $\Gamma_{\free}$, we have $R^{\univ} \cong R^\mu[[y_1,\ldots,y_{1+d_F}]][\Gamma_{\tor}]$, and we can assume $y_1,\ldots,y_{1+d_F}$ lie in the kernel of $x$. 
This implies $(R^{\univ})_x^\wedge \cong (R^\mu)_x^\wedge[[y_1,\ldots,y_{1+d_F}]]$, and the result for $R^{\univ}$ follows from that for $R^\mu$. 

The result for $R^\mu$ follows from \cite{MeSmooth}*{Theorem~B} using an argument as in \cite{KisinGeoDefs}*{\S8}. 
We give a sketch. 
Let $k = R^\mu[1/p]/\ker(x)$, and let $\rho : G_{F,S} \rightarrow \GL_2(k)$ be the pushforward of the universal $R^\mu$-valued deformation via $R^\mu[1/p] \twoheadrightarrow k$. 
We let $\ad^0(\rho)$ denote the trace zero subspace of the Lie algebra of $\GL_2(k)$, equipped with the adjoint $G_{F,S}$-action $\ad\circ \rho$. 
By \cite{MeSmooth}*{Theorem~B}, the geometric Bloch--Kato Selmer group
	\[ H_g^1(G_{F,S},\ad^0(\rho)) := \ker\big( H^1(G_{F,S},\ad^0(\rho)) \rightarrow \prod_{v|p} 
	H^1(G_v,B_{\dR}\otimes_{\Q_p}\ad^0(\rho))\big)\]
is trivial. 
Using this, the argument of \cite{KisinGeoDefs}*{Theorem~8.2} shows that $H^1(G_{F,S},\ad^0(\rho))$ injects into 
	\[ \prod_{v|p} \mathrm{Fil}^0 (B_{\dR}\otimes_{\Q_p}\ad^0(\rho))^{G_v}, \]
and this space has $k$-dimension $\sum_{v|p} 2[F_v:\Q_p] = 2[F:\Q]$, since the Hodge--Tate weights for $\rho|_{G_v}$ are distinct for each $v|p$ and each embedding $\sigma : F_v \hookrightarrow \Qbar_p$. 
On the other hand, using the argument of \cite{KisinOverConvFM}*{Proposition~9.5},  $H^1(G_{F,S},\ad^0(\rho))$ is isomorphic to the tangent space of $(R^\mu)_x^\wedge$ and $\dim(R^\mu)_x^\wedge \ge 2[F:\Q]$ by \cref{thm:dimdet}.
\end{proof}

\subsection{Small $R = \mathbb{T}$ theorems from the literature}\label{sec:smallRTHilb}
Let $\tildeS \subseteq S$, and for each $v\in \tildeS$, let $R_v$ be a quotient of $R_v^\square : = R_{\rhobar|_{G_v}}^\square$ that represents a local deformation problem.
By \cref{thm:defquolem}, the rings $R_v$ in \cref{thm:smallHilb,thm:potsmallHilb} below represent local deformation problems.

We refer to the tuple
	\[ \calS = (F, S, \tildeS, \calO, \rhobar,\mu, \{R_v\}_{v\in \tildeS}) \]
as a \emph{global} $\GL_2$-\emph{deformation datum}. 
A \emph{type} $\calS$ deformation of $\rhobar$ is a deformation $\rho : G_{F,S} \rightarrow \GL_2(A)$ with $A$ a $\CNL_{\calO}$-algebra such that for any (equivalently for one) lift $\rho$ in its deformation class 
	\begin{itemize}
	\item $\det\rho = \mu$, and 
	\item for each $v\in \tildeS$, the $\CNL_{\calO}$-morphism $R_v^\square \rightarrow A$ induced by the lift $\rho|_{G_v}$ of $\rhobar|_{G_v}$, factors through $R_v$.
\end{itemize}
The set valued functor $D_{\calS}$ on $\CNL_{\calO}$ that takes a $\CNL_{\calO}$-algebra $A$ to the set of deformations of type $\calS$ is representable.
Indeed a choice of lift $\rho^\mu$ in the universal $R^\mu$-valued deformation determines a $\CNL_{\calO}$-algebra morphism $\widehat{\otimes}_{v\in\tildeS} R_v^\square \rightarrow R^\mu$, which is canonical up to automorphisms of $\widehat{\otimes}_{v\in\tildeS} R_v^\square$. 
Then $D_{\calS}$ is represented by
	\[ R_{\calS} := R^\mu \otimes_{(\widehat{\otimes}_{v\in\tildeS} R_v^\square)} (\widehat{\otimes}_{v\in\tildeS} R_v).\]
We call $R_{\calS}$ the \emph{universal type} $\calS$ \emph{deformation ring}. 

We will deduce the following small $R = \mathbb{T}$ theorem from \cref{thm:PDsmallRT}, using an input due to Barnet-Lamb--Gee--Geraghty \cite{BLGGOrdLifts2}.
As in \S\ref{sec:CM}, both \cref{smallHilb:finite,smallHilb:modular} are crucial for our main theorems in this section.

\begin{thm}\label{thm:smallHilb}
Recall we have assumed $p>2$ and $\det\rhobar$ is totally odd. 
Assume further:
	\begin{ass}
	\item $\mu$ is de~Rham
	\item $\rhobar \otimes\Fbar_p \cong \rhobar_{\pi,\iota}$, where $\pi$ is regular algebraic cuspidal automorphic representation of $\GL_2(\A_F)$.
	\item\label{smallHilb:5} $\rhobar(G_{F(\zeta_p)})$ is adequate.
	\end{ass}	
For each $v|p$, let $\lambda_v \in (\Z_+^n)^{\Hom(F_v,\Qbar_p)}$ and $\tau_v$ be an inertial type defined over $E$, and let 
$R_v$ be a quotient $R_v^{\lambda_v,\tau_v,\cris,\mu}$ corresponding to a union of potentially diagonalizable irreducible components. 
Let $\calS$ be the global $\GL_2$-deformation datum
	\[ \calS = (F,S,S_p,\calO,\rhobar,\mu,\{R_v\}_{v\in S_p}).\]
and let $R_{\calS}$ be the universal type $\calS$ deformation ring.
Then	
	\begin{enumerate}
	\item\label{smallHilb:finite} $R_{\calS}$ is finite over $\calO$.
	\item\label{smallHilb:modular} Every $\Qbar_p$-point of $\Spec R_{\calS}$ is an automorphic point of level potentially prime to $p$.
	\end{enumerate}
\end{thm}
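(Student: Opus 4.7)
The plan is to deduce this theorem from the CM polarized version, Theorem~\ref{thm:PDsmallRT}, by the standard device of quadratic CM base change relating Hilbert modular forms on $F$ to RACSDC automorphic representations of $\GL_2(\A_{F'})$ for a suitable CM quadratic extension $F'/F$.

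First, I would choose a totally imaginary quadratic extension $M/\Q$ such that $F' := FM$ is a CM field with maximal totally real subfield $F$, subject to: (i) $F'$ is linearly disjoint from the subfield of $\overline F$ fixed by $\rhobar|_{G_{F(\zeta_p)}}$, so that $\rhobar|_{G_{F'(\zeta_p)}}$ is still adequate; (ii) every place of $S$ splits in $F'$; and (iii) the cyclic base change $\pi_{F'}$ remains cuspidal (which is a generic condition ruling out CM by $F'$). Since $\rhobar$ is defined on all of $G_F$ and $n=2$, the isomorphism $(\rhobar|_{G_{F'}})^c \cong (\rhobar|_{G_{F'}})^\vee \otimes \mubar|_{G_{F'}}$ is automatic, and by \cref{thm:Gdhoms} we may extend $\rhobar|_{G_{F'}}$ to a continuous $\rbar : G_F \to \calG_2(\F)$ with $\nu \circ \rbar = \mubar$; here $\mubar$ is totally odd as demanded by the polarized CM setup because $\det\rhobar$ is totally odd.

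Let $\tildeS_p'$ be a choice of one place $\tilde v$ of $F'$ above each $v|p$ in $F$. Using that $v$ splits in $F'$, the identification $F_v \cong F'_{\tilde v}$ lets us set $\lambda_{\tilde v} = \lambda_v$ and $\tau_{\tilde v} = \tau_v$, and, via \cref{thm:crdefringdet}, let $R_{\tilde v}$ be the quotient of $R_{\tilde v}^{\lambda_{\tilde v}, \tau_{\tilde v}, \cris}$ corresponding to the same potentially diagonalizable irreducible components that cut out the fixed-determinant ring $R_v$. Assembling the global $\calG_2$-deformation datum $\calS' = (F'/F, S, \tildeS_p', \calO, \rbar, \mu, \{R_{\tilde v}\}_{\tilde v \in \tildeS_p'})$, Theorem~\ref{thm:PDsmallRT} applies and yields: $R_{\calS'}$ is finite over $\calO$ and every $\Qbar_p$-point of $\Spec R_{\calS'}$ is automorphic (over $F'$) of level potentially prime to $p$. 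A type-$\calS$ deformation $\rho : G_F \to \GL_2(A)$ canonically extends to a $\calG_2$-valued type-$\calS'$ deformation of $\rbar$ (using that $\rho \cong \rho^\vee \otimes \mu$ automatically in dimension two), so the universal property produces a $\CNL_\calO$-algebra map $R_{\calS'} \to R_\calS$. By \cite{BLGGT}*{Lemma~1.2.3}, which applies since $\rhobar|_{G_{F'(\zeta_p)}}$ is absolutely irreducible, this map is finite, so $R_\calS$ is finite over $\calO$, proving \ref{smallHilb:finite}. For \ref{smallHilb:modular}, a $\Qbar_p$-point $x$ of $\Spec R_\calS$ restricts to a point $x'$ of $\Spec R_{\calS'}$ arising from a RACSDC automorphic representation $\pi'$ of $\GL_2(\A_{F'})$ of level potentially prime to $p$; since $\rho_x$ is defined on all of $G_F$, cyclic base change for $\GL_2$ together with the descent input of \cite{BLGGOrdLifts2} produces a regular algebraic cuspidal automorphic representation $\pi$ of $\GL_2(\A_F)$ with $\rho_{\pi,\iota} \cong \rho_x$, and the level-potentially-prime-to-$p$ property is preserved, possibly after a further solvable base change.

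The main obstacle is arranging the quadratic CM extension $F'/F$ to simultaneously preserve all hypotheses required to invoke Theorem~\ref{thm:PDsmallRT} — adequacy of $\rhobar|_{G_{F'(\zeta_p)}}$, cuspidality of $\pi_{F'}$, and exact matching of the local potentially diagonalizable conditions at places above $p$ through the split/fixed-determinant comparison of \cref{thm:crdefringdet} — while on the modularity side making the \cite{BLGGOrdLifts2} descent functor correctly recover a cuspidal Hilbert modular eigenform on $F$ (as opposed to a polarizable representation only defined over $F'$), including preservation of the level-prime-to-$p$ structure.
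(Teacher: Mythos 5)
Your overall plan — reduce to the CM polarized theorem via a CM quadratic extension, extend $\rhobar$ to a $\calG_2$-valued representation using the standard symplectic pairing and \cref{thm:Gdhoms}, match the fixed-determinant local conditions at places above $p$ with the unrestricted ones via \cref{thm:crdefringdet}, and then invoke finiteness of the resulting map of deformation rings via \cite{BLGGT}*{Lemma~1.2.3} — is the right general strategy, and it is essentially what the paper does in its second stage. However, there is a genuine gap at the very first step.

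To invoke \cref{thm:PDsmallRT} you must verify its hypothesis \cref{PDsmallRT:PD}: after base change to the CM field, the automorphic representation whose residual Galois representation matches $\rhobar$ must have $\rho_{\pi,\iota}|_{G_{\tilde v}}$ \emph{potentially diagonalizable for each} $\tilde v \in \tildeS_p$. But the hypotheses of \cref{thm:smallHilb} only supply a regular algebraic cuspidal $\pi$ of $\GL_2(\A_F)$ with $\rhobar\otimes\Fbar_p \cong \rhobar_{\pi,\iota}$ — there is no assumption whatsoever that $\rho_{\pi,\iota}|_{G_v}$ is potentially diagonalizable at places above $p$ (and in general it need not be). Your proposal passes directly to $F' = FM$ and applies \cref{thm:PDsmallRT} to the base change of $\pi$, silently assuming a potential diagonalizability condition that is not available. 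The paper's proof closes precisely this gap by first invoking \cite{BLGGOrdLifts2}*{Theorem~2.1.2} (and its proof) to produce a finite \emph{solvable totally real} extension $L/F$, disjoint from the fixed field of $\rhobar|_{G_{F(\zeta_p)}}$, together with a regular algebraic cuspidal automorphic representation $\pi'$ of $\GL_2(\A_L)$ with $\rhobar|_{G_L}\otimes\Fbar_p \cong \rhobar_{\pi',\iota}$ and $\rho_{\pi',\iota}|_{G_w}$ potentially diagonalizable for all $w|p$ in $L$. Only after this preliminary base change does one adjoin a quadratic CM extension $M/L$ (split at $p$), extend $\rhobar|_{G_M}$ to $\rbar : G_L \to \calG_2(\F)$, transfer $\pi'$ to a polarized $\Pi$ on $\GL_2(\A_M)$ via \cite{BLGGT}*{Lemma~2.2.2}, and apply \cref{thm:PDsmallRT}. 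Without the \cite{BLGGOrdLifts2} input your reduction to the CM theorem does not get off the ground.

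A smaller issue worth flagging: for \cref{smallHilb:modular} you appeal to ``cyclic base change for $\GL_2$ together with the descent input of \cite{BLGGOrdLifts2}'' to descend the modularity statement from the CM field back to $F$. Since the conclusion is only that the point is automorphic of level \emph{potentially} prime to $p$ (i.e. one is permitted to replace $F$ by a finite extension), it suffices to combine \cref{PDsmallRT:autpt} of \cref{thm:PDsmallRT} with the descent from $M$ to $L$ provided by \cite{BLGGT}*{Lemma~2.2.2}; no further cyclic-base-change descent to $F$ is needed, and trying to force the modular form down to $F$ itself would require a genuinely stronger argument.
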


\begin{proof}
By \cite{BLGGOrdLifts2}*{Theorem~2.1.2} (and it's proof), there is a finite solvable totally real extension $L/F$, disjoint from the fixed field of $\rhobar|_{G_{F(\zeta_p)}}$, and a regular algebraic cuspidal automorphic representation $\pi'$ of $\GL_2(\A_L)$ such that $\rhobar|_{G_L}\otimes\Fbar_p \cong \rhobar_{\pi',\iota}$ and such that $\rho_{\pi',\iota}|_{G_w}$ is potentially diagonalizable for any $w|p$ in $L$. 
Let $\chi$ denote the central character of $\pi'$, and let $S_L$ denote the set of places in $L$ above those in $S$.
 
Now choose a quadratic CM extension $M/L$, disjoint from $\rhobar|_{G_{L(\zeta_p)}}$, such that each $w|p$ in $L$ splits in $M$. 
Using the standard symplectic pairing for $\GL_2$ and \cref{thm:Gdhoms}, $\rhobar|_{G_M}$ extends to a continuous homomorphism
	\[ \rbar : G_{L,S_L} \lra \calG_2(\F) \]
with $\nu\circ \rbar = \mubar|_{G_L}$.
Using \cite{BLGGT}*{Lemma~2.2.2}, there is a regular algebraic cuspidal automorphic representation $\Pi$ of $\GL_2(\A_M)$ such that $(\Pi,\chi)$ is polarized and such that $\rho_{\pi',\iota}|_{G_M} = \rho_{\Pi,\iota}$. 
In particular, $\rbar|_{G_M}\otimes \Fbar_p \cong \rhobar_{\Pi,\iota}$, $\mubar|_{G_L} = \chi_\iota\epsilon^{-1} \mod {\frakm_{\Qbar_p}}$, and $\rho_{\Pi,\iota}|_{G_w}$ is potentially diagonalizable for each $w|p$ in $M$. 
Note also that $\rhobar(G_{M(\zeta_p)})$ is adequate and $\zeta_p \notin M$, by choice of $M$.
 
For each $w|p$ in $L$, fix a choice of $\tilde{w}$ above $w$ in $M$, and set $\tildeS_p = \{\tilde{w}\}_{w|p \text{ in }L}$.
For each $\tilde{w} \in \tildeS_p$, if $v$ is the place below it in $F$, we let $\tau_{\tilde{w}} = \tau_v|_{I_{\tilde{w}}}$ and $\lambda_{\tilde{w}} \in (\Z_+^n)^{\Hom(M_{\tilde{w}},\Qbar_p)}$ be given by $\lambda_{\tilde{w},\sigma'} = \lambda_{v,\sigma}$ if $\sigma' : M_{\tilde{w}} \hookrightarrow \Qbar_p$ extends $\sigma : F_v \hookrightarrow \Qbar_p$. 
For each $\tilde{w} \in \tildeS_p$, we then let $R_{\tilde{w}}$ be the quotient of $R_{\tilde{w}}^{\lambda_{\tilde{w}},\tau_{\tilde{w}},\cris}$ corresponding to the union of all potentially diagonalizable irreducible components.
We then have a global $\calG_2$-deformation datum
	\[ \calS_M = (M/L, S_L, \tilde{S}_p, \calO, \rbar, \mu, \{R_{\tilde{w}}\}_{\tilde{w} \in \tildeS_p}),\]
and we let $R_{\calS_M}$ be the universal type $\calS_M$ deformation ring. 
For any type $\calS$ deformation $\rho$ of $\rhobar$ to a $\CNL_{\calO}$-algebra $A$, again using the standard symplectic pairing for $\GL_2$ and \cref{thm:Gdhoms}, we obtain a type $\calS_M$ deformation
	\[ r : G_{L,S_L} \lra \calG_2(A). \]
We deduce the existence of a $\CNL_{\calO}$-algebra map $R_{\calS_L} \rightarrow R_{\calS}$. 
A standard argument using Nakayama's Lemma and \cite{KW0}*{Lemma~3.6} shows that this map is finite. 
\Cref{smallHilb:finite} now follows from \cref{PDsmallRT:finite} of \cref{thm:PDsmallRT}, and \ref{smallHilb:modular} follows from \cref{PDsmallRT:autpt} of \cref{thm:PDsmallRT} and \cite{BLGGT}*{Lemma~2.2.2}. 
\end{proof}

We again combine this with potential automorphy theorems.

\begin{thm}\label{thm:potsmallHilb}
Let the notation and assumptions be as in the beginning of this section, and assume further:
	\begin{ass}
	\item $\mu$ is de~Rham.
	\item $\rhobar(G_{F(\zeta_p)})$ is adequate.
	\end{ass}
For each $v|p$, let $\lambda_v \in (\Z_+^n)^{\Hom(F_v,\Qbar_p)}$ and $\tau_v$ be an inertial type defined over $E$, and let 
$R_v$ be a quotient $R_v^{\lambda_v,\tau_v,\cris,\mu}$ corresponding to a union of potentially diagonalizable irreducible components. 
Let $\calS$ be the global $\GL_2$-deformation datum
	\[ \calS = (F,S,S_p,\calO,\rhobar,\mu,\{R_v\}_{v\in S_p}).\]
and let $R_{\calS}$ be the universal type $\calS$ deformation ring.
Then	
	\begin{enumerate}
	\item\label{potsmallHilb:finite} $R_{\calS}$ is finite over $\calO$.
	\item\label{potsmallHilb:modular} Given any finite extension $F^{(\mathrm{avoid})}/F$, there is a finite extension of totally real fields $L/F$, disjoint from $F^{(\mathrm{avoid})}$, such that every $\Qbar_p$-point of $\Spec R_{\calS}$ is $L$-potentially automorphic.
	\end{enumerate}
\end{thm}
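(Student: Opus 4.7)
The plan is to mirror the proof of \cref{thm:potsmallRT} in the polarized setting, using a potential automorphy input to reduce to \cref{thm:smallHilb}. Enlarging $F^{(\mathrm{avoid})}$ to contain the fixed field of $\rhobar|_{G_{F(\zeta_p)}}$, I would first apply a potential automorphy theorem for two-dimensional totally odd residual representations over totally real fields. Such a statement is available either by descent through a CM extension — combining \cite{BLGGT}*{Proposition~3.3.1} with cyclic base change as in \cite{BLGGT}*{Lemma~2.2.2} — or via the direct Hilbert modular potential automorphy theorem in the style of \cite{BLGGOrdLifts2}*{Theorem~2.1.2}. This produces a finite totally real extension $L/F$ disjoint from $F^{(\mathrm{avoid})}$ together with a regular algebraic cuspidal automorphic representation $\pi$ of $\GL_2(\A_L)$ such that $\rhobar|_{G_L}\otimes\Fbar_p \cong \rhobar_{\pi,\iota}$, with $\pi_w$ unramified and $\rho_{\pi,\iota}|_{G_w}$ potentially diagonalizable of regular weight at each place $w|p$ of $L$.

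Next, I would set up a global $\GL_2$-deformation datum over $L$ that matches $\calS$. Let $S_L$ denote the set of finite places of $L$ above those in $S$ and $S_{L,p}$ those dividing $p$. For each $w\in S_{L,p}$ lying over $v|p$ in $F$, define $\lambda_w\in (\Z_+^2)^{\Hom(L_w,\Qbar_p)}$ by $\lambda_{w,\sigma'} = \lambda_{v,\sigma}$ when $\sigma'|_{F_v}=\sigma$, set $\tau_w = \tau_v|_{I_w}$, and let $R_w$ be the quotient of $R_w^{\lambda_w,\tau_w,\cris,\mu|_{G_L}}$ cut out by the union of all potentially diagonalizable irreducible components. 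The disjointness of $L$ from the fixed field of $\rhobar|_{G_{F(\zeta_p)}}$ ensures $\rhobar(G_{L(\zeta_p)})$ is adequate, so the hypotheses of \cref{thm:smallHilb} are met for the global $\GL_2$-deformation datum
\[ \calS_L = (L,S_L,S_{L,p},\calO,\rhobar|_{G_L},\mu|_{G_L},\{R_w\}_{w\in S_{L,p}}). \]
Thus $R_{\calS_L}$ is finite over $\calO$ and every $\Qbar_p$-point of $\Spec R_{\calS_L}$ corresponds to a regular algebraic cuspidal automorphic representation of $\GL_2(\A_L)$.

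Restriction of deformations from $G_{F,S}$ to $G_{L,S_L}$ yields a canonical $\CNL_{\calO}$-algebra morphism $R_{\calS_L}\to R_{\calS}$; a standard Nakayama argument combined with \cite{BLGGT}*{Lemma~1.2.3} shows this morphism is finite, so \cref{potsmallHilb:finite} follows from the finiteness of $R_{\calS_L}$ over $\calO$. For \cref{potsmallHilb:modular}, any $\Qbar_p$-point $x$ of $\Spec R_{\calS}$ pulls back to a $\Qbar_p$-point of $\Spec R_{\calS_L}$, which by \cref{smallHilb:modular} of \cref{thm:smallHilb} arises from a regular algebraic cuspidal automorphic representation of $\GL_2(\A_L)$, showing $x$ is $L$-potentially automorphic. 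The main obstacle is the first step: securing a potential automorphy input that simultaneously controls the ramification and the $p$-adic Hodge-theoretic behavior (unramified at $p$ with potentially diagonalizable local representations) of the auxiliary automorphic representation over a totally real base change, while keeping $L$ disjoint from the prescribed $F^{(\mathrm{avoid})}$. This is where the adequacy hypothesis on $\rhobar$ is essentially used; fortunately, all these features are now standard outputs of the potential automorphy machinery, and no new technical input is required beyond what has already entered \cref{thm:potsmallRT} and \cref{thm:smallHilb}.
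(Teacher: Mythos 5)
Your overall structure matches the paper: establish residual potential automorphy over a totally real $L/F$ disjoint from a suitably enlarged $F^{(\mathrm{avoid})}$, base change the deformation problem to $L$, show the natural $\CNL_{\calO}$-map $R_{\calS_L} \to R_{\calS}$ is finite, and invoke \cref{thm:smallHilb} over $L$. That is precisely what the paper does, and there is no gap in the architecture of your argument.

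Where you diverge is in the potential automorphy input, and one of your two suggested routes is not quite right. You describe \cite{BLGGOrdLifts2}*{Theorem~2.1.2} as ``the direct Hilbert modular potential automorphy theorem,'' but it is not a potential automorphy theorem: it \emph{presupposes} residual automorphy of $\rhobar$ over $F$ and then produces a \emph{solvable} totally real extension over which the automorphic lift becomes potentially diagonalizable. It cannot supply the initial automorphy, which is exactly what is missing here. The paper instead invokes \cite{TaylorRmkFM}*{Corollary~1.7} for potential modularity of two-dimensional odd residual representations of $G_F$, combined with the Moret--Bailly refinement \cite{HSBT}*{Proposition~2.1} to arrange disjointness of $L$ from $F^{(\mathrm{avoid})}$. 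Your CM-descent alternative via \cite{BLGGT}*{Proposition~3.3.1} plus base change would also work in principle, but it is more circuitous than what is needed. Finally, you ask the potential automorphy step to deliver $\pi$ unramified at $p$ with $\rho_{\pi,\iota}|_{G_w}$ potentially diagonalizable at every $w|p$; this is more than \cref{thm:smallHilb} requires as input, since that theorem only needs residual automorphy over $L$ together with adequacy (it internally handles the passage to a solvable extension where potential diagonalizability holds, via \cite{BLGGOrdLifts2}*{Theorem~2.1.2}). Asking for the stronger output is harmless but unnecessary. (A minor point: for the finiteness of $R_{\calS_L}\to R_{\calS}$ the paper cites \cite{KW0}*{Lemma~3.6} in the $\GL_2$/totally real setting, rather than \cite{BLGGT}*{Lemma~1.2.3}, which is phrased in the polarized $\calG_n$ framework, but both accomplish the same thing.)
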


\begin{proof}
We can and do assume that $F^{(\mathrm{avoid})}$ contains the fixed field of $\rhobar|_{G_{F(\zeta_p)}}$. 
By \cite{TaylorRmkFM}*{Corollary~1.7}, we can find a finite totally real extension $L/F$ and a regular algebraic cuspidal automorphic representation $\pi$ of $\GL_2(\A_L)$ such that $\rhobar|_{G_L}\otimes \Fbar_p \cong \rhobar_{\pi,\iota}$. 
Further, by using the refinement \cite{HSBT}*{Proposition~2.1} of Moret-Bailly's theorem in place of \cite{TaylorRmkFM}*{Theorem~G}, we may assume $L$ is disjoint from $F^{(\mathrm{avoid})}$ (see also \cite{BLGGT}*{Theorem~3.1.2}).
Let $S_L$, resp. $S_{L,p}$, denotes the set of places of $L$ above $S$, resp. above $p$. 
For each $w \in S_{L,p}$, if $v$ is the place below it in $F$, we let $\tau_w = \tau_v|_{I_w}$ and $\lambda_w \in (\Z_+^n)^{\Hom(L_w,\Qbar_p)}$ be given by $\lambda_{w,\sigma'} = \lambda_{v,\sigma}$ if $\sigma' : L_w \hookrightarrow \Qbar_p$ extends $\sigma : F_v \hookrightarrow \Qbar_p$. 
For each $w\in S_{L,p}$, we then let $R_w$ be the quotient of $R_w^{\lambda_w,\tau_w,\cris,\mu}$ corresponding the union of all potentially diagonalizable irreducible components. 
We have the global $\GL_2$-deformation datum 
	\[ \calS_L = (L,S_L,S_{L,p},\calO,\rhobar|_{G_L},\mu|_{G_L},\{R_w\}_{w\in S_{L,p}}),\]
and the universal type $\calS_L$-deformation ring $R_{\calS_L}$. 
There is a natural map $R_{\calS_L} \rightarrow R_{\calS}$, which is finite (\cite{KW0}*{Lemma~3.6}). 
The theorem now follows from applying \cref{thm:smallHilb} to $R_{\calS_L}$.
\end{proof}

\subsection{The main theorems in the Hilbert modular case}\label{sec:mainHilb}

We can now prove our main theorems in the Hilbert modular case. 

\begin{thm}\label{thm:mainHilbdet}
Recall we have assumed $p>2$ and that $\det\rhobar$ is totally odd.
Assume further:
	\begin{ass}
	\item $\mu$ is de~Rham.	
	\item\label{mainHilbdet:aut} $\rhobar \otimes\Fbar_p \cong \rhobar_{\pi,\iota}$, where $\pi$ is a regular algebraic cuspidal automorphic representation of $\GL_2(\A_F)$.
	\item $\rhobar(G_{F(\zeta_p)})$ is adequate.
	\item\label{mainHilbdet:smooth} $H^0(G_v,\ad^0(\rhobar)(1)) = 0$ for every $v|p$. 
	\end{ass}
Then any irreducible component $\calC$ of $\Spec R^\mu$ contains an automorphic point $x$ of level potentially prime to $p$.

Moreover, assume that for every $v|p$ in $F$, we are given $\lambda_v \in (\Z_+^n)^{\Hom(F_v,\Qbar_p)}$, an inertial type $\tau_v$ defined over $E$, and a nonzero potentially diagonalizable irreducible component $\calC_v$ of $\Spec R_v^{\lambda_v,\tau_v,\cris,\mu}$. 
Then we can assume that the $\Qbar_p$-point of $\Spec R_v^\square$ determined by $\rho_x|_{G_v}$ lies in $\calC_v$ for each $v|p$. 
\end{thm}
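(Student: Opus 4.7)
My plan is to adapt the proof of \cref{thm:mainPD} to the fixed-determinant $\GL_2$ setting, substituting the Hilbert-modular inputs recorded in this section. The heart of the argument is \cref{thm:thelemma}, which reduces the existence of an automorphic point in $\calC$ to a dimension inequality inside $\Spec R^{\loc}$; the requisite lower bound for $\dim \calC$ is the Galois-cohomological estimate \cref{thm:dimdet}.

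First I would reduce to the ``moreover'' form of the statement. Since $n=2$ and $\mu$ is de~Rham, \cref{thm:BTlift} applied to each $v|p$ produces a regular weight potentially diagonalizable lift of $\rhobar|_{G_v}$ with determinant $\mu|_{G_v}$. After enlarging $E$ if necessary via \cref{thm:coefchange}, this furnishes a choice of data $(\lambda_v, \tau_v, \calC_v)$ of the form specified in the refined version, so it suffices to prove that version.

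Next, fix an irreducible component $\calC$ of $\Spec R^\mu$ and set
\[ R^{\loc} = \widehat{\otimes}_{v \in S_p} R_v^{\square, \mu}, \qquad X^{\loc} = \Spec\bigl(\widehat{\otimes}_{v \in S_p} R_v\bigr) \subseteq \Spec R^{\loc}, \]
where $R_v$ is the quotient of $R_v^{\lambda_v, \tau_v, \cris, \mu}$ corresponding to $\calC_v$. Choosing a lift in the equivalence class of the universal $\mu$-deformation yields a $\CNL_{\calO}$-algebra morphism $R^{\loc} \to R^\mu$; let $X \subseteq \Spec R^\mu$ be the scheme-theoretic image of $X^{\loc}$. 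By construction $X = \Spec R_{\calS}$ for the global $\GL_2$-deformation datum $\calS = (F, S, S_p, \calO, \rhobar, \mu, \{R_v\}_{v \in S_p})$, and \cref{smallHilb:finite} of \cref{thm:smallHilb} shows $R_{\calS}$ is finite over $\calO$.

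Verifying the hypotheses of \cref{thm:thelemma} then comes down to a dimension count. Assumption \cref{mainHilbdet:smooth} together with \cref{localsmooth:det} of \cref{thm:localsmooth} makes each $R_v^{\square,\mu}$ a power series ring over $\calO$ in $3(1 + [F_v : \Q_p])$ variables, so $R^{\loc}$ is regular of dimension $1 + 3\lvert S_p \rvert + 3[F:\Q]$. By \cref{thm:crdefringdet}, $\dim X^{\loc} = 1 + 3\lvert S_p \rvert + [F:\Q]$, while \cref{thm:dimdet} gives $\dim \calC \ge 1 + 2[F:\Q]$, and these combine to give $\dim \calC + \dim X^{\loc} - \dim R^{\loc} \ge 1$. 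Applying \cref{thm:thelemma} produces a $\Qbar_p$-point of $\calC \cap \Spec R_{\calS}[1/p]$, which is automorphic of level potentially prime to $p$ by \cref{smallHilb:modular} of \cref{thm:smallHilb}. I do not foresee any substantive obstacle: the only step that is not a mechanical transcription of the CM argument is the a~priori existence of potentially diagonalizable data at each $v|p$, which is supplied by \cref{thm:BTlift} and is available precisely because $n=2$.
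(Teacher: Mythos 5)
Your proposal reproduces the paper's proof essentially verbatim: the same reduction to the ``moreover'' statement via \cref{thm:BTlift}, the same choice of $R^{\loc}$, $X^{\loc}$, and global deformation datum $\calS$, the same dimension count using \cref{thm:localsmooth}, \cref{thm:crdefringdet}, and \cref{thm:dimdet}, and the same invocation of \cref{thm:thelemma} and \cref{thm:smallHilb}. There is nothing to add or correct.
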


\begin{proof}
We first note that by \cref{thm:BTlift}, for each $v|p$ there is a choice of $\lambda_v \in (\Z_+^n)^{\Hom(F_v,\Qbar_p)}$, inertial type $\tau_v$ defined over $E$, and nonzero potentially diagonalizable irreducible component $\calC_v$ of $\Spec R_v^{\lambda_v,\tau_v,\cris,\mu}$, which we fix.
The proof now is similar to \cref{thm:mainPD}; we give the details. 

Fix an irreducible component $\calC$ of $\Spec R^\mu$. 
Set $R^{\loc} = \widehat{\otimes}_{v\in S_p} R_v^{\square,\mu}$, and let
	\[ X^{\loc} = \Spec(\widehat{\otimes}_{v\in S_p} R_v) \subseteq \Spec R^{\loc}, \]
where $R_v$ is the quotient of $R_v^{\lambda_v,\tau_v,\cris,\mu}$ corresponding to $\calC_v$. 
Choosing a lift $\rho^\mu : G_{F,S} \rightarrow \GL_2(R^\mu)$ in the class of the universal determinant $\mu$ deformation gives a local $\CNL_{\calO}$-algebra morphism $R^{\loc} \rightarrow R^\mu$, and we let $X = X^{\loc} \times_{\Spec R^{\loc}} \Spec R^\mu$.
Then $X = \Spec R_{\calS}$, where $\calS$ is the global $\GL_2$-deformation datum
	\[ \calS = (F, S, S_p, \calO, \rhobar, \mu, \{R_v\}_{v\in S_p}). \]
\Cref{smallHilb:finite} of \cref{thm:smallHilb} implies that $X$ is finite over $\calO$. 
We also have 
	\begin{itemize}
	\item $R^{\loc}$ is isomorphic to a power series over $\calO$ in $3|S_p| + 3[F:\Q]$-variables by \cref{mainHilbdet:smooth} and \cref{thm:localsmooth};
	\item $\dim X^{\loc} = 1+\sum_{v\in S_p} 3 + [F_v : \Q_p] = 1+ 3\lvert S_p \rvert + [F:\Q]$ by \cref{thm:crdefringdet};
	\item $\dim \calC \ge 1 + 2[F:\Q]$ by \cref{thm:dimdet}.
	\end{itemize}
We can now apply \cref{thm:thelemma} to conclude that 
	\[ \calC \cap (X\otimes_{\calO} E) = \calC \cap \Spec R_{\calS}[1/p] \ne \emptyset. \]
Applying \cref{smallHilb:modular} \cref{thm:smallHilb} finishes the proof.
\end{proof}

\begin{rmk}\label{rmk:comppss}
An analogue of \cref{thm:mainHilbdet} can be proved with ``potentially diagonalizable" replaced with ``semistable with distinct Hodge--Tate weights", at the cost of assuming $p$ is totally split in $F$ by using the $R=\bbT$ theorem of Kisin \cite{KisinFM} and the subsequent improvements on the Breuil--M\'{e}zard conjecture due to Pa\v{s}k\={u}nas \cite{PaskunasBM} and Hu--Tan \cite{HuTanBM}. 
We leave the precise statements to the reader.
\end{rmk}

We also have the potential version of the above theorem.

\begin{thm}\label{thm:mainpotHilb}
Recall we have assumed $p>2$ and that $\det\rhobar$ is totally odd.
Assume further:
	\begin{ass}
	\item $\mu$ is de~Rham.
	\item\label{mainpotHilb:TW} $\rhobar(G_{F(\zeta_p)})$ is adequate.
	\item\label{mainpotHilb:smooth} $H^0(G_v,\ad^0(\rhobar)(1)) = 0$ for every $v|p$. 
	\end{ass}
Then given any finite extension $F^{(\mathrm{avoid})}/F$, there is a finite extension $L/F$ of totally real fields, disjoint $F^{(\mathrm{avoid})}$, such that every irreducible component $\calC$ of $\Spec R^\mu$ contains an $L$-potentially automorphic point $x$.

Moreover, assume that for every $v|p$ in $F$, we are given $\lambda_v \in (\Z_+^n)^{\Hom(F_v,\Qbar_p)}$, an inertial type $\tau_v$ defined over $E$, and a nonzero potentially diagonalizable irreducible component $\calC_v$ of $\Spec R_v^{\lambda_v,\tau_v,\cris,\mu}$. 
Then we can assume that the $\Qbar_p$-point of $\Spec R_v^\square$ determined by $\rho_x|_{G_v}$ lies in $\calC_v$ for each $v|p$. 
\end{thm}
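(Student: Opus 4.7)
The plan is to adapt the argument of \cref{thm:mainHilbdet} by substituting the potential small $R=\bbT$ theorem \cref{thm:potsmallHilb} for the genuine one \cref{thm:smallHilb}; the geometric backbone — intersection in $\Spec R^{\loc}$ via \cref{thm:thelemma} — is unchanged.

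First I would choose, for each $v\mid p$, a $\lambda_v\in(\Z_+^2)^{\Hom(F_v,\Qbar_p)}$, an inertial type $\tau_v$ defined over $E$ (enlarging $E$ if necessary by \cref{thm:coefchange}), and a nonzero potentially diagonalizable irreducible component $\calC_v\subseteq\Spec R_v^{\lambda_v,\tau_v,\cris,\mu}$. Under the ``moreover" hypothesis the $\calC_v$ are given, and otherwise their existence is supplied by \cref{thm:BTlift}. I would then form $R^{\loc}=\widehat{\otimes}_{v\in S_p}R_v^{\square,\mu}$ and $X^{\loc}=\Spec\bigl(\widehat{\otimes}_{v\in S_p}R_v\bigr)\subseteq\Spec R^{\loc}$, with $R_v$ the quotient of $R_v^{\lambda_v,\tau_v,\cris,\mu}$ cutting out $\calC_v$, and assemble the global $\GL_2$-deformation datum $\calS=(F,S,S_p,\calO,\rhobar,\mu,\{R_v\}_{v\in S_p})$. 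Choosing a lift $\rho^\mu$ in the universal determinant $\mu$ deformation class produces a $\CNL_{\calO}$-morphism $R^{\loc}\to R^\mu$ whose scheme-theoretic image of $X^{\loc}$ is $X=\Spec R_{\calS}$.

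The one piece of input distinguishing this argument from the non-potential one is then applied: invoke \cref{thm:potsmallHilb} for $\calS$ (with $F^{(\mathrm{avoid})}$ enlarged to contain the fixed field of $\rhobar|_{G_{F(\zeta_p)}}$, which is harmless) to obtain a single finite totally real extension $L/F$, disjoint from $F^{(\mathrm{avoid})}$, such that $R_{\calS}$ is finite over $\calO$ and every $\Qbar_p$-point of $\Spec R_{\calS}$ is $L$-potentially automorphic. Now for any irreducible component $\calC$ of $\Spec R^\mu$, the hypotheses of \cref{thm:thelemma} follow just as in \cref{thm:mainHilbdet}: by \cref{mainpotHilb:smooth} together with \cref{localsmooth:det} of \cref{thm:localsmooth}, $R^{\loc}$ is a power series over $\calO$ in $3|S_p|+3[F:\Q]$ variables; by \cref{thm:crdefringdet}, $\dim X^{\loc}=1+3|S_p|+[F:\Q]$; and by \cref{thm:dimdet}, $\dim\calC\geq 1+2[F:\Q]$. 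The numerical coincidence $\dim\calC+\dim X^{\loc}-\dim R^{\loc}\geq 1$ holds, and \cref{thm:thelemma} yields a point of $\calC\cap\Spec R_{\calS}[1/p]$, which is $L$-potentially automorphic by construction.

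There is no genuine obstacle here — the argument runs in parallel to \cref{thm:mainHilbdet}. The only point that merits care is the order of quantifiers: the field $L$ must be extracted from \cref{thm:potsmallHilb} once, applied to a datum $\calS$ built using the fixed local components $\calC_v$, before any global component $\calC$ is chosen; the uniformity of \cref{thm:potsmallHilb} in the $\Qbar_p$-points of $\Spec R_{\calS}$ then guarantees that the same $L$ works for every irreducible component $\calC$ of $\Spec R^\mu$ simultaneously.
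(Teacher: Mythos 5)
Your proposal matches the paper's proof exactly: the paper says the argument is identical to that of \cref{thm:mainHilbdet} with \cref{thm:potsmallHilb} substituted for \cref{thm:smallHilb}, which is precisely what you do, and your remark on the quantifier order (extracting $L$ once from \cref{thm:potsmallHilb} so it works uniformly across all components $\calC$) correctly identifies the only place where care is needed.
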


\begin{proof}
The proof is identical to the proof of \cref{thm:mainHilbdet}, except using \cref{thm:potsmallHilb} in place of \cref{thm:smallHilb}.
\end{proof}

\Cref{thm:comps,thm:mainHilbdet} immediately imply.

\begin{cor}\label{thm:mainHilb}
\begin{enumerate}
\item\label{mainHilb:aut} Let the assumptions be as in \cref{thm:mainHilbdet}. 
Then any irreducible component of $\Spec R^{\univ}$ contains an automorphic point.
\item\label{mainHilb:pot} Let the assumptions be as in \cref{thm:mainpotHilb}. 
Then for any given finite extension $F^{(\mathrm{avoid})}$, there is a finite extension $L/F$ of totally real fields, disjoint form $F^{(\mathrm{avoid})}$, such that every irreducible component of $\Spec R^{\univ}$ contains an $L$-potentially automorphic point.
\end{enumerate}
\end{cor}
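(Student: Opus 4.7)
The plan is to reduce each assertion to its fixed-determinant counterpart via \cref{thm:comps}. Fix an irreducible component $\calC$ of $\Spec R^{\univ}$. After possibly enlarging $\calO$ (which is harmless, since any irreducible component of the base change $\Spec R^{\univ} \otimes_\calO \calO'$ maps surjectively onto an irreducible component of $\Spec R^{\univ}$, so producing an automorphic point upstairs yields one downstairs), \cref{thm:comps} supplies a finite $p$-power order character $\theta : G_{F,S} \to \calO^\times$ together with an irreducible component $\calC'$ of $\Spec R^{\theta\mu}$ such that, under the closed immersion $\Spec R^{\theta\mu} \hookrightarrow \Spec R^{\univ}$, $\calC'$ is contained in $\calC$. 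Any $\Qbar_p$-point of $\calC'$ is then a $\Qbar_p$-point of $\calC$, so it suffices to produce an automorphic (resp.\ $L$-potentially automorphic) point of $\calC'$.

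For this I apply \cref{thm:mainHilbdet} for part \cref{mainHilb:aut}, resp.\ \cref{thm:mainpotHilb} for part \cref{mainHilb:pot}, with $\mu$ replaced by $\theta\mu$. The hypotheses require only a brief check: since $\theta$ has $p$-power order and $p > 2$, the reduction $\overline{\theta}$ is trivial (the group $\F^\times$ has order prime to $p$), so $\overline{\theta\mu} = \mubar = \det\rhobar$ and $R^{\theta\mu}$ really is a universal determinant deformation ring lifting $\det\rhobar$; moreover $\theta\mu$ is de~Rham because $\theta$ has finite order. All remaining hypotheses concern $\rhobar$ alone and are unaffected by replacing $\mu$ by $\theta\mu$. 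This gives the desired point of $\calC'$ and settles part \cref{mainHilb:aut}.

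For part \cref{mainHilb:pot}, the field $L$ produced by \cref{thm:mainpotHilb} depends on the component $\calC$ through the character $\theta$, so a single uniform $L$ must be assembled. Since $\Spec R^{\univ}$ has only finitely many irreducible components $\calC_1,\ldots,\calC_r$, I would construct $L$ inductively: having obtained totally real extensions $L_1,\ldots,L_{i-1}$, apply \cref{thm:mainpotHilb} to $\calC_i$ with the ``avoidance field'' enlarged to the compositum of $F^{(\mathrm{avoid})}$ with $L_1,\ldots,L_{i-1}$, obtaining a field $L_i$ disjoint from this compositum. Setting $L := L_1 \cdots L_r$ gives a finite totally real extension of $F$ disjoint from $F^{(\mathrm{avoid})}$, and each $\calC_i$ contains an $L_i$-potentially automorphic point, which is automatically $L$-potentially automorphic because base change of an automorphic representation from $L_i$ to $L$ remains cuspidal and regular algebraic. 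There is no real obstacle here beyond bookkeeping: the substantive content is already contained in \cref{thm:comps,thm:mainHilbdet,thm:mainpotHilb}.
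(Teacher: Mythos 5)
Your reduction to the fixed-determinant statements via \cref{thm:comps} is exactly the paper's intended route, and your hypothesis checks (triviality of $\overline{\theta}$ since $\F^\times$ has order prime to $p$, the de~Rham property of $\theta\mu$, and the invariance of all the residual hypotheses under twisting) are all correct. Part \cref{mainHilb:aut} is fine.

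There is a genuine gap in your argument for part \cref{mainHilb:pot}, however. The inductive assembly of $L := L_1\cdots L_r$ does not in fact guarantee $L \cap F^{(\mathrm{avoid})} = F$. The implicit claim is: if $L_1 \cap F^{(\mathrm{avoid})} = F$ and $L_2 \cap (F^{(\mathrm{avoid})}L_1) = F$, then $(L_1 L_2) \cap F^{(\mathrm{avoid})} = F$. Translated to Galois theory (take $M/F$ Galois containing everything, $G = \Gal(M/F)$, with $H_1, H_2, A$ the subgroups fixing $L_1, L_2, F^{(\mathrm{avoid})}$), this reads: $\langle H_1, A\rangle = G$ and $\langle H_2, H_1\cap A\rangle = G$ should imply $\langle H_1\cap H_2, A\rangle = G$. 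This fails. Take $G = S_4$, $A$ the copy of $S_3$ stabilizing $4$, $H_1 = \{e, (12), (34), (12)(34)\}$, $H_2 = \langle(234)\rangle$. Then $\langle H_1, A\rangle = S_4$ and $H_1\cap A = \{e,(12)\}$, so $\langle H_2, H_1\cap A\rangle = \langle (234), (12)\rangle = S_4$; but $H_1\cap H_2 = \{e\}$, so $\langle H_1\cap H_2, A\rangle = A \neq G$. Concretely, $L_1 L_2 = M$ already contains $F^{(\mathrm{avoid})}$ even though each step respected disjointness. Since the avoidance condition is used to preserve adequacy of $\rhobar(G_{L(\zeta_p)})$, this is not a cosmetic issue. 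A secondary point: the appeal to "base change from $L_i$ to $L$" also needs care, as $L/L_i$ has no reason to be solvable once one passes to a compositum.

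The intended, and correct, argument avoids the induction altogether. Looking inside the proof of \cref{thm:potsmallHilb} (and hence \cref{thm:mainpotHilb}), the field $L$ is produced via Taylor's potential automorphy theorem so that $\rhobar|_{G_L}$ is residually automorphic and $L$ avoids $F^{(\mathrm{avoid})}$. This construction depends only on $\rhobar$ and $F^{(\mathrm{avoid})}$, not on the determinant character $\mu$; the determinant enters only in the subsequent application of the small $R=\bbT$ theorem over $L$, after $L$ has been fixed. Hence a \emph{single} $L$ works simultaneously for every $\theta_i\mu$, and one can run your argument component-by-component with this one $L$, with no compositum needed.
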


As in \S\ref{sec:CM}, we also deduce ring theoretic properties for $R^{\univ}$ and $R^\mu$. 

\begin{cor}\label{thm:Hilbgeom}
Recall we have assumed $p>2$ and that $\det\rhobar$ is totally odd. 
Assume further:
	\begin{ass}
	\item $\rhobar(G_{F(\zeta_p)})$ is adequate. 
	\item $H^0(G_v,\ad^0(\rhobar)(1)) = 0$ for every $v|p$. 
	\end{ass}
Then $R^{\univ}$ and $R^\mu$ are $\calO$-flat, reduced, complete intersection rings of dimensions $2+d_F+2[F:\Q]$ and $1+2[F:\Q]$, respectively.
\end{cor}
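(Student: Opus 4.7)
The plan is to mirror the proof of \cref{thm:CMgeom}, with \cref{thm:mainpotHilb} and \cref{mainHilb:pot} of \cref{thm:mainHilb} supplying potentially automorphic points on every component and \cref{thm:smoothHilb} supplying formal smoothness at such points. Since the statement of \cref{thm:Hilbgeom} imposes no de~Rham condition on $\mu$ while \cref{thm:mainpotHilb} does, I would first reduce to the de~Rham case for $R^\mu$. Fix any de~Rham lift $\mu_0$ of $\det\rhobar$ (for instance the Teichm\"{u}ller lift), and observe that $\chi = \mu\mu_0^{-1}$ takes values in $1+\frakm_\calO$. Since $p>2$, $\chi$ admits a unique square root $\chi^{1/2}$ in $1+\frakm_\calO$, and the twist $\rho\mapsto \rho\otimes\chi^{1/2}$ is an isomorphism of deformation functors $D^{\mu_0}\cong D^{\mu}$, yielding a $\CNL_\calO$-algebra isomorphism $R^{\mu_0}\cong R^\mu$. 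So it is enough to treat $R^{\mu_0}$, and I assume $\mu$ is de~Rham below.

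I would take $F^{(\mathrm{avoid})}$ to contain the fixed field of $\rhobar|_{G_{F(\zeta_p)}}$, so that $\rhobar(G_{L(\zeta_p)})$ remains adequate for any $L$ produced by \cref{thm:mainpotHilb}. Applying \cref{thm:mainpotHilb} separately to each minimal prime $\frakq$ of $R^\mu$ yields an $L$-potentially automorphic point $x$ on every irreducible component $\calC$. By \cref{thm:smoothHilb}, $(R^\mu)_x^\wedge$ is formally smooth over $E$ of dimension $2[F:\Q]$, in particular a regular domain; faithful flatness of the completion map then forces $(R^\mu)_x$ itself to be a domain, so $x$ lies on only one component of $\Spec R^\mu$, namely $\calC$. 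Combining $\dim (R^\mu)_x = 2[F:\Q]$ with $\dim R^\mu/\frakp_x = 1$ (since $R^\mu/\frakp_x$ embeds into $\calO_{\Qbar_p}$ and contains $\calO$) and catenarity of $R^\mu$, I obtain $\dim\calC = 1+2[F:\Q]$. This matches the lower bound of \cref{thm:dimdet}, so the inequality $g-k\ge 2[F:\Q]$ in that presentation must be an equality, the relations form a regular sequence, and $R^\mu$ is a complete intersection. Cohen--Macaulayness then gives $(S_1)$ (no embedded primes) and hence $\calO$-flatness, since $p$ lies outside every minimal prime; regularity at each $x$ passes to the generic point of the component $\calC$, giving $(R_0)$; and Serre's criterion yields reducedness.

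The argument for $R^{\univ}$ is formally identical, using \cref{mainHilb:pot} of \cref{thm:mainHilb} in place of \cref{thm:mainpotHilb}, the formal smoothness of $(R^{\univ})_x^\wedge$ over $E$ of dimension $1+d_F+2[F:\Q]$ from \cref{thm:smoothHilb}, and the matching lower bound $2+d_F+2[F:\Q]$ from \cref{thm:dimnodet}. I do not anticipate any substantive obstacle, since the hard inputs (potential automorphy, small $R=\bbT$, formal smoothness) are already packaged in the cited theorems and the remainder is standard commutative algebra of the sort exhibited in \cref{thm:CMgeom}. The only mild subtlety is the reduction to the de~Rham case for $R^\mu$, which the $\chi^{1/2}$ twist handles cleanly because $p>2$.
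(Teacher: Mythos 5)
Your proposal follows the paper's own proof essentially verbatim: twist to reduce to de Rham $\mu$, apply the potential automorphy machinery (\cref{thm:mainpotHilb}, resp.\ \cref{mainHilb:pot} of \cref{thm:mainHilb}) to put an $L$-potentially automorphic point on each component, invoke \cref{thm:smoothHilb} for formal smoothness and the resulting dimension, compare with the Galois-cohomological lower bound (\cref{thm:dimdet}, \cref{thm:dimnodet}) to get a complete intersection, and finish with Cohen--Macaulayness and Serre's criterion. The only cosmetic differences are that you perform the de Rham reduction at the start rather than at the end and spell out the local computation (completion a domain $\Rightarrow$ unique component, catenarity $\Rightarrow$ $\dim\calC = \dim (R^\mu)_x + \dim R^\mu/\frakp_x$) that the paper compresses into a single sentence.
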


\begin{proof}
The proof for $R^{\univ}$ is similar to that of \cref{thm:CMgeom}. 
We give the details.

\Cref{mainHilb:pot} of \cref{thm:mainHilb} implies that there is a finite extension $L/F$ of totally real fields, disjoint form the fixed field of $\rhobar|_{G_{F(\zeta_p)}}$, such that for any minimal prime ideal $\mathfrak{q}$ of $R^{\univ}$ there is an $L$-potentially automorphic point $x\in \Spec (R^{\univ}/\frakq) (\Qbar_p)$. 
In particular, this shows $R^{\univ}/\mathfrak{q}$ is $\calO$-flat, and it has dimension $2 + d_F + 2[F:\Q]$ by \cref{thm:smoothHilb}. 
So $R^{\univ}$ is equidimensional of dimension $2+d_F + 2[F:\Q]$. 
This together with \cref{thm:dimnodet} implies that $R^{\univ}$ is a complete intersection. 
This in turn implies that $R^{\univ}$ has no embedded prime ideals, and since $p$ does not belong to any minimal prime ideal, it is not a zero divisor and $R^{\univ}$ is $\calO$-flat. 
Applying \cref{thm:smoothHilb} again, we see that $R^{\univ}$ is generically regular. 
Since $R^{\univ}$ is generically regular and contains no embedded prime ideals, it is reduced.

When $\mu$ is de~Rham, the proof for $R^\mu$ is the same, using \cref{thm:mainpotHilb} instead of \cref{mainHilb:pot} of \cref{thm:mainHilb}, and \cref{thm:dimdet} instead of \cref{thm:dimnodet}.
When $\mu : G_{F,S} \rightarrow \calO^\times$ is an arbitrary character lifting $\det\rhobar$, we choose a character $\mu' : G_{F,S} \rightarrow \calO^\times$ lifting $\det\rhobar$ such that $\mu'$ is de~Rham. 
Since $p>2$, there is a character $(\mu'\mu^{-1})^{\frac{1}{2}} : G_{F,S} \rightarrow 1+\frakm_{\calO}$ such that $((\mu'\mu^{-1})^{\frac{1}{2}})^2 = \mu'\mu^{-1}$. 
Twisting the universal determinant $\mu$-deformation of $\rhobar$ by $(\mu'\mu^{-1})^{\frac{1}{2}}$ induces a $\CNL_{\calO}$-algebra isomorphism $R^{\mu'} \xrightarrow{\sim} R^\mu$. 
This completes the proof.
\end{proof}


Finally, combining the above with \cref{thm:smoothHilb} and the work of Gouvea--Mazur and Chenevier, we obtain:

\begin{thm}\label{thm:Hilbdense}
Assume $p>2$ and that $\det\rhobar$ is totally odd.
Let $\mathfrak{X}$ be the rigid analytic generic fibre of $\Spf R^{\univ}$.
Assume further:
	\begin{ass}
	\item $p$ is totally split in $F$, and if $F \ne \Q$, then $[F:\Q]$ is even. 
	\item\label{Hilbdense:aut} $\rhobar \otimes\Fbar_p \cong \rhobar_{\pi,\iota}$, where $\pi$ is a regular algebraic cuspidal automorphic representation of $\GL_2(\A_F)$ such that for all $v|p$, $\pi_v$ is unramified and $\rho_{\pi,\iota}|_{G_v}$ is potentially diagonalizable.
	\item\label{Hilbdense:adequate} $\rhobar(G_{F(\zeta_p)})$ is adequate.
	\item\label{Hilbdense:smooth} $H^0(G_v,\ad^0(\rhobar)(1)) = 0$ for every $v|p$. 
	\end{ass}
Then the set of essentially automorphic points of level essentially prime to $p$ in $\mathfrak{X}$ is Zariski dense.
If Leopoldt's conjecture holds for $F$ and $p$, then the set of automorphic points of level essentially prime to $p$ in $\mathfrak{X}$ is Zariski dense.
\end{thm}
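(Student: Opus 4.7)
The plan is to apply \cref{thm:genfiblem} to $R=R^{\univ}$ and the set $Z$ consisting of essentially automorphic maximal ideals of level essentially prime to $p$. The ring-theoretic inputs needed (flatness, reducedness, equidimensionality of $R^{\univ}$ of dimension $2+d_F+2[F:\Q]$) are supplied by \cref{thm:Hilbgeom}, which applies under assumptions \cref{Hilbdense:adequate,Hilbdense:smooth}. Combined with \cref{thm:genfibprop}, this tells us $\mathfrak{X}$ is equidimensional of dimension $1+d_F+2[F:\Q]$. The conditions to verify are then: (a) every irreducible component of the Zariski closure of $Z^{\rig}$ in $\mathfrak{X}$ has dimension $\dim R^{\univ}[1/p]=1+d_F+2[F:\Q]$, and (b) every irreducible component of $\Spec R^{\univ}$ contains an essentially automorphic point of level essentially prime to $p$ at which $R^{\univ}[1/p]$ is regular.

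For (b), I would invoke \cref{mainHilb:aut} of \cref{thm:mainHilb}, which under our hypotheses produces an automorphic point $x$ on every irreducible component of $\Spec R^{\univ}$; by the freedom to adjust $\pi$ at places above $p$ (and choosing inputs for \cref{thm:mainHilbdet} corresponding to components of crystalline deformation rings of level prime to $p$, using \cref{Hilbdense:aut}) we may arrange $\pi$ to be unramified at $p$, so $x$ has level prime to $p$. Then \cref{thm:smoothHilb} gives that $R^{\univ}[1/p]$ is formally smooth at $x$ of the correct dimension, and in particular regular there.

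For (a), I would exploit the product decomposition $R^{\univ}\cong R^\mu \widehat{\otimes}_{\calO} \calO[[\Gamma]]$ from \cref{thm:det}, with $\Gamma$ the maximal pro-$p$ abelian quotient of $G_{F,S}$. The rigid generic fibre $(\Spf \calO[[\Gamma]])^{\rig}$ is (up to finite torsion) an open polydisc of dimension $1+d_F$, whose $\Qbar_p$-points are Zariski dense. On the $R^\mu$ factor, Chenevier's infinite fern \cite{ChenevierFern} (using $p$ totally split and $[F:\Q]$ even, or the original construction of Gouv\^ea--Mazur \cite{GouveaMazur} when $F=\Q$) shows that the Zariski closure of automorphic points of level prime to $p$ in $\mathfrak{X}^\mu:=(\Spf R^\mu)^{\rig}$ has every component of dimension $2[F:\Q]=\dim R^\mu[1/p]$; this plus step (b) applied to $R^\mu$ (via \cref{thm:mainHilbdet} and \cref{thm:smoothHilb}) and \cref{thm:genfiblem} yields Zariski density of automorphic points of level prime to $p$ in $\mathfrak{X}^\mu$. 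Pulling back along the product map and multiplying by characters from $(\Spf \calO[[\Gamma]])^{\rig}$ then sweeps out a set of essentially automorphic points whose Zariski closure has dimension $2[F:\Q]+(1+d_F)$ on every irreducible component, establishing (a).

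The main obstacle is ensuring that the product/twisting construction correctly matches essentially automorphic points in $\mathfrak{X}$ as defined: a point of the form $\rho_{\pi,\iota}\otimes\psi$ with $\pi_v$ unramified for $v|p$ and $\psi$ an arbitrary continuous character is the image of a pair (automorphic point in $\mathfrak{X}^\mu$, point in $(\Spf\calO[[\Gamma]])^{\rig}$) under the map induced by \cref{thm:det}, with a finite-to-one fibre coming from $\Gamma_{\tor}$ and the square-root twist $\widehat\mu^{1/2}$; this finite-to-one ambiguity affects neither the dimension count nor density. For the refinement under Leopoldt, note that $d_F=0$ forces the $\Z_p$-rank of $\Gamma$ to be $1$, and the characters of $\Gamma$ attached to algebraic Hecke characters of $F$ form a Zariski-dense subset of $(\Spf\calO[[\Gamma]])^{\rig}$. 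Twisting an automorphic Galois representation by such a Hecke character again produces an automorphic Galois representation (in the strict sense of \cref{def:autpoint}), so the same density argument carried out along only the subset of algebraic character twists produces Zariski density of genuinely automorphic points of level essentially prime to $p$.
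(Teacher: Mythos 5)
Your proof follows the same overall Böckle-style strategy as the paper — verify the hypotheses of \cref{thm:genfiblem} for $R^{\univ}$ using \cref{thm:Hilbgeom}, \cref{thm:smoothHilb}, and an infinite-fern input — but the route you take for hypothesis (a) diverges from the paper's, and as written it has some real soft spots.

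For (a), the paper quotes Chenevier's Theorem 5.9 (and Gouv\^{e}a--Mazur via Emerton when $F=\Q$) \emph{directly for $\mathfrak{X}$}, i.e.\ for the nonfixed-determinant generic fibre, asserting the Zariski closure of essentially automorphic points of level essentially prime to $p$ has dimension $\ge 1+d_F+2[F:\Q]$. You instead assert a fixed-determinant version (density/dimension in $\mathfrak{X}^\mu$) and then manufacture the $\mathfrak{X}$-statement by multiplying along the decomposition $R^{\univ}\cong R^\mu\,\widehat\otimes\,\calO[[\Gamma]]$ of \cref{thm:det}. This is plausible in light of \cref{rmk:GL2pol}, but it asks for two things you neither state nor prove: (i) that the cited fern results really do deliver the dimension bound in the fixed-determinant deformation space $\mathfrak{X}^\mu$ (the paper's citation is for $\mathfrak{X}$, not $\mathfrak{X}^\mu$, and since the fern lives in the eigenvariety with varying weight, the restriction to a fixed-determinant slice is not automatic), and (ii) that a ``product of Zariski dense sets is Zariski dense'' statement for rigid analytic spaces holds, together with a matching of irreducible components across the product decomposition (including the $\Gamma_{\tor}$ factors). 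These steps are not difficult, but they are extra work the paper avoids by invoking the fern result in the form it needs.

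For (b) you gesture at \cref{mainHilb:aut} of \cref{thm:mainHilb} and then, in a parenthetical, at \cref{thm:mainHilbdet}; but you skip the determinant bookkeeping that makes this precise. The paper's argument is: given a component $\calC$ of $\Spec R^{\univ}$, use \cref{thm:comps} to find a finite $p$-power order $\theta$ and a component $\calC_{\theta\mu}\subset\calC$ of $\Spec R^{\theta\mu}$, take a square root $\eta$ of $\theta$ and twist to land on a component $\calC_\mu$ of $\Spec R^\mu$, apply \cref{thm:mainHilbdet} with the potentially diagonalizable crystalline components $\calC_v$ supplied by assumption \cref{Hilbdense:aut}, and then twist back by $\eta$. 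The twist back is precisely why the resulting point has level \emph{essentially} prime to $p$ rather than prime to $p$, and is also why the point is still automorphic and therefore \cref{thm:smoothHilb} applies to it. Your sketch leaves both the twist bookkeeping and the resulting ``essentially prime to $p$'' caveat implicit. Finally, for the Leopoldt refinement, the paper's observation is simply that under Leopoldt the two point sets \emph{coincide}, so nothing further is needed; your alternative argument via algebraic Hecke character twists is a reasonable substitute but relies on the unverified product/density claims above.
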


\begin{proof}
Note that if Leopoldt's conjecture holds for $F$ and $p$, then the set of essentially automorphic points of level essentially prime to $p$ and the set of automorphic points of level essentially prime to $p$ coincide, so the second claim follows from the first. 

Work of Gouvea and Mazur when $F = \Q$ (see \cite{Emertonpadic}*{Corollary~2.28}) and Chenevier \cite{ChenevierFern}*{Theorem~5.9} when $[F:\Q]$ is even, implies that the Zariski closure in $\mathfrak{X}$ of the set of essentially automorphic points of level essentially prime to $p$
has dimension at least $1+d_F + 2[F:\Q]$. 
By \cref{thm:Hilbgeom}, $R^{\univ}$ is $\calO$-flat, reduced, and equidimensional of dimension $2+d_F + 2[F:\Q]$. 
Then by \cref{thm:smoothHilb} and \cref{thm:genfiblem}, it suffices to prove that every irreducible component of $\Spec R^{\univ}$ contains an automorphic point of level essentially prime to $p$.

Fix an irreducible component $\calC$ of $\Spec R^{\univ}$, and let $\mu = \det \rho_{\pi,\iota}$. 
By \cref{thm:comps}, there is a finite $p$-power order character $\theta : G_{F,S} \rightarrow \calO^\times$ (extending $E$ if necessary) and an irreducible component $\calC_{\theta\mu}$ of $\Spec R^{\theta\mu}$ such that $\calC_{\theta\mu}\subset \calC$. 
Since $p>2$, there is a finite $p$-power order character $\eta : G_{F,S} \rightarrow \calO^\times$ such that $\eta^2 = \theta$. 
Twisting by $\eta$ gives an isomorphism $R^{\mu} \xrightarrow{\sim} R^{\theta\mu}$, so there is an irreducible component $\calC_\mu$ of $\Spec R^\mu$ such that twisting by $\eta$ yields an isomorphism $\calC_\mu \xrightarrow{\sim} \calC_{\theta\mu}$. 
By our assumption on $\pi$, for each $v|p$ there is a choice of $\lambda_v \in (\Z_+^n)^{\Hom(F_v,\Qbar_p)}$ such that $R_v^{\lambda_v,\cris} \ne 0$ and has a potentially diagonalizable irreducible component $\calC_v$. 
Applying \cref{thm:mainHilbdet}, we deduce there is an automorphic point $x$ on $\calC_\mu$ whose image in $\Spec R_v^\square$ lies on $\calC_v$ for each $v|p$. 
In particular, $\rho_x|_{G_v}$ is crystalline for each $v|p$. 
By local global compatibility, we deduce that $x$ has level prime to $p$. 
Then $\rho_x \otimes \eta$ is an automorphic point on $\calC_{\theta\mu} \subset \calC$ of level essentially prime to $p$, which completes the proof.
\end{proof}

\Cref{thm:Hilbdense} and \cref{thm:speczardense} immediately imply:

\begin{cor}\label{thm:specHilbdense}
Let the assumptions and notation be as in \cref{thm:Hilbdense}.
Then the set of automorphic points of level essentially prime to $p$ in $\Spec R^{\univ}$ is Zariski dense.
If Leopoldt's conjecture holds for $F$ and $p$, then the set of automorphic points of level essentially prime to $p$ in $\Spec R^{\univ}$ is Zariski dense.
\end{cor}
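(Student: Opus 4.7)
The plan is a direct transfer of the rigid-analytic density statement in \cref{thm:Hilbdense} to a scheme-theoretic statement on $\Spec R^{\univ}$, using \cref{thm:speczardense}. This is precisely the context in which the preamble ``\Cref{thm:Hilbdense} and \cref{thm:speczardense} immediately imply'' is meant; my job is just to check that both inputs apply with $R = R^{\univ}$.

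First I would verify the hypotheses of \cref{thm:speczardense}. Its standing assumption is that $R$ is an $\calO$-flat $\CNL_{\calO}$-algebra. The $\CNL_{\calO}$ structure is built into the definition of $R^{\univ}$, and $\calO$-flatness is supplied by \cref{thm:Hilbgeom}, whose hypotheses (the adequacy of $\rhobar(G_{F(\zeta_p)})$ and the vanishing of $H^0(G_v,\ad^0(\rhobar)(1))$ for each $v \mid p$) are explicitly included in the hypotheses of \cref{thm:Hilbdense}. In fact \cref{thm:Hilbgeom} gives the stronger conclusion that $R^{\univ}$ is a reduced complete intersection, but only $\calO$-flatness is needed here.

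Next I would identify the sets of points on the two sides of the bijection of \cref{genfibprop:pts} of \cref{thm:genfibprop}, namely the bijection between $\Qbar_p$-points of $\mathfrak{X}$ and maximal ideals of $R^{\univ}[1/p]$. By the very definitions in \S\ref{sec:Hilbsetup}, this bijection identifies essentially automorphic points of level essentially prime to $p$ in $\Spec R^{\univ}$ with the corresponding points in $\mathfrak{X}$. Letting $Z$ denote the former set, \cref{thm:Hilbdense} asserts that the image $Z^{\rig} \subset \mathfrak{X}$ is Zariski dense, and \cref{thm:speczardense} then yields Zariski density of $Z$ in $\Spec R^{\univ}$. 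The second, stronger assertion (automorphic points, assuming Leopoldt for $F$ and $p$) follows by the identical argument from the strengthened conclusion of \cref{thm:Hilbdense} under Leopoldt.

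There is no real obstacle to overcome: the entire statement is a formal consequence of the already-proved rigid-analytic density and the general principle, encoded in \cref{thm:speczardense}, that for $\calO$-flat formal schemes, Zariski density of a set of closed points in the rigid generic fibre implies Zariski density on $\Spec$. The only substantive input that does not appear in the statement of \cref{thm:Hilbdense} itself is the $\calO$-flatness of $R^{\univ}$, which, as noted, is supplied by the same hypotheses through \cref{thm:Hilbgeom}.
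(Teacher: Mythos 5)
Your proposal is correct and takes precisely the route the paper intends: the corollary is announced as an immediate consequence of \cref{thm:Hilbdense} and \cref{thm:speczardense}, and your job of verifying that \cref{thm:speczardense} applies with $R = R^{\univ}$ — namely supplying the $\calO$-flatness from \cref{thm:Hilbgeom}, whose hypotheses sit inside those of \cref{thm:Hilbdense} — is exactly what the paper leaves implicit. One small point worth flagging: the corollary as printed repeats ``automorphic points'' in both sentences, which is evidently a typo (the first sentence should read ``essentially automorphic points'' to match \cref{thm:Hilbdense}); your proof correctly tracks ``essentially automorphic'' in the unconditional part and ``automorphic'' in the Leopoldt-conditional part, so the argument is proving the statement the paper means.
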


\begin{rmk}\label{rmk:primetop}
Even if Leopoldt's conjecture holds for $F$ and $p$, it is necessary to use automorphic points of level essentially prime to $p$, i.e. it is not in general true that the automorphic points of pime to $p$ level are Zariski dense in $\Spec R^{\univ}$. 
Indeed, let $\Gamma$ be the maximal pro-$p$ abelian quotient of $G_{F,S}$, and let $\Gamma_{\tor}$ denote its torsion subgroup.
Then $\calO[[\Gamma]]$ is the universal deformation ring for lifts of $\det\rhobar$, and taking determinants yields a $\CNL_{\calO}$-algebra map $\calO[[\Gamma]] \rightarrow R^{\univ}$ that takes crystalline points of $\Spec R^{\univ}$ to crystalline points of $\Spec \calO[[\Gamma]]$. 
Applying \cref{thm:det} with $\mu$ equal to the Teichm\"{u}ller lift of $\det\rhobar$, we see that $\det\rho^{\univ}$ is the universal deformation of $\det\rhobar$, from which it follows that the map $\Spec R^{\univ} \rightarrow \Spec \calO[[\Gamma]]$ is surjective. 
Hence, if the crystalline points are dense in $\Spec R^{\univ}$, they will be dense in $\Spec \calO[[\Gamma]]$, which isn't true in general. 
For example, say there are two places $v$ and $w$ above $p$ in $F$ such that $F_v$ and $F_w$ both contain a primitive $p$th root of unity. 
The natural map 
	\[\mu_p(\calO_{F_v}) \times \mu_p(\calO_{F_w}) \lra \Gamma_{\tor} \]
coming from class field theory is injective ($F$ is totally real and $p>2$), so we can find a character $\theta : \Gamma_{\tor} \rightarrow \calO^\times$ (extending $\calO$ if necessary) that is trivial on $\mu_p(\calO_{F_v})$ but not on $\mu_p(\calO_{F_w})$. 
This character $\theta$ determines an irreducible component of $\Spec \calO[[\Gamma]]$, and any point on this component yields a character whose restriction to $\Gamma_{\tor}$ equals $\theta$. 
But since $F$ is totally real, any crystalline character of $G_{F,S}$ is of the form $\phi\epsilon^m$ for some $m\in\Z$ and a finite order character $\phi$ that is unramified at all places above $p$. 
In particular a crystalline character of $G_{F,S}$ is trivial on $\mu_p(\calO_{F_v})$ if and only if it is trivial on $\mu_p(\calO_{F_w})$.
\end{rmk}

\begin{rmk}\label{rmk:overQ} 
We now compare \cref{thm:Hilbdense} with the previous results \cite{BockleDensity} and \cite{EmertonLocGlob}*{Theorem~1.2.3} for $F = \Q$. 
We will temporarily use homological normalizations (i.e. uniformizers correspond to arithmetic Frobenii and Hodge--Tate weights are normalized so that $\epsilon$ has Hodge--Tate weight $1$) for the Galois representations associated to modular forms, as this is more common in the literature we quote.

First, we claim that \cref{Hilbdense:aut}  always holds for absolutely irreducible odd $\rhobar$ when $F = \Q$.
Indeed, there is $m\in\Z$ such that $\rhobar\otimes \epsilonbar^m$ has Serre weight $k \in [2,p+1]$.
By the strong form of Serre's conjecture, 
there is cuspform $f$ of level prime to $p$ and (classical) weight $k$ such that $\rhobar_{f,\iota} \cong \rhobar \otimes \epsilonbar^m$. 
If $k\le p$, then $\rho_{f,\iota}|_{G_p}$ is potentially diagonalizable by \cite{GaoLiuFLPD}*{Theorem~3.0.3}. 
If $k = p+1$, then $\rhobar_{f,\iota}|_{I_p}$ is an extension of the trivial character by the cyclotomic character, and \cite{KW2}*{Lemma~3.5} implies that $\rho_{f,\iota}$ is ordinary crystalline, hence potentially diagonalizable by \cite{BLGGT}*{Lemma~1.4.3}. 
Applying the twist $(\abs{\cdot}_{\A}\circ\det)^m$ to the regular algebraic cuspidal automorphic representation of $\GL_2(\A_{\Q})$ generated by $f$ yields a regular algebraic cuspidal automorphic representation $\pi$ satisfying \cref{Hilbdense:aut} of \cref{thm:Hilbdense}.

Secondly, the assumption that $H^0(G_v,\ad^0(\rhobar)(1))=0$ is easily checked to be equivalent to the following (extending scalars if necessary):
\begin{itemize}
	\item $\rhobar|_{G_v} \not\cong \chibar \otimes \begin{pmatrix}
	1 & \ast \\ & \epsilonbar
	\end{pmatrix}$ for any character $\chibar : G_v \rightarrow \F^\times$, and
	\item if $[F_v(\zeta_p):F_v] = 2$, then $\rhobar$ is not isomorphic to the induction of a character $\overline{\theta} : G_{F_v(\zeta_p)} \rightarrow \F^\times$. 
	\end{itemize}
In particular, \cref{thm:Hilbdense} removes the assumption from \cite{BockleDensity} and \cite{EmertonLocGlob}*{Theorem~1.2.3} requiring the semisimplification of $\rhobar|_{G_p}$ to be nonscalar.

However there is one situation covered by \cite{BockleDensity} and \cite{EmertonLocGlob}*{Theorem~1.2.3} that we do not treat here, namely if $p=3$ and the projective image of $\rhobar|_{G_{F(\zeta_3)}}$ is conjugate to $\PSL_2(\F_3)$. 
By \cite{BLGGU2}*{Proposition~6.5}, if $p>2$ and $\rhobar|_{G_{F(\zeta_p)}}$ acts absolutely irreducibly but does not have adequate image, then either
	\begin{itemize}
	\item $p = 3$ and the projective image of $\rhobar|_{G_{F(\zeta_3)}}$ is conjugate to $\PSL_2(\F_3)$, or
	\item $p = 5$ and the projective image of $\rhobar|_{G_{F(\zeta_5)}}$ is conjugate to $\PSL_2(\F_5)$.
	\end{itemize}
The latter case does not occur when $\det\rhobar$ is totally odd and $p=5$ is unramified in $F$. 
It would be possible to treat the former case using the main ideas in this article and $\GL_2$-automorphy lifting theorems, but this would also require proving \cref{thm:smoothHilb} using $\GL_2$-patching, instead of quoting \cite{MeSmooth}*{Theorem~B}, so we do not pursue it here.
\end{rmk}
	
\subsection{An example}\label{sec:eg}
We finish by using an example due to Serre (see \cite{RibetIrred}*{\S2}) to illustrate the subtleties, in particular the failure of \cref{thm:thelemma}, that can occur when $R^{\loc}$ is not regular. 
We again use homological normalizations (i.e. uniformizers correspond to arithmetic Frobenii and Hodge--Tate weights are normalized so that $\epsilon$ has Hodge--Tate weight $1$) for the Galois representations associated to modular forms, to be consistent with the literature we quote.

There is a newform $f$ of weight $2$ and level $\Gamma_1(13)$, with $q$-expansion (see \cite{LMFDB1324a})
	\begin{equation}\label{eq:qexp}
	f(q) = q + (-\zeta_6-1)q^2 + (2\zeta_6-2)q^3 + \zeta_6q^4 + (-2\zeta_6+1)q^5 + \cdots,
	\end{equation}
where $\zeta_6 = e^{\frac{2\pi i}{6}}$. 
The coefficient field of $f$ is $\Q(\sqrt{-3})$, and the nebentypus of $f$ is the character $(\Z/13\Z)^\times \rightarrow \Q(\sqrt{-3})^\times$ given by $2\mapsto \zeta_6$.
Further, $f$ and its conjugate are a basis for $S_2(\Gamma_1(13))$. 

We take $E = \Q_3(\sqrt{-3})$, and let
	\[ \rho_f : G_{\Q,\{3,13\}} \lra \GL_2(E) \]
denote the $(\sqrt{-3})$-adic representation attached to $f$, and let
	\[ \rhobar_f : G_{\Q,\{3,13\}} \lra \GL_2(\F_3) \]
be the residual representation.
The residual representation $\rhobar$ is absolutely irreducible, however its restriction to $G_{\Q(\zeta_3)}$ has abelian image. 
In particular, $H^0(G_3,\ad^0(\rhobar)(1)) \ne 0$. 

Let $\mu : G_{\Q,\{3,13\}} \rightarrow \calO^\times$ be the continuous character such that $\mu\epsilon^{-1}$ is the quadratic character of $\Gal(\Q(\sqrt{13})/\Q)$. 
Then $\det\rhobar_f = \mubar$, the reduction of $\mu$ modulo $\frakm_{\calO}$.
Let $R^\mu$ be the universal determinant $\mu$ deformation ring for $\rhobar$. 
Every irreducible component of $\Spec R^\mu$ has dimension at least $3$ by \cref{thm:dimdet}.
Let $R^{\loc} = R_3^{\square,\mu}$. 
By \cref{thm:gendetpres} and the local Euler--Poincar\'{e} characteristic formula, $\dim R^{\loc} \ge 7$. 
We fix a lift in the class of the universal determinant $\mu$ deformation of $\rhobar$, hence a $\CNL_{\calO}$-algebra morphism $R^{\loc} \rightarrow R^\mu$.
Note that $\rho_f|_{G_3}$ is crystalline with Hodge--Tate weights $\{0,1\}$, and $\det\rho_f|_{I_3} = \mu|_{I_3}$. 
So twisting $\rho_f|_{G_3}$ by an unramified character, if necessary, we see that $R_3^{(0,0),\cris,\mu} \ne 0$.
We let $X^{\loc} =\Spec R_3^{(0,0),\cris,\mu} \subset \Spec R^{\loc}$, so $\dim X^{\loc} = 5$ by \cref{thm:crdefringdet}. 
In particular, for any irreducible component $\calC$ of $\Spec R^\mu$, we have
	\[ \dim \calC + \dim X^{\loc} - \dim R^{\loc} \ge 1.\]
But, letting $X = X^{\loc} \times_{\Spec R^{\loc}} \Spec R^\mu$, we claim there is an irreducible component $\calC$ of $\Spec R^\mu$ such that 
	\[\calC \cap (X\otimes_{\calO} E) = \emptyset.\]
(However, the justification below does show that a different choice of $X^{\loc}$ yields a nontrivial intersection with  $\calC$.)

To see this, first note that by a version of Carayol's Lemma \cite{DiamondRefined}*{Lemma~2.1}, there is a modular lift $\rho_g$ of $\rhobar_f$ with $g\in S_2(\Gamma_1(9\cdot 13))$ and $\det\rho_g = \mu$. 
By local global compatibility, we have $\rho_g|_{I_{13}} = \mu|_{I_{13}} \oplus 1$. 
Let $\calC$ be an irreducible component of $\Spec R^\mu$ containing the point induced by $\rho_g$. 
Under the map $R_{13}^\square \rightarrow R^\mu$, the image of $\calC$ in $\Spec R_{13}^\square$ is contained in some characteristic $0$ irreducible component of $\Spec R_{13}^\square$. 
Inertial types are locally constant on $\Spec R_{13}^\square[1/p]$ (see \cite{GeeTypes}*{\S2}), so we deduce that for any $\Qbar_p$-point $x$ on $\calC$, the semisimplification of $\rho_x|_{I_{13}}$ is isomorphic to $\mu|_{I_{13}}\oplus 1$. 
Since $\mu|_{I_{13}}\ne 1$, we further conclude that $\rho_x|_{I_{13}} \cong\mu|_{I_{13}}\oplus 1$.

Now assume that $\calC \cap (X\otimes_{\calO} E) \ne 0$, and choose a $\Qbar_p$-point $x$ in this intersection. 
Then $\rho_x$ is unramfied outside of $\{3,13,\infty\}$, is crystalline at $3$ with Hodge--Tate weights $\{0,1\}$, and $\rho_x|_{I_{13}} \cong\mu|_{I_{13}}\oplus 1$. 
The $q$-expansion \eqref{eq:qexp} shows $f$ is $3$-ordinary, so
	\[ \rhobar_f|_{G_3} \cong 
	\begin{pmatrix}
	\chi_1 & \ast \\ & \chi_2
	\end{pmatrix},
	\]
with $\chi_2$ unramified and $\chi_1|_{I_3} = \epsilonbar|_{I_3}$. 
Using \cite{KW2}*{Lemma~3.5}, we see that $\rho_x$ is also ordinary, and since $\rhobar_f$ is $3$-distinguished, we can apply \cite{SWirreducible}*{Theorem in \S1} to conclude that $\rho_x$ is modular. 
But by local global compatibility, it must arrise from an eigenform in $S_2(\Gamma_1(13))$ with a quadratic nebentypus. 
This is a contradiction as $S_2(\Gamma_1(13))$ is spanned by $f$ and its conjugate, and the nebentypus of $f$ has order $6$.

\begin{bibdiv}
\begin{biblist}

\bibselect{\bibpath/Refs}

\end{biblist}
\end{bibdiv}

\end{document}